\DeclareMathOperator{\End}{End}
\DeclareMathOperator{\Hom}{Hom}
\DeclareMathOperator{\ev}{ev}
\DeclareMathOperator{\coev}{coev}
\DeclareMathOperator{\Gr}{Gr}
\DeclareMathOperator{\Irr}{Irr}
\newcommand\be            {\begin{equation}}
\newcommand\ee            {\end{equation}}
\theoremstyle{plain}
\newtheorem{theorem}{Theorem}
\newtheorem{lemma}[theorem]{Lemma}
\newtheorem{proposition}[theorem]{Proposition}
\newtheorem{corollary}[theorem]{Corollary}
\theoremstyle{definition}
\newtheorem{remark}[theorem]{Remark}
\newtheorem{definition}[theorem]{Definition}
\numberwithin{equation}{section}
\numberwithin{theorem}{section}
\newcommand*{\refh}[2]{\hyperref[#2]{#1~\ref{#2}}} % ie \hyref{Section}{sec:RepS-RepQ} gives: Section 3
\newcounter{ourcount}
\newcommand{\id}{\mathrm{id}}
\newcommand{\ok}{{\ensuremath{\Bbbk}}}
\newcommand{\SLiiZ}{SL(2,\oZ)}
\newcommand{\modS}{\mathscr{S}}
\newcommand{\modT}{\mathscr{T}}
\newcommand{\rA}{\mathcal{A}}
\newcommand{\natiso}{\varphi}
\newcommand{\nattr}{\tilde{\iota}}
\newcommand{\Nat}{\mathrm{Nat}}
\newcommand{\Din}{\mathrm{Din}}
\newcommand{\cat}{\mathcal{C}}
\newcommand{\catV}{\mathcal{V}}
\newcommand{\Cc}{\cat}
\newcommand{\catop}{\cat^{\operatorname{op}}}
\newcommand{\catB}{\mathcal{B}}
\newcommand{\catD}{\mathcal{D}}
\newcommand{\Fun}{\mathrm{Fun}}
\newcommand{\vect}{\mathrm{{\bf vect}}}
\newcommand{\Set}{\mathrm{{\bf Set}}}
\newcommand\Zc            {\mathcal{Z}}
\newcommand{\rep}{\mathrm{\bf Rep}\,}
\newcommand{\corep}{\mathrm{\bf coRep}\,}
\newcommand{\fun}{\mathcal{F}}    %%% a functor
\newcommand{\idt}{\mathscr{N}}
\newcommand{\natf}{\mathscr{N}}
\newcommand{\dinf}{\mathscr{D}}
\newcommand{\ot}{\otimes}
\newcommand{\tensor}{\otimes}
\newcommand{\as}{\Phi}            %%% in Q
\newcommand{\assoc}{\alpha} %%% for associators in a category
\newcommand{\brC}{c}               %%% for braiding in C
\newcommand{\lunit}{\lambda}
\newcommand{\runit}{\varrho}
\newcommand{\tid}{\one}      %%% Tensor identity
\newcommand{\uiso}{u}
\newcommand\ldX{{}^*\!X}
\newcommand{\Salpha}{\boldsymbol{\alpha}}
\newcommand{\Sbeta}{\boldsymbol{\beta}}
\newcommand{\bchi}{\pmb{\chi}}
\newcommand{\bphi}{\pmb{\phi}}
\newcommand{\im}{\mathrm{im}}
\newcommand{\one}{\boldsymbol{1}}%{1\kern-4pt 1}
\newcommand{\oC}{\mathbb{C}}
\newcommand{\eps}{\varepsilon}
\newcommand{\Ho}{A}
\newcommand{\coend}{\mathcal{L}}
\newcommand{\coa}{\rho_{\Ho^\ast}^\coend}
\newcommand{\adj}{\rho^\mathrm{adj}_\Ho}
\newcommand{\twist}{\theta}
\newcommand{\muc}{\mu_{\coend}}
\newcommand{\intL}{\Lambda_{\coend}}
\newcommand\eqpicnn[4]{\begin{eqnarray*}
                   \begin{picture}(#2,#3){}\end{picture}\nonumber\\
                   \raisebox{-#3pt}{ \begin{picture}(#2,#3) #4 \end{picture} }
                   \label{#1} \\~\nonumber \end{eqnarray*} }
\newcommand{\Rad}{\rho}
\newcommand{\intQ}{{\mathbf{c}}}
\newcommand{\coint}{\Lambda^{\rm co}}
\newcommand{\ribbon}{{\boldsymbol{v}}}
\newcommand{\sqs}{{\boldsymbol{u}}}
\newcommand{\Dt}{{\boldsymbol{f}}}
\newcommand{\oZ}{\mathbb{Z}}
\newcommand{\gcg}{\ , \ } % gap-comma-gap 
\newcommand{\gp}{\ .}
\newcommand{\gc}{\ ,}
\newcommand{\qcq}{\quad , \quad } % gap-comma-gap 
\newcommand{\qp}{\quad .}
\newcommand{\qc}{\quad ,}
\newcommand{\act}{\, . \,}
\newcommand{\ipic}[3][-0.5]{\raisebox{#1\height}{\scalebox{#3}{\includegraphics{pics/#2.pdf}}}}
\newcommand{\tr}{\mathrm{Tr}}
\newcommand{\bal}{\varkappa}
\begin{document}
\begin{flushright}
ZMP-HH/17-10\\
Hamburger Beitr\"age zur Mathematik 650
\end{flushright}

\vspace{2em}

\title[$\SLiiZ$-action for ribbon quasi-Hopf algebras]{$\bm{\SLiiZ}$-action for ribbon quasi-Hopf algebras}

\author{V.~Farsad, A.M.~Gainutdinov, I.~Runkel}

\address{VF: Fachbereich Mathematik, Universit\"at Hamburg, Bundesstra\ss e 55,
20146 Hamburg, Germany}
\email{vanda.farsad@uni-hamburg.de}

\address{AMG: Laboratoire de Math\'ematiques et Physique Th\'eorique CNRS,
Universit\'e de Tours,
Parc de Grammont, 37200 Tours, 
France and
 Fachbereich Mathematik, Universit\"at Hamburg, Bundesstra\ss e 55, 20146 Hamburg, Germany}
\email{azat.gainutdinov@lmpt.univ-tours.fr}

\address{IR: Fachbereich Mathematik, Universit\"at Hamburg, Bundesstra\ss e 55,
20146 Hamburg, Germany}
\email{ingo.runkel@uni-hamburg.de}

\begin{abstract}
We study the universal Hopf algebra $\coend$ of Majid and Lyubashenko in the case that the underlying ribbon category is the category of representations of a finite dimensional ribbon quasi-Hopf algebra $A$.
	We show that $\coend = A^*$ with coadjoint action and compute the Hopf algebra structure morphisms of $\coend$ in terms of the defining data of $A$. We give explicitly the condition on $A$ which makes $\rep A$ factorisable and compute Lyubashenko's projective $SL(2,\mathbb{Z})$-action on the centre of $A$ in this case.

The point of this exercise is to provide the groundwork for the applications to ribbon categories arising in logarithmic conformal field theories -- in particular symplectic fermions and $W_p$-models -- and to test a conjectural non-semisimple Verlinde formula.
\end{abstract}

%\date{\today}

\maketitle

\setcounter{tocdepth}{1}
\tableofcontents
    
\thispagestyle{empty}
\newpage

\section{Introduction} \label{sec:intro}

A modular tensor category is a finitely semisimple linear and abelian category, which is in addition a ribbon category (a braided tensor category with ribbon twist), which has a simple tensor unit, and whose braiding satisfies a certain non-degeneracy condition \cite{tur}. Modular tensor categories are important algebraic objects because they precisely encode the data necessary to define a 3-2-1 extended topological field theory \cite{retu,tur,Bartlett:2015baa}. 

One source of modular tensor categories are finite-dimensional ribbon Hopf algebras $H$ which are semisimple as algebras and which are 
{\em factorisable}~\cite{RS,Ta}.
Factorisability means that the monodromy matrix $M=R_{21}R \in H \otimes H$ obtained from the universal $R$-matrix of $H$ is non-degenerate as a copairing. 

A three-dimensional topological field theory gives rise to representations of mapping class groups of surfaces, possibly with marked points (in the extended case). 
	It turns out that one can drop the semisimplicity condition on the category and still obtain such mapping class group representations \cite{Lyubashenko:1994tm,Kerler:2001}, even if there no longer is an underlying 3-2-1 topological field theory in the sense of \cite{Bartlett:2015baa}.

The algebraic datum is now a 
finite abelian ribbon category with simple tensor unit, whose braiding satisfies a (more complicated) non-degeneracy condition \cite{Lyubashenko:1995,Lyubashenko:1994tm}.
We refer to such categories as factorisable finite ribbon tensor categories.
Again, finite-dimensional factorisable ribbon Hopf algebras provide examples, now without the semisimplicity requirement. 

In this paper we apply the general formalism of \cite{Lyubashenko:1995,Lyubashenko:1994tm,Kerler:2001} to
	finite-dimensional ribbon
\textit{quasi}-Hopf 
algebras $A$. We express the relevant non-degeneracy condition on the braiding in terms of the defining data of $A$ 
(see Section \ref{sec:non-deg-hopf-pair}) 
and compute the action of $SL(2,\oZ)$ -- the mapping class group of the torus -- on the centre $Z(A)$ of $A$
(Theorem \ref{thm:SL2Z-on-centre}). As is maybe not surprising for readers who have looked at quasi-Hopf algebras before, this leads to fairly long expressions in terms of the coassociator, universal $R$-matrix, etc.

Our motivation for carrying out this exercise is two fold: firstly -- as we explain next -- it is easy to detect when a finite tensor category comes from a quasi-Hopf algebra; secondly, it puts in place the explicit expressions we need for the symplectic fermion calculation in \cite{FGRprep} (Remark \ref{rem:SF-motivation}).

Let $\cat$ be a 
finite tensor category over a field $\ok$. If there exists a fiber functor $F : \cat \to \vect_\ok$, by reconstruction one can find a Hopf algebra $H$ such that $\cat \cong \rep H$ as linear
 tensor categories~\cite{Ulbrich}, see also~\cite[Sec.\,9.4]{Majid-book}. 
If we only require $F$ to be {\em multiplicative}, i.e.\ that there are isomorphisms $F(U \otimes V) \cong F(U) \otimes F(V)$ natural in $U,V$ but not subject to coherence conditions, then reconstruction results in a
{\em quasi}-Hopf 
algebra~\cite[Sec.\,9.4]{Majid-book}.
While it may be difficult to determine whether there is a fiber functor $\cat \to \vect_\ok$, there is a very simple criterion for the existence of multiplicative functors (see \cite[Prop.\,6.1.14]{EGNO-book}):

\begin{theorem}\label{thm:quasi-integralFP}
Let $\cat$ be a finite tensor category 
	over an algebraically closed field
for which the Perron-Frobenius dimensions of its simple objects are integers.
Then $\cat$ is equivalent as a linear tensor category to $\rep A$ for a
finite-dimensional quasi-Hopf algebra $A$.
\end{theorem}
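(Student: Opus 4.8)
This is a Tannaka--Krein style reconstruction statement, and the strategy is the one in \cite[Sec.\,6.1]{EGNO-book}. Write $\Irr(\cat)=\{X_i\}_{i\in I}$ for the finite set of isomorphism classes of simple objects, $P_i\to X_i$ for the projective covers, and $d_i:=\mathrm{FPdim}(X_i)$, which by assumption lies in $\oN_{>0}$. The goal is to produce a \emph{quasi-fiber functor} on $\cat$: an exact faithful $\ok$-linear functor $F\colon\cat\to\vect_\ok$ together with an isomorphism $F(\one)\cong\ok$ and isomorphisms $J_{X,Y}\colon F(X)\otimes F(Y)\xrightarrow{\sim}F(X\otimes Y)$ natural in $X$ and $Y$, but \emph{not} subject to any coherence condition. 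Given such an $F$, reconstruction produces a finite-dimensional quasi-bialgebra $A=\End(F)$ with $\cat\cong\rep A$ as linear tensor categories (the coassociator of $A$ measuring the failure of $J$ to be monoidal), and rigidity of $\cat$ upgrades $A$ to a quasi-Hopf algebra; see \cite[Sec.\,9.4]{Majid-book}. So the whole content is the construction of $F$.

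First I would take $F:=\Hom_\cat(P_0,-)$ with $P_0:=\bigoplus_{i\in I}d_i\,P_i$; it is precisely the integrality of the $d_i$ that makes $P_0$ an object of $\cat$, and since every $P_i$ occurs in $P_0$ it is a projective generator, so $F$ is exact and faithful. Next I would record that $\dim_\ok F(Z)=\mathrm{FPdim}(Z)$ for all $Z$: since $\ok$ is algebraically closed one has $\dim_\ok\Hom_\cat(P_i,Z)=[Z:X_i]$, so $\dim_\ok F(Z)=\sum_i d_i[Z:X_i]=\mathrm{FPdim}(Z)$ because $\mathrm{FPdim}$ is additive on short exact sequences. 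In particular $\dim_\ok F(\one)=\mathrm{FPdim}(\one)=1$, giving $F(\one)\cong\ok$, and by multiplicativity of $\mathrm{FPdim}$ under $\otimes$,
\[
\dim_\ok F(X\otimes Y)=\mathrm{FPdim}(X)\,\mathrm{FPdim}(Y)=\dim_\ok\bigl(F(X)\otimes F(Y)\bigr).
\]

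The step I expect to be the real obstacle is passing from this equality of dimensions to a genuine isomorphism $J_{X,Y}$ that is natural in \emph{both} $X$ and $Y$ --- an equality of dimensions object-by-object is a priori nowhere near a natural transformation. I would resolve it with the Deligne tensor product and the Yoneda lemma, as follows. Both $(X,Y)\mapsto F(X)\otimes F(Y)$ and $(X,Y)\mapsto F(X\otimes Y)$ are right exact in each variable, hence factor through functors $\cat\xtensor\cat\to\vect_\ok$; the first is $\Hom_{\cat\xtensor\cat}(P_0\xtensor P_0,-)$, and writing $\otimes\colon\cat\xtensor\cat\to\cat$ for the (exact) tensor product functor and $\otimes^{\mathrm L}$ for its left adjoint, the second is $\Hom_\cat(P_0,\otimes(-))=\Hom_{\cat\xtensor\cat}(\otimes^{\mathrm L}(P_0),-)$. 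Now $P_0\xtensor P_0$ and $\otimes^{\mathrm L}(P_0)$ are both projective objects of the finite abelian category $\cat\xtensor\cat$, whose indecomposable projectives are the $P_i\xtensor P_j$; counting multiplicities gives $P_0\xtensor P_0\cong\bigoplus_{i,j}d_id_j\,(P_i\xtensor P_j)$, while the multiplicity of $P_i\xtensor P_j$ in $\otimes^{\mathrm L}(P_0)$ is $\dim_\ok\Hom_\cat(P_0,X_i\otimes X_j)=\mathrm{FPdim}(X_i\otimes X_j)=d_id_j$. By Krull--Schmidt the two projectives are isomorphic, so by Yoneda the two functors on $\cat\xtensor\cat$ are isomorphic, which is exactly the desired natural isomorphism $J_{X,Y}$. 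With $F$ now a quasi-fiber functor, invoking reconstruction finishes the proof. Note that the hypothesis on Perron--Frobenius dimensions is used twice: to form $P_0$ as an honest object, and to identify the multiplicities of $\otimes^{\mathrm L}(P_0)$.
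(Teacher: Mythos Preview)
Your argument is correct and is precisely the standard proof from \cite[Sec.\,6.1]{EGNO-book}, which is exactly what the paper invokes: the theorem is stated in the introduction with a bare citation to \cite[Prop.\,6.1.14]{EGNO-book} and no independent proof is given. So there is nothing to compare---you have supplied the argument the paper only references.
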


The Perron-Frobenius dimensions of a simple object $X \in \cat$ is the 
	positive real number given by the
maximal non-negative eigenvalue of the linear map $[X \otimes -]$ 
on the $\mathbb{C}$-linearised 
Grothendieck ring $\mathbb{C} \otimes_\oZ \Gr(\cat)$,
 see e.g.\ \cite{EGNO-book}.
	For $\rep A$, the Perron-Frobenius dimension of an object is simply the dimension of the underlying vector space.

\smallskip

We now describe in more detail the construction of \cite{Lyubashenko:1995,Lyubashenko:1994tm,Kerler:2001}, see also 
	\cite[Sec.\,4]{Fuchs:2010mw}
for a review. In this paper we will only be interested in the action of the mapping class group of the torus, i.e.\ of $SL(2,\oZ)$.

Let $\cat$ be a factorisable finite ribbon tensor category over a field $\ok$.
Let $\coend \in \cat$ be the coend for the functor 
	$\cat^{\mathrm{op}} \times \cat \to \cat$:
 $(U,V) \mapsto U^* \otimes V$. 
As we review in Section \ref{sec:univ-hopf-via-coends}, 
using the universal property of the coend, one can endow $\coend$ with the structure of a Hopf algebra in the braided category $\cat$
	(Definition \ref{def:Hopf-C}),
together with a Hopf pairing $\omega_\coend : \coend \otimes \coend \to \one$.
The category
$\cat$ is called {\em factorisable} if $\omega_\coend$ is non-degenerate. 
Using once more the universal property 
one defines endomorphisms $\modS$, $\modT$ of~$\coend$
	which induce a projective action of $SL(2,\oZ)$ on $\cat(\one,\coend)$ (see Section \ref{sec:SL2Z-action}).
One finds that $\cat(\one,\coend) \cong \End(\id_{\cat})$, and so one obtains a projective action of $SL(2,\oZ)$ on $\End(\id_{\cat})$.

Let $A$ be a finite-dimensional ribbon quasi-Hopf algebra (see Section \ref{sec:ribbon-qHopf} for conventions and details). 
We show that, as for Hopf algebras \cite{Lyubashenko:1994tm,Kerler:1996}, the coend $\coend$ in $\rep A$ is given by the coadjoint representation on $A^*$ (Proposition \ref{prop:coend-RepH}). 
We 
compute the structure morphisms of the 
Hopf algebra $\coend$, as well as the 	pairing
$\omega_\coend$, 
in terms of the data of $A$ (Theorem \ref{thm:explicit-coend-qHopf}). Note that $\End(\id_{\rep A}) \cong Z(A)$, the centre of $A$. Using our explicit expressions, we
	give
the action of the $S$- and $T$-generators of $SL(2,\oZ)$ on $Z(A)$ in terms of the defining data 
	of $A$ and an integral, 
see Proposition \ref{prop:coint-coend-via-int-qHopf} and Theorem \ref{thm:SL2Z-on-centre}.
This generalises results for Hopf algebras in \cite{Lyubashenko:1994ma}  to quasi-Hopf algebras.

\smallskip

The Hopf algebra structure on $\coend$ can be understood as a special case of
braided version~\cite{Majid:1993} of the reconstruction theorem~\cite{Ulbrich}. 
There, one characterises a Hopf algebra in a braided monoidal category $\mathcal{V}$ via a monoidal functor $F$ from a monoidal category $\cat$ to $\mathcal{V}$ (see Section \ref{sec:univ-hopf-braided}). If $\cat$ is itself braided, there is a canonical choice for 
$\mathcal{V}$ and $F$, namely one can consider $\id_{\cat} : \cat \to \cat$. We will refer to the Hopf algebra $\rA$ in $\Cc$
characterised by this functor as {\em universal Hopf algebra}
 (being defined by a universal property, $\rA$ may or may not exist). 
We explain in detail an observation in \cite{Lyubashenko:1995} that $\rA \cong \coend$ as Hopf algebras in $\Cc$ with Hopf pairing (Proposition \ref{prop:univ-vs-coend}).

This reconstruction point of view is also used in \cite{[BT]}, where the notion of factorisability of a quasi-triangular quasi-Hopf algebra has first been defined. The definition in \cite{[BT]} is a priori different from factorisability of $\rep A$ (i.e.\ non-degeneracy of $\omega_\coend$), but we show in Corollary \ref{prop:fact-qHopf=fact-coend} 
 that the two definitions are equivalent.
To do so, we use  that factorisability can be expressed as the invertibility of a map -- composed of braidings and duality morphisms -- from the coend
 $\coend$ for $(U,V) \mapsto U^* \otimes V$ to the end for $(U,V) \mapsto U \otimes V^*$
  (Proposition~\ref{prop:C-DD}). 
  When applied to $\rep A$, this invertibility condition reduces precisely
to the definition in \cite{[BT]} (see Section \ref{sec:equiv-fact-cond}).
 
\smallskip

In our discussion of the universal Hopf algebra via reconstruction and via coends, we are careful to include the coherence isomorphism of the underlying monoidal category (as opposed to \cite{Majid:1993,Lyubashenko:1995} which work in the strict case). This is not meant as an extra torture for the reader but is necessary for the computation of the structure morphisms on the coend in Section \ref{sec:coend-repA}.

\begin{remark}\label{rem:SF-motivation}
Computing the $SL(2,\oZ)$-action on $\End(\id_{\rep A})$ explicitly is 
part of a larger project to provide a family of examples for a conjectured non-semisimple variant of the Verlinde formula \cite[Conj.\,5.10]{Gainutdinov:2016qhz}. The original semisimple version of the Verlinde formula was found in \cite{Verlinde:1988sn} and proved in the context of vertex operator algebras in \cite{Huang:2004bn}. A related investigation of the non-semisimple Verlinde formula is carried out in \cite{Creutzig:2016fms}.
\\
The conjecture in \cite{Gainutdinov:2016qhz} stipulates an isomorphism of projective representations between two $SL(2,\oZ)$-actions
associated to a $C_2$-cofinite, simple, self-dual and non-negatively graded vertex operator algebra $V$. The first action is obtained by modular transformations on the space of so-called pseudo-trace functions of $V$ \cite{Miyamoto:2002ar,Arike:2011ab}. For the second action one uses that $\rep V$ is conjecturally a factorisable finite ribbon tensor category and thus carries a projective $SL(2,\oZ)$-action on $\End(\id_{\rep V})$ as described above (see e.g.\ \cite[Sec.\,5]{Gainutdinov:2016qhz} for details).
\\
The series of examples we investigate are the so-called symplectic fermions, which are
 pa\-ra\-met\-ris\-ed by $N \in \oZ_{>0}$, the ``number of pairs of symplectic fermions''. In \cite{Davydov:2012xg,Runkel:2012cf} a conjecture was presented for the explicit ribbon category structure on $\rep V$, for $V$ the (even part of the) vertex operator algebra of $N$ pairs of symplectic fermions, see \cite[Conj.\,7.4]{Davydov:2016euo} for a precise formulation. 
Using the results of the present work, in the follow-up paper \cite{FGRprep} a description of this category as representations of a quasi-Hopf algebra $Q_N$ is given
and the $SL(2,\oZ)$-action on $\End(\id_{\rep {Q_N}})$ is computed 
(see also \cite{Gainutdinov:2015lja} for the case $N=1$).
The $SL(2,\oZ)$-action on the space of pseudo-trace functions
follows from \cite{Gainutdinov:2016qhz}. 
The outcome of this project is that the $SL(2,\oZ)$-actions do indeed agree projectively \cite{FGRprep} 
-- this provides the first example of such a comparison for non-semisimple theories in the literature.
\\
In a separate ongoing project \cite{Creutzig-prep}, the formalism presented here is applied to a series of ribbon quasi-Hopf algebras based on so-called restricted quantum groups for $sl(2)$ that have  already appeared in relation to the logarithmic $W_p$-triplet conformal field theories~\cite{[FGST]}.
\\
The explicit description via quasi-Hopf algebras of the factorisable finite tensor categories associated to symplectic fermions and $W_p$-triplet models  in \cite{FGRprep,Creutzig-prep} will also be useful when investigating 
bulk correlation functions for these models using the formalism developed in \cite{Fuchs:2010mw,Fuchs:2013lda,Fuchs:2016wjr}.
\end{remark}

The Verlinde formula has a purely categorical counterpart. Namely, let $\cat$ be a factorisable finite tensor category over an algebraically closed field of characteristic zero. Denote by $\modS_{\cat}$ the action of the $S$-generator of $SL(2,\oZ)$ on $\End(\id_{\cat})$ from \cite{Lyubashenko:1995}. 
Using the theory of internal characters of 
	\cite{Fuchs:2010mw,Shimizu:2015}
one obtains an injective map $\phi : \Gr(\cat) \to \End(\id_{\cat})$, $[M] \mapsto \phi_M$ (see Section \ref{sec:intchar-natendo}). For $U,V,W \in \cat$ simple denote by $N_{UV}^{~W}$ the structure constants in $\Gr(\cat)$, i.e.\ $[U][V] = \sum_{W} N_{UV}^{~W} [W]$. 
Here, the sum runs over representatives $W$ of isomorphism classes of simple objects in $\cat$. The categorical Verlinde formula states \cite[Thm.\,3.9]{Gainutdinov:2016qhz} (see \cite[Thm.\,4.5.2]{tur} for the semisimple case, i.e.\ the case of modular fusion categories)
\be\label{eq:verlinde-general}
	\modS_{\cat}^{-1}\big( 
	\modS_{\cat}(\phi_U) \circ \modS_{\cat}(\phi_V) \big)
	~=~ \sum_W N_{UV}^{~W} \, \phi_W \ .
\ee
In Section \ref{sec:intchar-qHopf} we evaluate this result explicitly in the case $\cat = \rep A$ for $A$ a finite dimensional factorisable quasi-Hopf algebra. 
	We give elements $\bchi_M \in Z(A)$, $M \in \rep A$,
corresponding to $S_{\rep A}(\phi_M)$ 
 via $Z(A) \cong \End(\id_{\rep A})$. One finds that the structure constants of $\Gr(\rep A)$ can be computed from \eqref{eq:verlinde-general} in terms of
\begin{itemize}
\item the defining data of the ribbon quasi-Hopf algebra $A$,
\item the characters $\tr_M(-)$ of all irreducible $A$-modules.
\end{itemize}
Explicitly, the $N_{UV}^{~W}$ are uniquely determined by the following linear relations in $Z(A)$:
\be
    \bchi_U \, \bchi_V
    ~=~ \sum_{W} N_{UV}^{~W} \, \bchi_W \ .
\ee
	We stress that there is no need to compute the centre of $A$, which is a difficult problem in general.
A related (but different) result on the categorical Verlinde formula in the case of factorisable Hopf algebras is given in \cite{Cohen:2008}.

\medskip

This paper is organised as follows. 

In Section \ref{sec:univ-hopf-braided} we give our conventions for braided tensor categories and Hopf algebras in them, and we review the reconstruction theory for Hopf algebras of \cite{Majid:1993} in the special case of the identity functor, leading to the universal Hopf algebra. 
In Section \ref{sec:univ-hopf-via-coends} an equivalent description of the universal Hopf algebra in terms of coends is given. This is the formalism used in \cite{Lyubashenko:1995} and in the rest of this paper.
The categorical setting we will work in -- factorisable finite tensor categories -- is described in Section \ref{sec:univ-Hopf-finite}. 
The $SL(2,\mathbb{Z})$-action of \cite{Lyubashenko:1995} and the theory of internal characters of \cite{Fuchs:2010mw,Shimizu:2015} are recalled in Section \ref{sec:SL2Z}. 
In Section \ref{sec:ribbon-qHopf} we state our conventions for quasi-triangular quasi-Hopf algebras $A$.
Section \ref{sec:coend-repA} contains our first main result, the explicit computation of the Hopf algebra structure maps of the universal Hopf algebra $\coend$ in $\rep A$. We state the factorisability condition on $\rep A$ in terms of the defining data of $A$ and show that it is equivalent to the definition in \cite{[BT]}.
Finally, Section \ref{sec:SL2Z-quasiHopf} contains our second main result, namely explicit expressions for the $S$- and $T$-action on the centre $Z(A)$ of a ribbon quasi-Hopf algebra $A$.

\bigskip

\medskip

\noindent
{\bf Acknowledgements:} We thank 
	Daniel Bulacu,
	Simon Lentner, 
	Yorck Sommerh\"auser
	and
	Blas Torrecillas
for discussions and helpful comments on a draft of this paper. The work of AMG was supported by DESY and CNRS.

\medskip

\noindent
\textbf{Conventions:}  In what follows,
by ``category'' we will mean ``essentially small category'', i.e.\ we will always assume that the isomorphism classes of objects form a set.
For the set of morphisms $U\to V$ in a category $\cat$ we use the notation $\cat(U,V)$ instead of $\Hom_{\cat}(U,V)$.
We fix a field $\ok$ and denote by $\vect_\ok$ the category of finite-dimensional $\ok$-vector spaces.
For $A$ a $\ok$-algebra (later mostly a quasi-Hopf algebra), the $\ok$-linear category of finite-dimensional left $A$-modules will be denoted by $\rep A$.

\section{The universal Hopf algebra in a braided monoidal category with duals} 
\label{sec:univ-hopf-braided}

In this section, we introduce our conventions on monoidal categories, duals and braidings, as well as for Hopf algebras in such categories. Then we review the generalized Tannaka--Krein-like reconstruction of a Hopf algebra for  a braided category with left duals, following~\cite{Majid:1993}.

\subsection{Conventions for monoidal categories} \label{subsec:conventions-mono-cats}

Let $\cat$ be a monoidal category. Our conventions for associator and unit isomorphisms are (following \cite[Sec.\,5]{ChPr})
\be
\assoc_{U,V,W}:\; U\otimes(V\otimes W) \xrightarrow{~ \sim ~} (U\otimes V)\otimes W ~~,\quad
\lunit_U: \one\otimes U \xrightarrow{ \sim } U 
~~,\quad
\runit_U:  U\otimes\one \xrightarrow{ \sim } U
\ .
\ee
For a braided monoidal category we denote the braiding isomorphisms by
\be
\brC_{U,V}:\; U\otimes V\xrightarrow{~ \sim ~} V\otimes U \ .
\ee
A monoidal category $\cat$ is said to have {\em left duals} if for each $U\in \cat$ there have been chosen an object $U^{\ast} \in \cat$ and morphisms
\be
\ev_U:\, U^\ast\otimes U\to \one
\quad , \quad \coev_U:\, \one \to U\otimes U^\ast \ ,
\ee
which satisfy the two zig-zag identities. Similarly, we say $\cat$ has {\em right duals} if for each $U\in \cat$ there have been chosen an object ${^{*}U} \in \cat$ and morphisms
\be\label{eq:left-duals-def}
\widetilde{\ev}_U:\, U\otimes {^{*}U}\to \one
\quad , \quad
\widetilde{\coev}_U:\, \one \to {^{*}U}\otimes U \ .
\ee 
subject to the zig-zag identities.

For a monoidal category $\cat$ with left duals one obtains a contravariant functor $(-)^*: \cat \to \cat$. For a morphism $f : U \to V$, the image 
$f^* : V^* \to U^*$ 
under $(-)^*$ is constructed from $\ev_U$, $\coev_U$ and the coherence isomorphisms of $\cat$. An analogous remark applies to right duals and ${}^*(-): \cat \to \cat$.

A {\em rigid monoidal category} is a monoidal category which has both right and left duals. 

A braided monoidal category with left duals is {\em ribbon} if it is equipped with a natural isomorphism $\theta$ of the identity functor (the {\em ribbon twist}), which satisfies, for all $U,V \in \cat$,
\be\label{eq:theta-braiding-prop}
\theta_{U\tensor V}  = (\theta_U\tensor \theta_V) \circ \brC_{V,U}\circ \brC_{U,V} \qquad \text{and} \qquad
\theta_{U^*} = (\theta_U)^*\ .
\ee
In a ribbon category, the left dual $U^*$ of an object $U$ is automatically also a right dual, with the right duality morphisms constructed from the left ones, together with the braiding and the ribbon twist, see e.g.\ \cite[Sec.\,XIV.3]{Kassel-book}.
In particular, a ribbon category is rigid
	and in fact pivotal, see \cite[Sec.\,8.10]{EGNO-book}.

Below we will use string diagram notation for morphisms. Our diagrams are read from bottom to top, and the diagrams we will use for the braiding, ribbon twist, and the
	left/right
duality maps are
\begin{align}
\brC_{U,V} &~=\hspace*{.7em} \ipic{braiding}{.25} 
\put(-42,-44){\scriptsize $U$} \put(-5,-44){\scriptsize $V$} 
\put(-42,38){\scriptsize $V$} \put(-5,38){\scriptsize $U$} \qc
&
\ev_U &~=\hspace*{.7em} \ipic{counit-rhs}{.25}  
\put(-36,-36){\scriptsize $U^\ast$} \put(-6,-36){\scriptsize $U$} \qc&
\coev_{U} &~=\hspace*{.7em} \ipic{coeval}{.25}  
\put(-3,31){\scriptsize $U^\ast$} \put(-35,31){\scriptsize $U$} \qc&
\\ 
\theta_{U} &~=\hspace*{1.7em} \ipic{ribbon-twist}{.15}  
\put(-24,-44){\scriptsize $U$} \put(-24,37){\scriptsize $U$} 
\qc
&
\widetilde\ev_U &~=\hspace*{.7em} \ipic{eval-tw}{.25} 
\put(-34,-37){\scriptsize $U$} \put(-7,-37){\scriptsize $^{*}U$} 
\qc
& 
\widetilde\coev_U &~=\hspace*{.7em} \ipic{coeval-tw}{.25} 
\put(-35,31){\scriptsize $^{*}U$} \put(-5,31){\scriptsize $U$} 
\qp
\end{align}

When giving morphism involving associator and unit isomorphism, we often write them as sequences of arrows, where for better readability we omit the tensor product symbol between objects and often only write ``$\sim$'' for a composition of coherence isomorphisms of the monoidal category. For example, one of the two zig-zag identities for left duals reads
\be
	\big[\,
	U \xrightarrow{\sim}
	\one U  
	\xrightarrow{\coev_U \otimes \id_U}
	(UU^*)U
	\xrightarrow{\sim} 
	U(U^*U)
	\xrightarrow{\id \otimes \ev_U}
	U\one
	\xrightarrow{\sim}
	U
	\,\big]
	~=~ \id_U \ .
\ee

In a monoidal category $\cat$ with left duals, there is a canonical natural isomorphism $\gamma_{V,U}: U^\ast\tensor V^\ast \to (V\tensor U)^\ast$. 
To define it, we first introduce the morphism (here we write out all coherence isomorphisms explicitly as we will need them in Section \ref{sec:coend-repA})
\begin{align}\label{eq:gamma-tildeUV}
\begin{split}
\tilde\gamma_{V,U} = \Big[ (U^\ast  V^\ast)  ( V U ) &\xrightarrow{\assoc^{-1}_{U^\ast,V^\ast,  V U}} U^\ast  (V^\ast  (V  U))
\xrightarrow{\id\tensor\assoc_{V^\ast ,V, U}} 
U^\ast  ((V^\ast  V)  U)\\
&\quad  \xrightarrow{\id\tensor\ev_{V}\tensor\id} U^\ast  (\tid  U) 
\xrightarrow{\id\tensor\lambda_U} U^\ast  U \xrightarrow{\ev_{U}}\tid \Big] \ .
\end{split}
\end{align}
Using this, $\gamma_{V,U}$ is given by
\begin{align}\label{eq:gammaUV}
\begin{split}
\gamma_{V,U} = \Big[ 
&U^\ast  V^\ast \xrightarrow{\;\runit^{-1}_{U^\ast  V^\ast}\;} (U^\ast  V^\ast)  \tid \xrightarrow{\id\tensor\coev_{V  U}}
(U^\ast  V^\ast)  ((V  U)  (V  U)^\ast) \\
&\xrightarrow{\assoc_{U^\ast  V^\ast,V  U,(V  U)^\ast}} ((U^\ast  V^\ast)  (V  U))  (V  U)^\ast
\xrightarrow{\tilde\gamma_{V, U}\tensor\id} \tid   (V  U)^\ast \xrightarrow{\;\lambda_{(V  U)^\ast}\;} (V  U)^\ast
\Big] \ . 
\end{split}
\end{align}
In string diagram notation, these definitions look much simpler: 
\be\label{eq:gammaVU}
   \tilde\gamma_{V,U} ~=~	 \ipic{gamma-tw}{0.2} 
	 \put(-69,-45){\scriptsize $U^\ast$} \put(-53,-45){\scriptsize $V^\ast$} 
	 \put(-21,-45){\scriptsize $V$} \put(-5,-45){\scriptsize $U$} 
	 \quad , \quad\gamma_{V,U}~=~\ipic{gamma-lhs}{0.2} \quad 
 	 \put(-95,-45){\scriptsize $U^\ast$} \put(-79,-45){\scriptsize $V^\ast$} 
	 \put(-44,-16){\scriptsize $V$} \put(-28,-16){\scriptsize $U$}  \put(-10,-16){\scriptsize $(VU)^\ast$} 
   \quad = \quad\ipic{gamma-rhs}{0.2}
	 \put(-95,-45){\scriptsize $U^\ast$} \put(-79,-45){\scriptsize $V^\ast$} 
	 \put(-44,-16){\scriptsize $V$} \put(-28,-16){\scriptsize $U$}  \put(-10,-16){\scriptsize $(VU)^\ast$}
	 \put(-70,16){ $\tilde\gamma_{V,U}$}
	 \quad .
\ee
Finally we note that if $\cat$ has left duals, there is a canonical isomorphism $\one^* \to \one$ given by
\be\label{eq:iso11*}
	\one^* \xrightarrow{\runit_{\one^*}^{-1}} \one^*\one \xrightarrow{\ev_{\one}} \one \ .
\ee
When writing $\one^* \xrightarrow{\sim} \one$ below, we refer to this isomorphism.

\subsection{Conventions for Hopf algebras in braided categories} \label{subsec:conventions-Hopf-bmc}

The definition of a Hopf algebra over a field has a natural generalisation to braided monoidal categories, see e.g.~\cite{Majid:1994}.

\begin{definition}\label{def:Hopf-C}
Let $\cat$ be a braided monoidal category.
 \textit{A Hopf algebra $H$ in $\cat$}  is  an object~$H$ together with morphisms
\begin{align}
\text{(product)} \quad
&\mu_H: H\tensor H \to H \ ,
&
\text{(coproduct)}\quad
&\Delta_H: H \to H\tensor H \ ,
\\
\text{(unit)}\quad
&\eta_H:\tid\to H \ ,
&
\text{(counit)}\quad
&\eps_H: H\to \tid \ ,
\nonumber\\
\text{(antipode)} \quad
& S_H: H\to H \ .
\nonumber
\end{align} 
These data are subject to the conditions
\begin{itemize}
	\setlength{\leftskip}{-1.5em}
\item associativity and unitality:
\begin{align}
&
\big[\, H(HH) \xrightarrow{\id \otimes \mu_H} HH \xrightarrow{\mu_H} H \,\big] 
~=~
\big[\, H(HH) \xrightarrow{\sim} (HH)H \xrightarrow{\mu_H \otimes \id} HH \xrightarrow{\mu_H} H \,\big]
\ ,
\\
&\big[\, H \xrightarrow{\sim} \one H \xrightarrow{\eta_H \otimes \id} HH \xrightarrow{\mu_H} H \,\big]
~=~
\id_H
~=~
\big[\, H \xrightarrow{\sim}  H \one \xrightarrow{\id \otimes \eta_H} HH \xrightarrow{\mu_H} H \,\big] \ .
\nonumber
\end{align}
\item coassociativity and counitality: same as above but with all arrows reversed, $\mu_H$ replaced by $\Delta_H$ and $\eta_H$ by $\eps_H$.
\item $\Delta_H,\eps_H$ are algebra homomorphisms:
\begin{align}\label{eq:Delta-alg-hom-condition}
 &\Delta_H \circ \eta_H = (\eta_H \otimes \eta_H) \circ \lambda_{\one}^{-1}
 \ ,
 \\ \nonumber
 &\big[\, HH \xrightarrow{\mu_H} H \xrightarrow{\Delta_H} HH
 \,\big]
 ~=~
 \big[\,
 HH \xrightarrow{\Delta_H \ot \Delta_H} (HH)(HH)
 \xrightarrow{\sim} H((HH)H)
 \\ \nonumber & \hspace{12em}
 \xrightarrow{\id \ot c_{H,H} \ot \id} H((HH)H)
 \xrightarrow{\sim}(HH)(HH)
 \xrightarrow{\mu_H \ot \mu_H}
 HH
 \,\big] \ ,
\end{align}
and
\be
	\eps_H \circ \mu_H = \lambda_{\one} \circ (\eps_H \otimes \eps_H)
	\quad , \quad
	\eps_H \circ \eta_H = \id_{\one} \ .
\ee
\item antipode condition: 
\begin{align}
\big[\, H \xrightarrow{\eps_H} \one \xrightarrow{\eta_H} H \,\big]
&~=~
\big[\, H \xrightarrow{\Delta_H} HH \xrightarrow{S_H \otimes \id} HH \xrightarrow{\mu_H} H \,\big]
\\
&~=~
\big[\, H \xrightarrow{\Delta_H} HH \xrightarrow{\id \otimes S_H} HH \xrightarrow{\mu_H} H \,\big]
\ .
\nonumber
\end{align}
\end{itemize}
\end{definition}

As a consequence of the Hopf-algebra axioms, we get that $S_H$ is an algebra and a coalgebra 
	anti-homomorphism,
in particular we have $S_H\circ \mu_H = \mu_H \circ \brC_{H,H} \circ (S_H\tensor S_H)$, see~\cite[Lem.\,2.3]{Majid:1994}.

When using string diagram notation to depict morphisms involving Hopf algebras, we use the following notation for its structure morphisms:
\be
\mu_H = \ipic{mult}{.25} \quad , \quad \Delta_H = \ipic{coprod}{.25} \quad , \quad
 \eta_H = \ipic{unit}{.25} \quad , \quad
  \eps_H = \ipic{counit}{.25} 
\quad , \quad S_H = \ipic{antipode}{.25} \quad .
\put(-347,45){\scriptsize $H$}  \put(-370,-51){\scriptsize $H$} \put(-326,-51){\scriptsize $H$}  
\put(-241,-51){\scriptsize $H$}  \put(-262,45){\scriptsize $H$} \put(-218,45){\scriptsize $H$} 
\put(-156,26){\scriptsize $H$} \put(-91,-31){\scriptsize $H$} 
\put(-24,-37){\scriptsize $H$} \put(-23,31){\scriptsize $H$}
\ee
For example, the second condition in \eqref{eq:Delta-alg-hom-condition}, i.e.\ that $\Delta_H$ is compatible with $\mu_H$, reads

\be\label{coprod-alg-map}
   \ipic{coprod-alg-map-lhs}{0.23}
	 \put(-17,-67){\scriptsize $H$} \put(-59,-67){\scriptsize $H$} 
	 \put(-16,61){\scriptsize $H$} \put(-59,61){\scriptsize $H$} 
	 \put(-49,8){\scriptsize $\Delta_H$} \put(-29,-13){\scriptsize $\mu_H$}
	 \ = \quad \ipic{coprod-alg-map-rhs}{.23}
	 \put(-17,-67){\scriptsize $H$} \put(-56,-67){\scriptsize $H$} 
	 \put(-16,61){\scriptsize $H$} \put(-56,61){\scriptsize $H$} 
\ee

The appearance of the braiding as opposed to the inverse braiding in the above condition is a choice, related to the choice made in defining the tensor product of algebras: For (associative, unital) algebras $(A,\mu_A,\eta_A)$ and $(B,\mu_B,\eta_B)$ we define the algebra $A\otimes B$ to have structure morphisms
\begin{align}
	\mu_{A \otimes B}
	&~=~
	\big[\,
	(AB)(AB) \xrightarrow{\sim} (A((BA)B)
	\xrightarrow{\id \otimes c_{B,A} \otimes \id}
	(A((AB)B)
	\\ \nonumber & \hspace{4em}
	\xrightarrow{\sim}(AA)(BB)
	\xrightarrow{\mu_A \otimes \mu_B}
	AB
	\,\big] \ ,
	\\ \nonumber
	\eta_{A \otimes B}
	&~=~
	\big[\,\one \xrightarrow{\sim} \one\one \xrightarrow{\eta_A \otimes \eta_B} AB \,\big] \ .
\end{align}

By definition, a \textit{Hopf pairing} for a Hopf algebra $H$ in $\cat$ is a morphism $\omega_H: H\tensor H \to \tid$  which makes
 the multiplication $\mu_H$ and the coproduct $\Delta_H$, as well as the unit $\eta_H$ and the counit $\eps_H$, each others adjoints. 
In terms of string diagrams, this means

\be\label{eq:hopf-pair-prop} 
\begin{split}
&   \ipic{hopf-pair-prop-lhs}{0.6}
	 \put(-51,-51){\scriptsize$H$} \put(-29,-51){\scriptsize$H$} \put(-5,-51){\scriptsize$H$}
	 \put(-25,33){$\omega_H$}
	 \quad = \quad \ipic{hopf-pair-prop-rhs}{0.6}
	 \put(-65,-50){\scriptsize$H$} \put(-43,-50){\scriptsize$H$} \put(-15,-50){\scriptsize$H$}
	 \put(-37,33){$\omega_H$}  \put(-37,6){$\omega_H$} 
	 \qquad , \qquad \ipic{hopf-pair-prop-2-lhs}{0.6}
	 \put(-51,-51){\scriptsize$H$} \put(-29,-51){\scriptsize$H$} \put(-5,-51){\scriptsize$H$}
	 \put(-35,33){$\omega_H$}
	 \quad = \quad \ipic{hopf-pair-prop-2-rhs}{0.6}
	 \put(-55,-50){\scriptsize$H$} \put(-25,-50){\scriptsize$H$} \put(-7,-50){\scriptsize$H$}
	 \put(-37,33){$\omega_H$}  \put(-37,6){$\omega_H$} 
\\[1em]
&   \ipic{hopf-pair-prop-3-lhs}{.6}
	 \put(-43,-50){\scriptsize$H$}  \put(-28,33){$\omega_H$}
	 \quad = \quad \ipic{hopf-pair-prop-3-4-rhs}{.6}
	 \put(-8,-30){\scriptsize$H$} 
	 \qquad , \qquad
	 \ipic{hopf-pair-prop-4-lhs}{.6}
	 \put(-6,-50){\scriptsize$H$}  \put(-26,33){$\omega_H$}
	 \quad = \quad
	 \ipic{hopf-pair-prop-3-4-rhs}{.6}
	 \put(-8,-30){\scriptsize$H$}
\end{split}	 
\ee
Translating back into formulas, for example the first of the above identities becomes
 \begin{align}
& \Big[(HH)H\xrightarrow{\mu_H\otimes\id} HH \xrightarrow{\omega_H} \tid\Big] 
\\ \nonumber
&  \qquad = \Big[(HH)H  \xrightarrow{\id\otimes\id\otimes\Delta_H} (HH)(HH) \xrightarrow {\; \sim \;} H((HH)H)
\\ \nonumber
& \hspace{5em} \xrightarrow {\id\otimes\omega_H\otimes\id} H(\one H)
  \xrightarrow {\; \sim \;} HH \xrightarrow{\omega_H} \tid
  \Big] \ .
\end{align}

If the braided monoidal category $\cat$ is equipped with left duals, for each Hopf algebra $H$ in $\cat$ we obtain the {\em (left) dual Hopf algebra} $H^*$. Its structure maps are
 \begin{align}
 \mu_{H^*} &= \Big[ H^* H^* \xrightarrow{\gamma_{H,H}} (H H)^* \xrightarrow{(\Delta_H)^*} H^* \Big]\ , \\ \nonumber
 \Delta_{H^*} &=  \Big[H^* \xrightarrow{(\mu_H)^*} (H  H)^* \xrightarrow{\gamma^{-1}_{H,H}} H^*  H^*\Big] \ ,
 \\ \nonumber
 \eta_{H^*} &= \Big[ \one \xrightarrow{\sim} \one^* \xrightarrow{(\eps_H)^*} H^* \big] \ ,
 \\ \nonumber
 \eps_{H^*} &= \Big[ H^* \xrightarrow{(\eta_H)^*} \one^*\xrightarrow{\sim} \one \big] \ ,
 \end{align}
where we used the isomorphism $\gamma_{H,H}$ from \eqref{eq:gammaUV} and the isomorphism \eqref{eq:iso11*}. The antipode is given by $S_{H^*} = (S_H)^*$.

Given a Hopf pairing $\omega_H$ on $H$, we can define the map
\be\label{eq:Omega}
	\mathcal{D}_H ~:=~
	\big[\,
	H \xrightarrow{\sim}
	H\one \xrightarrow{\id\ot \coev_H}
	H(HH^*) \xrightarrow{\sim}
	(HH)H^* \xrightarrow{\omega_H\ot\id}
	\one H^* \xrightarrow{\sim} H^*
	\,\big]	
\ee
The definitions above are set up such that $\mathcal{D}_H$ is a homomorphism of Hopf algebras.

\begin{definition}\label{def:non-deg-Hopf-pair}
Let $H$ be a Hopf algebra in a braided monoidal category with left duals.
A Hopf pairing $\omega_H$ for $H$ is called \textit{non-degenerate} if the morphism $\mathcal{D}_H$ in~\eqref{eq:Omega} is an isomorphism. 
\end{definition}

In Sections \ref{sec:univ-Hopf-finite}--\ref{sec:SL2Z-quasiHopf} we will also need  integrals and cointegrals for a Hopf algebra $H$ in $\cat$.
In general, these are morphisms 
$\Lambda_H:{\rm Int} H \to H$ and $\coint_{H} : H \to {\rm Int} H$ for an invertible object ${\rm Int} H$ (see e.g.\ \cite[Sec.\,4.2.3]{Kerler:2001}). 
In this paper we will only need to consider
${\rm Int} H = \tid$, and in this case, the conditions satisfied by $\Lambda_H:\one \to H$ and $\coint_{H} : H \to \tid$ are:
\begin{align} \label{eq:integral-def-general}
&\text{(left integral)} &
\mu_H\circ(\id_H\otimes\Lambda_H)\circ \runit^{-1}_H &~=~ \Lambda_H\circ \eps_H \ ,
& \qquad \\ \nonumber
&\text{(right integral)}&
\mu_H\circ(\Lambda_H\otimes \id_H) \circ \lambda_H^{-1} &~=~ \Lambda_H\circ \eps_H \ ,
 \\ \nonumber
&\text{(left cointegral)}&
\runit_H \circ (\id_H\tensor\coint_H)\circ \Delta_H &~=~ \eta_H \circ \coint_H \ ,
 \\ \nonumber
&\text{(right cointegral)}&
\lambda_H \circ (\coint_H\tensor\id_H)\circ \Delta_H &~=~ \eta_H \circ \coint_H \ .
\end{align}

One can use the integrals and cointegrals to test non-degeneracy of a Hopf pairing~$\omega_H$:

\begin{lemma}[\cite{Kerler:1996}]\label{lem:coint-from-int}
Assume that  $H\in\cat$ is a Hopf algebra with a
right  cointegral $\coint_H: H\to \tid$ and a Hopf pairing $\omega_H$.
The Hopf pairing $\omega_H$ is non-degenerate iff there exists a morphism $\Lambda_H: \tid \to H$ such that the  cointegral $\coint_H$ factors through $\omega_H$:
$$
\coint_H = \Big[H\xrightarrow{\sim} \tid  H \xrightarrow{\Lambda_H\ot\id} H  H \xrightarrow{\omega_H} \tid \Big]\ .
$$
If such a $\Lambda_H$ exists, it is automatically a left  integral for $H$. A similar statement can be made for left cointegrals.
\end{lemma}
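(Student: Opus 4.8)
The plan is to reduce the statement to a purely formal manipulation of string diagrams, using the compatibility properties of $\omega_H$ with the Hopf structure from \eqref{eq:hopf-pair-prop} together with the antipode axiom. First I would unwind what non-degeneracy of $\omega_H$ means: by Definition \ref{def:non-deg-Hopf-pair}, $\omega_H$ is non-degenerate iff the morphism $\mathcal{D}_H$ of \eqref{eq:Omega} is an isomorphism $H \xrightarrow{\sim} H^*$. Using the zig-zag identities for the left duality $(\ev_H,\coev_H)$, one sees directly that the map
$$
\Theta_H := \big[\, \tid \xrightarrow{\sim} \tid H \xrightarrow{f \ot \id} H H \xrightarrow{\omega_H} \tid \,\big]
$$
that appears in the lemma is precisely $\omega_H \circ (f \ot \id_H)$ read after the standard mate correspondence, i.e.\ giving the cointegral $\coint_H$ as this composite for some $\Lambda_H : \tid \to H$ is the same as demanding $\coint_H = (\ev_H) \circ ((\,\cdot\,)^* \circ \mathcal{D}_H^{\text{-like transpose}}) \ot \id$; more concretely, $\Lambda_H$ exists with the stated property iff $\coint_H$ lies in the image of precomposition with $\mathcal{D}_H$ under the duality adjunction. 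Since $H$ is a fixed object (no finiteness needed beyond having left duals), $\mathcal{D}_H$ being invertible immediately produces such a $\Lambda_H$, namely as the image of $\coint_H$ under the inverse correspondence; this gives the ``only if'' direction.

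For the ``if'' direction I would argue contrapositively or, more cleanly, directly: suppose such a $\Lambda_H$ exists. The key observation is that $\coint_H$ is, by hypothesis, the image under $\ev_H$ (twisted by $\Lambda_H$) of $\id_H$, whereas the general element of $\cat(H,\tid)$ corresponds under the duality isomorphism $\cat(H,\tid) \cong \cat(\tid, H^*)$ to a morphism $\tid \to H^*$. The condition that $\coint_H$ factors through $\omega_H$ via $\Lambda_H$ translates, after composing with $\mathcal{D}_H$ and using \eqref{eq:Omega}, into the statement that a certain composite built from $\Lambda_H$ and $\mathcal{D}_H$ equals $\coint_H^{\,*}$-transpose. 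Then the non-degeneracy of cointegrals — a right cointegral for a Hopf algebra in a braided category is unique up to scalar and moreover the pairing $(h, h') \mapsto \coint_H(\mu_H(h \ot h'))$ is a perfect pairing between $H$ and $H^*$ when $\coint_H \neq 0$ (this is the categorical version of the standard fact for finite-dimensional Hopf algebras, and is exactly what \cite{Kerler:1996} establishes) — forces $\mathcal{D}_H$ to be split epi, hence an isomorphism since source and target have the same ``size'' (dual objects). I would make the last step precise by exhibiting an explicit inverse to $\mathcal{D}_H$ assembled from $\Lambda_H$, $\coint_H$, $\mu_H$ and the duality maps, and checking the two composites equal the identities by diagram chasing: one uses the left-cointegral property $\runit_H \circ (\id_H \ot \coint_H) \circ \Delta_H = \eta_H \circ \coint_H$ (or its right analogue from \eqref{eq:integral-def-general}) together with the Hopf-pairing compatibilities \eqref{eq:hopf-pair-prop} and the antipode condition.

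Finally, for the claim that such a $\Lambda_H$ is automatically a left integral: I would substitute the factorisation $\coint_H = \omega_H \circ (\Lambda_H \ot \id_H) \circ (\text{unit iso})$ into the right-cointegral identity, then slide the multiplication across $\omega_H$ using the first identity in \eqref{eq:hopf-pair-prop} (which says $\mu_H$ and $\Delta_H$ are $\omega_H$-adjoint) and the counit/unit adjointness from the second line of \eqref{eq:hopf-pair-prop}; after using coassociativity and counitality of $\Delta_H$ this collapses to the left-integral equation $\mu_H \circ (\id_H \ot \Lambda_H) \circ \runit_H^{-1} = \Lambda_H \circ \eps_H$, provided one also invokes non-degeneracy of $\omega_H$ to cancel a common $\omega_H$ on both sides. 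The statement for left cointegrals is entirely parallel, interchanging left/right throughout and replacing $\omega_H$ by its ``opposite'' via the braiding.

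I expect the main obstacle to be the ``if'' direction: turning the factorisation hypothesis into genuine invertibility of $\mathcal{D}_H$ rather than mere surjectivity of some associated map. The crux is the uniqueness-up-to-scalar of cointegrals and the resulting perfectness of the cointegral pairing in the braided categorical setting; once that input from \cite{Kerler:1996} is available, the remaining diagram chases are routine but do require careful bookkeeping of the coherence isomorphisms $\lambda, \varrho, \alpha$, which — as the authors emphasise in the introduction — cannot be suppressed here.
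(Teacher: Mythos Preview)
The paper does not give a proof of this lemma: it is stated with attribution to \cite{Kerler:1996} and no argument follows. So there is nothing on the paper's side to compare your strategy against.

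On the proposal itself: the ``only if'' direction is fine. When $\mathcal{D}_H$ is invertible, the element of $\cat(\tid,H^*)$ corresponding to $\coint_H$ under the duality adjunction pulls back along $\mathcal{D}_H^{-1}$ to the required $\Lambda_H$, and the factorisation is immediate from $\ev_H \circ (\mathcal{D}_H \ot \id) = \omega_H$ (which is exactly how \eqref{eq:Omega} is set up). Your argument that $\Lambda_H$ is then a left integral is also correct in outline: pairing $\mu_H \circ (\id \ot \Lambda_H)$ against a test element via $\omega_H$, using the first identity in \eqref{eq:hopf-pair-prop} and then the right-cointegral condition on $\coint_H$, collapses both sides to $\eps_H(-)\,\coint_H(-)$; non-degeneracy cancels the outer $\omega_H$. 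Your reliance on non-degeneracy here is legitimate, since the integral claim is stated after the ``iff'' and therefore in a context where non-degeneracy is already available.

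The genuine gap is the ``if'' direction. The step ``$\mathcal{D}_H$ is split epi, hence an isomorphism since source and target have the same size'' is not valid in a bare braided monoidal category with left duals: there is no rank or dimension argument available, and a split epimorphism between an object and its dual need not be invertible. You recognise this and propose to repair it by writing down an explicit two-sided inverse to $\mathcal{D}_H$ built from $\Lambda_H$, $\coint_H$, $\mu_H$ and duality maps --- but you stop short of saying what that inverse is or verifying either composite. That verification is precisely the content one has to supply, and it is where the Hopf-pairing identities \eqref{eq:hopf-pair-prop}, the (right) cointegral condition, and the antipode axiom must all be used in the correct order with the braiding tracked. Invoking ``perfectness of the cointegral pairing'' from \cite{Kerler:1996} as a black box does not by itself yield invertibility of $\mathcal{D}_H$: you would still need to relate the Frobenius map $h \mapsto \coint_H \circ \mu_H(h \ot -)$ to $\mathcal{D}_H$ through an explicitly invertible intertwiner, and your sketch does not do this. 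As written, the ``if'' direction is a plausibility outline rather than a proof.
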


\subsection{Reconstruction of a Hopf algebra} 
\label{sec:Hopf-alg-C}

Given a Hopf algebra $\rA$ over a field $\ok$, let $\corep_{\ok}\rA$ denote the category of finite-dimensional 	right
corepresentations of a $\rA$ over $\ok$.
We have the following well-known reconstruction theorem~\cite{Ulbrich} (see also~\cite[Thm.\,5.1.11]{ChPr}):

\begin{theorem}
Let $\cat$ be 	a
$\ok$-linear abelian monoidal category with left duals and let $\fun : \cat \to \vect_\ok$ be a fiber functor
(i.e.\ a $\ok$-linear exact faithful monoidal functor into the category of finite-dimensional $\ok$-vector spaces). Then there exists a Hopf algebra $\rA$ over $\ok$ such that $\fun$ factors monoidally as $\cat \to \corep_{\ok}\rA \xrightarrow{\mathrm{forget}} \vect_\ok$, where  $\cat \to \corep_{\ok}\rA$ is a $\ok$-linear equivalence of monoidal categories.
\end{theorem}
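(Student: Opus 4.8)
The plan is to reconstruct the Hopf algebra $\rA$ directly from the fiber functor $\fun$ using a coend, which mirrors the approach taken later in the paper for the universal Hopf algebra via coends. First I would set $\rA := \int^{U\in\cat} \fun(U)^* \otimes \fun(U)$, the coend in $\vect_\ok$ of the functor $(U,V)\mapsto \fun(U)^*\otimes \fun(V)$; since $\cat$ is a finite (or more generally essentially small with enough structure) $\ok$-linear abelian category and $\fun$ is exact and faithful, this coend exists in $\vect_\ok$ — concretely it is the quotient of $\bigoplus_{U}\fun(U)^*\otimes\fun(U)$ by the relations $\xi\circ\fun(f)^*\otimes x \sim \xi\otimes \fun(f)(x)$ coming from dinaturality. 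The universal dinatural family $\iota_U:\fun(U)^*\otimes\fun(U)\to\rA$ then plays the role of a set of ``matrix coefficients,'' and every structure morphism of $\rA$ will be defined by specifying its precomposition with the $\iota_U$ and invoking the universal property to get a well-defined map out of the coend.

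Next I would equip $\rA$ with its coalgebra structure: the coproduct $\Delta_{\rA}:\rA\to\rA\otimes\rA$ is induced by the family $\fun(U)^*\otimes\fun(U)\xrightarrow{\id\otimes\coev\otimes\id}\fun(U)^*\otimes\fun(U)\otimes\fun(U)^*\otimes\fun(U)\to\rA\otimes\rA$ (inserting a resolution of the identity on $\fun(U)$), and the counit $\eps_{\rA}$ by the family $\ev_{\fun(U)}:\fun(U)^*\otimes\fun(U)\to\ok$. The algebra structure is where monoidality of $\fun$ enters: using the natural isomorphisms $\fun(U\otimes V)\cong \fun(U)\otimes\fun(V)$, the product $\mu_{\rA}:\rA\otimes\rA\to\rA$ is induced from $\bigl(\fun(U)^*\otimes\fun(U)\bigr)\otimes\bigl(\fun(V)^*\otimes\fun(V)\bigr)\cong \fun(U\otimes V)^*\otimes\fun(U\otimes V)\to\rA$, and the unit $\eta_{\rA}$ from the component at the tensor unit $\one\in\cat$ together with $\fun(\one)\cong\ok$. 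One then checks the Hopf axioms: associativity/unitality and the bialgebra compatibility follow from the coherence (pentagon, triangle, hexagon-type) conditions on the monoidal structure of $\fun$, while the antipode $S_{\rA}$ is built from the duality morphisms of $\cat$ transported through $\fun$ — existence of left duals in $\cat$ is exactly what is needed here. Finally, I would construct the equivalence $\cat\xrightarrow{\sim}\corep_{\ok}\rA$ by sending $U\in\cat$ to $\fun(U)$ with the right $\rA$-coaction $\fun(U)\to\fun(U)\otimes\rA$ dual to ``$x\mapsto \sum \xi_i\otimes x$'' via $\iota_U$; faithfulness and exactness of $\fun$ give faithfulness and exactness of this functor, and a standard argument (e.g.\ as in \cite{Ulbrich} or \cite[Thm.\,5.1.11]{ChPr}) using abelianness and the coend description shows it is full and essentially surjective onto $\corep_{\ok}\rA$.

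The main obstacle is verifying the compatibility axioms — in particular that $\Delta_{\rA}$ and $\eps_{\rA}$ are algebra maps and that the antipode axiom holds — because these require chasing the coherence isomorphisms of the monoidal functor $\fun$ carefully rather than working in a strictified setting. Concretely, one must check that the two dinatural families defining, say, $\Delta_{\rA}\circ\mu_{\rA}$ and the ``braided'' composite $(\mu_{\rA}\otimes\mu_{\rA})\circ(\cdots)\circ(\Delta_{\rA}\otimes\Delta_{\rA})$ agree on each component $\fun(U)^*\otimes\fun(U)\otimes\fun(V)^*\otimes\fun(V)$, which amounts to an identity among maps of finite-dimensional vector spaces that unwinds to the hexagon/coherence data of $\fun$; since $\vect_\ok$ is symmetric and strict, this is ultimately a finite check, but it is the technically heaviest part. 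I will mostly cite \cite{Ulbrich} and \cite[Thm.\,5.1.11]{ChPr} for these routine verifications and only indicate how the coend presentation makes each structure morphism manifestly well-defined via the universal property.
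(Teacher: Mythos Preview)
The paper does not actually prove this theorem: it is stated as the ``well-known reconstruction theorem'' with a citation to \cite{Ulbrich} (see also \cite[Thm.\,5.1.11]{ChPr}), and no argument is given in the text. So there is no in-paper proof to compare against; your sketch goes well beyond what the paper itself provides. That said, your coend-based outline is the standard modern proof of Tannaka reconstruction and is in fact exactly the point of view the paper adopts in the braided setting in Section~\ref{sec:univ-hopf-via-coends} (where the fiber functor is replaced by $\id_\cat$), so in spirit your approach matches how the authors think about the construction.

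One small correction: you write that the coend $\int^{U}\fun(U)^*\otimes\fun(U)$ ``exists in $\vect_\ok$''. In the generality of the statement, $\cat$ is not assumed to be finite, so the reconstructed Hopf algebra $\rA$ need not be finite-dimensional; the coend must be taken in the category of all $\ok$-vector spaces, not in $\vect_\ok$. Its finite-dimensional right comodules then form $\corep_\ok\rA$. This is a harmless adjustment to your argument, but as written the claim is false in general. Beyond that, your outline --- structure maps via the universal property of the (iterated) coend, antipode from left duals in $\cat$, and fullness/essential surjectivity of $U\mapsto(\fun(U),\text{coaction})$ via exactness and faithfulness of $\fun$ --- is correct, and deferring the coherence checks to \cite{Ulbrich} and \cite[Thm.\,5.1.11]{ChPr} is entirely appropriate here.
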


This theorem can be formulated in a more general context \cite[Thm.\,2.2]{Majid:1993}, which we now review. Let $\cat$ be 
	a
monoidal category with left duals, let $\catV$ be a
braided monoidal category with left duals, and let $\fun : \cat \to \catV$ be a monoidal functor (so $\catV$ is a replacement for $\vect_\ok$ while $\fun$ is a replacement for the fiber functor). 
In this situation there is a universal property which characterises 
 a Hopf algebra $\rA$ internal to~$\catV$ such that $\fun$ factors monoidally as $\cat\to \corep_{\catV}\rA \xrightarrow{\text{forget}} \catV$. Here, $\corep_{\catV}\rA$ is the category of $\rA$-comodules internal to $\catV$. 
Existence of $\rA$ is guaranteed under certain completeness conditions \cite{Majid:1993}, and we describe such a situation in Section \ref{sec:univ-Hopf-exists} below.
 
If $\cat$ is in addition braided, one has the natural choice $\catV=\cat$, $\fun=\id$, and we obtain a universal property characterising a Hopf algebra in $\cat$ which only depends on $\cat$. We will refer to this Hopf algebra as the {\em universal Hopf algebra for $\cat$}.

\subsubsection{The universal Hopf algebra $\rA$}\label{sec:rA}
Let $\cat$ be 	a
braided monoidal category with left duals.
We now review the construction in \cite{Majid:1993} in the special case $\catV=\cat$ and $\fun = \id$. As an object in $\cat$, $\rA$ is defined to  represent the functor 
$\idt\colon \cat\to \Set$ 
which on objects is given by
\begin{equation}\label{eq:idt}
\idt: \; V\mapsto \Nat(\id,\id\tensor V)\ .
\end{equation}
Here, $\id\tensor V: \cat\to\cat$ is the functor that sends an object to its tensor product with $V$. 
	By the overall assumption that all our categories are essentially small, the functor $\natf$ does indeed land in $\Set$.

If a representing object exists, 
by definition it is equipped with a family of natural isomorphisms
\begin{equation}\label{eq:natiso}
\natiso_V: \quad \cat(\rA,V) \; \xrightarrow{\quad\sim\quad}\; \idt(V)=\Nat(\id,\id\tensor V)\ ,
\end{equation}
and the pair $(\rA,\natiso_V)$ is uniquely defined up to a unique isomorphism.

In particular, for $V=\rA$ we have the natural transformation
\be\label{eq:nattr}
\nattr:=\natiso_\rA(\id)\ .
\ee
In terms  of the morphisms $\nattr_X: X\mapsto X\tensor \rA$, which are natural in $X$, we define the Hopf algebra structure on $\rA$ as follows.
\begin{enumerate}
\setlength{\leftskip}{-1.5em}
\item The \textit{comultiplication} $\Delta_\rA: \rA\to \rA\tensor \rA$ is defined by
\begin{equation}\label{eq:rA-cop}
 \natiso_{\rA\tensor\rA}(\Delta_\rA)_X ~=~ \big[ \, X \xrightarrow{\nattr_X} X \rA  \xrightarrow{\nattr_X\tensor\id_\rA} (X  \rA)   \rA \xrightarrow{\sim} X  (\rA  \rA) \,   \big] \ .
\end{equation}

\item The \textit{counit} $\eps_\rA: \rA\to \tid$ is defined via the right unit isomorphism of $\cat$,
\be\label{eq:rA-counit}
	\natiso_{\tid}(\eps_\rA)_X ~=~ \big[\, X \xrightarrow{\sim} X\one \, \big] \ .
\ee
\end{enumerate}

In order to define the multiplication, we need to consider the functor $\idt^2 : \cat \to \Set$ acting on objects as
\be
\idt^2: \; V\mapsto \Nat(-\tensor-,(-\tensor -)\tensor V) \ ,
\ee
	where the natural transformations are between two functors $\cat \times \cat \to \cat$. $\idt^2$
is a kind of square version of $\idt$. It is shown in \cite[Lem.\,2.3]{Majid:1993} that this functor is representable by $\rA\tensor\rA$. The corresponding natural isomorphisms
\begin{equation}
\natiso^2_V: \quad \cat(\rA\tensor\rA,V) \; \xrightarrow{~\sim~}\; \idt^2(V) 
\end{equation}
are given by
\begin{multline}\label{eq:natiso2}
\natiso^2_V(f)_{X,Y} ~=~ \big[\, X Y \xrightarrow{\nattr_{X}\tensor \nattr_{Y}} (X \rA) (Y  \rA) \xrightarrow{\, \sim \,} 
X ((\rA   Y)   \rA)\\
\xrightarrow{\id\tensor \brC_{A,Y}\tensor \id} X  ((Y  \rA)   \rA) \xrightarrow{\, \sim \,}  X  (Y  (\rA   \rA))
  \xrightarrow{\id\tensor f} X  (Y V) \xrightarrow{\, \sim \,} (XY)V \, \big]   \ .
\end{multline}
\begin{enumerate} \setlength{\leftskip}{-1.5em} \setcounter{enumi}{2}
\item
\textit{The multiplication} $\mu_\rA\in\cat(\rA\tensor\rA,\rA)$ is defined by the equality
\begin{equation}\label{eq:rA-mult}
\natiso^2_{\rA}(\mu_{\rA})_{X,Y}  ~=~ \big[\,
XY \xrightarrow{\nattr_{X\tensor Y}} (XY)\rA \,\big] \ .
\end{equation}

\item \textit{The unit} is given by $\eta_\rA: = \lambda_\rA\circ\nattr_{\tid}: \tid\to \rA$, where $\lambda_\rA: \tid\tensor \rA\to \rA$ is the left-unit isomorphism of $\cat$.

\item \textit{The antipode} $S_\rA: \rA\to \rA$ is defined by the equality
\begin{multline}\label{eq:rA-S}
\natiso_\rA(S)_X ~=~   \big[\, X 
 \xrightarrow{\sim} 
 \tid X  \xrightarrow{\coev_X\tensor \id} (X X^*) X 
\xrightarrow{\id\tensor\nattr_{X^*}\tensor\id} (X (X^* \rA))X  \\
\xrightarrow{\id\tensor\brC_{\rA,X^*}^{-1}\tensor \id} 
(X ( \rA X^* ))X   \xrightarrow{\, \sim \,} X(\rA(X^* X))  \xrightarrow{\id\tensor\ev_X} X(\rA \tid) 
\xrightarrow{\sim}  
X \rA \,\big] \ .
\end{multline}
\end{enumerate}

The following theorem is shown in \cite[Sec.\,2]{Majid:1993} (in the more general case of monoidal functors $\fun:\cat\to\catV$).

\begin{theorem}[Part 1]\label{thm:univ-Hopf}
Let $\rA$ be an object representing  $\idt$ in~\eqref{eq:idt}. Then (1)-(5) endow $\rA$ with the structure of a Hopf algebra in $\cat$. 
\end{theorem}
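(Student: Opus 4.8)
The plan is to verify each axiom of Definition~\ref{def:Hopf-C} by transporting it along the representing isomorphisms and then arguing by Yoneda-style uniqueness, following \cite[Sec.\,2]{Majid:1993}; the only extra work compared to loc.\ cit.\ is to keep all coherence isomorphisms of $\cat$ explicit, which throughout is handled by Mac Lane coherence. First I would record that, since $\idt$ is represented by $\rA$, the higher functors $\idt^n : V \mapsto \Nat\big((-)^{\tensor n},(-)^{\tensor n}\tensor V\big)$ are represented by $\rA^{\tensor n}$, with natural isomorphisms $\natiso^n_V : \cat(\rA^{\tensor n},V) \xrightarrow{\sim} \idt^n(V)$: the case $n=2$ is \eqref{eq:natiso2} and \cite[Lem.\,2.3]{Majid:1993}, and the general case follows by iterating that construction, the various iterations agreeing by coherence. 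Along the way one checks that the prescribed right-hand sides in \eqref{eq:rA-cop}, \eqref{eq:rA-counit}, \eqref{eq:rA-mult} and \eqref{eq:rA-S} are indeed elements of the relevant $\idt^n(V)$, i.e.\ natural in the test objects, so the definitions make sense. Since every axiom is an identity of two parallel morphisms $\rA^{\tensor n}\to V$ for suitable $n$ and $V\in\{\one,\rA,\rA^{\tensor 2},\rA^{\tensor 3}\}$, it then suffices to check that the two sides have the same image in $\idt^n(V)$, where the comparison becomes a statement about natural transformations assembled from the $\nattr_X\colon X\to X\rA$, the braiding $\brC$, the left-duality maps, and coherence isomorphisms.

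For the purely (co)algebraic axioms --- associativity and unitality of $\mu_\rA$, coassociativity and counitality of $\Delta_\rA$, and $\eps_\rA\circ\eta_\rA=\id_\one$ --- transporting via the appropriate $\natiso^n_V$ turns both sides into natural transformations built by iterating or composing the maps $\nattr_X$ (via \eqref{eq:rA-cop} and \eqref{eq:rA-mult}), reassociating, and, in the unital cases, inserting the unit $\nattr_\one$ (cf.\ \eqref{eq:rA-counit} and the definition of $\eta_\rA$). Naturality of $\nattr$ in $X$ then identifies the two expressions; for instance transporting $(\eps_\rA\tensor\id_\rA)\circ\Delta_\rA$ collapses it to $\nattr_X=\natiso_\rA(\id_\rA)$, hence it equals $\id_\rA$. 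No braiding occurs in this block, and the only thing to watch is the bookkeeping of associators and unit isomorphisms, which is why the diagrams must be drawn with all coherence data in place.

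The substantial part is the three axioms in which the braiding genuinely enters: that $\Delta_\rA$ and $\eps_\rA$ are algebra homomorphisms \eqref{eq:Delta-alg-hom-condition}, and the two antipode identities. After transport via $\natiso^2_{\rA^{\tensor 2}}$ (resp.\ $\natiso_\rA$), the braiding $\brC_{\rA,-}$ appears through the definition \eqref{eq:natiso2} of $\natiso^2$ (resp.\ through \eqref{eq:rA-S}), and equality of the two sides follows from the hexagon identities for $\brC$ --- needed to move the $\rA$-strands past the tensor factors of the test objects coherently --- together with naturality of $\brC$ and of $\nattr$; for the antipode one additionally invokes the zig-zag identities for the chosen left duals $(\ev_X,\coev_X)$, which is exactly why \eqref{eq:rA-S} is phrased via $X^*$ and $\brC^{-1}$ rather than via right duals. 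I expect this last step --- carrying out the hexagon/naturality manipulations for \eqref{eq:Delta-alg-hom-condition} with every associator and unit isomorphism written out --- to be the main obstacle; the rest is formal diagram-chasing. Once all five axioms are verified, $\rA$ carries the asserted structure of a Hopf algebra in $\cat$.
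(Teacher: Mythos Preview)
Your proposal is correct and follows precisely the approach the paper defers to: the paper does not give its own proof of this statement but cites \cite[Sec.\,2]{Majid:1993}, and your plan is exactly Majid's Yoneda-style argument (representability of $\idt^n$ by $\rA^{\tensor n}$, then verification of each axiom after transport along $\natiso^n_V$), with the added care of tracking coherence isomorphisms explicitly.
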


The above construction also provides us with a functor
 $\mathcal{R}: \cat \to \corep_{\cat}\rA$. Namely, for each $X \in \cat$ consider the morphism $\tilde\iota_X : X \to X \otimes \rA$. It is immediate from the definition of $\rA$ that this defines a right comodule structure on $X$. Naturality of $\tilde\iota$ implies that morphisms in $\cat$ become comodule morphisms in $\corep_{\cat}\rA$. 
Furthermore, it is straightforward to check that $\mathcal{R}$ is strictly monoidal (i.e.\ via the identity morphisms $\mathcal{R}(X) \tensor \mathcal{R}(Y) = \mathcal{R}(X \tensor Y)$, $\mathcal{R}(\one) = \one$). 
Altogether we see that the identity functor on $\cat$ factors monoidally as
\be\label{eq:id-factor-corep}
	\id_\cat ~=~ \big[\, \cat \xrightarrow{\;\mathcal{R}\;} \corep_{\cat}\rA \xrightarrow{\text{forget}} \cat \,\big] \ .
\ee
(And this is indeed an equality, not just an equivalence.)

\medskip

\noindent
{\bf Theorem \ref{thm:univ-Hopf}} (Part 2){\bf.} {\em
 The Hopf algebra $\rA$ is universal in the sense that if $\rA'$ is another Hopf algebra in $\cat$ such that the identity functor factors 
	monoidally  
as  $\cat\to\corep_{\!\cat\,} \rA' \to \cat$
   as in \eqref{eq:id-factor-corep}, then there exists a unique Hopf algebra map $\rA\to \rA'$ and thus the corresponding functor $\corep_{\!\cat\,}\rA\to \corep_{\!\cat\,}\rA'$ such that the functor $\cat \to \corep_{\!\cat\,} \rA'$ 
equals the composition $\cat\to\corep_{\!\cat\,}\rA\to\corep_{\!\cat\,} \rA'$.}

\medskip

\subsubsection{Hopf pairing for $\rA$} \label{subsec:Hopf-pair-A}

The ``inverse monodromy'' natural isomorphisms $c_{X,Y}^{-1} \circ c_{Y,X}^{-1} : X \tensor Y \to X \tensor Y$ of $\cat$ can be used to define a
pairing on the universal Hopf algebra $\rA$. Namely, define the morphism $\omega_\rA : \rA \tensor \rA \to \one$ via
\be\label{eq:rA-omega}
 \natiso^2_{\tid}(\omega_{\rA})_{X,Y}  ~=~
 \big[\, 
 XY \xrightarrow{\brC_{Y,X}^{-1}} YX \xrightarrow{\brC_{X,Y}^{-1}} XY
 \xrightarrow{\sim} (XY)\one
 \,\big]\ ,
 \ee
where the family $\natiso^2_{\tid}(f)_{X,Y}$ was defined in~\eqref{eq:natiso2}. 
The following proposition is a corollary to Theorem \ref{thm:coend-Hopf+pairing} and Proposition \ref{prop:univ-vs-coend} below, where we give an alternative description of the universal Hopf algebra $\rA$ and of the pairing $\omega_\rA$ in terms of coends.

\begin{proposition}\label{prop:univ-hopf-pairing}
Suppose that the universal Hopf algebra $\rA \in \cat$ exists. Then
the pairing $\omega_\rA : \rA \tensor \rA \to \one$ is a Hopf pairing for $\rA$.
\end{proposition}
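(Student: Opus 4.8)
The plan is to verify the four string-diagram identities in~\eqref{eq:hopf-pair-prop} directly from the universal property of $\rA$, by transporting each identity through the natural isomorphisms $\natiso_V$, $\natiso^2_V$ and their ``cubic'' analogue $\natiso^3_V$ representing the functor $\idt^3 : V \mapsto \Nat(-\tensor-\tensor-,(-\tensor-\tensor-)\tensor V)$. The guiding principle is that a morphism out of a tensor power $\rA^{\tensor n}$ is uniquely determined by the corresponding element of $\idt^n$, so each of the required equalities of morphisms $\rA^{\tensor 3} \to \one$ (respectively $\one \to \one$) reduces to an equality of natural transformations built out of the monodromy $\brC_{X,Y}^{-1}\circ\brC_{Y,X}^{-1}$ and the structure data of $\rA$ from~\eqref{eq:rA-cop}--\eqref{eq:rA-S}. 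Concretely, I would first apply $\natiso^3_\one$ to both sides of the first identity: using \eqref{eq:rA-mult} the left-hand side becomes the monodromy of $(XY)$ with $Z$ pushed through $\nattr_{(XY)\tensor Z}$ followed by $\natiso^2_\one(\omega_\rA)$, while the right-hand side, using \eqref{eq:rA-cop}, unpacks into two nested copies of $\omega_\rA$; the equality of the two sides is then a consequence of the hexagon axioms for the braiding $\brC$ together with naturality of $\brC$, i.e.\ it is the statement that the ``double braiding'' is a bicharacter-like pairing with respect to the comultiplication. The second identity (compatibility of $\omega_\rA$ with $\Delta_\rA$ on the second leg) is proved symmetrically, and the two counit/unit identities in the second line of~\eqref{eq:hopf-pair-prop} follow from \eqref{eq:rA-counit} and $\eta_\rA = \lambda_\rA\circ\nattr_\one$ together with the fact that the monodromy with the tensor unit is trivial (since $\brC_{X,\one}$ and $\brC_{\one,X}$ are the canonical unit coherence isomorphisms, by naturality of the braiding).

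Rather than carry out the hexagon bookkeeping by hand, I would instead invoke the coend description promised in the text: by Proposition~\ref{prop:univ-vs-coend} the universal Hopf algebra $\rA$ is isomorphic as a Hopf algebra to the coend $\coend$, and by Theorem~\ref{thm:coend-Hopf+pairing} the pairing on $\coend$ built from the inverse monodromy is a Hopf pairing; since the isomorphism $\rA\cong\coend$ identifies $\omega_\rA$ with $\omega_\coend$ (both being defined by the same universal recipe applied to $\brC^{-1}_{X,Y}\circ\brC^{-1}_{Y,X}$), the Hopf pairing axioms transport back to $\rA$. This is in fact exactly the route the paper signals (``The following proposition is a corollary to Theorem~\ref{thm:coend-Hopf+pairing} and Proposition~\ref{prop:univ-vs-coend}''), so the cleanest write-up is: unwind the definitions to check that under the isomorphism of Proposition~\ref{prop:univ-vs-coend} the pairing $\omega_\rA$ of~\eqref{eq:rA-omega} corresponds to the pairing $\omega_\coend$ of Theorem~\ref{thm:coend-Hopf+pairing}, and then quote that theorem.

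The main obstacle is the coherence bookkeeping in the first (non-strict) step: matching $\omega_\rA$ with $\omega_\coend$ requires tracking the associator and unitor insertions in \eqref{eq:natiso2}, \eqref{eq:rA-omega} against those appearing in the coend's universal dinatural family, and verifying that the isomorphism $\natiso$ of~\eqref{eq:natiso} indeed intertwines the two ``double-braiding'' recipes on the nose. In the strict case this is immediate, but as emphasised in the introduction the whole point of the careful treatment here is that one must keep the coherence isomorphisms; so the real work is a diagram chase showing that the coherence data cancel compatibly on both sides. Once that identification is in place, the Hopf-pairing property of $\omega_\rA$ is a formal consequence of Theorem~\ref{thm:coend-Hopf+pairing}, and no further computation is needed.
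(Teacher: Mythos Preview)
Your proposal is correct and follows exactly the route the paper takes: the proposition is stated as a corollary to Theorem~\ref{thm:coend-Hopf+pairing} and Proposition~\ref{prop:univ-vs-coend}, and the identification of $\omega_\rA$ with $\omega_\coend$ under the isomorphism of Proposition~\ref{prop:univ-vs-coend} is indeed carried out explicitly in the appendix proof of that proposition (last paragraph of Appendix~\ref{app:univ-vs-coend}). Your remark that the real work lies in the non-strict coherence bookkeeping when matching the two pairings is spot on, and is precisely the content of that appendix computation.
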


We remark that if one were to use the monodromy isomorphism $c_{Y,X} \circ c_{X,Y}$ instead of the inverse monodromy in \eqref{eq:rA-omega} one would not obtain a Hopf pairing in the sense of \eqref{eq:hopf-pair-prop}.
 
In particular, if the universal Hopf algebra exists it is automatically equipped with a Hopf algebra map 
\be
\mathcal{D}_\rA:\; \rA\to \rA^* \ ,
\ee
as defined in~\eqref{eq:Omega}.
	We note that in case $\cat$ is the category of representations of a Hopf algebra over a field, $\mathcal{D}_\rA$ specialises to the Drinfeld map composed with an isomorphism to the double dual (see Remark \ref{rem:fact-qHopf-def}\,(2) below).

Later in this paper we will be particularly interested in situations where $\mathcal{D}_\rA$ is invertible, or, equivalently, where $\omega_\rA$ is non-degenerate (recall Definition \ref{def:non-deg-Hopf-pair}).

\section{The universal Hopf-algebra via coends} 
\label{sec:univ-hopf-via-coends}

In this section, we first recall standard facts about coends in general. We then review a construction due to \cite{Lyubashenko:1995} which gives a Hopf algebra structure together with a Hopf pairing on a certain coend for a braided monoidal category with left duals. 
We elaborate on the observation in \cite{Lyubashenko:1995}
that this coend provides an alternative description of the universal Hopf algebra from Section \ref{sec:Hopf-alg-C}.\footnote{
  In \cite[p.\,289]{Lyubashenko:1995}, the relation between the universal Hopf algebra and the coend is explained by: ``In [9] a Hopf algebra $\mathrm{Aut}\,\cat$
was introduced. It can be represented as a coend $\mathrm{Aut}\cat \cong F = \int^{X \in \cat} X \otimes X^\vee$.'' We hope it to be helpful to provide some additional details in Section \ref{sec:univHopf-coend-iso}.} 

Using the coend description, in \cite{Lyubashenko:1995} a projective $SL(2,\oZ)$ is defined on the space of invariants of $\rA$, which we recall in Section \ref{sec:SL2Z-action}.

\subsection{The universal property of coends} \label{subsec:uni-prop-coends}

Let $\cat$ and $\catD$ be any categories, denote by $\catop$ the opposite category with reversed morphisms, and let $F\colon \cat^{\operatorname{op}} \times \cat \to \catD$ be a functor. We recall the definition of a dinatural transformation from the functor $F$ to an object $B\in\catD$ in Appendix~\ref{app:dinat-tr} and recall here the notion of a coend~\cite{MacLane-book}.

\begin{definition} \label{def:coend} 
A \emph{coend} $(C,\iota)$ of a functor $F\colon \cat^{\operatorname{op}} \times \cat \to \catD$ is an object $C\in\catD$ endowed with a dinatural transformation 
$\iota\colon F \stackrel{..}{\longrightarrow} C$,
see Definition~\ref{def:dinat-const} (1),
satisfying the following universal property: 
for any dinatural transformation $\phi\colon F \stackrel{..}{\longrightarrow} B$ there is a unique $g\in{\catD}(C,B)$
such that the following diagram commutes 
for all $U \in \cat$:
\be\label{eq:din-diag}
\xymatrix{
& F(U,U) \ar[dl]_{\iota_U} \ar[dr]^{\phi_U} \\
C \ar@{-->}[rr]^{\exists!\,g} && B
}
\ee
or, in other words, any dinatural transformation $\phi\colon F \stackrel{..}{\longrightarrow} B$
factors through the coend of $F$ in a unique way: $\phi_U = g\circ\iota_U$ for all $U \in \cat$.
\end{definition}

A coend is unique up to unique isomorphism, so that we may refer to `the coend'.
A common notation for the coend is $C=\int^{U\in\cat} F(U,U)$.
 For brevity, we will often just write  $C$ for the coend instead of the pair $(C,\iota)$.
 
 We will also need multiple or iterated coends $\int^{U\in\catB} \int^{V\in\cat} F(U,U,V,V)$ of a functor
\be
 F\colon \catB^{\operatorname{op}} \times \catB  \times \cat^{\operatorname{op}} \times \cat \to \catD\ .
\ee 
 These are defined by considering first the functor $F$ as the functor $\tilde F\colon     \cat^{\operatorname{op}} \times \cat \to \Fun (\catB^{\operatorname{op}} \times \catB, \catD)$ to the category of functors from $\catB^{\operatorname{op}} \times \catB$ to $\catD$, and assuming that the coend of $\tilde F$ exists as an object in this category of functors.
We can then consider the coend of this coend-object $\int^{V\in\cat} F(-,-, V,V)$, which is by definition the iterated coend of $F$ from above. 
Alternatively, we could first take the coend (or ``integration'') over objects in $\catB$ as the object $\int^{U\in\catB} F(U,U, -,-)$ in the category of functors from  $\cat^{\operatorname{op}} \times \cat$ to $\catD$ and take then the corresponding coend (or ``integration'') over objects in $\cat$. This gives another iterated coend 
$ \int^{V\in\cat}  \int^{U\in\catB}F(U,U,V,V)$. 
Finally, one can consider the ``double coend'', that is, the coend for the functor 
\be
(\catB \times \cat)^{\operatorname{op}} \times (\catB  \times \cat) \xrightarrow{~\sim~}
\catB^{\operatorname{op}} \times \catB  \times \cat^{\operatorname{op}} \times \cat
\xrightarrow{~F~}
\catD \ ,
\ee
which we write as $\int^{(U,V)\in\catB\times\cat}F(U,U,V,V)$.
The iterated coends and the double coend can be compared by 
	a ``Fubini theorem'' (see e.g.~\cite[IX.8]{MacLane-book}).

\begin{proposition}\label{prop:coend-Fubini}
Let $\catB,\cat$ be categories and let $F\colon \catB^{\operatorname{op}} \times \catB  \times \cat^{\operatorname{op}} \times \cat \to \catD$ be a functor. Consider the three coends
$$
 \int^{U\in\catB} \int^{V\in\cat} F(U,U,V,V) \quad,\quad  \int^{(U,V)\in\catB\times\cat}F(U,U,V,V) \quad,\quad  \int^{V\in\cat}  \int^{U\in\catB}F(U,U,V,V)\ .
$$
If any one of them exists, then so do the other two, and all three are canonically isomorphic. 
\end{proposition}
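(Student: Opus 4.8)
The plan is to prove the Fubini theorem for coends in the standard categorical way, reducing everything to the universal property in Definition~\ref{def:coend} and the fact that an object representing a functor is unique up to canonical isomorphism. The key observation is that for each of the three expressions, the object in question represents the \emph{same} functor on $\catD$, namely
\[
	B \;\longmapsto\; \big\{ \text{dinatural transformations } F \stackrel{..}{\longrightarrow} B \text{ in the sense made precise below} \big\}.
\]
So the strategy is: (i) identify the functor $\catD \to \Set$ that each coend represents; (ii) show these three functors are naturally isomorphic; (iii) conclude by the Yoneda lemma (uniqueness of representing objects) that the three coends are canonically isomorphic whenever one of them exists.

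First I would set up the bookkeeping. Write $G := \catB^{\mathrm{op}} \times \catB \times \cat^{\mathrm{op}} \times \cat$, and note the canonical isomorphism of categories $(\catB\times\cat)^{\mathrm{op}}\times(\catB\times\cat)\xrightarrow{\sim}G$ already recorded in the excerpt. A dinatural transformation $F \stackrel{..}{\longrightarrow} B$ over the ``big'' index category $\catB\times\cat$ is, by Definition~\ref{def:dinat-const}(1) in the appendix, a family $\phi_{(U,V)}\colon F(U,U,V,V)\to B$ satisfying the hexagon/dinaturality condition for every morphism $(f,g)$ in $\catB\times\cat$. The first elementary lemma to prove is that such a family is the \emph{same data} as a family $\phi_{(U,V)}$ that is dinatural separately in $U\in\catB$ (with $V$ fixed) and separately in $V\in\cat$ (with $U$ fixed); this is a routine check — dinaturality in the product category for a general morphism $(f,g)$ factors as $(f,\id)$ followed by $(\id,g)$, and conversely the two separate hexagons compose to give the joint one, using functoriality of $F$. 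Call the set of such families $\mathrm{Din}(F;B)$; it is clearly functorial in $B$. By the universal property, $\int^{(U,V)}F(U,U,V,V)$ represents $B\mapsto \mathrm{Din}(F;B)$.

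Next I would handle an iterated coend, say $C' := \int^{U\in\catB}\!\int^{V\in\cat}F(U,U,V,V)$. Here one uses the hypothesis that the inner coend exists as an \emph{object in the functor category} $\Fun(\catB^{\mathrm{op}}\times\catB,\catD)$: write $K := \int^{V\in\cat}F(-,-,V,V)$, a functor $\catB^{\mathrm{op}}\times\catB\to\catD$ equipped with a dinatural $\iota^{\cat}\colon F(-,-,V,V)\stackrel{..}{\longrightarrow}K$. The universal property of $K$ in the functor category says: giving a dinatural family (in $V$) of natural transformations $F(-,-,V,V)\Rightarrow B$ (for $B$ a constant functor, i.e. an object of $\catD$) is the same as giving a single natural transformation $K\Rightarrow B$; unwinding, a morphism $C'\to B$ in $\catD$ corresponds via the universal property of $\int^{U}K(U,U)$ to a family dinatural in $U$ of morphisms $K(U,U)\to B$, which in turn — using the universal property of the inner coend evaluated at each $U$ — corresponds to a family $\phi_{(U,V)}\colon F(U,U,V,V)\to B$ dinatural in $V$ for each fixed $U$ and dinatural in $U$ after the inner factorisation. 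The content here is that ``dinatural in $U$ of the $K$-level map'' plus ``dinatural in $V$ at each $U$'' assembles exactly to an element of $\mathrm{Din}(F;B)$; this requires chasing the dinaturality hexagon for a mixed morphism $(f,\id)$ through the defining factorisation $\phi_{(U,V)}=g\circ\iota^{\cat}_{U,U,V}$ and invoking naturality of $\iota^{\cat}$ in $U$. This is the step I expect to be the main obstacle: it is not deep, but it is the one genuinely fiddly diagram chase, and one has to be careful that ``dinatural transformation valued in the functor category'' is being used with the correct notion of morphism (componentwise natural transformations), which is where the hypothesis ``the coend of $\tilde F$ exists as an object in this category of functors'' is really used. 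The symmetric argument with the roles of $\catB$ and $\cat$ exchanged shows $C'' := \int^{V\in\cat}\!\int^{U\in\catB}F(U,U,V,V)$ also represents $B\mapsto\mathrm{Din}(F;B)$.

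Finally I would assemble the conclusion. All three objects — $\int^{(U,V)}F$, $C'$, and $C''$ — represent the single functor $B\mapsto\mathrm{Din}(F;B)\colon\catD\to\Set$, by the three computations above. By uniqueness of representing objects (Yoneda), any two of them that exist are connected by a unique isomorphism compatible with the dinatural structure maps; moreover, if one of the three exists, the functor $B\mapsto\mathrm{Din}(F;B)$ is representable, hence the other two — which represent the same functor under the hypotheses that the relevant inner coends exist — also exist and are canonically isomorphic to it. One small point to be explicit about: to run the argument for $C'$ starting only from existence of $\int^{(U,V)}F$, one does need to know the inner coend $\int^{V}F(-,-,V,V)$ exists in the functor category; the proposition as stated in the excerpt builds that into the setup of iterated coends, so I would simply note that the standing hypotheses on iterated coends supply this. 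I would close by remarking that the canonical isomorphisms are the ones induced on representing objects by the identity on $\mathrm{Din}(F;-)$, so they are automatically coherent with the dinatural injections $\iota_U$, which is exactly what ``canonically isomorphic'' means here and is what downstream applications (the Fubini comparisons used implicitly in Section~\ref{sec:coend-repA}) require. A reference to \cite[IX.8]{MacLane-book} for the classical statement completes the picture.
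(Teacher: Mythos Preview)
The paper does not give its own proof of this proposition: it is stated as a standard categorical fact with the parenthetical reference ``(see e.g.~\cite[IX.8]{MacLane-book})'' and no further argument. Your sketch via representability of the functor $B\mapsto\mathrm{Din}(F;B)$ is exactly the classical proof found there, so there is nothing substantive to compare.

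One caveat worth flagging in your write-up: the direction ``double coend exists $\Rightarrow$ iterated coends exist'' is the delicate one, and your closing paragraph glosses it by saying the standing hypotheses on iterated coends ``supply'' the inner coend. That is not quite right --- existence of $\int^{(U,V)}F$ does not by itself produce $\int^{V}F(-,-,V,V)$ as an object of the functor category; one has to \emph{construct} the inner coend from the double one (for each fixed $U$, exhibit the factorisation through a suitable object and check universality, using that dinaturals in $V$ alone extend to dinaturals in $(U,V)$ by padding with identities). Mac Lane's treatment handles this via the parameter theorem, and if you want the proposition in the strong ``any one exists $\Rightarrow$ all three exist'' form you should make that step explicit rather than fold it into the setup. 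For the purposes of this paper the point is moot, since in the applications (Remark~\ref{rem:double-coend} and Section~\ref{sec:coend-repA}) the coend $\coend$ is already known to exist and one only needs that $\coend\otimes\coend$ serves as the double coend.
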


We can similarly define higher iterated and multiple coends, up to a unique isomorphism.

\begin{remark} \label{rem:coend}
\mbox{}
\begin{enumerate}\setlength{\leftskip}{-1.5em}
	\item  We can define the category $DIN(F)$ of dinatural transformations for $F$: objects are pairs $(B,\iota)$  and morphisms between two dinatural transformations $(B,\iota)$ and $(B',\phi)$ are defined as
		${DIN(F)}\bigl((B,\iota),(B',\phi)\bigr)\coloneqq \{ f\in{\catD}(B,B') : \phi=f\circ \iota \}$.
		Coends  are then the initial objects in $DIN(F)$.
	(And {\em ends} are terminal objects, but we only use these in Section \ref{sec:fact-end-coend} below.)

	\item We will later use the following important property of a coend $C$: to define a morphism $C\to B$ (e.g. for $B=C\otimes C$ below) is enough to fix a morphism from $F(U,U)$ to~$B$ for all $U\in\cat$ and such that it is dinatural. This is due to the universal property of $C$: 
 there is a one-to-one correspondence between the set $\Din(F,B)$ of dinatural transformations $\iota\colon F \stackrel{..}{\longrightarrow} B$ and the set $\catD(C,B)$.
	Similarly, 
	by Proposition \ref{prop:coend-Fubini} 
the 	iterated coend 
$C=\int^{U\in\catB} \int^{V\in\cat} F(U,U,V,V)$ of a functor $F\colon \catB^{\operatorname{op}} \times \catB  \times \cat^{\operatorname{op}} \times \cat \to \catD$ has the following universal property:
	any  transformation of $F$ dinatural in both the arguments (for $\catB$ and $\cat$) to an object in $\catD$ factors uniquely through the iterated coend.
	
\end{enumerate}
\end{remark}

\subsection{The coend $\coend$} \label{subsec:coends}

Let $\cat$ be a braided monoidal category with left duals.
We will now focus on the coend of the functor
\be \label{eq:F}
 F~ :=~ \big[\, \catop \times \cat \xrightarrow{(-)^\ast\times \id_{\cat}} \cat \times \cat \xrightarrow{\quad\tensor\quad} \cat  \,\big] \ ,
\ee
i.e.\ the functor which acts on objects and morphisms as $(U,V)\mapsto U^\ast \tensor V$ and $(f,g)\mapsto f^\ast\tensor g$.
We denote this coend as
\begin{equation}\label{eq:coend}
\coend:= 
 \int^{U\in\cat} U^\ast\tensor U  \ ,
\end{equation}
and the corresponding family of dinatural transformations as 
\begin{equation}\label{eq:iota}
	\iota_X : X^* \otimes X \longrightarrow \coend \quad , \quad  X \in \cat \ .
\end{equation}
We will abbreviate the morphism $\iota_X$ in string diagrams as
\be\label{eq:iota-pic}
   \iota_X\ \coloneqq
   \ipic{din-trafo}{0.9}
	 \put(-26,43){\scriptsize$\coend$}
	 \put(-40,-48){\scriptsize$X^\ast$} \put(-12,-48)  {\scriptsize$X$}
\ee
Dinaturality means here that for all $X,Y\in\cat$ and $f\in\cat(X,Y)$ we have
\be\label{eq:dinat-iota-f}
\iota_Y \circ (\id_{Y^\ast} \ot f) = \iota_X \circ (f^\ast \ot \id_X) \ .
\ee
In string diagram notation, this equality reads
\be\label{eq:P-iota-pic} 
   \ipic{din-trafo-property-lhs}{0.9}
	 \put(-24,42){\scriptsize$\coend$} 
	 \put(-37,12){\scriptsize$\iota_{Y}$}
	 \put(-10,-18){\scriptsize$f$}
	 \put(-40,-48){\scriptsize$Y^\ast$} 
	 \put(-12,-48){\scriptsize$X$}
	 \quad = \quad
	 \ipic{din-trafo-property-rhs}{.9}
	 \put(-24,42){\scriptsize$\coend$} 
	 \put(-37,12){\scriptsize$\iota_{X}$}
	 \put(-40,-18){\scriptsize$f^*$}
	 \put(-40,-48){\scriptsize$Y^\ast$} 
	 \put(-12,-48){\scriptsize$X$} \gp
 \ee
\begin{remark}\label{rem:double-coend}
Consider the functor  
 $F : \catop \times \cat  \times \catop \times \cat \xrightarrow{\quad}\cat$ 
 which is defined on objects and morphisms as
\begin{equation}
F:\quad  
(U,V, X,Y)\mapsto U^\ast \tensor V \tensor X^\ast\tensor Y \ ,\qquad (f,g, h,k)\mapsto f^\ast\tensor g\tensor h^\ast\tensor k \ .
\end{equation}  
It follows from the relation to iterated coends in Proposition \ref{prop:coend-Fubini} that the double coend is given by
\be
	\int^{(U,V) \in\cat\times \cat} 
	F(U,U,V,V) ~=~ \coend\tensor \coend \ ,
\ee
with dinatural family $\iota_{U,V}=\iota_U\tensor \iota_V$. As explained in  Remark~\ref{rem:coend} (2), by the universal property of $\coend\otimes\coend$, any dinatural family $\phi_{U,V}$ from $U^\ast\tensor U\tensor V^\ast \tensor V$ (dinatural in $U$ and $V$) to an object $B\in\cat$ uniquely factors through  a map $g:\coend\tensor \coend\to B$ as $\phi_{U,V} = g\circ (\iota_U\tensor \iota_V)$. 
\end{remark}

\subsection{Hopf algebra structure and Hopf pairing on $\coend$} \label{subsec:Hopf-alg-structure-L}

Let again $\cat$ be a braided monoidal category with left duals and assume that the coend $\coend$ defined in \eqref{eq:coend} exists. 
Following \cite{Lyubashenko:1995} we now use the universal properties of the coends $\coend$ and $\coend \otimes \coend$ 
(as in Remark~\ref{rem:coend}\,(2) and Remark~\ref{rem:double-coend}) to  define the structure morphisms of a Hopf algebra on $\coend$ and 
endow it with a Hopf pairing. For example,
instead of giving the map $\Delta_\coend: \coend \to \coend\tensor \coend$ explicitly, we give the corresponding dinatural transformation $U^\ast\tensor U \to \coend\tensor\coend$, etc.

In giving the dinatural transformations
 defining the product, coproduct, etc.\ on $\coend$, we will write out all coherence isomorphisms of $\cat$ explicitly. This involves choices and we will use the precise form given below to derive explicit expressions for all structure morphisms in the case of representations of a 
	quasi-triangular
quasi-Hopf algebra in Section \ref{sec:coend-repA}. Other combinations of associator and unit isomorphisms  lead to the same structure morphisms, but the explicit formulas in terms of the data of the quasi-Hopf algebra would look differently.

We will need Drinfeld's canonical isomorphism $\uiso_V : V\to V^{**}$ between $V$ and its double dual $V^{**}$, as well as a variant of it which uses the inverse braiding,
 and which we call $\tilde\uiso_V$:
\be\label{def:Drinfeld-maps}
\uiso_V ~=~ \ipic{drinfeld-element}{0.27} 	
\put(-119,-27){\scriptsize$V$} \put(-81,-27){\scriptsize$V^*$} \put(-45,-27){\scriptsize$V^{**}$} \put(-119,19){\scriptsize$V^*$} \put(-81,19){\scriptsize$V$}
 ,  \qquad
\tilde\uiso_V ~=~ \ipic{drinfeld-element-inv-braid}{0.27} 	
\put(-119,-27){\scriptsize$V$} \put(-81,-27){\scriptsize$V^*$} \put(-46,-27){\scriptsize$V^{**}$}  \put(-119,19){\scriptsize$V^*$} \put(-84,19){\scriptsize$V$} 
\put(-30,0){.}
\ee
As a composition of structure morphisms, this reads
\begin{align}\label{eq:uiso}
\uiso_V ~=~ \Big[\, V &\xrightarrow{\runit^{-1}_V} V\tid  \xrightarrow{\id\tensor \coev_{V^*}} V(V^* V^{**})  \xrightarrow{\assoc_{V, V^*,V^{**}}} (V V^*) V^{**}\\
            &  \xrightarrow{\brC_{V,V^*}\tensor \id} (V^* V) V^{**}   \xrightarrow{\ev_V\tensor\id} \tid V^{**}  \xrightarrow{\lambda_{V^{**}} } V^{**}\, \Big] \ , \nonumber \\
\label{eq:uiso-tw}
\tilde\uiso_V ~=~ \Big[\, V &\xrightarrow{\runit^{-1}_V} V\tid  \xrightarrow{\id\tensor \coev_{V^*}} V(V^* V^{**})  \xrightarrow{\assoc_{V, V^*,V^{**}}} (V V^*) V^{**}\\
            &  
\xrightarrow{\brC^{-1}_{V^*,V}\tensor \id} 
(V^* V) V^{**}   \xrightarrow{\ev_V\tensor\id} \tid V^{**}  \xrightarrow{\lambda_{V^{**}} } V^{**}\, \Big] \ . \nonumber
\end{align}
	
\begin{remark} \label{rem:twist}
Suppose for a moment that $\cat$ is in
addition ribbon. Then it is in particular pivotal, and we write $\delta_V : V \to V^{**}$ for the pivotal structure. In this situation, $\uiso_V$ and $\tilde\uiso_V$ are related by $\uiso_V = \delta_V \circ \theta_V^{-1}$ and $\tilde\uiso_V = \delta_V \circ \theta_V$ (cf.\ Section \ref{sec:qHopf-pivot}).
\end{remark}	
	
\begin{figure}[t]
\hspace*{-10em}\begin{minipage}{30em}
\eqpicnn{coend-hopf} {390} {115} {\setlength\unitlength{.85pt}
   \put(42,150) {\scalebox{.2}{\includegraphics[scale=1]{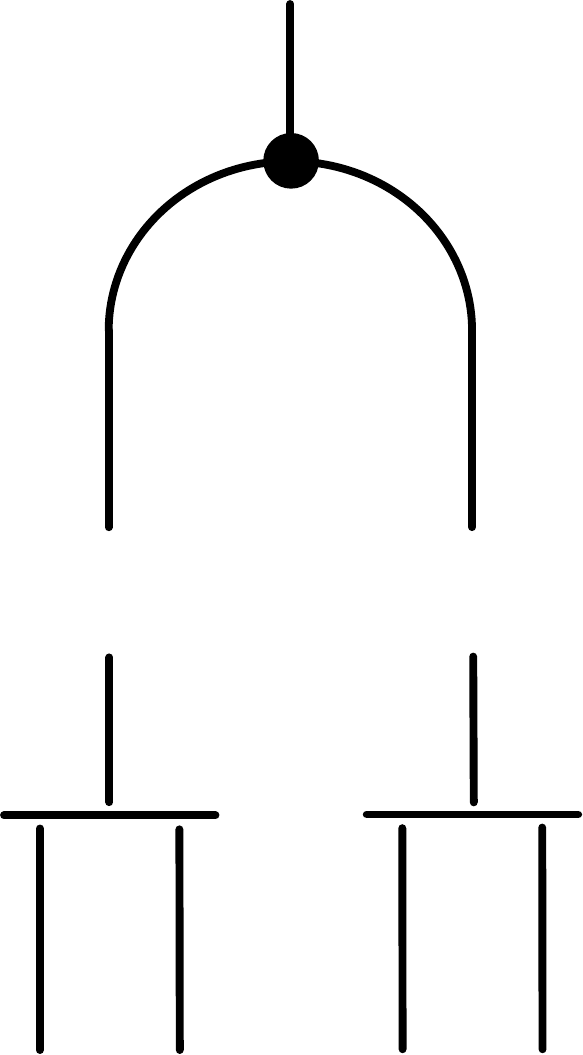}}}
	 \put(71,273){\scriptsize{$\coend$}}
	 \put(80,255){\scriptsize{$\muc$}} 
	 \put(50,198){\scriptsize{$\coend$}} \put(91,198){\scriptsize{$\coend$}}  
   \put(30,170){\scriptsize$\iota_U$} \put(110,170){\scriptsize$\iota_V$}
	 \put(40,140){\scriptsize$U^\ast$} \put(58,140){\scriptsize$U$} \put(81,140){\scriptsize$V^\ast$} \put(99,140){\scriptsize$V$}
   \put(120,194) {$~:=$}
	
   \put(160,150) {\scalebox{.2}{\includegraphics{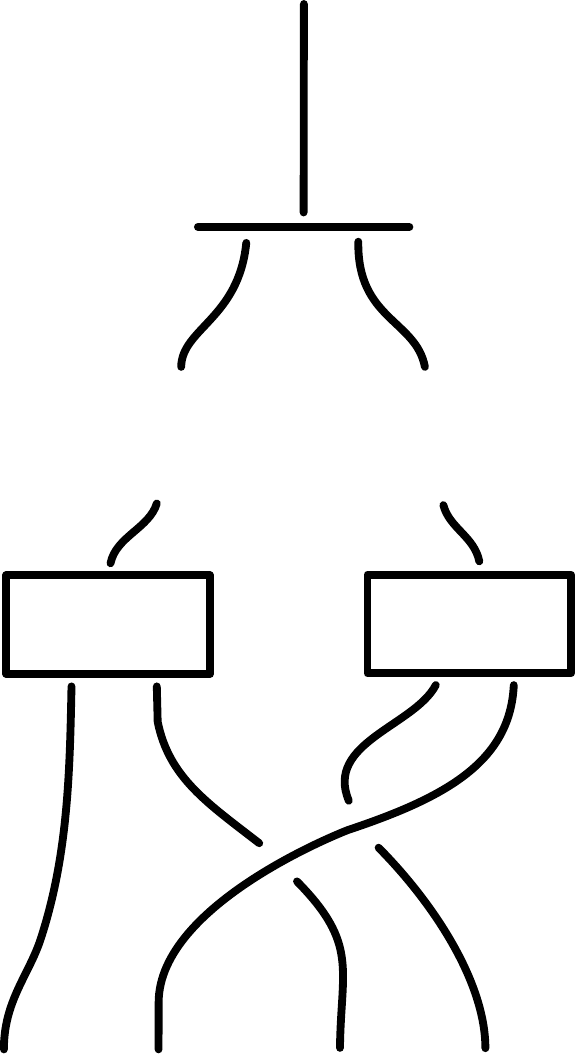}}} 
	 \put(191,273){\scriptsize{$\coend$}} 
   \put(164,250){\scriptsize$\iota_{V\tensor U}$}
	 \put(155,217){\tiny$(V\tensor U)^\ast$} \put(203,216){\tiny$V\tensor U$}
	 \put(161.8,197){\scriptsize$\gamma_{V,U}$} \put(209,195){\scriptsize$\id$}
	 \put(155,140){\scriptsize$U^\ast$} \put(173,140){\scriptsize$U$} \put(193,140){\scriptsize$V^\ast$} \put(211,140){\scriptsize$V$}
	 \put(250,194) {$,$}
	
	 \put(337,150){\scalebox{.2}{\includegraphics{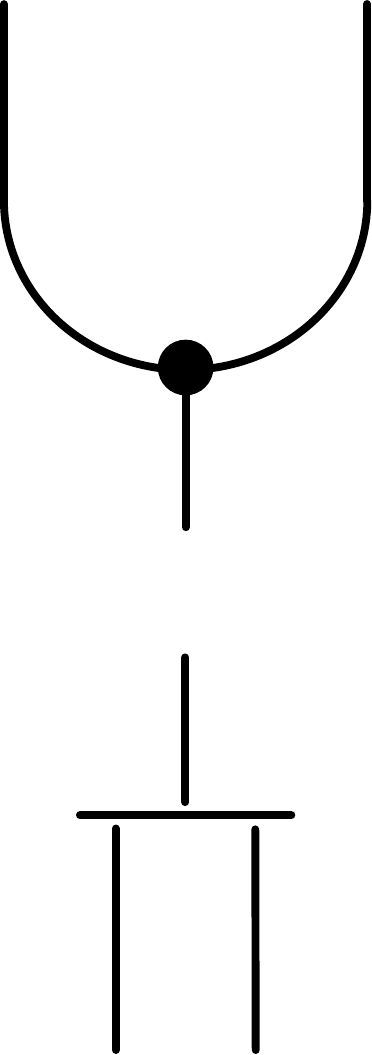}}}
	 \put(334,273){\scriptsize$\coend$} \put(375,273){\scriptsize$\coend$}
	 \put(363,216){\scriptsize{$\Delta_\coend$}} 
	 \put(354,198){\scriptsize{$\coend$}} 
   \put(332,170){\scriptsize$\iota_U$} 
	 \put(342,140){\scriptsize$U^\ast$} \put(361,140){\scriptsize$U$} 
   \put(395,194) {$~:=$}
	
	 \put(435,150) {\scalebox{.2}{\includegraphics[scale=1]{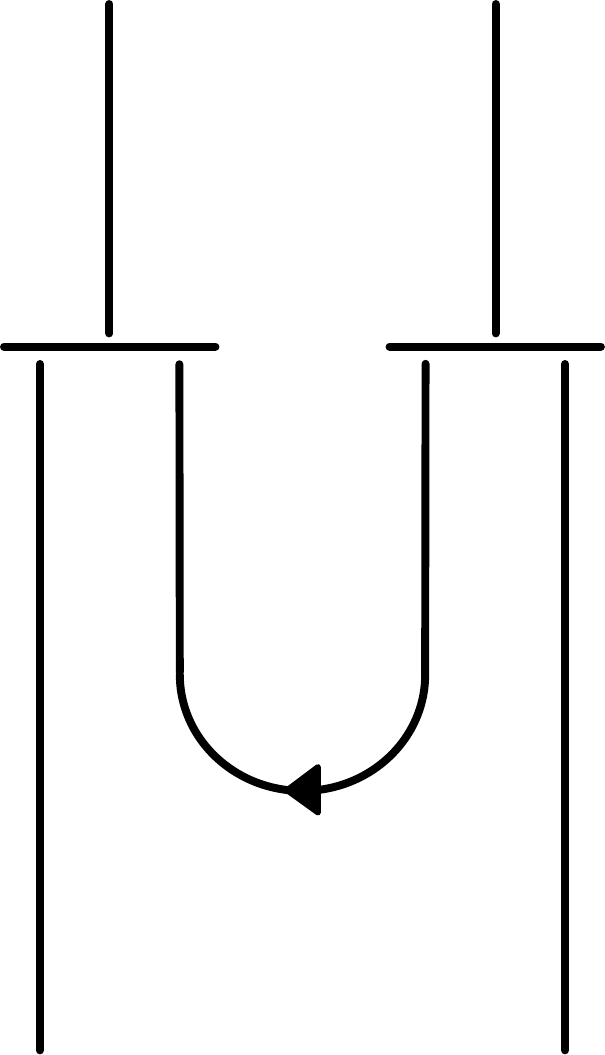}}}
	 \put(443,273){\scriptsize$\coend$} \put(487,273){\scriptsize$\coend$}
   \put(424,232){\scriptsize$\iota_U$} \put(505,232){\scriptsize$\iota_U$} 
	 \put(432,140){\scriptsize$U^\ast$} \put(494,140){\scriptsize$U$} 
	 \put(520,194) {$,$}

	 \put(42,10) {\scalebox{.2}{\includegraphics{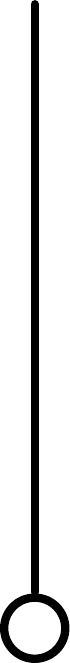}}}
	 \put(42,88){\scriptsize$\coend$} 
	 \put(30,22){\scriptsize$\eta_\coend$} 
	 \put(61,50) {$~:=$}
	
	 \put(95,10) {\scalebox{.2}{\includegraphics{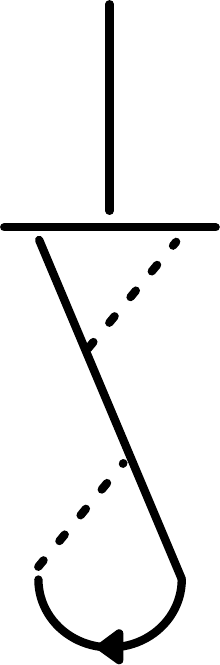}}} 
	 \put(103,88){\scriptsize$\coend$}
   \put(122,60){\scriptsize$\iota_{\tid}$}
	 \put(114,28){\scriptsize$\lunit$} \put(86,37){\scriptsize$\runit^{-1}$}
	 \put(140,50) {$,$}
	
	 \put(190,10){\scalebox{.2}{\includegraphics{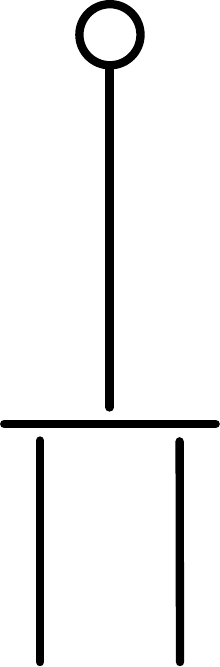}}}
	 \put(207,70){\scriptsize$\eps_\coend$} 
	 \put(180,30){\scriptsize$\iota_U$} 
	 \put(186,0){\scriptsize$U^\ast$} \put(207,00){\scriptsize$U$} 
	 \put(225,50) {$~:=$}
	
	 \put(260,10){\scalebox{.2}{\includegraphics{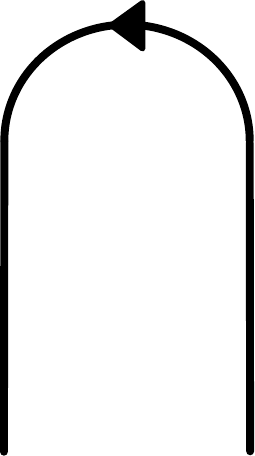}}}
	 \put(254,00){\scriptsize$U^\ast$} \put(284,00){\scriptsize$U$} 
	 \put(305,50) {$,$}
	
	 \put(375,10){\scalebox{.2}{\includegraphics{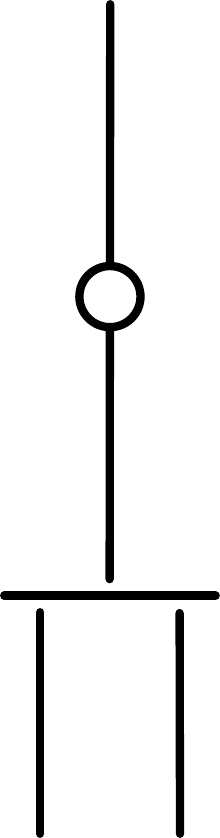}}}
	 \put(384,108){\scriptsize$\coend$}
	 \put(394,74){\scriptsize$S_\coend$} 
	 \put(365,30){\scriptsize$\iota_U$} 
	 \put(373,0){\scriptsize$U^\ast$} \put(392,00){\scriptsize$U$} 
	 \put(408,55) {$~:=$}
	
	 \put(438,10){\scalebox{.2}{\includegraphics{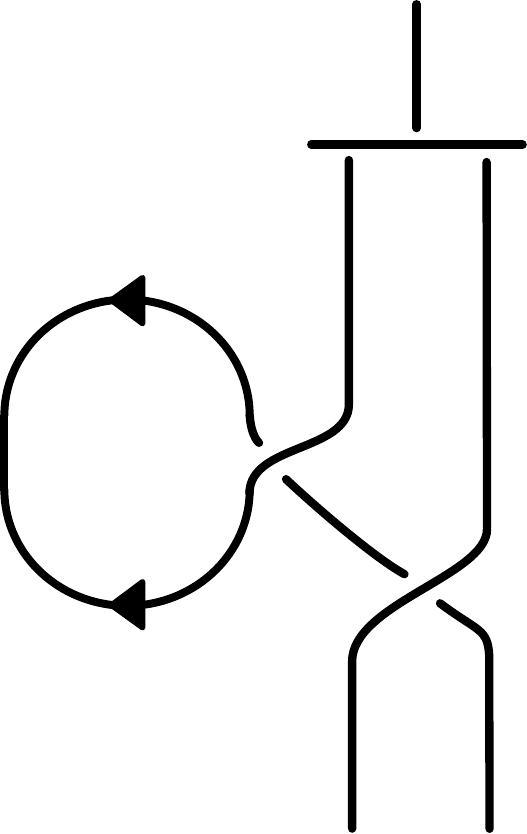}}}
	 \put(482,108){\scriptsize$\coend$}
	 \put(501,91){\scriptsize$\iota_{U^\ast}$} 
	 \put(472,0){\scriptsize$U^\ast$} \put(490,00){\scriptsize$U$} 
	 \put(515,50) {$.$}
}
\end{minipage}
\caption{Hopf algebra structure  on the coend $\coend\in\cat$. Here, $\gamma_{V,U}$ is the canonical isomorphism $U^\ast\tensor V^\ast \to (V\tensor U)^\ast$ defined by~\eqref{eq:gammaUV}.}
\label{fig:Hopf-coend}
\end{figure}
	
Recall 
our Hopf-algebra conventions in Definition~\ref{def:Hopf-C} and
also the isomorphism $\gamma_{U,V}$ from \eqref{eq:gammaUV}. 
The structure morphisms on $\coend$ are defined as follows.
\begin{enumerate}\setlength{\leftskip}{-1.5em}
\item {\em
 (Multiplication)} $\muc : \coend \ot \coend \to \coend$ is determined by the universal property of the double coend $\coend\otimes\coend$ via, for all $U,V \in \cat$,
\begin{align}
\label{eq:mult-L}
\muc \circ (\iota_U\tensor\iota_V) 
&~=~ 
\Big[ \, (U^\ast  U)  (V^\ast  V) \xrightarrow{\assoc^{-1}_{U^\ast,U,V^\ast  V}} U^\ast  (U  (V^\ast  V))
 \\ \nonumber & \hspace{4em}
\xrightarrow{\id\tensor\brC_{U,V^\ast  V}} U^\ast  ((V^\ast  V)  U) \xrightarrow{\id\tensor\assoc^{-1}_{V^\ast,V,U}} U^\ast  (V^\ast  (V  U))
 \\ \nonumber & \hspace{4em}
\xrightarrow{\assoc_{U^\ast,V^\ast,V U}} (U^\ast  V^\ast)  (V  U) \xrightarrow{\gamma_{V,U}\tensor\id} (V  U)^\ast  (V  U)
\xrightarrow{\iota_{V \tensor U}}\coend
 \,\Big] \ .
\end{align}
\item {\em (Unit)} $\eta_\coend : \one \to \coend$ is defined directly as 
\begin{align}
\label{eq:eta-L}
\eta_\coend 
&~=~
\Big[ \,
\one \xrightarrow{\lambda_{\one}^{-1}} \one\one
	\xrightarrow{\eqref{eq:iso11*}^{-1} \otimes \id} \one^*\one 
\xrightarrow{\iota_{\one}} \coend
 \,\Big]
\\ \nonumber
&~\overset{(*)}=~
\Big[ \,
\one 
\xrightarrow{\coev_{\one}} \one\one^*
\xrightarrow{\lambda_{\one^*}} \one^* 
\xrightarrow{\runit_{\one^*}^{-1}} \one^* \one
\xrightarrow{\iota_{\one}} 
\coend
 \,\Big]\ ,
\end{align}
	where (*) follows from the zig-zag identity for $\ev_{\one}$ and $\coev_{\one}$ and naturality of the unit-isomorphisms $\lambda$, $\runit$.
\item {\em (Coproduct)} $\Delta_\coend : \coend \to \coend \ot \coend$ is determined by the universal property of the coend $\coend$ via, for all $U \in \cat$,
\begin{align}
\label{eq:cop-L}
\Delta_\coend \circ \iota_U 
&~=~ 
\Big[ \, U^\ast U \xrightarrow{\id\tensor\lambda_U^{-1}}  U^\ast(\tid U) \xrightarrow{\id\tensor\coev_U\tensor\id} U^\ast((U U^\ast) U)
 \\ \nonumber & \hspace{4em}
\xrightarrow{\id\tensor\assoc^{-1}_{U ,U^\ast,U}} 
 U^\ast(U (U^\ast U)) \xrightarrow{\assoc_{U^\ast,U ,U^\ast U}} (U^\ast U) (U^\ast U) 
\xrightarrow{\iota_U\tensor\iota_U} \coend\tensor\coend  
 \,\Big] \ .
\end{align}
\item {\em (Counit)} $\eps_\coend : \coend \to \one$ is determined by, for all $U \in \cat$,
\begin{align}
\label{eq:eps-L}
\eps_\coend \circ \iota_U  \ ,
&~=~ 
\Big[ \,  U^\ast U \xrightarrow{\; \ev_U \;} \tid  \, \Big]\ .
\end{align}
\item {\em (Antipode)} $S_\coend : \coend \to \coend$ is determined by, for all $U \in \cat$, 
\begin{align}
\label{eq:antipode-L}
S_\coend \circ \iota_U 
&~=~ 
\Big[ \, U^* U \xrightarrow{\brC_{U^*,U}} U U^* \xrightarrow{\tilde\uiso_U\tensor \id} U^{**} U^* \xrightarrow{\iota_{U^*}} \coend  \,\Big]\ .
\end{align}
\end{enumerate}
These defining relations are given in string diagram notation in Figure \ref{fig:Hopf-coend}.

Finally, we define a pairing on $\coend$ using the universal property of $\coend \otimes \coend$ via the condition that for all $U,V \in \cat$,
\begin{align}\label{eq:omega-L}
	\omega_{\coend} \circ (\iota_U \otimes \iota_V)
~=~& \Big[\,
(U^\ast  U)  (V^\ast  V) \xrightarrow{\assoc^{-1}_{U^\ast,U,V^\ast  V}} U^\ast  (U  (V^\ast  V)) 
\xrightarrow{\id\tensor\assoc_{U,V^\ast,V}}  
 U^\ast  ((U  V^\ast)  V) 
\\  \nonumber & \hspace{3em}
\xrightarrow{\id\tensor (\brC_{V^\ast,U}\circ\brC_{U, V^\ast}) \tensor \id } U^\ast  ((U  V^\ast)  V)
\xrightarrow{\id\tensor\assoc^{-1}_{U,V^\ast,V}} 
 U^\ast  (U  (V^\ast  V)) 
\\ \nonumber & \hspace{3em}
 \xrightarrow{\id \otimes \id \otimes \ev_V} 
U^*(U \one) \xrightarrow{\id \otimes \runit_U} U^* U \xrightarrow{\ev_U} \one
\, \Big]  \ .
\end{align} 
In string diagram notation this reads
\be\label{hopf-pair}
   \ipic{hopf-pair-lhs}{.22}
	 \put(-55,-1){\scriptsize$\coend$} \put(-15,-1){\scriptsize$\coend$} 
   \put(-62,-18){\scriptsize$\iota_U$} \put(-23,-18){\scriptsize$\iota_V$}
	 \put(-63,-57){\scriptsize$U^\ast$} \put(-45,-57){\scriptsize$U$} \put(-24,-57){\scriptsize$V^\ast$} \put(-6,-57){\scriptsize$V$}
	 \put(-36,39){\footnotesize$\omega_{\coend}$} 
	 \quad\coloneqq\quad \ipic{hopf-pair-rhs}{.23}
	 \put(-67,-59){\scriptsize$U^\ast$} \put(-50,-59){\scriptsize$U$} \put(-22,-59){\scriptsize$V^\ast$} \put(-4,-59){\scriptsize$V$}
\ee

We gather all the structures so far defined on the coend $\coend$ in the following theorem.

\begin{theorem}[\cite{Lyubashenko:1995}]\label{thm:coend-Hopf+pairing}
Let $\cat$ be a braided monoidal category with left duals. If the coend $\coend$ from \eqref{eq:coend} exists, the morphisms defined by \eqref{eq:mult-L}--\eqref{eq:antipode-L} turn it into a Hopf algebra in $\cat$. The pairing \eqref{eq:omega-L} is a Hopf pairing for $\coend$.
\end{theorem}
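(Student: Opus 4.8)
# Proof proposal for Theorem (Lyubashenko's Hopf algebra on the coend)

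\textbf{Overall strategy.} The plan is to verify the axioms of Definition \ref{def:Hopf-C} one at a time by exploiting the universal property of $\coend$ (Remark \ref{rem:coend}\,(2)) and of the double coend $\coend \otimes \coend$ (Remark \ref{rem:double-coend}). The key principle is that two morphisms out of $\coend$ (resp.\ out of $\coend \otimes \coend$) are equal iff they agree after precomposition with the dinatural family $\iota_U$ (resp.\ $\iota_U \otimes \iota_V$). So for each axiom I would precompose both sides with the appropriate $\iota$'s, substitute the defining formulas \eqref{eq:mult-L}--\eqref{eq:antipode-L} and \eqref{eq:omega-L}, and reduce the resulting equality of morphisms in $\cat$ -- which now only involves $\iota$, coherence isomorphisms, braidings, and duality maps -- to a true identity. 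At the level of string diagrams these verifications are essentially the standard manipulations; the only subtlety is that, because we have written out all associators and unitors explicitly, one must be careful that the two sides produce the \emph{same} bracketing of coherence isomorphisms, which is guaranteed by MacLane coherence but should be invoked explicitly.

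\textbf{Order of the steps.} First I would check that $\coend$ is an (associative, unital) algebra: associativity of $\muc$ amounts, after precomposing with $\iota_U \otimes \iota_V \otimes \iota_W$, to showing that the two ways of multiplying three factors both produce $\iota_{W \otimes V \otimes U}$ composed with $\gamma$-isomorphisms that agree by the cocycle-type compatibility of $\gamma_{V,U}$ with the associator (this is where \eqref{eq:gamma-tildeUV}--\eqref{eq:gammaUV} are used); unitality is immediate from the definition \eqref{eq:eta-L} of $\eta_\coend$ together with dinaturality \eqref{eq:dinat-iota-f} applied to $\lambda_U : \one \otimes U \to U$. Next, coassociativity and counitality of $(\Delta_\coend, \eps_\coend)$: after precomposing with $\iota_U$ these reduce to zig-zag identities for $\ev_U, \coev_U$ and naturality of the unitors, essentially by the same diagrammatic argument as for the dual of a finite-dimensional vector space. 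Then I would verify that $\Delta_\coend$ and $\eps_\coend$ are algebra maps in the sense of \eqref{eq:Delta-alg-hom-condition}: precomposing with $\iota_U \otimes \iota_V$, the left-hand side inserts $\coev_{V \otimes U}$ and reorganises, the right-hand side inserts $\coev_U$ and $\coev_V$ separately and then braids; equality follows from naturality of $\coev$ together with the hexagon axioms controlling how the braiding $\brC$ interacts with $\otimes$. Finally the antipode axiom: precomposing $\mu_\coend \circ (S_\coend \otimes \id) \circ \Delta_\coend \circ \iota_U$ with the definitions, the inserted $\coev_U$ from $\Delta_\coend$ meets the braiding and $\tilde\uiso_U$ from $S_\coend$, and after using dinaturality of $\iota$ applied to $\ev_U$ and $\coev_U$ and the defining property \eqref{eq:uiso-tw} of $\tilde\uiso_U$ the whole composite collapses to $\eta_\coend \circ \eps_\coend \circ \iota_U = \iota_\one \circ (\ev_U\text{-stuff})$; the same for the other antipode equation with $\uiso$ versus $\tilde\uiso$ roles interchanged via the braiding.

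\textbf{The Hopf pairing.} For $\omega_\coend$ one must check the four conditions in \eqref{eq:hopf-pair-prop}. The first two -- compatibility of $\omega_\coend$ with $\mu_\coend$ on one leg and $\Delta_\coend$ on the other -- are verified by precomposing with $\iota_U \otimes \iota_V \otimes \iota_W$ and $\iota_U \otimes \iota_V \otimes \iota_W$ respectively; after substituting \eqref{eq:mult-L}, \eqref{eq:cop-L}, \eqref{eq:omega-L} both sides become a composite of two copies of the ``double-braiding'' $\brC_{V^*,U}\circ\brC_{U,V^*}$ arranged by the hexagon axioms, together with $\ev$'s that cancel the extra $U^* \otimes U$ factor coming from $\coev_U$ in $\Delta_\coend$ -- this is precisely the point where the use of the inverse monodromy (as opposed to the monodromy) in \eqref{eq:rA-omega}/\eqref{eq:omega-L} is needed, as flagged in the text after Proposition \ref{prop:univ-hopf-pairing}. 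The last two conditions, compatibility with $\eta_\coend$ and $\eps_\coend$, follow directly from \eqref{eq:eta-L}, \eqref{eq:eps-L} and a zig-zag identity.

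\textbf{Main obstacle.} I expect the genuinely delicate part to be \emph{bookkeeping of coherence isomorphisms}: since the definitions spell out every associator and unitor, the verifications are not literally the strict-case string-diagram moves but their coherence-laden refinements, and one must repeatedly appeal to MacLane's coherence theorem to identify two a priori different composites of structural isomorphisms. A clean way to organise this -- and the approach I would actually take in the write-up -- is to prove the theorem first in the strict case (where it is the content of \cite{Lyubashenko:1995} and all the diagrammatic identities above are routine), and then observe that passing to a general braided category with left duals changes nothing because strictification is a braided monoidal equivalence respecting duals, and all the structures $\muc, \Delta_\coend, \eta_\coend, \eps_\coend, S_\coend, \omega_\coend$ as well as the coend itself are defined by universal properties that are preserved under such an equivalence. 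That reduces the work to the strict case plus a coherence argument, and the only thing one must still be explicit about -- because it is needed in Section \ref{sec:coend-repA} -- is that the particular representatives chosen in \eqref{eq:mult-L}--\eqref{eq:omega-L} are among those the coherence theorem identifies, which is immediate by inspection.
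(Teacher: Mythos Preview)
Your direct-verification approach is correct and is essentially the argument in \cite{Lyubashenko:1995}, which the paper cites for this theorem rather than reproving it from scratch. The paper does, however, offer an implicit alternative route for the Hopf-algebra part: Theorem~\ref{thm:univ-Hopf} (Majid) establishes that the representing object $\rA$ of the functor $\natf$ is a Hopf algebra in $\cat$, and Proposition~\ref{prop:univ-vs-coend} (proved in Appendix~\ref{app:univ-vs-coend}) shows that the coend $(\coend,\iota)$ with the structure maps \eqref{eq:mult-L}--\eqref{eq:antipode-L} coincides with $(\rA,\varphi)$ as a Hopf algebra. Combining these yields the Hopf-algebra claim without rechecking each axiom on $\coend$ separately. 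Your approach has the virtue of being self-contained and of making the role of each defining equation transparent (which matters for the explicit computations in Section~\ref{sec:coend-repA}), whereas the reconstruction route outsources the coherence bookkeeping to Majid's general theorem. For the Hopf-pairing claim the paper genuinely relies on \cite{Lyubashenko:1995}: it uses Theorem~\ref{thm:coend-Hopf+pairing} together with Proposition~\ref{prop:univ-vs-coend} to \emph{deduce} the pairing property in the reconstruction picture (this is how Proposition~\ref{prop:univ-hopf-pairing} is obtained), so your direct verification of the conditions \eqref{eq:hopf-pair-prop} is exactly the substantive content that the paper does not itself supply.
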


\subsection{Relation between the universal Hopf algebra in $\cat$ and the coend $\coend$} \label{sec:univHopf-coend-iso}

Let $\cat$ be 	a
braided monoidal category with left duals. At this point we have introduced two Hopf algebras in $\cat$ together with a Hopf pairing, subject to existence of solutions to certain universal properties: on the one hand, the universal Hopf algebra $\rA$ from 
Theorem \ref{thm:univ-Hopf}, and on the other hand the coend $\coend$ from Theorem \ref{thm:coend-Hopf+pairing}. In this section we show that $\rA$ and $\coend$ are canonically isomorphic as Hopf algebras with Hopf pairing.

To describe the isomorphism, we need to consider two functors $\dinf,\natf \colon \cat \to \Set$. Denote by $\Din(F,V)$ the set of dinatural transformations $j\colon F \stackrel{..}{\longrightarrow} V$, 
	where $F = (-)^*\otimes(-) : \catop \times \cat \to \cat$ is the functor from \eqref{eq:F}. 
The first functor is $\dinf := \Din(F,-)$. The second functor is 
$\natf = \Nat(\id,\id\tensor -)$ 
as already defined in \eqref{eq:idt}.
We have the following simple lemma.

\begin{lemma}\label{lem:I-Din}
The family of maps of sets $\zeta_V\colon\dinf(V) \to \natf(V)$, $V \in \cat$, given by, for $j \in \dinf(V)$, 
\begin{equation}\label{eq:tilde}
(\zeta_V(j))_X = \tilde{j}_X ~:=~ \big[\,X \xrightarrow{\sim} \tid X \xrightarrow{\coev_X\tensor \id} (XX^*)X \xrightarrow{\,\sim\,} X(X^* X)
\xrightarrow{\id\tensor j_X} X  V \,\big] \ ,
\end{equation}
defines a natural isomorphism 
	$\zeta : \dinf \to \natf$.
\end{lemma}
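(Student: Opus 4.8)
The plan is to verify directly that the family $\zeta_V$ is a natural transformation of functors $\cat \to \Set$ and that each component $\zeta_V$ is a bijection, by exhibiting a two-sided inverse. Both halves will follow from the zig-zag identities for the left duality morphisms $\ev_X, \coev_X$, naturality of the coherence isomorphisms of $\cat$, and dinaturality of $j$ (respectively naturality of the members of a natural transformation on the other side).

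First I would check that $\tilde j$ as defined by \eqref{eq:tilde} really is a natural transformation $\id \Rightarrow \id \otimes V$, i.e.\ an element of $\natf(V)$: given $f : X \to Y$ in $\cat$, one must show $(\id_Y \otimes f^{?})$-type compatibility, namely $(f \otimes \id_V) \circ \tilde j_X = \tilde j_Y \circ f$. Expanding both sides using \eqref{eq:tilde}, the left-hand side inserts $f$ on the $X$-strand after the cap--cup, while the right-hand side inserts $f$ before it; sliding $f$ through $\coev$ and the associators (naturality), one ends up needing $(\id \otimes j_X) \circ (\id \otimes (f^* \otimes \id))$ to agree with $(\id \otimes j_Y) \circ (\id \otimes (\id \otimes f))$ after suitable reassociation, which is exactly the dinaturality relation \eqref{eq:dinat-iota-f} for $j$. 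So naturality of $\tilde j$ in $X$ is equivalent to dinaturality of $j$. Then I would check naturality of $\zeta$ itself in $V$: for $g : V \to W$, both $\dinf(g)$ and $\natf(g)$ act by postcomposition with $g$ (up to the obvious $\id \otimes g$), and \eqref{eq:tilde} visibly commutes with postcomposing $j_X$ by $g$, so $\zeta_W \circ \dinf(g) = \natf(g) \circ \zeta_V$ is immediate.

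Next I would construct the inverse map $\eta_V : \natf(V) \to \dinf(V)$. Given a natural transformation $a \in \natf(V)$ with components $a_X : X \to X \otimes V$, define $(\eta_V(a))_X = j_X : X^* \otimes X \to V$ by
\be
  j_X ~=~ \big[\, X^* X \xrightarrow{\id \otimes a_X} X^*(X V) \xrightarrow{\sim} (X^* X) V \xrightarrow{\ev_X \otimes \id} \one V \xrightarrow{\lambda_V} V \,\big]\ .
\ee
One must verify $\eta_V(a)$ is dinatural, which again reduces to naturality of $a$ plus a zig-zag, and that $\eta_V$ is natural in $V$, which is routine. The bulk of the work is then the two composite computations $\zeta_V \circ \eta_V = \id$ and $\eta_V \circ \zeta_V = \id$: each is a string-diagram manipulation in which one of the two zig-zag identities for $(\ev_X, \coev_X)$ collapses a cap--cup pair, after first using naturality of $a$ (resp.\ dinaturality of $j$) to move the inserted morphism past the duality data. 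I expect the main obstacle to be purely bookkeeping: keeping track of the associator and unit isomorphisms correctly so that the zig-zag identity applies in the precise form stated in the excerpt, since $\cat$ is not assumed strict. Working in string-diagram notation (as the excerpt does for $\iota_X$ and $\tilde j_X$) makes the coherence isomorphisms invisible and reduces each of the four verifications to a single application of a zig-zag move, so I would present the argument diagrammatically and relegate the explicit coherence-isomorphism chase to a remark.
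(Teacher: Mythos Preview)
Your proposal is correct and follows essentially the same approach as the paper: the paper gives the same inverse map $\zeta_V^{-1}$ (your $\eta_V$), invokes the zig-zag identities to verify it is a two-sided inverse, and remarks that the naturality of $\zeta$ follows from dinaturality of $j$. Your write-up is more explicit about the well-definedness checks (that $\tilde j$ lands in $\natf(V)$ and $\eta_V(a)$ lands in $\dinf(V)$), which the paper leaves implicit, but the argument is the same.
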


\begin{proof}
Using the zig-zag property of the evaluation and coevaluation maps, one easily checks that the map
\be\label{eq:tilde-inv}
\zeta_V^{-1}\colon\quad 
\tilde{j}_X \;\mapsto\; \bigl[X^* X \xrightarrow{\id\tensor \tilde{j}_X}   X^* (X V) \xrightarrow{\,\sim\,} (X^* X) V
 \xrightarrow{\ev_X\tensor \id} \tid V
 \xrightarrow{\sim} V \bigr]
\ee
is the inverse to~\eqref{eq:tilde}.
 The naturality
of $\zeta$ follows from dinaturality of $j$.
\end{proof}

Suppose the coend $(\coend,(\iota_V)_{V \in \cat})$ from \eqref{eq:coend} exists in $\cat$. Recall from Remark \ref{rem:coend}\,(2) that there is a natural isomorphism 
$\cat(\coend,V)\xrightarrow{\;\sim\;} \Din(F,V)$, given by $f \mapsto f \circ \iota$. 
In other words, the coend $\coend$ represents the functor $\dinf$, while by definition the universal  Hopf algebra $\rA$ represents the functor $\natf$, see Section \ref{sec:rA}.
Therefore, we have following corollary to Lemma~\ref{lem:I-Din}.

\begin{corollary}\label{cor:coend-Nat-rep}
If $(\coend,\iota)$ is a coend for 
the functor $F =
(-)^* \otimes (-)$, then $(\coend,\varphi)$ with 
\be
\varphi_V : \cat(\coend,V) \to \Nat(\id,\id \otimes V)
\quad , \quad
(\varphi_V(f))_X = 
(\zeta_V(f \circ \iota))_X : X \to X \otimes V
\ee
represents $\natf$,
with $\zeta_V$ defined in~\eqref{eq:tilde}.

Conversely, if $(\rA,\varphi)$ represents $\natf$, then $(\rA,\iota_X=\zeta^{-1}_\rA(\nattr_X))$,  with $\zeta^{-1}$  from~\eqref{eq:tilde-inv}  and $\nattr$  from~\eqref{eq:nattr}, is a coend for the functor $F$.
\end{corollary}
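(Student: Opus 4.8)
The plan is to verify both directions of the corollary by combining two facts already at hand: the universal property of the coend $\coend$ (Remark~\ref{rem:coend}\,(2)), which gives a natural isomorphism $\cat(\coend,V)\xrightarrow{\sim}\Din(F,V)=\dinf(V)$ via postcomposition with the dinatural family $\iota$, and the natural isomorphism $\zeta\colon\dinf\to\natf$ from Lemma~\ref{lem:I-Din}. Composing these two natural isomorphisms immediately yields a natural isomorphism $\cat(\coend,-)\xrightarrow{\sim}\natf$, which is exactly what it means for $(\coend,\varphi)$ to represent $\natf$; the formula for $\varphi_V$ in the statement is just the composite written out on elements. So the first half is essentially a one-line assembly, modulo checking that the indicated formula $(\varphi_V(f))_X = (\zeta_V(f\circ\iota))_X$ really is the composite of the two isomorphisms evaluated on $f\in\cat(\coend,V)$ — which it is by definition of $\zeta$ in \eqref{eq:tilde}.

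For the converse, suppose $(\rA,\varphi)$ represents $\natf$, so $\varphi_V\colon\cat(\rA,V)\xrightarrow{\sim}\natf(V)$ naturally in $V$. Precomposing with $\zeta_V^{-1}\colon\natf(V)\to\dinf(V)$ from \eqref{eq:tilde-inv} gives a natural isomorphism $\cat(\rA,-)\xrightarrow{\sim}\dinf=\Din(F,-)$; by the (contravariant) Yoneda lemma, such a natural isomorphism corresponds to a universal dinatural transformation out of $F$ with apex $\rA$, i.e.\ $\rA$ together with the dinatural family obtained by applying $\zeta_\rA^{-1}\circ\varphi_\rA$ to $\id_\rA$ is a coend for $F$. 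Tracing the identifications: the universal dinatural family is the image of $\id_\rA$ under $\cat(\rA,\rA)\xrightarrow{\varphi_\rA}\natf(\rA)\xrightarrow{\zeta_\rA^{-1}}\dinf(\rA)$, and since $\varphi_\rA(\id_\rA)=\nattr$ by \eqref{eq:nattr}, this family is precisely $\zeta_\rA^{-1}(\nattr)$, whose $X$-component is the $\iota_X$ written in the statement. One then needs the standard remark that a natural isomorphism $\cat(\rA,-)\cong\Din(F,-)$ automatically sends $\id_\rA$ to a \emph{dinatural} (not merely natural) transformation and that this is the universal one — this is exactly the content of "coends are initial objects in $DIN(F)$" from Remark~\ref{rem:coend}\,(1).

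The only genuinely non-formal point, and the main thing to be careful about, is checking that $\zeta$ (equivalently $\zeta^{-1}$) interacts correctly with \emph{postcomposition}: the universal property of the coend is phrased via $f\mapsto f\circ\iota$, a map $\cat(\coend,V)\to\dinf(V)$ that is natural in $V$ in the covariant sense, and one must confirm that $\zeta$ from Lemma~\ref{lem:I-Din} is natural in the same variance so that the composite is again a natural isomorphism of functors $\cat\to\Set$ — but this is precisely what Lemma~\ref{lem:I-Din} asserts, so no new work is required. I therefore expect essentially no obstacle: the corollary is a formal consequence of Lemma~\ref{lem:I-Din} together with the representability of $\coend$ and of $\rA$, and the proof consists simply in transporting the representing pair along the natural isomorphism $\zeta$, in each direction, and unwinding \eqref{eq:nattr}, \eqref{eq:tilde}, \eqref{eq:tilde-inv} to see that the transported data have the stated explicit form.
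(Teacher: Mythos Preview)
Your proposal is correct and takes essentially the same approach as the paper: the paper presents this as an immediate corollary of Lemma~\ref{lem:I-Din} (the natural isomorphism $\zeta\colon\dinf\to\natf$) combined with the fact that the coend represents $\dinf$, and gives no further argument. You have simply written out the details of that composition in both directions, including the Yoneda-style identification of the universal dinatural family as the image of $\id_\rA$, which is exactly what is implicit in the paper's one-line deduction.
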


In particular, the coend $\coend$ exists in $\cat$ if and only if the representing object $\rA$ exists.

\begin{proposition}\label{prop:univ-vs-coend}
Suppose the coend $(\coend,\iota)$ exists, and denote by $(\coend,\varphi)$ the corresponding representing object for $\natf$ obtained from Corollary \ref{cor:coend-Nat-rep}. The Hopf algebra structure morphisms and the Hopf pairing defined on $\coend$ in Theorem \ref{thm:univ-Hopf} and Proposition \ref{prop:univ-hopf-pairing} via the representing object property of $(\coend,\varphi)$ are equal to those defined on $\coend$ in Theorem \ref{thm:coend-Hopf+pairing} via the coend property of $(\coend,\iota)$.
\end{proposition}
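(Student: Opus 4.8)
The plan is to use Corollary \ref{cor:coend-Nat-rep}: the pair $(\coend,\varphi)$ is itself a representing object for $\natf$, so it \emph{is} a copy of the universal Hopf algebra $\rA$ of Theorem \ref{thm:univ-Hopf}, and the proposition becomes a comparison of two a priori different Hopf structures on one and the same object $\coend$. The dictionary between the two universal families is the natural isomorphism $\zeta$ of Lemma \ref{lem:I-Din}: under it $\nattr_X=\zeta_\coend(\iota)_X=\tilde\iota_X$, and more generally, for $f\in\cat(\coend,V)$, $\varphi_V(f)_X=\zeta_V(f\circ\iota)_X=(\id_X\otimes f)\circ\tilde\iota_X$. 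Since both $(\coend,\varphi)$ and $(\coend,\iota)$ are universal, each structure morphism is pinned down by its composite with $\nattr$ (respectively $\iota$), so it suffices to check, piece by piece, that the two defining relations coincide after translation through $\zeta$ (and its two-variable analogue); equality of the relations then forces equality of the morphisms.

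First I would treat the single-variable structure maps. For the counit, $\zeta_\one^{-1}$ of the trivial transformation $X\xrightarrow{\sim}X\one$ of \eqref{eq:rA-counit} collapses, after one zig-zag, to $\ev_U$, which is exactly \eqref{eq:eps-L}. For the unit one substitutes $\nattr_\one=\tilde\iota_\one$ into $\eta_\rA=\lambda_\coend\circ\nattr_\one$ and simplifies the $\coev$ and the unitors to reach the second line of \eqref{eq:eta-L}. For the coproduct one inserts $\nattr_X=\tilde\iota_X$ into \eqref{eq:rA-cop}, applies $\zeta^{-1}_{\coend\otimes\coend}$, and uses the zig-zag identities and naturality of $\lambda,\runit$ to recognise \eqref{eq:cop-L}. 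For the antipode one inserts $\nattr_{X^*}=\tilde\iota_{X^*}$ into \eqref{eq:rA-S}, slides the $\coend$-leg past the inverse braiding $\brC_{\coend,X^*}^{-1}$ and the evaluation using naturality of the braiding and dinaturality of $\iota$, and identifies the remaining $\coev_X$–(inverse braiding)–$\ev_X$ subdiagram with $\tilde\uiso_U$ as in \eqref{eq:uiso-tw}; the result is \eqref{eq:antipode-L}.

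For the multiplication and the Hopf pairing one first needs the two-variable version of Lemma \ref{lem:I-Din}: the double coend $\coend\otimes\coend$ with family $\iota_U\otimes\iota_V$ (Remark \ref{rem:double-coend}) and the same object with the family $\natiso^2$ of \eqref{eq:natiso2} both represent, respectively, the functor of transformations dinatural in both arguments and the functor $\idt^2$, and these are intertwined by a natural isomorphism $\zeta^2$ built from two copies of the $\coev$–$\ev$ datum exactly as in \eqref{eq:tilde}, the braiding in \eqref{eq:natiso2} accounting for the transposition of the two $\coend$-legs. With this in hand, feeding $\nattr=\tilde\iota$ into \eqref{eq:rA-mult} and translating through $\zeta^2$ — using dinaturality of $\iota$ together with the definition \eqref{eq:gammaUV} of $\gamma_{V,U}$ — reproduces \eqref{eq:mult-L}; feeding $\nattr=\tilde\iota$ into \eqref{eq:rA-omega}, translating the inverse monodromy through $\zeta^2_\one$, and simplifying the zig-zags reproduces \eqref{eq:omega-L}. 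That $\omega_\coend$ is then a Hopf pairing is Theorem \ref{thm:coend-Hopf+pairing}.

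The genuinely laborious part is not any single identification but the coherence bookkeeping: because — unlike \cite{Majid:1993,Lyubashenko:1995} — we display every associator and unitor, each manipulation above must be carried out with all coherence isomorphisms present, and one must verify that the specific bracketings chosen in \eqref{eq:mult-L}--\eqref{eq:antipode-L} and \eqref{eq:omega-L} are exactly those produced by \eqref{eq:rA-cop}--\eqref{eq:rA-S} and \eqref{eq:rA-omega} after substituting \eqref{eq:natiso2} and \eqref{eq:tilde}. This is finite but tedious and uses nothing beyond the zig-zag identities, naturality, and Mac Lane coherence, so I would carry out one or two cases (say the coproduct and the pairing) in detail and leave the rest to the reader. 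A more conceptual alternative is to observe that $X\mapsto(X,\tilde\iota_X)$ exhibits the Lyubashenko Hopf algebra as one through which $\id_\cat$ factors as in \eqref{eq:id-factor-corep}, invoke Part 2 of Theorem \ref{thm:univ-Hopf} to obtain a unique Hopf map $\coend\to\coend$ compatible with the factorisations, and note it must be $\id$; but this still requires the comodule-axiom check for $\tilde\iota$ and a separate comparison of the pairings, so it does not really shorten the argument.
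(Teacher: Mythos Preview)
Your proposal is correct and follows essentially the same approach as the paper's proof in Appendix~\ref{app:univ-vs-coend}: translate between the two universal families via $\zeta$ from Lemma~\ref{lem:I-Din}, rewrite $\nattr_X=\tilde\iota_X$ in terms of $\iota$, and verify the defining relations match one structure morphism at a time. The single-variable cases (coproduct, counit, antipode) are handled exactly as you describe.

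There is one small tactical difference worth noting. For the multiplication and the pairing, the paper explicitly \emph{avoids} building the two-variable analogue $\zeta^2$ you propose, writing ``we use a direct calculation instead of the `double' analogue of the diagram~\eqref{eq:comm-diag-3}.'' Concretely, the paper takes the coend-side multiplication $\muc$ from \eqref{eq:mult-L}, feeds it into the explicit formula \eqref{eq:natiso2} for $\natiso^2_\coend$, and simplifies graphically (using naturality of the braiding, dinaturality of $\iota$, and the explicit form of $\gamma_{Y,X}$) to obtain $\nattr_{X\otimes Y}$, which is the defining relation \eqref{eq:rA-mult}. Your route instead goes the other direction, from the $\rA$-side through an inverse $\zeta^2$. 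Both are valid; the paper's direction is slightly shorter because $\natiso^2$ is already given to you in \eqref{eq:natiso2}, so there is no need to write down and justify the two-variable version of \eqref{eq:tilde-inv}. The same remark applies to the pairing.
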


The proof of this proposition is given in Appendix \ref{app:univ-vs-coend}.

In what follows we will mostly work with the description of the universal Hopf algebra as a coend, as this is the framework used in \cite{Lyubashenko:1995,Lyubashenko:1994tm} to obtain mapping class group actions on certain Hom spaces, see Section \ref{sec:SL2Z-action} below.

\section{Factorisable finite tensor categories}
	\label{sec:univ-Hopf-finite}

In this section we apply the construction of the universal Hopf algebra in the case of braided finite tensor categories. It is known that for such categories, the universal Hopf algebra exists (see Section \ref{sec:univ-Hopf-exists}). 
The Hopf pairing appears as one of several equivalent ways of characterising factorisability of such a category (see 
	Sections \ref{sec:factorisable-cat} and \ref{sec:fact-end-coend}). 

\subsection{Finite tensor categories} \label{subsec:finite-tens-cats}

Let $\ok$ be a field. Following \cite{Etingof:2003}, by a {\em finite tensor category} $\cat$ we mean:
\begin{itemize}
	\setlength{\leftskip}{-1.5em}
\item 
$\cat$ is a finite abelian $\ok$-linear category. Here {\em finite} means that $\cat$ is equivalent, as a $\ok$-linear category, to the category of finite-dimensional representations of a finite-dimensional $\ok$-algebra.
In particular, $\cat$ is essentially small.
\item
$\cat$ is a rigid monoidal category with simple tensor unit $\one$, such that the tensor product is a $\ok$-linear functor in each argument. As $\cat$ is rigid, the tensor product is automatically exact in each argument, see e.g.~\cite[Prop.\,2.1.8]{BK:2001}.
\end{itemize}
We note that in a finite tensor category, the dual of a projective object is again projective (see e.g.\ \cite[Sec.\,6.1]{EGNO-book}), and so each projective object is also injective. 

The finiteness condition implies the following useful representability property (see e.g.\ \cite[Cor.\,1.10]{DSS14}).

\begin{lemma}\label{lem:represent}
Let $\mathcal{A}$ be a finite $\ok$-linear abelian category and let $\fun : \mathcal{A} \to \vect_\ok$ be a $\ok$-linear left exact functor from $\mathcal{A}$ to finite-dimensional $\ok$-vector spaces. Then $\fun$ is representable, i.e.\ there is $A \in \mathcal{A}$ such that $\mathcal{A}(A,-)$ is naturally isomorphic to $\fun$.
\end{lemma}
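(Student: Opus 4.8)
The plan is to reduce Lemma~\ref{lem:represent} to the classical representability statement for module categories, using the definition of a finite $\ok$-linear abelian category as $\mathcal{A} \simeq A'\text{-mod}$ for some finite-dimensional $\ok$-algebra $A'$. First I would fix such an equivalence and transport $\fun$ along it, so that without loss of generality $\mathcal{A}$ is the category of finite-dimensional left $A'$-modules. The functor $\fun$ is then a $\ok$-linear left exact functor $A'\text{-mod} \to \vect_\ok$, and the goal is to produce an object $A \in A'\text{-mod}$ with a natural isomorphism $\mathcal{A}(A,-) \cong \fun$.

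Next I would recall the standard fact that on a finite-dimensional algebra, left exact $\ok$-linear functors to $\vect_\ok$ are exactly the functors of the form $\Hom_{A'}(P,-)$ for a finitely generated projective $P$ — or, slightly more flexibly, that any such functor is representable by a finite-dimensional module. The cleanest route is: $A'$-mod has enough projectives and the regular module $A'$ is a projective generator, so for any $n$ the module $(A')^{\oplus n}$ represents $M \mapsto \fun(M)$ only if $\fun$ is a subfunctor cut out appropriately. More precisely, I would apply $\fun$ to a presentation: pick a finite presentation of the identity functor in terms of $(A')^{\oplus \bullet}$, i.e. use that every $M$ has a presentation $(A')^{\oplus m} \to (A')^{\oplus n} \to M \to 0$, apply the left exact $\fun$ together with Yoneda $\mathcal{A}((A')^{\oplus k},M)\cong M^{\oplus k}$ (finite-dimensional), and deduce that $\fun$ is the kernel of a map between corepresentable functors $\mathcal{A}((A')^{\oplus n},-) \to \mathcal{A}((A')^{\oplus m},-)$; such a map is induced by a morphism $(A')^{\oplus m}\to (A')^{\oplus n}$, whose cokernel $A$ then represents $\fun$ by left exactness of $\mathcal{A}(-,-)$ in the first variable applied to the exact sequence. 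Finiteness of $\mathcal{A}$ is what guarantees the presenting objects can be taken to be finitely generated, so that $A$ lands in $\mathcal{A}$ rather than in Ind-completion.

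Alternatively, and perhaps more transparently, I would invoke the adjoint functor theorem in its elementary form for finite abelian categories: $\fun : \mathcal{A} \to \vect_\ok$ is left exact and $\mathcal{A}$ is a finite abelian category (hence has all finite limits and is equivalent to a small category), and $\vect_\ok$ is finite-dimensional, so $\fun$ preserves finite limits and — because all Hom-spaces are finite-dimensional and $\mathcal{A}$ has a generator — the solution set condition is automatic; this yields a left adjoint $G : \vect_\ok \to \mathcal{A}$, and then $A := G(\ok)$ represents $\fun$ via $\mathcal{A}(A,M) = \mathcal{A}(G(\ok),M) \cong \Hom_\ok(\ok,\fun(M)) \cong \fun(M)$, naturally in $M$. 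I would cite \cite[Cor.\,1.10]{DSS14} for the precise version, and present the module-category argument as the proof, since it is self-contained.

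The main obstacle I expect is purely bookkeeping: making sure the representing object $A$ is genuinely an object of the finite category $\mathcal{A}$ and not merely of its Ind-completion — i.e. that finite dimensionality of the target $\vect_\ok$ together with finiteness of $\mathcal{A}$ forces the presenting free modules to be \emph{finitely generated}. This hinges on the observation that $\dim_\ok \fun(A') < \infty$ bounds the number of copies of $A'$ needed, so the argument never leaves the finite-dimensional world. Everything else — naturality of the constructed isomorphism, transport along the equivalence $\mathcal{A}\simeq A'\text{-mod}$, and exactness manipulations — is routine, and I would simply remark that the result is standard and refer to \cite{DSS14} for details rather than spelling out the diagram chase.
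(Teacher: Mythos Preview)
Your proposal is correct, and in fact your ``alternative'' route via the left adjoint is precisely what the paper does: it simply observes that a $\ok$-linear left exact functor between finite $\ok$-linear abelian categories admits a left adjoint $\mathcal{G}$ (citing \cite[Cor.\,1.9]{DSS14}), and then takes $A = \mathcal{G}(\ok)$ so that $\fun(X) \cong \Hom_\ok(\ok,\fun(X)) \cong \mathcal{A}(\mathcal{G}(\ok),X)$. The paper does not give the module-theoretic presentation argument you outline first; that route is more self-contained but, as you note yourself, the description of how to obtain a \emph{single} pair $(m,n)$ and a \emph{single} map $(A')^{\oplus m}\to (A')^{\oplus n}$ whose cokernel represents $\fun$ (rather than one depending on $M$) needs to be made precise --- the clean way is to use the right $A'$-action on $\fun(A')$ to identify the representing object directly, rather than via per-object presentations.
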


This result follows from the more general observation that a $\ok$-linear left exact functor between two finite $\ok$-linear abelian categories admits a left adjoint (see e.g.\ \cite[Cor.\,1.9]{DSS14}). 
Indeed, in the case of the above lemma, if we denote the left adjoint of $\fun$ by $\mathcal{G}$, we have $\fun(X) \cong \Hom_\ok(\ok,\fun(X)) \cong \mathcal{A}(\mathcal{G}(\ok),X)$.

\subsection{Existence of the universal Hopf algebra}\label{sec:univ-Hopf-exists}

Let $\ok$ be a field and let $\cat$ be a $\ok$-linear braided finite tensor category. Using exactness of the tensor product, it is straightforward to verify that the functor $\natf : \cat \to \vect_\ok$ from \eqref{eq:idt} is left exact. 
Lemma \ref{lem:represent} now implies \cite{Majid:1993,Lyubashenko:1995}:

\begin{proposition}\label{prop:univHopf-exists}
The universal Hopf algebra from Theorem \ref{thm:univ-Hopf} exists in $\cat$.
\end{proposition}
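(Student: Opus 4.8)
The plan is to derive the proposition from the representability statement Lemma~\ref{lem:represent}: I will check that the functor $\natf\colon\cat\to\vect_\ok$ of \eqref{eq:idt} is a $\ok$-linear left exact functor taking finite-dimensional values, so that it is representable, and that the representing object is then the universal Hopf algebra of Theorem~\ref{thm:univ-Hopf}.

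First I would observe that $\natf$ is $\ok$-linear, since natural transformations between $\ok$-linear endofunctors of $\cat$ form a $\ok$-vector space, functorially in $V$. To see that $\natf(V)=\Nat(\id,\id\tensor V)$ is finite-dimensional, I would fix a projective generator $P$ of $\cat$, which exists because $\cat$ is equivalent as a $\ok$-linear category to $\rep B$ for a finite-dimensional $\ok$-algebra $B$. Any $\eta\in\natf(V)$ is additive (being a natural transformation of additive functors), hence determined on finite direct sums by its components on the summands; and for every $X\in\cat$ there is an epimorphism $q\colon P^{\oplus n}\twoheadrightarrow X$, so naturality forces $\eta_X\circ q=(q\tensor\id_V)\circ\eta_{P^{\oplus n}}$, where $q\tensor\id_V$ is an epimorphism because $-\tensor V$ is exact. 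Thus $\eta_X$ is determined by $\eta_P$; in particular $\eta_P=0$ forces $\eta=0$, so the $\ok$-linear map $\eta\mapsto\eta_P$ is injective and $\natf(V)$ embeds into the finite-dimensional space $\cat(P,P\tensor V)$.

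Next I would verify that $\natf$ is left exact. For each fixed $X$ the functor $V\mapsto\cat(X,X\tensor V)$ is left exact: applying $\cat(X,-)$ to $0\to X\tensor V'\to X\tensor V\to X\tensor V''$, which is exact because $X\tensor-$ is exact, gives a left exact sequence of $\ok$-vector spaces, compatibly with the naturality constraints defining $\Nat$; equivalently, $\natf(V)=\Nat(\id,\id\tensor V)=\int_{X\in\cat}\cat(X,X\tensor V)$ is an end, hence a limit of left exact functors, and thus itself left exact. Lemma~\ref{lem:represent} then produces an object $\rA\in\cat$ with a natural isomorphism $\cat(\rA,-)\cong\natf$; that is, $\rA$ represents $\natf$ in the sense of Section~\ref{sec:rA}, so Theorem~\ref{thm:univ-Hopf} endows it with the universal Hopf algebra structure, and by Corollary~\ref{cor:coend-Nat-rep} the coend $\coend$ of \eqref{eq:coend} exists as well.

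I do not expect a serious obstacle here, since the statement amounts to checking that $\natf$ satisfies the hypotheses of the representability lemma. The only substantive input is the finite-dimensionality of the values of $\natf$, which as above rests on the existence of a projective generator together with exactness of the tensor product; left exactness of $\natf$ is then formal, following from exactness of $\otimes$ in each variable and left exactness of the Hom functors.
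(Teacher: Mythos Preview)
Your proposal is correct and follows essentially the same approach as the paper: verify that $\natf$ is a $\ok$-linear left exact functor into $\vect_\ok$, then invoke Lemma~\ref{lem:represent} and Theorem~\ref{thm:univ-Hopf}. The paper compresses this into a single sentence (``using exactness of the tensor product, it is straightforward to verify that $\natf:\cat\to\vect_\ok$ is left exact''), whereas you spell out the finite-dimensionality check via a projective generator and the end description of $\natf$; these are exactly the details one would supply if asked to unpack that sentence.
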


Since we will mostly use the coend perspective (Proposition \ref{prop:univ-vs-coend}), we will denote the universal Hopf algebra in $\cat$ by $\coend$ in what follows.

We can describe $\coend$ more explicitly as a cokernel. Since $\cat$ is finite, it contains a projective generator $G$. The coend can be written as a quotient of $G^* \otimes G$: 

\begin{proposition}[{\cite[Cor.\,5.1.8]{Kerler:2001}}]
We have the short exact sequence
\begin{equation}\label{eq:L-P}
0\longrightarrow K \longrightarrow G^\ast\otimes G \xrightarrow{\;\;\;\pi\;\;\;} \coend \longrightarrow 0 \,
\end{equation}
where $K$ is the image of the map 
$
\bigoplus_i (f_i^\ast\otimes\id- \id\otimes f_i):\;  \bigoplus_i G^\ast\otimes G \longrightarrow G^\ast\otimes G 
$
and the  direct sum is taken over a basis $\{f_i \in \End_\cat(G)\}$.
\end{proposition}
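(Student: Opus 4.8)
The plan is as follows. We already know from Proposition~\ref{prop:univHopf-exists} that the coend $(\coend,\iota)$ of~\eqref{eq:coend} exists, and I keep writing $\pi=\iota_G$. Dinaturality of $\iota$ applied to the morphisms $f\in\End_\cat(G)$ gives $\iota_G\circ(f^\ast\otimes\id_G)=\iota_G\circ(\id_{G^\ast}\otimes f)$, so $\pi$ annihilates $K$ and therefore factors as $\pi=\bar\pi\circ p$ with $p\colon G^\ast\otimes G\to Q:=(G^\ast\otimes G)/K$ the canonical projection. The whole statement is then equivalent to the assertion that $\bar\pi\colon Q\to\coend$ is an isomorphism, since in that case $\pi=\bar\pi\circ p$ is an epimorphism and $\ker\pi=\ker p=K$. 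I would prove this by equipping $Q$ itself with the structure of a coend for the functor $F=(-)^\ast\otimes(-)$ from~\eqref{eq:F} and then checking that the canonical comparison morphism coincides with $\bar\pi$.

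First I would build the dinatural family $\iota'_X\colon X^\ast\otimes X\to Q$. On the full additive subcategory $\mathrm{add}(G)$ of finite direct sums of $G$ I would set
\[
  \iota'_{G^{\oplus n}}~:=~\Big[\,(G^{\oplus n})^\ast\otimes G^{\oplus n}\xrightarrow{\ \sum_{j=1}^{n}\kappa_j^\ast\otimes\varpi_j\ }G^\ast\otimes G\xrightarrow{\ p\ }Q\,\Big],
\]
where $\kappa_j\colon G\to G^{\oplus n}$ and $\varpi_j\colon G^{\oplus n}\to G$ are the biproduct injections and projections. Writing an arbitrary morphism $G^{\oplus n}\to G^{\oplus m}$ as a matrix $(h_{ab})$ over $\End_\cat(G)$, the dinaturality relation for $\iota'$ along this morphism reduces term by term to the single identity $p\circ(h_{ab}^\ast\otimes\id_G)=p\circ(\id_{G^\ast}\otimes h_{ab})$, i.e.\ to the very relation defining $Q$; so $\iota'$ is dinatural on $\mathrm{add}(G)$. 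For a general object $X$ I would pick, using that $G$ is a projective generator, a presentation $G^{\oplus m}\xrightarrow{\,d\,}G^{\oplus n}\xrightarrow{\,q\,}X\to 0$. Since $\otimes$ is exact in each variable, $\id_{X^\ast}\otimes q$ is the cokernel of $\id_{X^\ast}\otimes d$, and the composite $\iota'_{G^{\oplus n}}\circ(q^\ast\otimes\id_{G^{\oplus n}})$ annihilates $\im(\id_{X^\ast}\otimes d)$ because $q\circ d=0$ together with dinaturality of $\iota'$ on $\mathrm{add}(G)$ forces $\iota'_{G^{\oplus n}}\circ(q^\ast\otimes d)=0$. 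Hence there is a unique $\iota'_X$ with $\iota'_X\circ(\id_{X^\ast}\otimes q)=\iota'_{G^{\oplus n}}\circ(q^\ast\otimes\id)$, and a routine lifting argument along the projective objects $G^{\oplus n}$ shows that $\iota'_X$ is independent of the chosen presentation and that the family $(\iota'_X)_{X\in\cat}$ is dinatural in $X$.

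Then I would verify the universal property. Given an object $B\in\cat$ and a dinatural transformation $\psi\colon F\stackrel{..}{\longrightarrow}B$, the component $\psi_G$ coequalizes $f^\ast\otimes\id_G$ and $\id_{G^\ast}\otimes f$ for all $f\in\End_\cat(G)$, hence annihilates $K$ and factors uniquely as $\psi_G=g\circ p$ for a unique $g\colon Q\to B$. Dinaturality of $\psi$ for the biproduct maps gives $\psi_{G^{\oplus n}}=\psi_G\circ(\sum_j\kappa_j^\ast\otimes\varpi_j)$, and combining this with dinaturality of $\psi$ for the epimorphism $q$ and with the defining relation of $\iota'_X$ yields $\psi_X\circ(\id_{X^\ast}\otimes q)=g\circ\iota'_X\circ(\id_{X^\ast}\otimes q)$; since $\id_{X^\ast}\otimes q$ is epi this gives $\psi_X=g\circ\iota'_X$ for all $X$, and $g$ is unique because $\iota'_G=p$ is epi. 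Thus $(Q,\iota')$ is a coend for $F$. By uniqueness of coends the canonical comparison morphism $Q\to\coend$ --- the unique $h$ with $\iota_X=h\circ\iota'_X$ for all $X$ --- is an isomorphism; evaluating at $X=G$, where $\iota'_G=p$ and $\iota_G=\pi$, gives $h\circ p=\pi=\bar\pi\circ p$, so $h=\bar\pi$ because $p$ is epi. Hence $\bar\pi$ is an isomorphism, which is exactly the claimed short exact sequence.

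I expect the main obstacle to be the construction of $\iota'$ and, within it, the reduction of all dinaturality conditions to the single family of relations $p\circ(f^\ast\otimes\id_G)=p\circ(\id_{G^\ast}\otimes f)$: one has to know that the relations coming from $\End_\cat(G)$ already "generate" the coend, which is where the two structural features of a finite tensor category enter crucially --- exactness of the tensor product (so that $\id_{X^\ast}\otimes q$ stays a cokernel) and projectivity of the generator $G$ (so that morphisms out of the $G^{\oplus n}$ lift along epimorphisms). Once $\iota'$ is in hand, the remaining comparison with $\bar\pi$ is formal.
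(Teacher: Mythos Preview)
The paper does not give its own proof of this proposition; it is stated as a citation to \cite[Cor.\,5.1.8]{Kerler:2001}. So there is nothing to compare against on the paper's side.

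Your argument is correct and is essentially the standard proof that in a finite tensor category the coend $\int^{X} X^*\otimes X$ is the coequaliser of the two maps $f^*\otimes\id$ and $\id\otimes f$ over $f\in\End_\cat(G)$. The key structural inputs you use --- exactness of the tensor product in each variable and the existence of a projective generator $G$ --- are exactly what is available in the paper's setting, and you invoke them at the right places: exactness to ensure that $\id_{X^*}\otimes q$ remains a cokernel, and projectivity of $G^{\oplus n}$ to make the ``routine lifting argument'' for independence of presentation and dinaturality go through. The reduction to $(Q,\iota')$ being a coend and the identification $h=\bar\pi$ via $\iota'_G=p$ are clean. One small remark: your verification that $\iota'_{G^{\oplus n}}\circ(q^*\otimes d)=0$ implicitly uses dinaturality of $\iota'$ on $\mathrm{add}(G)$ for the morphism $d$ together with $(q\circ d)^*=0$; this is fine, but it is worth stating explicitly since it is the step where the relations defining $K$ actually do their work.
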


Note that $K$ in the above proposition is independent of the choice of the basis in $\mathrm{End}_\cat(G)$.

\newcommand{\surj}{\to}

\begin{remark}
The coend $\coend$ is equipped with the family of dinatural transformations $\iota_X: X^{\ast}\tensor X\to \coend$. The latter can be defined in terms of the  surjective map $\pi$ from~\eqref{eq:L-P} as follows: note first that the map $\pi$ in~\eqref{eq:L-P} is by definition $\iota_G$; we fix then a surjective map $f_X: G^{\oplus m}\surj X$ (for some $m\geq1$) and define $\iota_X$ by the equality of the (composition of the) maps 
\be\label{eq:P-iota}
[X^*\tensor G^{\oplus m} \xrightarrow{\; \id_{X^\ast}\tensor f_X\;} X^*\tensor X  \xrightarrow{\; \iota_X\;} \coend ] 
= [X^*\tensor G^{\oplus m} \xrightarrow{\; f^*_X\tensor \id \;} (G^{\oplus m})^*\tensor G^{\oplus m}  \xrightarrow{\; \iota_{G^{\oplus m}}\;} \coend ]
\ee
or graphically 
\be\label{eq:P-iota-pic-GX} 
   \ipic{din-trafo-f-lhs}{.11}
	 \put(-34,55){\scriptsize$\coend$} 
	 \put(-24,40){\scriptsize$\iota_{X}$}
	 \put(-54,-64){\scriptsize$X^\ast$} 
	 \put(-19,7){\scriptsize$X$}
	 \put(-19,-24){\scriptsize$f_X$}
	 \put(-19,-64){\scriptsize$G^{\oplus m}$}
	 \quad = \quad
	 \ipic{din-trafo-f-rhs}{.11}
	 \put(-26,55){\scriptsize$\coend$} 
	 \put(-17,40){\scriptsize$\iota_{G^{\oplus m}}$}
	 \put(-48,-64){\scriptsize$X^\ast$} 
	 \put(-51,7){\scriptsize$(G^{\oplus m})^*$}
	 \put(-46,-24){\scriptsize$f_X^*$}
	 \put(-13,-65){\scriptsize$G^{\oplus m}$}
\ee
where $\iota_{G^{\oplus m}}$ is defined using~\eqref{eq:P-iota} for $X=G$ and recall that $\iota_G=\pi$ is given to us. 
\end{remark}

We can describe $\coend$ as a quotient of an even 
smaller projective object. Namely, 
the surjective map $\pi$ in~\eqref{eq:L-P} factors through a `diagonal' product of the projective covers in $\cat$. Indeed, let  
\be\label{eq:IrrC-def}
\Irr(\cat)
\ee 
be a choice of representatives of the isomorphism classes of simple objects in $\cat$ and let us denote by $P_U$ 
a projective cover of~$U \in \Irr(\cat)$. 

\begin{proposition}
We have the 
	short
exact sequence
\begin{equation}\label{eq:L-P-2}
0\longrightarrow W \longrightarrow \bigoplus_{U\in\Irr(\cat)} P_U^\ast\otimes P_U \xrightarrow{\;\;\;\tilde\pi\;\;\;} \coend \longrightarrow 0 \,
\end{equation}
where the object $W$ (i.e.\ the kernel of $\tilde\pi$) is the union 
in $\bigoplus_{U} P_U^\ast\otimes P_U$ of
the images\footnote{
 That is, the smallest subobject of
$\bigoplus_{U} P_U^\ast\otimes P_U$ containing all these images.
}
\begin{equation}\label{eq:W-1}
 \im(f^* \tensor \id_{P_U}) 
 ~~,~~
 \im(\id_{P_V^*} \tensor  f) \ , \quad \text{for}\quad 
 U,V\in\Irr(\cat)~,~
 U\ne V~,~
 f: P_U\to P_V  \ ,
\end{equation}
and
\begin{equation}\label{eq:W-2}
\im (f^\ast\otimes\id_{P_U} - \id_{P_{U^*}}\otimes f)\  , \quad \text{for}\quad U\in\Irr(\cat)~,~f: P_U\to P_U\ .
\end{equation}
\end{proposition}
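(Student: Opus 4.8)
The statement refines the presentation \eqref{eq:L-P} by replacing the projective generator $G$ with the ``diagonal'' sum $\bigoplus_{U \in \Irr(\cat)} P_U^* \ot P_U$. The plan is to exhibit the map $\tilde\pi$ as a suitable co-restriction of $\pi$ and then identify its kernel. First I would choose $G := \bigoplus_{U \in \Irr(\cat)} P_U$ as the projective generator in \eqref{eq:L-P}; this is legitimate since a finite abelian category has the direct sum of the projective covers of all simple objects as a projective generator. With this choice, $G^* \ot G \;\cong\; \bigoplus_{U,V \in \Irr(\cat)} P_V^* \ot P_U$, and the ``diagonal'' summand $\bigoplus_U P_U^* \ot P_U$ sits inside $G^* \ot G$ as the sub-biproduct with $U = V$. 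I would define $\tilde\pi$ to be the restriction of $\pi$ to this diagonal summand, or equivalently via the dinatural transformations $\iota_{P_U} : P_U^* \ot P_U \to \coend$ assembled over $U \in \Irr(\cat)$.

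The key step is to prove that $\tilde\pi$ is still surjective. For this I would use that every $X \in \cat$ admits a surjection $G^{\oplus m} \twoheadrightarrow X$, hence (by the reconstruction of $\iota_X$ from $\pi$ as in \eqref{eq:P-iota}) the image of $\pi$ already equals all of $\coend$; it then suffices to show that the off-diagonal blocks $P_V^* \ot P_U$ with $U \ne V$ contribute nothing new modulo the diagonal blocks. This is exactly the content of the relations $\im(f^* \ot \id_{P_U})$ and $\im(\id_{P_V^*} \ot f)$ for $f : P_U \to P_V$, $U \ne V$: given $\xi \in P_V^* \ot P_U$, dinaturality \eqref{eq:dinat-iota-f} applied to a morphism factoring $\xi$ through a diagonal object lets one rewrite $\pi$ on $\xi$ in terms of $\iota$ on diagonal summands. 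More precisely, since $\cat(P_U, P_V)$ pairs with everything through the counit-type maps, I would show $\pi$ restricted to the off-diagonal part $\bigoplus_{U\ne V} P_V^* \ot P_U$ lands in $\tilde\pi\bigl(\bigoplus_U P_U^* \ot P_U\bigr)$, so $\tilde\pi$ is onto.

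The heart of the argument is the identification of $\ker\tilde\pi = W$. Here I would run the general coend-as-cokernel formula \eqref{eq:L-P} through the decomposition $\End_\cat(G) = \bigoplus_{U,V} \cat(P_U, P_V)$: the kernel $K$ of $\pi$ is spanned by $\im(f^* \ot \id - \id \ot f)$ for $f$ ranging over a basis of $\End_\cat(G)$, and each such $f$ decomposes into its homogeneous components $f_{UV} : P_U \to P_V$. For a component with $U \ne V$, the two terms $f_{UV}^* \ot \id_{P_U}$ and $\id_{P_V^*} \ot f_{UV}$ land in \emph{different} summands $P_V^* \ot P_U$ and $P_V^* \ot P_U$ — wait, both land in $P_V^* \ot P_U$; one must be careful that $f^*\ot\id$ has source $P_V^*\ot P_U$ with target inside $P_U^*\ot P_U$ and $\id\ot f$ has source $P_U^*\ot P_U$ with target inside $P_V^*\ot P_U$. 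Tracking sources and targets carefully, the cross terms decouple and one reads off that, after using surjectivity to pass to the diagonal presentation, $\ker\tilde\pi$ is generated by exactly the images in \eqref{eq:W-1} (the off-diagonal pieces, which must be killed for $\tilde\pi$ to be defined on the diagonal quotient) together with \eqref{eq:W-2} (the genuine relations $f^* \ot \id - \id \ot f$ for endomorphisms $f : P_U \to P_U$, where now note $P_{U^*}$ appears because $f^*: P_U^* \to P_U^*$ but the dinaturality square for $\iota$ relates $P_U^* \ot P_U$ to $P_{U^*}^{**} \ot P_{U^*}^*$, and $P_U^* \cong P_{U^*}$ since duals of projective covers of simples are projective covers of the dual simples). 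I expect the main obstacle to be precisely this bookkeeping of which summand each image lands in — in particular verifying $P_U^* \cong P_{U^*}$ (using that $(-)^*$ sends simples to simples and preserves projectivity in a finite tensor category, \cite[Sec.\,6.1]{EGNO-book}) and checking that no relations are lost or double-counted when passing from the generator $G$ to the diagonal sum. Once the block decomposition is set up cleanly, the rest is a diagram chase using \eqref{eq:dinat-iota-f} and the universal property of the coend.
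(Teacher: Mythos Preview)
Your plan is the paper's plan --- take $G=\bigoplus_U P_U$ and read the kernel off from the block decomposition of $\End_\cat(G)$ --- but you are missing the single step that dissolves all the bookkeeping you flag as the ``main obstacle''. Before touching any radical maps, feed the \emph{primitive idempotents} $e_U\colon G\to P_U\hookrightarrow G$ into the relation $\im(f^*\otimes\id_G-\id_{G^*}\otimes f)\subset K$. One checks directly that
\[
\im(e_U^*\otimes\id-\id\otimes e_U)\;=\;\bigoplus_{V\ne U}\bigl(P_U^*\otimes P_V\ \oplus\ P_V^*\otimes P_U\bigr),
\]
so every off-diagonal block $P_V^*\otimes P_U$ with $U\ne V$ lies \emph{entirely} in $K=\ker\pi$. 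This is strictly stronger than your ``off-diagonal contributes nothing new modulo diagonal'': it shows $\pi$ factors through the projection onto the diagonal, giving $\tilde\pi$ and its surjectivity for free, with no separate argument needed. The remaining generators of $K$ --- maps $f$ in the radical of $\End_\cat(G)$ --- then produce $W=\ker\tilde\pi$ as their projection to the diagonal, and your mid-proof confusion about targets disappears: for $f\colon P_U\to P_V$ with $U\ne V$, the map $f^*\otimes\id_{P_U}$ has target $P_U^*\otimes P_U$ and $\id_{P_V^*}\otimes f$ has target $P_V^*\otimes P_V$, and each image lies \emph{separately} in $W$ because $\pi$ already kills their common source $P_V^*\otimes P_U$.

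Two smaller points. The images in \eqref{eq:W-1} are subobjects of the \emph{diagonal} summands $P_U^*\otimes P_U$ and $P_V^*\otimes P_V$, not ``the off-diagonal pieces which must be killed''. And $\id_{P_{U^*}}$ in \eqref{eq:W-2} is a typo for $\id_{P_U^*}$; there is no need to invoke $P_U^*\cong P_{U^*}$ or double duals.
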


\begin{proof}
In the exact sequence~\eqref{eq:L-P}, we can  choose $G$ as the minimal projective generator $G=\oplus_{U\in\Irr(\cat)} P_U$  in $\cat$.
Denote the (primitive) idempotents in $\End_\cat(G)$ by 
$e_U: G\to P_U \to G$, 
for $U\in\Irr(\cat)$. The image of the map $e_U^\ast\otimes\id - \id\otimes e_U \in \End_\cat(G^*\tensor G)$ equals $\bigoplus_{V\ne U}P_U^\ast\otimes P_V \oplus P_V^\ast\otimes P_U$. Therefore, the kernel $K$ of $\pi$ contains all $P_U^\ast\otimes P_V$ such that $V \ncong U$, 
and the map $\pi$ factors through a map $\tilde\pi$ from the diagonal part of $G^*\tensor G$ to $\coend$: 
\begin{equation}\label{eq:L-P-2b}
\pi: \quad G^\ast \otimes G \longrightarrow \bigoplus_{U\in\Irr(\cat)} P_U^\ast\otimes P_U  \xrightarrow{\;\;\;\tilde\pi\;\;\;} \coend\ .
\end{equation}
We compute the kernel $W$ of $\tilde\pi$ using the rest of the
basis elements in $\End_\cat(G)$, those in the radical, and it gives the
span of~\eqref{eq:W-1} and~\eqref{eq:W-2}.
\end{proof}

\begin{remark}
If $\cat$ is semisimple, then by 
using~\eqref{eq:L-P-2} 
with $P_U=U$ we see that the coend $\coend$ is the direct sum over
 (isomorphism classes of)
 simple objects:
\begin{equation}\label{eq:coend-ss}
\coend = \bigoplus_{U\in\Irr(\cat)} U^\ast\tensor U \ .
\end{equation}
The corresponding family of dinatural transformations can be easily described using  the corresponding embeddings $i_{U}: U^*\otimes U\to \coend$, for details see~\cite[Lem.\,2]{Kerler:1996}.
\end{remark}

\subsection{Factorisability of a braided finite tensor category}\label{sec:factorisable-cat}

Let $\ok$ be a field and let $\cat$ be a $\ok$-linear braided finite tensor category.

By Proposition \ref{prop:univHopf-exists}, the universal Hopf algebra $\coend$ exists in $\cat$, and by Theorem \ref{thm:coend-Hopf+pairing} it is equipped with a Hopf pairing $\omega_\coend : \coend \otimes \coend \to \one$. Recall from Definition \ref{def:non-deg-Hopf-pair} that $\omega_\coend$ is called non-degenerate if the Hopf algebra homomorphism $\mathcal{D}_\coend : \coend \to \coend^*$ in \eqref{eq:Omega} is an isomorphism. 
Note that the kernel of $\mathcal{D}_\coend$ is the left annihilator of $\omega_\coend$.

\begin{definition}\label{def:fact-cat}
$\cat$ is called {\em factorisable} if the Hopf pairing $\omega_\coend$ is non-degenerate.\footnote{This definition is due to~\cite{Lyubashenko:1995, Kerler:2001} where the name `modular' is used. 
The usual definition of ``modular tensor category'' implies semisimplicity. But in view of fact that the qualifier ``modular'' is motivated by the projective action of the modular group, it would equally make good sense to speak of ``modular fusion category'' and of ``modular finite tensor category''. 
However, to avoid confusion we stick to the term ``factorisable'' in this paper, which is motivated from the application to Hopf algebras and quasi-Hopf algebras (see Section \ref{sec:ribbon-qHopf}).
}
\end{definition}

	In particular, 
in a factorisable finite tensor category the universal Hopf algebra $\coend$ is self-dual as a Hopf algebra.

Next we recall three more natural non-degeneracy conditions on the braiding of $\cat$ and then quote a theorem from \cite{Shimizu:2016} which states that for $\ok$ algebraically closed, they are all equivalent
to Definition~\ref{def:fact-cat}. 
A fourth equivalent
condition will be given later (Proposition \ref{prop:C-DD}).
We will need the $\ok$-linear map
\be \label{eq:Omega-def}
\Omega : \Cc(\one,\coend) \longrightarrow \Cc(\coend,\one) \quad , \quad a \mapsto \omega_\coend \circ (a \tensor \id) \circ \lambda_\coend^{-1}  \ .
\ee
The three conditions are:
\begin{enumerate}\setlength{\leftskip}{-1.5em}
\item The linear map $\Omega$ from~\eqref{eq:Omega-def} is an isomorphism.
\item Every transparent object in $\Cc$ is isomorphic to a direct sum of tensor units. ($T \in\Cc$ is {\em transparent} if for all $X \in \Cc$, $c_{X,T} \circ c_{T,X} = \id_{T\ot X}$.)
\item The canonical braided monoidal functor 
$\Cc \boxtimes \overline{\Cc} \to \Zc(\Cc)$ 
is an equivalence. (Here, $\boxtimes$ is the Deligne product, $\overline{\Cc}$ is the same tensor category as $\Cc$, but has inverse braiding, 
and $\Zc(\Cc)$ is the Drinfeld centre of $\Cc$.)
\end{enumerate}

\begin{theorem}[\cite{Shimizu:2016}]\label{thm:factequiv}
A braided finite tensor category over an algebraically closed field is factorisable if and only if any one of the conditions (1)--(3) is satisfied.
\end{theorem}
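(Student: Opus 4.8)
The plan is to quote the equivalences from \cite{Shimizu:2016}, but to indicate how one sees each implication, since the statement assembles several results. I would organize the argument around the Hopf algebra $\coend$ and its pairing $\omega_\coend$, using that $\cat$ is a braided finite tensor category so that $\coend$ exists (Proposition \ref{prop:univHopf-exists}) and carries the Hopf pairing of Theorem \ref{thm:coend-Hopf+pairing}. First I would observe that the map $\Omega$ of \eqref{eq:Omega-def} is, up to the canonical identification $\cat(\coend,\one) \cong \cat(\one,\coend^*)$ coming from rigidity, precisely the map induced by $\mathcal{D}_\coend : \coend \to \coend^*$; hence non-degeneracy of $\omega_\coend$ (Definition \ref{def:fact-cat}, i.e.\ $\mathcal{D}_\coend$ an isomorphism) immediately gives condition (1), and conversely if $\Omega$ is an isomorphism then $\mathcal{D}_\coend$ is injective; being a morphism between objects of the same (finite) length whose cokernel one controls via the left annihilator of $\omega_\coend$, injectivity forces bijectivity. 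This handles the equivalence of Definition \ref{def:fact-cat} with (1).

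For the equivalence of (1) with (2) I would invoke the description of the left annihilator of $\omega_\coend$ and its relation to transparent objects: a transparent object $T$ produces, via the dinatural family $\iota$, elements of $\cat(\one,\coend)$ annihilated by $\Omega$, because transparency makes the inverse-monodromy insertions in \eqref{eq:omega-L} act trivially; conversely the kernel of $\Omega$ can be shown to come from the subcategory of transparent objects (this is the content of Shimizu's analysis of the ``Müger centre'' via $\coend$). Thus $\Omega$ is injective exactly when the only transparent objects are sums of $\one$. The equivalence of (2) with (3) is the statement that the Müger centre of $\cat$ is trivial iff the natural functor $\cat \boxtimes \overline{\cat} \to \Zc(\cat)$ is an equivalence; here I would cite the relevant result (\cite{Shimizu:2016}, building on the Hopf-algebraic description of $\Zc(\cat)$ and the fact that both sides have the same Frobenius--Perron/Frobenius-type dimension, so that a fully faithful functor between finite tensor categories of equal dimension is an equivalence).

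The step I expect to be the main obstacle is the bridge between the ``internal'' condition (1)/non-degeneracy of $\omega_\coend$ and the ``external'' condition (2) on transparent objects: one must show that \emph{every} element of $\ker\Omega$ is accounted for by transparent objects, not merely that transparent objects give such elements. This requires the full strength of the coend machinery — analysing $\cat(\one,\coend)$ as $\End(\id_\cat)$, identifying the image of the transparent subcategory therein, and showing the pairing restricted to this image is the obstruction to non-degeneracy. Since all of this is precisely Theorem 1.1 of \cite{Shimizu:2016}, in the paper I would state the result as a citation and content myself with the remarks above explaining the easy directions and the role of $\mathcal{D}_\coend$; a self-contained proof would essentially reproduce \cite{Shimizu:2016} and is beyond the scope here.
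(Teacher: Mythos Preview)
The paper does not prove this theorem: it is stated with the attribution \cite{Shimizu:2016} and no proof is given. Your proposal ultimately reaches the same conclusion (``state the result as a citation''), so in that sense you match the paper exactly.

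Your additional sketch goes beyond what the paper provides, and most of it is in the right spirit. One point to be careful with, should you ever expand it: the step ``if $\Omega$ is an isomorphism then $\mathcal{D}_\coend$ is injective'' is not immediate. The map $\Omega$ is the action of $\mathcal{D}_\coend$ on $\cat(\one,-)$, and the kernel $K \subset \coend$ of $\mathcal{D}_\coend$ need not admit any nonzero morphism $\one \to K$, so injectivity of $\Omega$ alone does not force $K=0$. Shimizu's argument uses more than this Hom-level statement; it analyses the full structure of $\coend$ and its relation to the M\"uger centre. Since you correctly flag this bridge between (1) and (2) as ``the main obstacle'' and defer to \cite{Shimizu:2016}, this does not undermine your plan, but the heuristic you give for the easy direction of factorisable $\Leftrightarrow$ (1) should not be mistaken for a two-line proof.
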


\begin{remark}
In the case that $\cat$ is semisimple, this equivalence was already known from \cite{Bruguieres:2000,Muger2001b}. $\cat$ is then a modular tensor category (minus the ribbon structure), and condition (1) above encodes the non-degeneracy of the $S$-matrix whose entries given by the 
	quantum traces of the
monodromy of pairs of simple objects (see \cite[Sec.\,5.1]{Shimizu:2016} for details).
\end{remark}

The following result is instrumental in the construction of the projective $SL(2,\oZ)$-action below.

\begin{proposition}[{\cite{Lyubashenko:1995} and \cite[Sect.\,5.2.3]{Kerler:2001}}]\label{prop:integrals-exist}
If $\cat$ factorisable, the coend $\coend$ has a two-sided (that is, a simultaneous left and right) integral
$\Lambda_\coend : \tid \to \coend$
satisfying
\be\label{eq:integral-normalisation}
\omega_\coend \circ (\Lambda_\coend \otimes \Lambda_\coend) \circ \lambda_{\one}^{-1}
~=~ k \, \id_{\tid}
\ee 
for some $k \in \ok^\times$. If $\Lambda_\coend'$ is another such integral, then $\Lambda_\coend' = r\, \Lambda_\coend$ for some $r \in \ok^\times$.
\end{proposition}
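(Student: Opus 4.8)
The plan is to build the two-sided integral $\Lambda_\coend$ by combining a cointegral on $\coend$ with the non-degeneracy of $\omega_\coend$, using \refh{Lemma}{lem:coint-from-int}. First I would establish that $\coend$ possesses a right cointegral $\coint_\coend : \coend \to \tid$. This follows because $\cat$ is a finite tensor category: the functor $\natf = \Nat(\id,\id\tensor-)$ is represented by $\coend$, which as an object is a quotient of $\bigoplus_{U\in\Irr(\cat)} P_U^*\otimes P_U$ by \eqref{eq:L-P-2} and hence is projective (duals of projectives are projective in a finite tensor category, as noted after \refh{Lemma}{lem:represent}), and in fact injective. For a Hopf algebra in $\cat$ whose underlying object is projective-injective — equivalently, a quotient of a projective generator in the way described — the space of cointegrals $\cat(\coend,\tid)$ is one-dimensional; this is the categorical analogue of the classical fact that a finite-dimensional Hopf algebra has a one-dimensional space of cointegrals, and it is exactly the content of \cite{Lyubashenko:1995,Kerler:2001} that is being invoked. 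I would simply cite this: pick a nonzero right cointegral $\coint_\coend$, unique up to a scalar.

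Next, since $\cat$ is factorisable, $\omega_\coend$ is non-degenerate, so by \refh{Lemma}{lem:coint-from-int} there exists a unique morphism $\Lambda_\coend : \tid \to \coend$ with
\[
\coint_\coend ~=~ \Big[\coend\xrightarrow{\sim} \tid\,\coend \xrightarrow{\Lambda_\coend\ot\id} \coend\,\coend \xrightarrow{\omega_\coend} \tid\Big],
\]
and moreover \refh{Lemma}{lem:coint-from-int} guarantees that this $\Lambda_\coend$ is automatically a \emph{left} integral. To upgrade it to a two-sided integral I would invoke the ribbon (hence pivotal, hence unimodular) structure: in a factorisable — more generally, in a unimodular — finite ribbon category, the distinguished invertible object governing the discrepancy between left and right integrals on $\coend$ is trivial, so a left integral is automatically also a right integral. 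Concretely, one checks that $S_\coend\circ \Lambda_\coend$ is a right integral, and that the antipode fixes $\Lambda_\coend$ up to the scalar given by evaluating the modulus; factorisability forces this scalar to be $1$ (again, this is part of \cite{Lyubashenko:1995,Kerler:2001}). Alternatively, one can argue directly that the space of left integrals and the space of right integrals are both $1$-dimensional and both equal to the image under $\zeta^{-1}$ of a distinguished natural transformation, so they coincide.

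It remains to prove the normalisation \eqref{eq:integral-normalisation}. Set $k := \omega_\coend\circ(\Lambda_\coend\otimes\Lambda_\coend)\circ\lambda_\one^{-1} \in \cat(\tid,\tid)=\ok$. Using the defining property of $\Lambda_\coend$ just above, $k = \coint_\coend\circ\Lambda_\coend$. I would argue $k\neq 0$ as follows: if $k=0$, then $\Lambda_\coend$ lies in the left annihilator of $\omega_\coend$ paired against $\Lambda_\coend$, and more usefully one can show that $\coint_\coend\circ\Lambda_\coend = 0$ would force $\mathcal{D}_\coend\circ\Lambda_\coend$ (an element of $\cat(\tid,\coend^*)$) to pair trivially with $\eta_\coend$, contradicting that $\Lambda_\coend$ is a nonzero integral whose pairing with the counit is nonzero — equivalently, $\coint_\coend$ and the class of $\Lambda_\coend$ are dual under the non-degenerate $\omega_\coend$, so their pairing cannot vanish. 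So $k\in\ok^\times$. Finally, for uniqueness: if $\Lambda_\coend'$ is another two-sided integral with a nonzero self-pairing, then since the space of left integrals $\cat(\tid,\coend)$ supporting the integral property is $1$-dimensional (this is the same representability/projectivity input as above, dualised), we have $\Lambda_\coend' = r\Lambda_\coend$ for some $r\in\ok$, and $r\neq 0$ because $\Lambda_\coend'\neq 0$; the normalisation scalar then scales as $r^2 k$, still in $\ok^\times$.

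The main obstacle is the unimodularity step — showing a left integral on $\coend$ is automatically two-sided. This is precisely where factorisability (or at least the ribbon/pivotal structure forcing the modulus to be trivial) is essential, and it is the least formal part; everything else is either a citation to the one-dimensionality of (co)integrals for $\coend$ or a direct consequence of \refh{Lemma}{lem:coint-from-int}. In a write-up I would either cite \cite[Sect.\,5.2.3]{Kerler:2001} for this or give the short argument via the distinguished invertible object being trivial in a factorisable category.
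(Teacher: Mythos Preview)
The paper does not give a proof of this proposition at all; it simply cites \cite{Lyubashenko:1995} and \cite[Sect.\,5.2.3]{Kerler:2001} and moves on. So there is no ``paper's own proof'' to compare against beyond those references.

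Your sketch follows the standard strategy of those references (produce a cointegral, use non-degeneracy of $\omega_\coend$ via \refh{Lemma}{lem:coint-from-int} to obtain a left integral, then argue two-sidedness), but there are genuine gaps in the justifications you give.

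First, your claim that $\coend$ is projective is wrong. Being a quotient of the projective object $\bigoplus_{U} P_U^*\otimes P_U$ in \eqref{eq:L-P-2} does \emph{not} make $\coend$ projective; quotients of projectives are arbitrary objects. In the non-semisimple case $\coend$ is typically not projective (for $\cat=\rep A$ with $A$ a non-semisimple Hopf algebra, $\coend=A^*$ with the coadjoint action, which is rarely projective). The one-dimensionality of the space of cointegrals on $\coend$ is a genuine theorem in \cite{Kerler:2001} proved by quite different means; you cannot shortcut it through projectivity.

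Second, you invoke a ribbon (or pivotal) structure to pass from left to right integral, but the proposition is stated for $\cat$ merely braided and factorisable --- no ribbon is assumed in Section~\ref{sec:factorisable-cat}. It is true that factorisability alone forces unimodularity of $\cat$ and hence two-sidedness of the integral, but that is exactly the nontrivial content of \cite[Sect.\,5.2.3]{Kerler:2001}; appealing to ribbon structure here is circular or simply unavailable.

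Third, your argument for $k\neq 0$ is not convincing. You assert that a nonzero integral has nonzero pairing with the counit, but $\eps_\coend\circ\Lambda_\coend$ being nonzero is precisely the semisimplicity/Maschke criterion and can fail; it is not what is needed here. The actual argument in \cite{Kerler:2001} that $\coint_\coend\circ\Lambda_\coend\neq 0$ uses the structure of the Hopf pairing and the Radford-type relation between integrals and cointegrals more carefully.

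In short: your outline is the right shape, and your honest assessment that the unimodularity step is ``the least formal part'' is correct --- but the supporting arguments you offer for the cointegral existence and for $k\neq 0$ contain actual errors, not just omissions. A clean write-up should cite \cite{Kerler:2001} for all three substantive inputs rather than attempt the flawed shortcuts.
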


If $\ok$ has square roots we can normalise $\Lambda_\coend$
in \eqref{eq:integral-normalisation} such that $k=1$. In this normalisation, $\Lambda_\coend$ is unique up to a sign.

\newcommand{\DD}{\mathbb{D}_{\coend,\Gamma}}
\newcommand{\DDL}{\mathbb{D}_{\coend,\coend^*}}
\subsection{Factorisability as an isomorphism between end and coend}\label{sec:fact-end-coend}
Let $\cat$ be a finite 
 braided tensor category over a field $\ok$. 
Recall our
	functor 
$F=(-)^*\otimes (-)\colon \cat^{\mathrm{op}}\times \cat\to \cat$ 
	from \eqref{eq:F}
and that the coend $\coend$ represents the functor 
$\dinf= \Din(F, -)\colon \cat \to \vect_\ok$. 
The coend $\coend$
	can be thought of as the dual notion
to the \textit{end} of the functor 
\be
G:=(-)\otimes (-)^*:\quad  \cat\times \cat^{\mathrm{op}}\to \cat
\ee 
 (note the change of the order in the tensor product).
The end of $G$ is an object in $\cat$ representing the functor 
 $\Din(-, G)\colon \cat \to \vect_\ok$,
	see
 Definition~\ref{def:dinat-const} 
 (2). We will denote such an object (if it exists) as $\Gamma$ together with its family of 
 dinatural transformations $j_X\colon \Gamma \to X\otimes X^*$.  The dinaturality condition is now (compare with~\eqref{eq:dinat-iota-f})
\be\label{eq:dinat-j-f}
  (\id_{Y} \ot f^*) \circ j_Y =  (f \ot \id_{X^*}) \circ  j_X \qquad 
 \text{for all}
   \quad f\colon X\to Y\ .
\ee

An end $\Gamma$ exists by similar arguments as for coends in Section~\ref{sec:univ-Hopf-exists}. The existence  also follows from Lemma~\ref{lem:end-coend} given below.
	As was the case for coends, ends are
unique up to a unique isomorphism.

Consider a family of maps
\be
T_{X,Y}: \quad X^*\otimes X \to Y\otimes Y^*
\qquad , \quad X,Y \in \cat \ .
\ee
Suppose that for fixed 
$Y$ the family $(t_X)_{X \in \cat}$ with $t_X := T_{X,Y}$ is a dinatural transformation from $F$ to $Y \otimes Y^*$, and that for fixed
 $X$  the family 
$(t'_Y)_{Y\in\cat}$ with $t'_Y := T_{X,Y}$ is 
a dinatural transformation  
from $X^* \otimes X$ to~$G$
(see Definition~\ref{def:dinat-const}).
 Using the universal properties of ends and coends, one easily checks that there exists a unique  $D\colon \coend\to \Gamma$ such that $T_{X,Y}$ factors as 
$T_{X,Y} = j_Y\circ D\circ\iota_X$ for all $X,Y \in \cat$.

\newcommand{\T}{\mathbb{T}}
Consider now a special (``Hopf tangle'') dinatural transformation $\T_{X,Y}$  defined explicitly as
\begin{align}\label{eq:T-tangle}
\T_{X,Y} ~:=~ & \Bigl[ X^* X 
	\xrightarrow{~\sim~}  
X^* (X \one)  \xrightarrow{\id\tensor \coev_Y} X^* (X (Y Y^*)) 
 \xrightarrow{\id\tensor \assoc_{X,Y,Y^*}}  X^* ((X Y) Y^*)   \\\nonumber
& \hspace{3em} \xrightarrow{\id\tensor (\brC_{Y,X}\circ\brC_{X,Y})\tensor \id}  X^* ((X Y) Y^*) \xrightarrow{\id \tensor \assoc_{X,Y,Y^*}^{-1}} X^* (X (Y Y^*))  \\ \nonumber
& \hspace{3em}  \xrightarrow{\assoc_{X^*,X,YY^*}}  (X^* X) (Y Y^*)   \xrightarrow{\ev_X\tensor \id} \one(Y Y^*) 
\xrightarrow{~\sim~} Y Y^* \Bigr] \ .
\end{align}
The corresponding diagram is
\be\label{T-pic}
   \T_{X,Y} ~=\hspace*{1em}
\ipic{Hopf-tangle}{.25}
	 \put(-96,-70){\scriptsize$X^\ast$} \put(-67,-70){\scriptsize$X$} \put(-34,-37.5){\scriptsize$Y$} \put(-3,-37.5){\scriptsize$Y^\ast$}
	 \put(-34,63){\scriptsize$Y$} \put(-3,63){\scriptsize$Y^\ast$}
	 \qquad .
 \ee
	By the above discussion, this map factors through a  
unique map $\DD: \coend\to \Gamma$ such that
\be\label{eq:T-DD}
\T_{X,Y}  =  j_Y\circ \DD \circ\iota_X \ .
\ee
	We call $\DD$ the {\em Drinfeld map} for the category $\Cc$. The reason for this is two-fold. Firstly, in case $\Cc$ is the category of finite-dimensional representations of a finite-dimensional quasi-triangular Hopf algebra, and for an appropriate choice of end and coend, $\DD$ is precisely the Drinfeld map, see Remark \ref{rem:DD-is-Drinfeld-for-Hopf} below. Secondly, 
	invertibility of the map $\DD$ provides another equivalent formulation of  factorisability of $\cat$:

\begin{proposition}\label{prop:C-DD}
$\cat$  is factorisable iff $\DD$ is invertible.
\end{proposition}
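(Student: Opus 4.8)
The plan is to relate the Drinfeld map $\DD$ from \eqref{eq:T-DD} to the Hopf-algebra homomorphism $\mathcal{D}_\coend : \coend \to \coend^*$ from \eqref{eq:Omega}, whose invertibility is the definition of factorisability (Definition \ref{def:fact-cat}). The key observation is that the end $\Gamma$ for $G = (-)\otimes(-)^*$ can be identified canonically with $\coend^*$, the left dual of the coend $\coend$. Indeed, applying $(-)^*$ to the dinatural family $\iota_X : X^* \otimes X \to \coend$ and using the canonical isomorphism $\gamma$ of \eqref{eq:gammaUV} together with the Drinfeld-type isomorphism $u$ of \eqref{eq:uiso}, one builds a dinatural family $\coend^* \to X \otimes X^*$ (the order of the tensor factors being swapped precisely as $(-)^*$ demands, compare \eqref{eq:dinat-iota-f} with \eqref{eq:dinat-j-f}). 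By the universal property of $\coend$ this exhibits $\coend^*$ as an end for $G$, and by uniqueness of ends we get a canonical isomorphism $\Gamma \cong \coend^*$; I would state this as a separate lemma (this is presumably ``Lemma \ref{lem:end-coend}'' referenced in the text just before the Proposition).

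First I would make that identification precise: write down the dinatural family $\coend^* \to X\otimes X^*$ explicitly and check dinaturality using \eqref{eq:dinat-iota-f} applied to $f^*$ and the naturality of $\gamma$ and $u$, then invoke uniqueness. Next I would compute, under the identification $\Gamma \cong \coend^*$, what the Drinfeld map $\DD$ becomes. Since $\DD$ is the unique morphism making $\T_{X,Y} = j_Y \circ \DD \circ \iota_X$ hold for all $X,Y$, and since $\mathcal{D}_\coend$ is characterised (via the coend universal property, once one precomposes with $\iota_U$) by an analogous dinatural equation coming from the Hopf pairing $\omega_\coend$ of \eqref{eq:omega-L}, the task is to verify that the two dinatural transformations agree. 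Concretely: the composite $j_Y \circ (\text{candidate}) \circ \iota_X$ built out of $\mathcal{D}_\coend$ should be string-diagrammatically equal to $\T_{X,Y}$ in \eqref{eq:T-tangle}. Both sides are a double evaluation/coevaluation sandwich of the double-braiding $c_{Y,X}\circ c_{X,Y}$, so the comparison is a zig-zag manipulation together with one application of the defining relation \eqref{eq:omega-L} and one use of the naturality of the braiding with respect to $\iota$. This identifies $\DD$ with $\mathcal{D}_\coend$ up to the canonical isomorphism $\Gamma \cong \coend^*$.

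Granting that identification, the Proposition is immediate: $\DD$ is invertible iff $\mathcal{D}_\coend$ is invertible iff $\omega_\coend$ is non-degenerate (Definition \ref{def:non-deg-Hopf-pair}) iff $\cat$ is factorisable (Definition \ref{def:fact-cat}). I expect the main obstacle to be bookkeeping rather than conceptual: carefully tracking the coherence isomorphisms and the Drinfeld element $u$ (equivalently, in the ribbon case, the pivotal structure and twist, cf.\ Remark \ref{rem:twist}) through the identification $\Gamma \cong \coend^*$, so that the "Hopf tangle" $\T_{X,Y}$ of \eqref{T-pic} matches the diagram obtained from $\mathcal{D}_\coend$ on the nose and not merely up to an unexamined isomorphism. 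The cleanest route is to do the entire comparison in string-diagram notation, where $\T_{X,Y}$ and the $\omega_\coend$-diagram \eqref{hopf-pair} are both manifestly the same tangle, and only then read off that the mediating morphism must coincide with $\mathcal{D}_\coend$ by the uniqueness clauses of the two universal properties.
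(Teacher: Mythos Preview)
Your approach is essentially the same as the paper's: identify $\coend^*$ as an end for $(-)\otimes(-)^*$ (this is indeed Lemma~\ref{lem:end-coend}), show that under this identification $\DD$ becomes $\mathcal{D}_\coend$, and conclude via Definitions~\ref{def:non-deg-Hopf-pair} and~\ref{def:fact-cat}.

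The one difference worth flagging is how the end structure on $\coend^*$ is built. You propose dualising $\iota_X$ and repairing the result with $\gamma$ and the Drinfeld isomorphism $u$. The paper instead exploits rigidity directly: it sets $\hat\iota_X : \coend^* \to X\otimes X^*$ using $\iota_{{}^*X}$ together with the canonical isomorphism $d_X : X \to ({}^*X)^*$ coming from the \emph{right} duality (equation~\eqref{eq:dX-rigid-def}). This choice makes the key verification $\hat\iota_Y \circ \mathcal{D}_\coend \circ \iota_X = \T_{X,Y}$ a matter of zig-zag identities only, with no braidings to cancel beyond those already present in $\omega_\coend$. Your route via $u$ should work as well, but note that $\iota_X^*$ composed with $\gamma^{-1}$ and $u^{-1}$ lands in $X^*\otimes X$, not $X\otimes X^*$, so an additional braiding is forced; that braiding then has to be shown to match (or cancel against) something when you compare with $\T_{X,Y}$. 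This is precisely the ``bookkeeping'' you anticipated, and the paper's use of right duals sidesteps it entirely.
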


The proof requires the following lemma,
which makes use of the canonical isomorphism $X \to (\ldX)^*$ in a rigid category, which is given by
\begin{align}
\label{eq:dX-rigid-def}
d_X ~=~ \big[\, & X \xrightarrow{~\sim~} X\tid \xrightarrow{\id\ot\coev_{\ldX}} X(\ldX (\ldX)^*) 
\\ \nonumber &
\xrightarrow{~\sim~} (X\,\ldX) (\ldX)^* 
        \xrightarrow{\widetilde\ev_{X}\ot\id} \tid (\ldX)^*  \xrightarrow{~\sim~} (\ldX)^* \,\big] \ .
\end{align}

\begin{lemma}\label{lem:end-coend}
The pair 
$(\coend, \iota)$ is a coend of $F=(-)^*\otimes (-)$ iff the pair $(\coend^*, \hat{\iota})$ with 
\be\label{eq:hat-iota}
   \ipic{hat-iota-2}{.25}
    \put(-134,0){$\hat{\iota}_X~~~=$}
	 \put(-95,-77){\scriptsize$\coend^*$}  \put(-75,30){\scriptsize$\iota_{\ldX}$} 
	 \put(-22,-35){\scriptsize$X$} \put(-3,-35){\scriptsize$X^{\ast}$} \put(-22,70){\scriptsize$X$} \put(-3,70){\scriptsize$X^{\ast}$}
	 \put(-61,-35){\scriptsize$X$}  \put(-43,-35){\scriptsize$\ldX$}  \put(-61,-10){\scriptsize$d_X$} \put(-67,14){\scriptsize$(\ldX)^*$} 
\ee
 is an end
	for
the  functor $G= (-)\otimes(-)^*$.
\end{lemma}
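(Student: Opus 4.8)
The plan is to exploit that rigidity makes $(-)^*$ and ${}^*(-)$ a pair of mutually quasi-inverse contravariant equivalences of $\cat$, and that they intertwine the functor $F=(-)^*\otimes(-)$ of \eqref{eq:F} with $G=(-)\otimes(-)^*$. Indeed, the canonical isomorphism $\gamma$ of \eqref{eq:gammaUV} gives a natural isomorphism $(U^*\otimes V)^*\xrightarrow{\sim}V^*\otimes U^{**}$, i.e.\ $\bigl(F(U,V)\bigr)^*\cong G(V^*,U^*)$, natural in $U,V$; together with the canonical isomorphism $d_X\colon X\xrightarrow{\sim}(\ldX)^*$ of \eqref{eq:dX-rigid-def} (hence also $d_X^*\colon(\ldX)^{**}\xrightarrow{\sim}X^*$) and the fact that every object is isomorphic to one of the form $Z^*$ (take $Z={}^*X$, so $X\cong(\ldX)^*$), this lets me move dinatural cocones for $F$ to dinatural cones for $G$ and back.

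Concretely, to a dinatural transformation $j\colon F\stackrel{..}{\longrightarrow}B$ (so $j_X\colon X^*\otimes X\to B$ satisfying \eqref{eq:dinat-iota-f}) I would assign the family
\[
k_X~:=~\Bigl[\,B^*\xrightarrow{(j_{\ldX})^*}\bigl((\ldX)^*\otimes\ldX\bigr)^*\xrightarrow{\ \gamma^{-1}\ }(\ldX)^*\otimes(\ldX)^{**}\xrightarrow{d_X^{-1}\otimes d_X^*}X\otimes X^*\,\Bigr]\ ,\qquad X\in\cat\ .
\]
Then I would check: (a) the $F$-dinaturality relations \eqref{eq:dinat-iota-f} for $j$ are equivalent to the $G$-dinaturality relations \eqref{eq:dinat-j-f} for $(k_X)$ — this is the one genuine computation, and it follows by applying \eqref{eq:dinat-iota-f} to the morphism ${}^*f\colon{}^*Y\to{}^*X$ attached to $f\colon X\to Y$ and using functoriality of $(-)^*$ together with naturality of $\gamma$ and of $d$; (b) each $j_{\ldX}\mapsto k_X$ is bijective, being the hom-set bijection induced by the fully faithful functor $(-)^*$ followed by the isomorphisms $\gamma^{-1}$ and $d_X^{-1}\otimes d_X^*$, so by (a) the assignment $j\mapsto(k_X)_X$ is a bijection from $\Din(F,B)$ onto the set of dinatural transformations $B^*\stackrel{..}{\longrightarrow}G$; (c) this bijection is natural in $B$.

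Granting (a)--(c), the lemma is formal. By Remark~\ref{rem:coend} a coend of $F$ is an initial object of the category $DIN(F)$ of dinatural transformations out of $F$, and likewise an end of $G$ is a terminal object of the category $DIN(G)$ of dinatural transformations into $G$. On morphisms the transport sends $h\colon(B,j)\to(B',j')$ to $h^*\colon(B'^*,k')\to(B^*,k)$, so it is a contravariant functor $DIN(F)\to DIN(G)$; by (b), (c) and the essential surjectivity of $(-)^*$ it is an equivalence $DIN(F)^{\mathrm{op}}\xrightarrow{\ \simeq\ }DIN(G)$, $(B,j)\mapsto(B^*,(k_X))$. As the coend $(\coend,\iota)$ is the terminal object of $DIN(F)^{\mathrm{op}}$ and equivalences preserve terminal objects, its image $(\coend^*,(k_X))$ is the terminal object of $DIN(G)$, i.e.\ an end of $G$; running the same argument with ${}^*(-)$ gives the converse. (Equivalently one may phrase this through representing objects: $\coend$ represents $\dinf=\Din(F,-)$ iff $\coend^*$ represents $\Din(-,G)$.)

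Finally I would identify the transported dinatural family $(k_X)$ with the family $\hat\iota_X$ of \eqref{eq:hat-iota}, by substituting the definitions \eqref{eq:gammaUV} of $\gamma$ and \eqref{eq:dX-rigid-def} of $d_X$ into the formula for $k_X$ and simplifying with the zig-zag identities. I expect the main obstacle to be exactly this bookkeeping, together with pinning down the intertwiner $\bigl(F(U,V)\bigr)^*\cong G(V^*,U^*)$: keeping track of left versus right duals, of the order of the tensor factors in $\gamma$, and of the interplay of the several canonical isomorphisms ($d_X$, $\uiso_X$, the (co)evaluations) is where errors are easy, whereas the categorical skeleton of the argument is routine.
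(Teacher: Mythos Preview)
Your proposal is correct and follows essentially the same approach as the paper, which merely sketches in two sentences that dinaturality of $\hat\iota$ follows from dinaturality of $\iota$ together with naturality of $d_X$, and that the universal property of $(\coend^*,\hat\iota)$ is inherited from that of $(\coend,\iota)$. Your explicit framing via the equivalence $DIN(F)^{\mathrm{op}}\simeq DIN(G)$ and the final identification $k_X=\hat\iota_X$ simply unpack what the paper leaves implicit.
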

\begin{proof}
The dinaturality condition~\eqref{eq:dinat-j-f} on $\hat{\iota}_X$ is 
	easily verified from the diagram \eqref{eq:hat-iota} using
the dinaturality of $\iota$ and naturality of $d_X$.
The universal property of $(\coend^*,\hat{\iota})$ is proven using the universal property of $(\coend,\iota)$.
\end{proof}

\begin{proof}[Proof of Proposition~\ref{prop:C-DD}]
By definition, $\cat$ is factorisable if 
the map $\mathcal{D}_{\coend}$ defined in~\eqref{eq:Omega} is invertible (Definitions~\ref{def:fact-cat} and~\ref{def:non-deg-Hopf-pair}).

Consider the end $(\coend^*, \hat{\iota})$ from Lemma \ref{lem:end-coend}. We will start by showing that $\DDL=\mathcal{D}_{\coend}$. By the unique factorisation property explained above, it is enough to show that for all $X,Y \in \cat$ we have $\hat\iota_Y\circ \DDL \circ \iota_X = \hat\iota_Y\circ \mathcal{D}_{\coend} \circ \iota_X$. Using \eqref{eq:T-DD} we see that we need to show 
\be\label{eq:C-DD-aux1}
\hat\iota_Y\circ \mathcal{D}_{\coend} \circ \iota_X = \T_{X,Y}
\ee
with $\T_{X,Y}$ as in~\eqref{eq:T-tangle}.
Substituting \eqref{eq:Omega}, \eqref{hopf-pair}, \eqref{eq:hat-iota} and \eqref{eq:dX-rigid-def} into the left hand side of \eqref{eq:C-DD-aux1} and using the zig-zag identity for the duality maps on $\coend$ once gives
\be
   \ipic{proof-D-L-Gamma-fact-lhs}{.25}
	 \put(-135,-74){\scriptsize$X^\ast$} \put(-116,-74){\scriptsize$X$} \put(-124,-2){\scriptsize$\coend$}  \put(-138,-7){\scriptsize$\iota_{X}$}
	 \put(-38,-7){\scriptsize$\iota_{{}^*Y}$} \put(-100,-35){\scriptsize$Y$} \put(-83,-35){\scriptsize${}^*Y$}
	 \put(-69,-35){\scriptsize$({}^*Y)^{\ast}$} \put(-42,-35){\scriptsize${}^*Y$} \put(-21,-35){\scriptsize$Y$} \put(-3,-35){\scriptsize$Y^{\ast}$}
	 \put(-50,-2){\scriptsize$\coend$} \put(-91,47){$\omega_\coend$}
	 \put(-20,73){\scriptsize$Y$} \put(-3,73){\scriptsize$Y^{\ast}$}
	 \qquad \overset{(*)}= \qquad
	 \ipic{proof-D-L-Gamma-fact-rhs}{.25}
	 \put(-113,-76){\scriptsize$X^\ast$} \put(-91,-76){\scriptsize$X$}
	 \put(-61,-36){\scriptsize$Y$} \put(-43,-36){\scriptsize${}^*Y$} \put(-22,-36){\scriptsize$Y$} \put(-5,-36){\scriptsize${}^*Y$}
	 \put(-20,66){\scriptsize$Y$} \put(-3,66){\scriptsize$Y^{\ast}$}
	 \quad ,
\ee
	where in (*) the zig-zag identity for duality morphisms was used once again.
Using the zig-zag identity once more,
we find the right hand side of \eqref{eq:C-DD-aux1}.

For an end $(\Gamma,j)$,   there is a unique isomorphism $\phi: (\coend^*,\hat\iota) \xrightarrow{\;\sim\;} (\Gamma, j)$ such that the diagram
\be
\xymatrix
{
&X\tensor X^*&\\
\coend^{*}\ar[rr]^{\phi}\ar[ur]^{\hat\iota_X}&& \Gamma\ar[ul]_{j_X}
}
\ee
commutes. Then we have $\DD=\phi\circ \DDL$ because by definition  we have  
\be
 j_Y\circ \DD \circ \iota_X =\T_{X,Y} =  \hat{\iota}_Y \circ \DDL\circ\iota_X =  j_Y\circ \phi\circ \DDL\circ\iota_X \ .
 \ee
  Therefore, $\DD$ is invertible iff $\mathcal{D}_\coend$ is and so iff the Hopf pairing $\omega_\coend$ is non-degenerate.
\end{proof}

\section{$SL(2,\oZ)$-action for factorisable finite tensor categories} 
\label{sec:SL2Z}

For factorisable 
finite ribbon
categories $\cat$ with universal Hopf algebra $\coend$, 
one can define a projective $SL(2,\oZ)$-action on $\cat(\one,\coend)$ (Section \ref{sec:SL2Z-action}), and
on 
the $\ok$-vector space $\End(\id_\Cc)$.
	The second action is independent of the choice of $\coend$
 (Proposition~\ref{prop:ST-on-EndId}).
	In Section \ref{sec:intchar-natendo} we discuss different ways to transport internal characters from $\cat(\one,\coend)$ to $\End(\id_\Cc)$.

\subsection{Projective $SL(2,\oZ)$-action}\label{sec:SL2Z-action}

Let $\ok$ be a
field and let $\cat$ be a factorisable finite 
tensor category over $\ok$.
We assume that \eqref{eq:integral-normalisation} has a solution with $k=1$ (e.g.\ if $\ok$ is algebraically closed).
We furthermore assume that $\cat$ is ribbon, and we will denote the ribbon twist on $V \in \cat$ by $\theta_V\colon V \to V$.

In this section we review from \cite{Lyubashenko:1995} the projective $SL(2,\oZ)$-action on the Hom-space 
$\cat(1,\coend)$ and on the vector space $\End(\id_\cat)$, the natural endomorphisms of the identity functor.

We start by introducing the monodromy morphism $\mathcal Q \colon \coend\tensor\coend\to\coend\tensor\coend$ as
\begin{align}\label{eq:Q-map}
\mathcal Q \circ (\iota_U \tensor \iota_V) ~=~ \Big[ 
& (U^\ast  U)  (V^\ast  V) \xrightarrow{\assoc^{-1}_{U^\ast,U,V^\ast  V}} U^\ast  (U  (V^\ast  V)) 
\xrightarrow{\id\tensor\assoc_{U,V^\ast,V}}  
U^\ast  ((U  V^\ast)  V) 
\\  \nonumber & 
\xrightarrow{\id\tensor (\brC_{V^\ast,U}\circ\brC_{U, V^\ast}) \tensor \id } U^\ast  ((U  V^\ast)  V)
\xrightarrow{\id\tensor\assoc^{-1}_{U,V^\ast,V}} 
U^\ast  (U  (V^\ast  V)) 
\\ \nonumber & 
\xrightarrow{\assoc_{U^\ast,U,V^\ast  V}} (U^\ast  U)  (V^\ast  V) \xrightarrow{\iota_U\tensor \iota_V} \coend\tensor \coend
\Big] \ , 
\end{align} 
or, in string diagram notation, 
\be\label{coend-Q}
   \ipic{Q-lhs}{.23}
	 \put(-42,55){\scriptsize$\coend$} \put(-28,55){\scriptsize$\coend$} 
	 \put(-56,-5){\scriptsize$\coend$} \put(-16,-5){\scriptsize$\coend$} 
   \put(-66,-22){\scriptsize$\iota_U$} \put(-7,-22){\scriptsize$\iota_V$}
	 \put(-65,-60){\scriptsize$U^\ast$} \put(-47,-60){\scriptsize$U$} \put(-24,-60){\scriptsize$V^\ast$} \put(-8,-60){\scriptsize$V$}
	 \put(-36,27){\footnotesize$\mathcal Q$} 
	 \quad = \quad
	 \ipic{Q-rhs}{.23}
	 \put(-55,55){\scriptsize$\coend$} \put(-17,55){\scriptsize$\coend$} 
	 \put(-66,39){\scriptsize$\iota_U$} \put(-7,39){\scriptsize$\iota_V$}
	 \put(-68,-60){\scriptsize$U^\ast$} \put(-51,-60){\scriptsize$U$} \put(-22,-60){\scriptsize$V^\ast$} \put(-4,-60){\scriptsize$V$}
\ee
The Hopf pairing in \eqref{eq:omega-L} is related to $\mathcal Q$ as
\begin{align} \label{def:Hopf-pair}
	\omega_\coend~=~ \big[\, \coend\coend \xrightarrow{\mathcal Q}
	\coend\coend \xrightarrow{\eps_\coend\tensor\eps_\coend}
	\one\one \xrightarrow{\sim} \one \,\big] \ .
\end{align}

Following \cite{Lyubashenko:1995}, we introduce two endomorphisms $\modS,\modT : \coend \to \coend$, which will then be used to define the action of the $S$- and $T$-generator of $SL(2,\oZ)$. 
We take the integral given to us by Proposition \ref{prop:integrals-exist} to be normalised such that
\be
\omega_\coend \circ (\Lambda_\coend \otimes \Lambda_\coend) \circ \lambda_{\one}^{-1}
~=~ \id_{\tid} 
\ee 
(as is possible by our assumptions on $\ok$).
Recall that this fixes $\Lambda_\coend$ up to a sign.
We set
\be \label{eq:cat-ST}
\modS  = \lambda_\coend \circ (\eps_\coend\tensor\id) \circ \mathcal Q \circ (\id \tensor \Lambda_\coend)  \circ \runit^{-1}_\coend ~~,\quad
\modT \circ \iota_U  = \iota_U \circ (\id \otimes \theta_U) 
~~\text{ for all }~~U\in\cat \ .
\ee
In terms of string diagrams we have
\be\label{fig:eq-Tac}
   \modS\quad
	=
   \quad
	 \ipic{S-action-rhs}{.8}
	 \put(-23,-60){\scriptsize$\coend$} \put(-8,-50){\scriptsize$\Lambda_\coend$}
	 \put(-16,-2){\scriptsize$\mathcal Q$}	 \put(-8,54){\scriptsize$\coend$} 
	 \quad\qcq\quad
   \ipic{T-action-lhs}{.8}
	 \put(-22,64){\scriptsize$\coend$}
	 \put(-22,17){$\modT$}
	 \put(-10,-12){\scriptsize$\iota_U$}
	 \put(-37,-69){\scriptsize$U^\ast$} \put(-10,-69){\scriptsize$U$}
   \quad
	=
\quad
	 \ipic{T-action-rhs}{.8}
	 \put(-22,64){\scriptsize$\coend$}
	 \put(-10,12){\scriptsize$\iota_U$}
	 \put(-37,-69){\scriptsize$U^\ast$} \put(-15,-69){\scriptsize$U$}
\ee

\begin{theorem}[{\cite{Lyubashenko:1995}}]\label{thm:MCG-1hole-torus}
The endomorphisms $\modS,\modT$ from \eqref{eq:cat-ST} satisfy
\begin{align} \label{eq:ST-endo-coend}
(\modS\modT)^3=\lambda \, \modS^2 \ , \qquad \modS^2 = S_\coend^{-1} \ , \qquad \ ,
\end{align} 
for some constant $\lambda\in\ok^\ast$.
\end{theorem}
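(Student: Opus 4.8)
The plan is to follow Lyubashenko \cite{Lyubashenko:1995}, reducing everything via the universal property of the coend $\coend$ (Remark \ref{rem:coend}) to the level of dinatural transformations out of $U^\ast\tensor U$. First, $\modT$ is invertible: since $\theta$ is a natural isomorphism, the family $\iota_U\circ(\id\tensor\theta_U^{-1})$ is dinatural and hence determines a morphism $\coend\to\coend$ which is $\modT^{-1}$. Invertibility of $\modS$ is where factorisability enters, either directly — by exhibiting $\modS$, up to unit isomorphisms, as a composite of the isomorphism $\mathcal{D}_\coend$ of Definition \ref{def:non-deg-Hopf-pair} (invertible since $\cat$ is factorisable; cf.\ also Proposition \ref{prop:C-DD}) with isomorphisms assembled from the two-sided integral $\Lambda_\coend$ of Proposition \ref{prop:integrals-exist} and the normalisation \eqref{eq:integral-normalisation} — or a posteriori from the relation $\modS^2=S_\coend^{-1}$, using that $S_\coend$ is invertible in a ribbon category.

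The core step is the identity $\modS^2=S_\coend^{-1}$. I would compute $\modS\circ\modS\circ\iota_U$ directly from \eqref{eq:cat-ST} and \eqref{eq:Q-map}, obtaining a string diagram built from two copies of the monodromy $\mathcal{Q}$ and two copies of $\Lambda_\coend$, and then simplify it using: (i) that $\omega_\coend=(\eps_\coend\tensor\eps_\coend)\circ\mathcal{Q}$ is a Hopf pairing (Theorem \ref{thm:coend-Hopf+pairing}), which supplies the bialgebra compatibilities of $\mathcal{Q}$ with $\muc$ and $\Delta_\coend$; (ii) that $\Lambda_\coend$ is a two-sided integral, so $\muc\circ(\id\tensor\Lambda_\coend)\circ\runit_\coend^{-1}=\Lambda_\coend\circ\eps_\coend=\muc\circ(\Lambda_\coend\tensor\id)\circ\lambda_\coend^{-1}$, together with the standard identity $S_\coend\circ\Lambda_\coend=\Lambda_\coend$; and (iii) the normalisation \eqref{eq:integral-normalisation} with $k=1$, which is exactly what absorbs one of the two integrals. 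After these cancellations the two monodromies and one integral disappear; matching the remaining dinatural transformation with the one defining $S_\coend$ in \eqref{eq:antipode-L} (equivalently, checking that $S_\coend\circ\modS^2\circ\iota_U=\iota_U$ for every $U$) yields $\modS^2=S_\coend^{-1}$. This is the categorical analogue of the Hopf-algebra computations in \cite{Lyubashenko:1994ma,Kerler:1996}.

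For the first relation I would establish the equivalent $SL(2,\oZ)$-identity $\modS\modT\modS=\zeta\,\modT^{-1}\modS\modT^{-1}$ for some $\zeta\in\ok^\times$. The decisive tool is the ribbon axiom \eqref{eq:theta-braiding-prop}, which lets one trade the double braiding $c_{V^\ast,U}\circ c_{U,V^\ast}$ occurring in $\mathcal{Q}$ (hence inside $\modS$) for ribbon twists on the individual tensor factors; these twists are then moved past $\iota$ using its dinaturality \eqref{eq:dinat-iota-f} and the relation $\modT\circ\iota_U=\iota_U\circ(\id\tensor\theta_U)$, and collecting the emerging scalar gives $\zeta$. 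Combining the two relations then finishes the argument: $\modS\modT\modS=\zeta\,\modT^{-1}\modS\modT^{-1}$ gives $(\modS\modT)^3=\zeta\,\modT^{-1}\modS^2\modT$, and since $\modS^2=S_\coend^{-1}$ commutes with $\modT$ — which follows from $\theta_{U^\ast}=(\theta_U)^\ast$, the identification $\tilde\uiso_U=\delta_U\circ\theta_U$ of Remark \ref{rem:twist}, and dinaturality of $\iota$ — we get $(\modS\modT)^3=\zeta\,\modS^2$, i.e.\ the claim with $\lambda=\zeta$. I expect the main obstacle to be the bookkeeping in steps (ii)--(iii): organising the graphical calculus so that the two integrals and two monodromies cancel, against the normalisation, down to a single antipode; tracking the braidings, twists and the scalar $\zeta$ in the $STS$-relation is a secondary but still delicate chore.
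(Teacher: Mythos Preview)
The paper does not prove this theorem: it is stated as a citation to \cite{Lyubashenko:1995} and no argument is supplied in the text. There is therefore nothing in the paper to compare your proposal against.

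That said, your outline is a faithful sketch of how the proof in \cite{Lyubashenko:1995} (and its reworkings in \cite{Kerler:1996,Kerler:2001}) actually goes: one verifies $\modS^2=S_\coend^{-1}$ by a direct dinatural computation using the two-sided integral and the normalisation, and one obtains the braid-type relation from the ribbon axiom. Your reduction $(\modS\modT)^3=\zeta\,\modT^{-1}\modS^2\modT$ together with the commutation of $\modS^2=S_\coend^{-1}$ and $\modT$ is correct; the commutation argument you indicate does work, once one uses naturality of $\tilde\uiso$ (so $\tilde\uiso_U\circ\theta_U=(\theta_U)^{**}\circ\tilde\uiso_U$), the ribbon identity $\theta_{U^*}=(\theta_U)^*$, and dinaturality of $\iota_{U^*}$ with respect to $\theta_{U^*}$. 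The only place where your plan is genuinely sketchy is step (i)--(iii) of the $\modS^2$ computation: the Hopf pairing property of $\omega_\coend$ alone does not quite give the ``bialgebra compatibilities of $\mathcal{Q}$'' you invoke; in Lyubashenko's argument one works directly with the dinatural picture \eqref{coend-Q} of $\mathcal{Q}$ and uses naturality of the braiding plus the integral property, rather than abstract Hopf-pairing identities. This is a matter of which identities one writes down, not a gap in the strategy.
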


It is not hard to verify directly  from the definition in \eqref{eq:antipode-L} that the antipode of $\coend$ squares to the ribbon twist,
\be\label{eq:S2-ribbontwist}
 S_\coend \circ S_\coend~=~\twist_\coend \ .
\ee
Thus, the relations in Theorem \ref{thm:MCG-1hole-torus} also imply $\modS^4 =  \theta_\coend^{-1}$.

By the $S$- and $T$-generators of $SL(2,\oZ)$ we mean the $2{\times}2$ matrices $\mathbf{S} = \big( \begin{smallmatrix} 0 & -1 \\ 1 & 0 \end{smallmatrix} \big)$ 
and
$\mathbf{T} = \big( \begin{smallmatrix} 1 & 1 \\ 0 & 1 \end{smallmatrix} \big)$,
respectively. One can describe $SL(2,\oZ)$ as the group freely generated by $\mathbf{S}$ and $\mathbf{T}$ subject to the relations
\be\label{eq:SL2Z-gen-and-rel}
	(\mathbf{S}\mathbf{T})^3 = \mathbf{S}^2
	\quad , \quad
	\mathbf{S}^4 = \id \ .
\ee
Therefore,
as an immediate consequence of Theorem \ref{thm:MCG-1hole-torus} we have:

\begin{corollary}\label{cor:SL2Z-on-C1L}
The $\ok$-vector space $\cat(\one,\coend)$ carries a projective action of $SL(2,\oZ)$ where the action of $\mathbf{S}$ and $\mathbf{T}$ is given by, for $f \in \cat(\one,\coend)$,
\be\label{eq:SL2Z-on-C1L}
	\mathbf{S}.f := \modS \circ f
	\quad , \quad
	\mathbf{T}.f := \modT \circ f \ .
\ee
\end{corollary}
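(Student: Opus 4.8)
The plan is to deduce the corollary directly from Theorem~\ref{thm:MCG-1hole-torus} and the presentation of $SL(2,\oZ)$ recorded in \eqref{eq:SL2Z-gen-and-rel}. First I would note that $\modS$ and $\modT$ are automorphisms of $\coend$: indeed $\modS$ is invertible since $\modS^2 = S_\coend^{-1}$ by Theorem~\ref{thm:MCG-1hole-torus} and $S_\coend$ is invertible by \eqref{eq:S2-ribbontwist}, while $\modT^{-1}$ is the endomorphism of $\coend$ determined by $\modT^{-1}\circ\iota_U = \iota_U\circ(\id\otimes\theta_U^{-1})$ for all $U$ (dinaturality of this family follows from naturality of $\theta^{-1}$ together with dinaturality of $\iota$, and $\modT\modT^{-1} = \id = \modT^{-1}\modT$ is checked by precomposing with each $\iota_U$). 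Consequently $f\mapsto\modS\circ f$ and $f\mapsto\modT\circ f$ are invertible $\ok$-linear endomorphisms of $\cat(\one,\coend)$, so sending the two free generators $\mathbf{S},\mathbf{T}$ to these operators defines a group homomorphism $\rho$ from the free group on $\mathbf{S},\mathbf{T}$ to $GL(\cat(\one,\coend))$.

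Next I would verify that the two defining relators of $SL(2,\oZ)$, namely $(\mathbf{S}\mathbf{T})^3\mathbf{S}^{-2}$ and $\mathbf{S}^4$, are sent by $\rho$ to scalar operators. For the first this is immediate: Theorem~\ref{thm:MCG-1hole-torus} gives $(\modS\modT)^3\circ\modS^{-2} = \lambda\,\id_\coend$ with $\lambda\in\ok^\times$, hence $\rho\big((\mathbf{S}\mathbf{T})^3\mathbf{S}^{-2}\big)$ is $\lambda\cdot\id$ on $\cat(\one,\coend)$. For the second, the relations of Theorem~\ref{thm:MCG-1hole-torus} only yield $\modS^4 = S_\coend^{-2} = \theta_\coend^{-1}$ (using \eqref{eq:S2-ribbontwist}), which in general is \emph{not} a scalar multiple of $\id_\coend$; the point is that $\theta_\coend$ acts as the identity after restriction to $\cat(\one,\coend)$. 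This I would get from naturality of the ribbon twist applied to $f\colon\one\to\coend$: $\theta_\coend\circ f = f\circ\theta_\one = f$, since $\theta_\one = \id_\one$. Hence $\rho(\mathbf{S}^4) = \id$ on $\cat(\one,\coend)$.

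Finally, since the scalars $\ok^\times\cdot\id$ are central in $GL(\cat(\one,\coend))$, the image under $\rho$ of the normal closure in the free group of $\{(\mathbf{S}\mathbf{T})^3\mathbf{S}^{-2},\,\mathbf{S}^4\}$ is contained in the scalars; therefore $\rho$ descends to a homomorphism $SL(2,\oZ)\to PGL(\cat(\one,\coend))$, i.e.\ a projective $SL(2,\oZ)$-action with $\mathbf{S}$ and $\mathbf{T}$ acting as in \eqref{eq:SL2Z-on-C1L}. I expect the only non-routine point to be the observation that the ribbon twist on $\coend$ becomes trivial on $\cat(\one,\coend)$, which is what rescues the relation $\mathbf{S}^4 = \id$; the remainder is bookkeeping with the group presentation.
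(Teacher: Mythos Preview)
Your proposal is correct and follows essentially the same approach as the paper: the key step in both is the observation that $\modS^4 = \theta_\coend^{-1}$ acts trivially on $\cat(\one,\coend)$ by naturality of the ribbon twist together with $\theta_\one = \id_\one$. Your write-up is more explicit than the paper about the invertibility of $\modS$ and $\modT$ and about the group-theoretic bookkeeping (descending from the free group to a projective representation), but the substance is identical.
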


\begin{proof}
The first relation in \eqref{eq:SL2Z-gen-and-rel} is just the first relation in \eqref{eq:ST-endo-coend} (up to the projectivity factor). The second relation in \eqref{eq:SL2Z-gen-and-rel} follows from \eqref{eq:ST-endo-coend} and \eqref{eq:S2-ribbontwist}, together with naturality of the ribbon twist:
$\mathbf{S}^4.f = \modS^4 \circ f = \theta_\coend^{-1} \circ f
= f \circ \theta_{\one}^{-1} = f$.
\end{proof}

The universal Hopf algebra $\coend$ is only unique up to unique isomorphism, and in the above projective representation of $SL(2,\oZ)$ already the underlying vector space depends on $\coend$. It can be helpful to have a variant of the action which is manifestly independent of the choice of $\coend$
(up to the choice of the sign of the integral). 
This can be achieved by transporting the action to $\End(\id_\cat)$, as we explain next (see \cite{Lyubashenko:1995,Shimizu:2015}).

We start by defining two $\ok$-linear isomorphisms
\be\label{eq:RadPsi-def}
\Rad : \cat(\coend,\one) \to \cat(\one,\coend)
\qquad,\qquad
\psi : \End(\id_\Cc) \to \cat(\coend,\one) \ .
\ee
Their values on $f \in \cat(\coend,\one)$ and $\alpha \in \End(\id_\Cc)$ are determined by
\begin{align}\label{eq:rhopsi-def}
\Rad(f) &~=~ \big[\,
\one \xrightarrow{\Lambda_\coend}
\coend \xrightarrow{\Delta_\coend}
\coend\coend \xrightarrow{f \otimes \id} \one \coend 
\xrightarrow{\sim} \coend \,\big] \ ,
\\ \nonumber
\psi(\alpha) \circ \iota_X 
&~=~ \big[\,
X^*X \xrightarrow{\id\tensor \alpha_X} X^*X \xrightarrow{\ev_X} \one \,\big]
 \quad \text{for all } X\in\Cc \ .
\end{align}
The inverses of $\Rad$ and $\psi$ can be given explicitly. For $a \in \cat(\one,\coend)$ and $f \in \cat(\coend,\one)$ we have
\begin{align}\label{eq:rhopsi-inv}
\Rad^{-1}(a) &~=~ \big[\, \coend \xrightarrow{\sim} \one \coend \xrightarrow{a \ot \id} \coend\coend
\xrightarrow{S_\coend \ot \id} \coend\coend \xrightarrow{\mu_\coend} \coend \xrightarrow{\coint_\coend} \one \,\big] \ ,
\\ \nonumber
\psi^{-1}(f)_X 
&~=~ \big[\, X \xrightarrow{\sim} \one X \xrightarrow{\coev_X\tensor\id} (XX^\ast)X \xrightarrow{\sim} X(X^\ast X) 
  \xrightarrow{\id\tensor\iota_X} X\coend \xrightarrow{\id\tensor f} X\one \xrightarrow{\sim} X \,  \big] \ ,
\end{align}
where the second line applies again for all $X \in \cat$.
In terms of the isomorphisms $\psi$ and $\Rad$, together with the isomorphism $\Omega$ from \eqref{eq:Omega-def} and the ribbon twist $\theta$, we define $\modS_\cat , \modT_\cat \in 
\End_\ok(\End(\id_\cat))$ as
\begin{align}\label{eq:SCTC}
\modS_\cat &~=~ \big[\, \End(\id_\Cc) \xrightarrow{\psi} \cat(\coend,\one)
 \xrightarrow{\Rad} \cat(\one,\coend) \xrightarrow{\Omega} 
 \cat(\coend,\one)
  \xrightarrow{\psi^{-1}} \End(\id_\Cc) \,\big] \ ,
\\ \nonumber 
\modT_\cat &~=~ \big[\, \End(\id_\Cc) \xrightarrow{\theta \circ (-)} \End(\id_\Cc) \,\big]  \ .
\end{align}
We collect the results reviewed in this section in the following proposition.

\begin{proposition}\label{prop:ST-on-EndId}
The $\ok$-vector space $\End(\id_\Cc)$ carries a projective action of $SL(2,\oZ)$ where the actions of $\mathbf{S}$ and $\mathbf{T}$ are given by, for $\alpha \in \End(\id_\Cc)$,
\be\label{eq:ST-on-EndId}
	\mathbf{S}.\alpha := \modS_\cat(\alpha)
	\quad , \quad
		\mathbf{T}.\alpha
	:= \modT_\cat(\alpha) \ .
\ee
 Moreover, $\modS_\cat$ and $\modT_\cat$ in \eqref{eq:SCTC} are independent of $(\coend, \intL)$ up to the choice of sign of $\intL$.
\end{proposition}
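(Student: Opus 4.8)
The plan is to derive the statement from the projective $SL(2,\oZ)$-action on $\cat(\one,\coend)$ of Corollary~\ref{cor:SL2Z-on-C1L}, by transporting it along a linear isomorphism and then matching the transported operators with $\modS_\cat$ and $\modT_\cat$. Since $\cat$ is factorisable, $\Omega$ in~\eqref{eq:Omega-def} is an isomorphism, and $\Rad,\psi$ are isomorphisms by~\eqref{eq:rhopsi-inv}; hence
\[
	\Xi ~:=~ \Rad\circ\psi ~:~ \End(\id_\cat)~\xrightarrow{~\sim~}~\cat(\one,\coend)
\]
is a $\ok$-linear isomorphism. The first goal is to show that $\Xi$ intertwines $\modS_\cat$ with the operator $a\mapsto\modS\circ a$ and $\modT_\cat$ with $a\mapsto\modT\circ a$ on $\cat(\one,\coend)$. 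Granting this, the relations of Theorem~\ref{thm:MCG-1hole-torus} together with $S_\coend\circ S_\coend=\theta_\coend$ from~\eqref{eq:S2-ribbontwist} and naturality of $\theta$ (which gives $\modS^4\circ a=\theta_\coend^{-1}\circ a=a$, exactly as in the proof of Corollary~\ref{cor:SL2Z-on-C1L}) transport verbatim to $(\modS_\cat\modT_\cat)^3=\lambda\,\modS_\cat^2$ and $\modS_\cat^4=\id_{\End(\id_\cat)}$, so that by the presentation~\eqref{eq:SL2Z-gen-and-rel} the assignment $\mathbf S\mapsto\modS_\cat$, $\mathbf T\mapsto\modT_\cat$ defines the asserted projective $SL(2,\oZ)$-action on $\End(\id_\cat)$.

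The $\modS$-intertwining is the technical core. Unravelling~\eqref{eq:SCTC} gives $\Xi\circ\modS_\cat=(\Rad\circ\Omega)\circ\Xi$, so it is enough to prove
\[
	\modS\circ a ~=~ \Rad\bigl(\Omega(a)\bigr) \qquad\text{for all }a\in\cat(\one,\coend)\ .
\]
To do so I would expand both sides: insert $\Omega(a)=\omega_\coend\circ(a\tensor\id)\circ\lambda_\coend^{-1}$ and the relation $\omega_\coend=(\text{unit iso})\circ(\eps_\coend\tensor\eps_\coend)\circ\mathcal Q$ of~\eqref{def:Hopf-pair} into $\Rad(\Omega(a))=\lambda_\coend\circ(\Omega(a)\tensor\id_\coend)\circ\Delta_\coend\circ\Lambda_\coend$ from~\eqref{eq:rhopsi-def}, and compare with $\modS\circ a=\lambda_\coend\circ(\eps_\coend\tensor\id_\coend)\circ\mathcal Q\circ(a\tensor\Lambda_\coend)\circ\lambda_{\one}^{-1}$ from~\eqref{eq:cat-ST}. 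The equality then reduces to the identity
\begin{align*}
(\eps_\coend\tensor\id)\circ\mathcal Q\circ(\id\tensor\Lambda_\coend)
&~=~ (\eps_\coend\tensor\eps_\coend\tensor\id)\circ(\mathcal Q\tensor\id)\circ(\id\tensor\Delta_\coend)\circ(\id\tensor\Lambda_\coend)\ ,
\end{align*}
which I would establish from the Hopf-pairing axioms~\eqref{eq:hopf-pair-prop} for $\coend$ rephrased through~\eqref{def:Hopf-pair}, from coassociativity and counitality of $\coend$, and from the left-integral property of $\Lambda_\coend$. I expect this string-diagram computation to be the main obstacle: it is the one step that genuinely uses the Hopf-algebra structure on $\coend$ together with the integral, and some care is needed with the coherence isomorphisms.

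The $\modT$-intertwining is more elementary. From $\modT\circ\iota_U=\iota_U\circ(\id\tensor\theta_U)$, from $\psi(\alpha)\circ\iota_U=\ev_U\circ(\id\tensor\alpha_U)$ and from naturality of $\theta$ one gets $\psi(\theta\circ\alpha)=\psi(\alpha)\circ\modT$, hence $\Xi\circ\modT_\cat(\alpha)=\Rad\bigl(\psi(\alpha)\circ\modT\bigr)$; comparing with $(\modT\circ-)\circ\Xi(\alpha)=\modT\circ\Rad\bigl(\psi(\alpha)\bigr)$ it remains to check $\modT\circ\Rad(g)=\Rad(g\circ\modT)$ for all $g\in\cat(\coend,\one)$. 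By~\eqref{eq:rhopsi-def} and naturality of the unit isomorphisms this reduces to the symmetry $(\modT\tensor\id_\coend)\circ\Delta_\coend\circ\Lambda_\coend=(\id_\coend\tensor\modT)\circ\Delta_\coend\circ\Lambda_\coend$, which follows from $\Lambda_\coend$ being a two-sided integral together with naturality of the ribbon twist; this part is routine.

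It remains to prove independence. As $\modT_\cat=\theta\circ(-)$ refers neither to $\coend$ nor to $\intL$, it is manifestly independent of both. For $\modS_\cat$, let $(\coend',\iota')$ be a second universal Hopf algebra, i.e.\ a second coend for the functor $F$ of~\eqref{eq:F}. By uniqueness of coends there is a unique isomorphism $\kappa\colon\coend\to\coend'$ with $\iota'_X=\kappa\circ\iota_X$ for all $X$; since the product, coproduct, counit, antipode, Hopf pairing $\omega$, monodromy $\mathcal Q$ and the endomorphisms $\modS,\modT$ on the coend are all defined through the universal properties of $\coend$ and $\coend\tensor\coend$, the map $\kappa$ intertwines the whole package of structure on $\coend$ with that on $\coend'$. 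Choosing $\Lambda_{\coend'}:=\kappa\circ\Lambda_\coend$ — a two-sided integral for $\coend'$ normalised by~\eqref{eq:integral-normalisation} with $k=1$ because $\kappa$ preserves $\omega$ — the three isomorphisms $\psi,\Rad,\Omega$ are likewise intertwined by $\kappa$, and substituting into~\eqref{eq:SCTC} the copies of $\kappa$ cancel, giving $\modS_\cat=\modS'_\cat$ for this choice. Since by Proposition~\ref{prop:integrals-exist} the only remaining freedom in $\Lambda_{\coend'}$ is an overall sign, and $\Rad'$, hence $\modS'_\cat$, is $\ok$-linear in $\Lambda_{\coend'}$, we conclude that $\modS_\cat$ is independent of the choice of $(\coend,\intL)$ up to the sign of $\intL$, as claimed.
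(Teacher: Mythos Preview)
Your overall architecture matches the paper's: transport the action on $\cat(\one,\coend)$ from Corollary~\ref{cor:SL2Z-on-C1L} along $\Rad\circ\psi$, and then check independence by comparing two coends via the unique isomorphism. The reduction $\modS\circ a=\Rad(\Omega(a))$ is exactly what the paper proves as~\eqref{eq:Rad-Omega-S}.

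However, you make the key identity harder than it is. The paper does \emph{not} use the Hopf-pairing axioms, coassociativity, or the integral property. Instead it proves the stronger equality~\eqref{eq:ST-on-EndId_aux1},
\[
\lambda_\coend\circ(\omega_\coend\tensor\id)\circ\assoc\circ(\id\tensor\Delta_\coend)
~=~
\lambda_\coend\circ(\eps_\coend\tensor\id)\circ\mathcal Q
\]
as maps $\coend\tensor\coend\to\coend$ (no $\Lambda_\coend$ yet), simply by precomposing with $\iota_U\tensor\iota_V$ and substituting the defining relations~\eqref{eq:cop-L}, \eqref{eq:omega-L}, \eqref{eq:Q-map}, \eqref{eq:eps-L}. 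After this substitution both sides become the same string diagram up to one zig-zag identity. Your identity is this one precomposed with $\id\tensor\Lambda_\coend$, so your proposed detour through the Hopf-algebra structure and the integral is unnecessary.

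For the $\modT$-intertwining, your reduction to $(\modT\tensor\id)\circ\Delta_\coend\circ\Lambda_\coend=(\id\tensor\modT)\circ\Delta_\coend\circ\Lambda_\coend$ is correct, but the justification you give is off: the integral plays no role. In fact $(\modT\tensor\id)\circ\Delta_\coend=(\id\tensor\modT)\circ\Delta_\coend$ already holds as maps $\coend\to\coend\tensor\coend$. Precompose with $\iota_U$, use the definition~\eqref{eq:cop-L} together with $\modT\circ\iota_U=\iota_U\circ(\id\tensor\theta_U)$, then the equality follows from $(\theta_U\tensor\id)\circ\coev_U=(\id\tensor\theta_U^{\,*})\circ\coev_U$ and dinaturality of $\iota$. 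So this part really is routine, just not for the reason you state.

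Your independence argument is the paper's, phrased slightly more abstractly; the paper writes out the explicit compatibilities~\eqref{eq:ST-on-EndId_aux2} and notes $\Lambda_{\coend'}=\pm\,h\circ\Lambda_\coend$, but the content is the same.
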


\begin{proof}
We first show the identity
\be\label{eq:ST-on-EndId_aux1}
\big[\,
\coend\coend
\xrightarrow{\id \otimes \Delta_\coend}
\coend(\coend\coend)
\xrightarrow{\sim}
(\coend\coend)\coend
\xrightarrow{\omega_\coend\tensor\id}
\one\coend
\xrightarrow{\sim}
\coend
\,\big]
~=~
\big[\,
\coend\coend
\xrightarrow{\mathcal Q}
\coend\coend
\xrightarrow{\eps_\coend \otimes \id}
\one\coend
\xrightarrow{\sim}
\coend
\,\big] \ .
\ee
To establish this equality we verify that it holds when precomposed with $\iota_U \otimes \iota_V$ for all $U,V\in\cat$. Indeed, substituting the defining relations in \eqref{eq:cop-L}, \eqref{eq:omega-L}, \eqref{eq:Q-map} and \eqref{eq:eps-L} one finds that \eqref{eq:ST-on-EndId_aux1} is equivalent to the following identity, which we give in terms of string diagrams
\be
\put(-3,-63){\scriptsize$U^\ast$} \put(14,-63){\scriptsize$U$} \put(49,-63){\scriptsize$V^\ast$} \put(118,-63){\scriptsize$V$}
\put(108,45){\scriptsize$\coend$}
\put(187,-71){\scriptsize$U^\ast$} \put(204,-71){\scriptsize$U$} \put(239,-71){\scriptsize$V^\ast$} \put(255,-71){\scriptsize$V$}
\put(243,65){\scriptsize$\coend$}
\ipic{proof-SL2Z-act-End-id-lhs}{.25} \qquad = \qquad \ipic{proof-SL2Z-act-End-id-rhs}{.25}
\ee
This identity clearly holds by the zig-zag identity for duality morphisms. 
Thus also \eqref{eq:ST-on-EndId_aux1} holds. 

Precomposing \eqref{eq:ST-on-EndId_aux1} with $\id \otimes \Lambda_\coend$ and comparing to the definition of $\Omega$ and $\Rad$ in \eqref{eq:Omega-def} and \eqref{eq:rhopsi-def}, as well as to the definition of $\modS$ in \eqref{eq:cat-ST}, shows that for all $f \in \cat(\one,\coend)$,
\be\label{eq:Rad-Omega-S}
	\Rad(\Omega(f)) ~=~ \modS \circ f \ .
\ee
It is now straightforward to check that conjugating the action of $\mathbf{S}$ and $\mathbf{T}$ in \eqref{eq:SL2Z-on-C1L} with 
the $\ok$-linear isomorphism
$\Rad \circ \psi$ gives the action in \eqref{eq:ST-on-EndId}.

Next we show 
 the independence 
of $\modS_\cat$ and $\modT_\cat$ from the choice of universal Hopf algebra 
	$(\coend,\iota)$ and integral $\Lambda_{\coend}$ (up to sign).
For $\modT_\cat$ there is nothing to do. 
For $\modS_\cat$, let 
$(\coend',\iota')$ be another choice of coend
	and $\Lambda_{\coend'}$ a choice of (normalised) integral for $\coend'$. Let
$h : \coend \to \coend'$ be the unique isomorphism satisfying $h \circ \iota_X = \iota'_X$ for all $X \in \cat$. 
Then using the defining relations in Figure~\ref{fig:Hopf-coend}
for the Hopf algebra structure and the Hopf pairing~\eqref{hopf-pair} but written for the coend $\coend'$ and equating maps corresponding to the same dinatural transformations, we find the relations
\begin{align}
\mu_{\coend}  &= h^{-1} \circ \mu_{\coend'} \circ (h\otimes h)\ , 
& \Delta_{\coend} &= (h^{-1}\otimes h^{-1}) \circ \Delta_{\coend'} \circ h\ ,\\ \nonumber
\omega_{\coend} &= \omega_{\coend'} \circ (h\otimes h)\ ,
& \eps_\coend &= \eps_{\coend'}\circ h\ .
\end{align}
{}From
this and Proposition~\ref{prop:integrals-exist} we get the relation $\Lambda_{\coend'} = \pm \, h\circ\Lambda_\coend$ between the normalised integrals.
Denote by $\Omega'$, $\Rad'$, $\psi'$ the maps in \eqref{eq:Omega-def} and \eqref{eq:rhopsi-def}, but computed for $\coend'$. 
Using the relations between $\coend$ and $\coend'$ just stated,
one easily verifies that, for $x \in \cat(\one,\coend')$, $f \in \cat(\coend',\one)$, $\alpha \in \End(\id_\cat)$,
\begin{align}\label{eq:ST-on-EndId_aux2}
	\Omega'(x) &= \Omega(h^{-1} \circ x) \circ h^{-1} \ ,
	& \psi'(\alpha) &= \psi(\alpha) \circ h^{-1} \ ,
\\ \nonumber
	\Rad'(f) &= \pm \, h \circ \Rad(f\circ h) \ ,
	& (\psi')^{-1}(f) &= \psi^{-1}(f \circ h) \ .
\end{align}
Substituting this into the definition of $\modS_\cat$ in \eqref{eq:SCTC} one arrives at 
$\modS'_\cat = \pm\,\modS_\cat$.
\end{proof}

\subsection{Internal characters and corresponding natural endomorphisms} 
\label{sec:intchar-natendo}

Let $\ok$ be an algebraically closed field and let $\cat$ be a $\ok$-linear factorisable and pivotal finite tensor category.

For the comparison to the conformal field theory calculation of the $SL(2,\oZ)$-action from \cite{Gainutdinov:2016qhz} in the companion paper \cite{FGRprep} and for the explicit form of the Verlinde formula in Section \ref{sec:intchar-qHopf} below, we will need to recall the definition and some properties of internal characters.

The {\em internal character}	of $V \in \cat$ is the element $\chi_V \in \Cc(\one,\coend)$ given by \cite{Fuchs:2010mw,Shimizu:2015} 
\be\label{eq:chiV-def}
\chi_V ~=~ 
\big[\, \one \xrightarrow{\widetilde\coev_V} V^* \ot V   \xrightarrow{\iota_{V}} \coend \,\big] \ ,
\ee
where we use the convention in \cite{Gainutdinov:2016qhz}.

Denote by $\Gr(\cat)$ the Grothendieck ring of $\cat$, and 
	recall the definition of $\Irr(\cat)$ from \eqref{eq:IrrC-def}.
	As
$\cat$ is finite, $\Gr(\cat)$ is the free $\oZ$-linear span of $[U]$, $U \in \mathrm{Irr}(\cat)$. We will abbreviate
$\Gr_\ok(\cat) := \ok \otimes_\oZ \Gr(\cat)$ for the $\ok$-linearised 
Grothendieck ring. 
The following theorem was proved in \cite[Cor.\,4.2]{Shimizu:2015}
under more general assumptions (in particular for non-braided $\cat$).

\begin{theorem}\label{thm:chiinj}
The assignment $V \mapsto \chi_V$
induces a $\ok$-linear map $\chi : \Gr_\ok(\Cc) \to \Cc(\one,\coend)$. 
	The map $\chi$ is injective.
\end{theorem}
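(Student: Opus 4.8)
The statement has two parts: that $V\mapsto\chi_V$ descends to the Grothendieck ring, and that the induced map is injective.

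\textit{Well-definedness.} Under the canonical isomorphism $\Cc(\one,V^\ast\ot V)\cong\End_\cat(V)$ the morphism $\widetilde\coev_V$ corresponds to $\id_V$, so it is natural to introduce, for $f\in\End_\cat(V)$, the \emph{internal trace}
\[
\mathrm{tr}^\coend_V(f)~:=~\big[\,\one\xrightarrow{\widetilde\coev_V}V^\ast\ot V\xrightarrow{\id\ot f}V^\ast\ot V\xrightarrow{\iota_V}\coend\,\big]\ \in\ \Cc(\one,\coend)\ ,
\]
so that $\chi_V=\mathrm{tr}^\coend_V(\id_V)$ and $f\mapsto\mathrm{tr}^\coend_V(f)$ is $\ok$-linear. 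Dinaturality of $\iota$ (eq.~\eqref{eq:dinat-iota-f}) together with the zig-zag identities yields the cyclicity property $\mathrm{tr}^\coend_W(a\circ b)=\mathrm{tr}^\coend_V(b\circ a)$ for all $b\colon V\to W$, $a\colon W\to V$; in particular $\mathrm{tr}^\coend_B$ is a trace on the algebra $\End_\cat(B)$, and it vanishes on any endomorphism of $B$ with image in a subobject $A\subseteq B$ which restricts to $0$ on $A$ (write such a map as $B\twoheadrightarrow B/A\to A\hookrightarrow B$ and use cyclicity to move the epimorphism past everything, where it composes with $A\hookrightarrow B$ to $0$). From these two properties, additivity on short exact sequences, $\chi_B=\chi_A+\chi_C$ for $0\to A\to B\to C\to 0$, follows by the standard argument (reduce to $A$ simple; when the sequence splits use orthogonal idempotents, the general case being \cite[proof of Cor.\,4.2]{Shimizu:2015}). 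Together with isomorphism-invariance, which is immediate from naturality of $\widetilde\coev$ and dinaturality of $\iota$, this shows that $V\mapsto\chi_V$ factors through a $\ok$-linear map $\chi\colon\Gr_\ok(\Cc)\to\Cc(\one,\coend)$.

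\textit{Injectivity.} Since $\{[U]\mid U\in\Irr(\cat)\}$ is a $\ok$-basis of $\Gr_\ok(\cat)$, it suffices to show that $\{\chi_U\mid U\in\Irr(\cat)\}$ is linearly independent in $\Cc(\one,\coend)$. The plan is to construct, for each $W\in\Irr(\cat)$, a morphism $g_W\colon\coend\to\one$ in $\cat$ with $g_W\circ\chi_U=\delta_{U,W}\,\id_\one$ for all $U\in\Irr(\cat)$; applying $g_W$ to a hypothetical relation $\sum_U a_U\chi_U=0$ then forces $a_W=0$. By the universal property of the coend (Remark~\ref{rem:coend}\,(2)), giving $g_W$ is the same as giving a dinatural transformation $t=(t_V\colon V^\ast\ot V\to\one)_{V\in\cat}$, and then $g_W\circ\chi_V=t_V\circ\widetilde\coev_V\in\Cc(\one,\one)=\ok$; by the well-definedness part this is additive over short exact sequences, so the requirement $g_W\circ\chi_U=\delta_{U,W}$ on simples is equivalent to $t_V\circ\widetilde\coev_V=[V:W]\,\id_\one$ for all $V$, where $[V:W]$ is the multiplicity of $W$ in a composition series of $V$. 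Such a family $t$ is built from the projective cover $P_W$ of $W$: the functor $\Cc(P_W,-)$ is exact, so $\dim_\ok\Cc(P_W,V)=[V:W]$; realising $\Cc(P_W,-)$ and $\Cc(-,P_W)$ through internal Homs with $P_W$ and using the duality of $\cat$, one assembles $t_V$ out of suitably paired bases of these Hom-spaces, with dinaturality following from functoriality of $\Cc(P_W,-)$ and the normalisation $t_V\circ\widetilde\coev_V=[V:W]\,\id_\one$ from a dimension count. Equivalently, one can read the $g_W$ off the projective-generator presentation \eqref{eq:L-P-2} of $\coend$ by lifting the block idempotents of $\End_\cat(G)/\mathrm{rad}$ — this is the categorical transcription of the classical proof that the irreducible characters of a finite-dimensional algebra over an algebraically closed field are linearly independent.

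\textit{Main obstacle.} The additivity-on-short-exact-sequences step in the first part, and — above all — the construction and dinaturality verification of the separating morphisms $g_W$ in the second part, carry the real content; this is where finiteness of $\cat$ is essential (existence of projective covers, equivalently of a projective generator), and it is precisely \cite[Cor.\,4.2]{Shimizu:2015}, which one may alternatively invoke verbatim. Note that neither factorisability nor the braiding enters the argument, consistent with the fact that Shimizu's theorem holds for arbitrary finite tensor categories.
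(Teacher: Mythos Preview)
The paper does not give its own proof of this theorem; it simply cites \cite[Cor.\,4.2]{Shimizu:2015}. Your well-definedness argument is fine and in the spirit of Shimizu's treatment.

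Your injectivity argument, however, has a genuine gap. You propose to separate the $\chi_U$ by morphisms $g_W\in\Cc(\coend,\one)$ satisfying $g_W\circ\chi_U=\delta_{U,W}$. But via the isomorphism $\psi\colon\End(\id_\Cc)\xrightarrow{\sim}\Cc(\coend,\one)$ of~\eqref{eq:RadPsi-def}--\eqref{eq:rhopsi-def}, every $g=\psi(\alpha)$ satisfies
\[
g\circ\chi_U \;=\; \psi(\alpha)\circ\iota_U\circ\widetilde\coev_U
\;=\;\ev_U\circ(\id\otimes\alpha_U)\circ\widetilde\coev_U
\;=\; c_U\cdot\big(\ev_U\circ\widetilde\coev_U\big)\ ,
\]
where $c_U\in\ok$ is the scalar by which $\alpha$ acts on the simple $U$. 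Thus $g\circ\chi_U$ is always a multiple of the pivotal dimension $\ev_U\circ\widetilde\coev_U$. If some simple $W$ has pivotal dimension zero --- which does happen in the non-semisimple examples this paper is aimed at, e.g.\ the Steinberg module in $\rep\,\UresSL2$ at a $2p$-th root of unity --- then $g\circ\chi_W=0$ for \emph{every} $g\in\Cc(\coend,\one)$, so no separating $g_W$ can exist, and a relation $\sum_U a_U\chi_U=0$ yields no constraint on $a_W$. Your alternative via block idempotents does not escape this: whatever the construction, it still lands in $\Cc(\coend,\one)$ and is subject to the same obstruction.

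The point is that the composition pairing $\Cc(\coend,\one)\times\Cc(\one,\coend)\to\ok$ is too coarse in the non-semisimple pivotal setting. Shimizu's proof does not use it. In the concrete case $\Cc=\rep A$ one sees what replaces it: $\chi_V(1)\in A^*$ is the ordinary character $a\mapsto\mathrm{tr}_V(a\cdot(\text{pivot}))$, and linear independence of irreducible characters of a finite-dimensional algebra is detected by evaluating at elements of $A$ --- functionals that are \emph{not} of the form $g\circ(-)$ for an $A$-module map $g\colon\coend\to\one$. Shimizu's categorical argument is the abstraction of this: he separates the $\chi_U$ using the ambient $\ok$-linear structure of the Hom-space (equivalently, a suitable class-function space), not by morphisms out of $\coend$. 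Either reproduce that argument or, as the paper does, invoke \cite[Cor.\,4.2]{Shimizu:2015} directly.
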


We remark that if in addition $\ok$ is of characteristic zero, then also the composition $\Gr(\cat) \to \Gr_\ok(\cat) \xrightarrow{\chi} \Cc(\one,\coend)$ is injective.

Next we use the maps $\Rad^{-1}$ and $\psi^{-1}$ from \eqref{eq:rhopsi-inv} to transport $\chi_V$ 
to $\End(\id_{\cat})$,
\be\label{eq:tildechi-def}
\phi_V ~:=~ \psi^{-1}(\Rad^{-1}(\chi_V)) \ .
\ee
{}From 
the proof of Proposition \ref{prop:ST-on-EndId} and from Theorem \ref{thm:chiinj} we conclude:

\begin{corollary}\label{cor:phiM-Gr-indepL}
The $\phi_V$ only depend on the class $[V]$ of $V$ in $\Gr(\cat)$
and on the choice of sign for the integral $\intL$, but are otherwise independent of the choice of the coend $\coend$.
The set $\{ \phi_U \,|\, U \in \Irr(\cat) \} \subset \End(\id_{\cat})$ is $\ok$-linearly independent.
\end{corollary}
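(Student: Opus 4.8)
The plan is to reduce both claims to results already established in the excerpt. For the first assertion—that $\phi_M$ depends only on $[M] \in \Gr(\cat)$ and is independent of the choice of coend—I would argue as follows. By definition \eqref{eq:tildechi-def}, $\phi_M = \psi^{-1}(\Rad^{-1}(\chi_M))$, so it suffices to track the three ingredients $\chi_M$, $\Rad^{-1}$, $\psi^{-1}$. First, by Theorem \ref{thm:chiinj} the assignment $M \mapsto \chi_M$ factors through $\Gr_\ok(\cat)$, hence $\chi_M$ depends only on $[M]$; in particular $\phi_M$ depends only on $[M]$. For independence of the coend: suppose $(\coend',\iota')$ is another choice with the unique Hopf-algebra isomorphism $h : \coend \to \coend'$ satisfying $h \circ \iota_X = \iota'_X$. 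From the relations between the structure maps of $\coend$ and $\coend'$ recorded in the proof of Proposition \ref{prop:ST-on-EndId} (namely $\mu_{\coend} = h^{-1}\mu_{\coend'}(h\ot h)$, $\Delta_{\coend} = (h^{-1}\ot h^{-1})\Delta_{\coend'}h$, $\Lambda_{\coend'} = \pm h\circ\Lambda_\coend$, plus the corresponding identities for $\eta,\eps,S_\coend,\coint_\coend$ which follow by the same dinaturality argument), together with the definition \eqref{eq:chiV-def} giving $\chi'_M = h \circ \chi_M$, one computes directly from \eqref{eq:rhopsi-inv} that $\Rad'^{-1}(\chi'_M) = \pm\,\Rad^{-1}(\chi_M)\circ(\pm 1)$—the two sign ambiguities (one from $\Lambda$, one propagating through $S_\coend$ and $\mu_\coend$) can be checked to cancel, giving $\Rad'^{-1}(\chi'_M) = \Rad^{-1}(\chi_M)$ exactly—and then $\psi'^{-1}(\Rad'^{-1}(\chi'_M)) = \psi^{-1}(\Rad'^{-1}(\chi'_M)\circ h) = \psi^{-1}(\Rad^{-1}(\chi_M))$ using $\psi'^{-1}(g) = \psi^{-1}(g\circ h)$ from \eqref{eq:ST-on-EndId_aux2}. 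Hence $\phi'_M = \phi_M$.

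For the second assertion—that $\{\phi_U \mid U \in \Irr(\cat)\}$ is $\ok$-linearly independent in $\End(\id_\cat)$—the key observation is that $\phi_M = \psi^{-1}(\Rad^{-1}(\chi_M))$ is the image of $\chi_M$ under the composite of the two $\ok$-linear \emph{isomorphisms} $\Rad^{-1} : \cat(\one,\coend) \to \cat(\coend,\one)$ and $\psi^{-1} : \cat(\coend,\one) \to \End(\id_\cat)$. (That these are isomorphisms, with explicit inverses, is exactly the content of \eqref{eq:RadPsi-def}–\eqref{eq:rhopsi-inv}.) Therefore linear independence of $\{\phi_U\}$ is equivalent to linear independence of $\{\chi_U \mid U \in \Irr(\cat)\}$ inside $\cat(\one,\coend)$. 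But the latter is immediate from Theorem \ref{thm:chiinj}: the classes $[U]$, $U\in\Irr(\cat)$, form a $\oZ$-basis of $\Gr(\cat)$, hence a $\ok$-basis of $\Gr_\ok(\cat)$, and $\chi : \Gr_\ok(\cat) \to \cat(\one,\coend)$ is injective, so the images $\chi_U$ are $\ok$-linearly independent.

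Assembling these, the corollary follows with no extra work. The only point requiring genuine care—and the step I expect to be the main obstacle—is the bookkeeping of the two sign ambiguities in the coend-independence part: the normalised integral $\Lambda_\coend$ is only defined up to a sign, and this sign enters $\Rad^{-1}$ through $\Lambda_\coend$ explicitly in \eqref{eq:rhopsi-inv}, but one must check it does not produce a residual sign in $\phi_M$. In fact, inspecting \eqref{eq:rhopsi-inv}, $\Rad^{-1}(a)$ does not reference $\Lambda_\coend$ at all (it uses only $a$, $S_\coend$, $\mu_\coend$, $\coint_\coend$), so the sign of $\Lambda_\coend$ is irrelevant for $\Rad^{-1}$—it is $\Rad$, not $\Rad^{-1}$, that contains $\Lambda_\coend$. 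Hence the only data entering $\phi_M$ are $\chi_M$, $S_\coend$, $\mu_\coend$, $\coint_\coend$, $\psi$, all of which transform covariantly under $h$ with no sign, and the argument closes cleanly. It remains only to note that the cointegral $\coint_\coend$ appearing in $\Rad^{-1}$ is likewise canonically determined (up to the same scaling that is fixed by the normalisation \eqref{eq:integral-normalisation}), so that no further ambiguity is introduced.

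\begin{proof}
By Theorem \ref{thm:chiinj}, $\chi : \Gr_\ok(\cat) \to \cat(\one,\coend)$ is a well-defined injective $\ok$-linear map; in particular $\chi_M$ depends only on the class $[M]$ in $\Gr(\cat)$, and hence so does $\phi_M = \psi^{-1}(\Rad^{-1}(\chi_M))$. Independence of $\phi_M$ from the choice of coend follows as in the proof of Proposition \ref{prop:ST-on-EndId}: if $(\coend',\iota')$ is another universal Hopf algebra with $h : \coend \to \coend'$ the unique isomorphism satisfying $h\circ\iota_X = \iota'_X$, then $\chi'_M = h\circ\chi_M$ by \eqref{eq:chiV-def}, $\mu_{\coend} = h^{-1}\circ\mu_{\coend'}\circ(h\ot h)$ and $S_\coend = h^{-1}\circ S_{\coend'}\circ h$ and $\coint_\coend = \coint_{\coend'}\circ h$ by equating maps on dinatural transformations, so $\Rad^{-1}$ in \eqref{eq:rhopsi-inv} gives $\Rad'^{-1}(\chi'_M) = \Rad^{-1}(\chi_M)$, and then $\psi'^{-1}(\Rad'^{-1}(\chi'_M)) = \psi^{-1}(\Rad'^{-1}(\chi'_M)\circ h) = \psi^{-1}(\Rad^{-1}(\chi_M))$ using $\psi'^{-1}(g) = \psi^{-1}(g\circ h)$ from \eqref{eq:ST-on-EndId_aux2}. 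Thus $\phi'_M = \phi_M$.

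For linear independence, the maps $\Rad^{-1} : \cat(\one,\coend) \to \cat(\coend,\one)$ and $\psi^{-1} : \cat(\coend,\one) \to \End(\id_\cat)$ are $\ok$-linear isomorphisms by \eqref{eq:RadPsi-def}--\eqref{eq:rhopsi-inv}, so $\phi_\bullet = \psi^{-1}\circ\Rad^{-1}\circ\chi$ on $\Gr_\ok(\cat)$, and $\{\phi_U \mid U\in\Irr(\cat)\}$ is $\ok$-linearly independent if and only if $\{\chi_U\mid U\in\Irr(\cat)\}$ is. Since $\{[U]\mid U\in\Irr(\cat)\}$ is a $\ok$-basis of $\Gr_\ok(\cat)$ and $\chi$ is injective by Theorem \ref{thm:chiinj}, the $\chi_U$ are $\ok$-linearly independent in $\cat(\one,\coend)$, and the claim follows.
\end{proof}
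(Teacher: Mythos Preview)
Your approach is essentially the paper's, and the linear-independence half is clean. There is however a bookkeeping slip in the coend-independence argument that, as written, breaks the chain of equalities.

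You claim $\Rad'^{-1}(\chi'_M) = \Rad^{-1}(\chi_M)$, but these morphisms have different sources ($\coend'$ versus $\coend$), so the equality is ill-typed. The correct relation, which the paper records, is $(\Rad')^{-1}(x) = \Rad^{-1}(h^{-1}\circ x)\circ h^{-1}$; applied to $x=\chi'_M=h\circ\chi_M$ this gives $\Rad'^{-1}(\chi'_M) = \Rad^{-1}(\chi_M)\circ h^{-1}$. With this in hand your next step closes correctly: $\psi'^{-1}(\Rad'^{-1}(\chi'_M)) = \psi^{-1}(\Rad'^{-1}(\chi'_M)\circ h) = \psi^{-1}(\Rad^{-1}(\chi_M)\circ h^{-1}\circ h) = \psi^{-1}(\Rad^{-1}(\chi_M))$. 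But if one takes your stated intermediate equality at face value, the last step would produce $\psi^{-1}(\Rad^{-1}(\chi_M)\circ h)$, not $\psi^{-1}(\Rad^{-1}(\chi_M))$.

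A smaller point: the identity $\coint_\coend = \coint_{\coend'}\circ h$ is not obtained ``by equating maps on dinatural transformations''. The cointegral is not characterised by a universal property here; it is defined as $\coint_\coend=\Omega(\Lambda_\coend)$, so its transformation law follows from those of $\omega_\coend$ and $\Lambda_\coend$. In particular the sign ambiguity in $\Lambda_\coend$ propagates to $\coint_\coend$ and hence to $\Rad^{-1}$ and $\phi_M$; the paper (and you) suppress this by transporting the integral along $h$, which is fine but should be said.
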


\begin{proof}
Using the notation from the proof of Proposition \ref{prop:ST-on-EndId}, it remains to note that in addition to \eqref{eq:ST-on-EndId_aux2} we have 
$\chi'_V = h \circ \chi_V$ and $(\Rad')^{-1}(x) = \Rad^{-1}(h^{-1} \circ x) \circ h^{-1}$.
\end{proof}

After applying $\modS_\cat$ to $\phi_V$, the expression simplifies to a ``Hopf link operator'' as considered in \cite{Creutzig:2016fms}, see \cite[Rem.\,3.10\,(2)]{Gainutdinov:2016qhz},
\be\label{eq:hopf-link-op}
	\modS_\cat(\phi_V)_X ~=~ \ipic{Hopf-link-operator}{.3} \ .
	\put(-12,-43){\scriptsize $V$} \put(-94,-43){\scriptsize $X$} \put(-94,75){\scriptsize $X$} \put(-54,-43){\scriptsize $V^\ast$}
\ee
In terms of formulas, this reads
\begin{align}\label{eq:S(phi)-generalcat}
	\modS_\cat(\phi_V)_X 
&~=~
\big[
 X 
\xrightarrow{\runit_X^{-1}}  X\one
\xrightarrow{\id \ot \widetilde\coev_{V} } X (V^{*} V)
\xrightarrow{\alpha_{X,V^*,V}} (XV^{*}) V
\\ \nonumber 
& \hspace{2em} 
\xrightarrow{ (c_{V^*,X} \circ c_{X,V^*})\ot \id} (XV^{*}) V
\xrightarrow{\alpha^{-1}_{X,V^*,V}} X (V^{*} V) 
\xrightarrow{\id\ot \ev_{V}}  X\one
\xrightarrow{\runit_X}X
\big] \ .
\end{align}

Combining Theorem \ref{thm:chiinj} 
and \cite[Sec.\,4.5]{Fuchs:2010mw} gives (see also \cite[Thm.\,3.11\,\&\,Prop.\,3.14]{Shimizu:2015}, as well as \cite{Creutzig:2016fms} and \cite[Thm.\,3.9]{Gainutdinov:2016qhz}):

\begin{theorem}\label{thm:S_C(phi_M)-algebramap}
The assignment 
$[V] \mapsto \modS_\cat(\phi_V)$
is an injective $\ok$-algebra homomorphism
$\Gr_\ok(\cat) \to \End(\id_\cat)$.
\end{theorem}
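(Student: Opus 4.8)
The plan is to prove Theorem~\ref{thm:S_C(phi_M)-algebramap} by reducing it to the categorical Verlinde formula~\eqref{eq:verlinde-general}, which is quoted from \cite[Thm.\,3.9]{Gainutdinov:2016qhz} and may be assumed. First I would recall that, by Theorem~\ref{thm:chiinj}, the assignment $[V] \mapsto \chi_V$ gives an injective $\ok$-linear map $\chi : \Gr_\ok(\cat) \to \cat(\one,\coend)$, and that the composition $\Rad^{-1} \circ \psi^{-1}$ in \eqref{eq:rhopsi-inv}, \eqref{eq:tildechi-def} is an isomorphism of vector spaces $\cat(\one,\coend) \to \End(\id_\cat)$, so that $[V] \mapsto \phi_V$ is itself injective. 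Since $\modS_\cat$ is a linear isomorphism of $\End(\id_\cat)$ (it is the $\mathbf{S}$-generator of a projective $SL(2,\oZ)$-action, by Proposition~\ref{prop:ST-on-EndId}), the composite $[V] \mapsto \modS_\cat(\phi_V)$ is also an injective $\ok$-linear map. Thus the only thing requiring proof is that this map is multiplicative, i.e.\ that $\modS_\cat(\phi_U) \circ \modS_\cat(\phi_V) = \sum_W N_{UV}^{~W}\,\modS_\cat(\phi_W)$ for simple $U,V,W$, where $N_{UV}^{~W}$ are the structure constants of $\Gr(\cat)$ and the sum is over $\Irr(\cat)$.

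Next I would apply $\modS_\cat$ to both sides of the categorical Verlinde formula~\eqref{eq:verlinde-general}. The right-hand side becomes $\sum_W N_{UV}^{~W}\,\modS_\cat(\phi_W)$ directly. The left-hand side is $\modS_\cat\big(\modS_\cat^{-1}(\modS_\cat(\phi_U)\circ\modS_\cat(\phi_V))\big) = \modS_\cat(\phi_U)\circ\modS_\cat(\phi_V)$. Comparing the two gives exactly the multiplicativity identity on the $\oZ$-span of the simple classes, and hence, by $\ok$-linear extension, on all of $\Gr_\ok(\cat)$. Since $\modS_\cat(\phi_{\one})$ must be the unit of $\End(\id_\cat)$ (this follows from $\phi_{\one}$ being sent to the image of $[\one]$ under the algebra map, or can be checked directly: $\chi_{\one}$ corresponds to the canonical morphism picking out the counit, and $\modS_\cat$ maps it to $\id_{\id_\cat}$ — alternatively it is forced by multiplicativity together with injectivity once one unit is identified), the map is a unital $\ok$-algebra homomorphism. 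This establishes the theorem.

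The main obstacle — or rather the only genuine content beyond bookkeeping — is that everything rests on the categorical Verlinde formula~\eqref{eq:verlinde-general}, whose proof is \emph{not} in this excerpt; it is cited from \cite{Gainutdinov:2016qhz} (with the semisimple precursor \cite[Thm.\,4.5.2]{tur}). So the honest statement is that, \emph{given} \eqref{eq:verlinde-general} and the injectivity from Theorem~\ref{thm:chiinj}, the present theorem is essentially a restatement: applying the invertible linear map $\modS_\cat$ to both sides of \eqref{eq:verlinde-general} converts the somewhat awkward ``conjugated product'' $\modS_\cat^{-1}(\modS_\cat(-)\circ\modS_\cat(-))$ on $\End(\id_\cat)$ into the honest composition product, under which the $\modS_\cat(\phi_V)$ become a homomorphic image of $\Gr_\ok(\cat)$. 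If instead one wanted a self-contained proof, the hard part would be re-deriving \eqref{eq:verlinde-general}: one would express $\modS_\cat(\phi_M)$ as the Hopf-link operator in~\eqref{eq:hopf-link-op}/\eqref{eq:S(phi)-generalcat}, and then show that composing two such Hopf-link operators and resolving the resulting tangle using the double braiding and the dinaturality of $\iota$ reproduces the fusion coefficients — this is precisely the computation carried out in \cite[Sec.\,4.5]{Fuchs:2010mw} and \cite{Shimizu:2015}, and I would simply cite it rather than reproduce it.
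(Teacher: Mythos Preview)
Your proposal is correct. The paper itself does not give a proof of this theorem: it is stated as a consequence of Theorem~\ref{thm:chiinj} together with \cite[Sec.\,4.5]{Fuchs:2010mw}, with \cite{Shimizu:2015}, \cite{Creutzig:2016fms} and \cite[Thm.\,3.9]{Gainutdinov:2016qhz} listed as further references. So both you and the paper defer the substantive content to the literature.

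The one mild difference in presentation is the direction of the implication: the paper takes Theorem~\ref{thm:S_C(phi_M)-algebramap} as the primary statement (sourced from \cite{Fuchs:2010mw,Shimizu:2015}) and then remarks that it \emph{implies} the Verlinde formula~\eqref{eq:verlinde-general}, whereas you take~\eqref{eq:verlinde-general} as the black box and deduce the theorem by applying $\modS_\cat$. As you yourself note, the two are trivially equivalent, and your final paragraph correctly identifies that the independent route is the Hopf-link computation of \cite[Sec.\,4.5]{Fuchs:2010mw}, which is exactly what the paper cites. For the unit, your parenthetical is slightly roundabout; the cleanest check is simply to set $M=\one$ in the explicit Hopf-link expression~\eqref{eq:S(phi)-generalcat}, where the double braiding with $\one$ and the duality maps for $\one$ collapse to give $\id_X$.
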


This theorem implies Equation \eqref{eq:verlinde-general}. If $\ok$ has characteristic zero, it allows one to compute the structure constants $N_{UV}^{~W}$ of $\Gr(\cat)$.

\medskip

In case that $\cat$ is ribbon, a short calculation with string diagrams shows that the antipode of $\coend$ acts on internal characters as $S_\coend \circ \chi_V = \chi_{V^*}$. Combining this with $\modS^2= S_\coend^{-1}$ from Theorem \ref{thm:MCG-1hole-torus} and transporting everything to $\End(\id_\cat)$ gives:

\begin{lemma}
Let $\cat$ be in addition ribbon. Then $\modS_\cat(\modS_\cat(\phi_V)) = \phi_{V^*}$.
\end{lemma}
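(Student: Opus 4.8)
The plan is to reduce the claim to the known behaviour of the antipode $S_\coend$ on internal characters and then transport the resulting identity along the $\ok$-linear isomorphism $\Rad\circ\psi$ of \eqref{eq:rhopsi-def}, which conjugates $\modS_\cat$ into post-composition by $\modS$.

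First I would record the auxiliary identity $S_\coend\circ\chi_M=\chi_{M^*}$ in $\cat(\one,\coend)$. Inserting the definition \eqref{eq:chiV-def} of $\chi_M$ and the defining relation \eqref{eq:antipode-L} of $S_\coend$ (with $U=M$) gives
\[
	S_\coend\circ\chi_M ~=~ \iota_{M^*}\circ(\tilde\uiso_M\otimes\id_{M^*})\circ\brC_{M^*,M}\circ\widetilde\coev_M ,
\]
so it suffices to check $(\tilde\uiso_M\otimes\id_{M^*})\circ\brC_{M^*,M}\circ\widetilde\coev_M=\widetilde\coev_{M^*}\colon\one\to M^{**}\otimes M^*$, after which the right-hand side equals $\iota_{M^*}\circ\widetilde\coev_{M^*}=\chi_{M^*}$ by \eqref{eq:chiV-def}. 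This last equality is the ``short string-diagram calculation'': using $\tilde\uiso_M=\delta_M\circ\theta_M$ from Remark~\ref{rem:twist} (with $\delta$ the pivotal structure), the relation \eqref{eq:theta-braiding-prop} between the ribbon twist and the braiding, and the standard expression of the right (co)evaluation of $M^*$ in a ribbon category in terms of the left (co)evaluation, the braiding and the twist, one verifies that the twist contributions cancel and both sides coincide.

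Then I would assemble the proof. Since $\modS_\cat=\psi^{-1}\circ\Omega\circ\Rad\circ\psi$ by \eqref{eq:SCTC} and $\Rad(\Omega(g))=\modS\circ g$ for $g\in\cat(\one,\coend)$ by \eqref{eq:Rad-Omega-S}, the isomorphism $\Rad\circ\psi\colon\End(\id_\cat)\to\cat(\one,\coend)$ satisfies $(\Rad\circ\psi)\circ\modS_\cat=(\modS\circ-)\circ(\Rad\circ\psi)$; iterating, $(\Rad\circ\psi)\big(\modS_\cat(\modS_\cat(\phi_M))\big)=\modS\circ\modS\circ\big((\Rad\circ\psi)(\phi_M)\big)$. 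From \eqref{eq:tildechi-def} one has $(\Rad\circ\psi)(\phi_M)=\chi_M$, and Theorem~\ref{thm:MCG-1hole-torus} gives $\modS^2=S_\coend^{-1}$, so this equals $S_\coend^{-1}\circ\chi_M$. Applying the auxiliary identity with $M$ replaced by $M^*$ yields $S_\coend\circ\chi_{M^*}=\chi_{M^{**}}$, and since $\cat$ is pivotal we have $[M^{**}]=[M]$ in $\Gr(\cat)$, whence $\chi_{M^{**}}=\chi_M$ by Theorem~\ref{thm:chiinj}; hence $S_\coend^{-1}\circ\chi_M=\chi_{M^*}=(\Rad\circ\psi)(\phi_{M^*})$. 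As $\Rad\circ\psi$ is an isomorphism, $\modS_\cat(\modS_\cat(\phi_M))=\phi_{M^*}$.

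The main obstacle is the auxiliary identity $S_\coend\circ\chi_M=\chi_{M^*}$: although short, it requires matching the conventions for the right duality morphisms and for the map $\tilde\uiso$ in a ribbon category, and checking that the ribbon twists introduced by $\tilde\uiso_M$ and by passing to the right coevaluation of $M^*$ cancel. Everything else is formal manipulation with the isomorphisms $\psi$, $\Rad$, $\Omega$ already set up in Section~\ref{sec:SL2Z-action}.
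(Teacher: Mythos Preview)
Your proposal is correct and follows essentially the same approach as the paper: prove the auxiliary identity $S_\coend\circ\chi_M=\chi_{M^*}$ by a string-diagram calculation, invoke $\modS^2=S_\coend^{-1}$ from Theorem~\ref{thm:MCG-1hole-torus}, and transport via $\Rad\circ\psi$. The only cosmetic difference is how you pass from $S_\coend\circ\chi_M=\chi_{M^*}$ to $S_\coend^{-1}\circ\chi_M=\chi_{M^*}$: you argue through $\chi_{M^{**}}=\chi_M$ via pivotality, whereas one could equivalently use $S_\coend^2=\theta_\coend$ from \eqref{eq:S2-ribbontwist} together with naturality of the twist (so $\theta_\coend\circ\chi_M=\chi_M\circ\theta_\one=\chi_M$), which gives $S_\coend^{-1}\circ\chi_M=S_\coend\circ\chi_M$ directly and avoids the appeal to Theorem~\ref{thm:chiinj}.
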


In this sense, we can think of $\modS_\cat^2$ as implementing ``charge conjugation'' on the internal characters (and on their images in $\End(\id_\cat)$).

For $\cat$ pivotal 	(but not necessarily ribbon),
after a short calculation with string diagrams one can find the following expression for the $\phi_V$, which is a bit lengthy as we write out associators for later use:
\begin{align}\label{eq:phiV-rib-def}
(\phi_V)_X ~=~ \big[\, & X  \xrightarrow{\sim} 
\one (X \one)
\xrightarrow{\coev_{XV^*} \otimes \id\otimes \widetilde\coev_{V}  }\left\{(XV^*)(XV^*)^*\right\} \bigl(X  (V^{*} V) \bigr)
 \\ \nonumber & 
 \xrightarrow{\id\otimes\alpha_{X,V^{*},V}}  \left\{(XV^*)(XV^*)^*\right\} \bigl((X V^{*}) V\bigr)
    \\ \nonumber & 
  \xrightarrow{\alpha^{-1}_{ XV^* , (XV^*)^* ,(X V^{*}) V}}  (XV^*) \left\{ (XV^*)^* \bigl((X V^{*}) V\bigr)\right\}
   \\ \nonumber & 
 \xrightarrow{\id\otimes \alpha_{(XV^*)^*, X V^{*}, V}}  (XV^*)\left\{\bigl((XV^*)^* (X V^{*})\bigr) V\right\}
 \\ \nonumber & 
\xrightarrow{\id \otimes ( \coint_\coend \circ \iota_{XV^*} ) \otimes \id}
(X V^*) \left\{\one  V\right\} \xrightarrow{\sim}(X V^*)   V 
 \\ \nonumber & 
\xrightarrow{\alpha^{-1}_{X,V^*,V}}
X (V^*   V) \xrightarrow{\id\otimes \ev_V}  X\one  \xrightarrow{\sim} X\,\big] \ ,
\end{align}
or graphically
\be\label{eq:phi_M-explicit}
	(\phi_V)_X ~= \ipic{pic-phi-simpler}{.75} 
     \put (-64,-88) {\scriptsize$ X $}
     \put (-64,84) {\scriptsize$ X $}
     \put (-59,38) {\scriptsize$ \id $}
     \put (-59,-44) {\scriptsize$ \id $}
     \put (-48,2) {\scriptsize$ \iota_{XV^*} $}
     \put (-32,20.5) {\scriptsize$ \Lambda^{\mathrm{co}}_\coend $}
     \put (-64,16) {\scriptsize$ X{}V^* $}
     \put (-4,16) {\scriptsize$ V $}
 \quad .
\ee
For the derivation of this diagram and for a more detailed study of the properties of $\phi_V$, see~\cite{GR-prep}.

\medskip

\section{Ribbon quasi-Hopf algebras}\label{sec:ribbon-qHopf}

In this section we introduce our conventions for 
	ribbon
quasi-Hopf algebras $A$ and for the structure maps on the categories $\rep A$ of their finite-dimensional representations. We show that the factorisability of $\rep A$ is equivalent the factorisability condition on $A$ given in \cite{[BT]}. 

\subsection{Conventions} \label{subsec:conventions-ribbon-qHopf}
We begin with the definition of a quasi-Hopf algebra
$A$~\cite{Dr-quasi} and we mainly  follow the conventions in~\cite[Sec.\,16.1]{ChPr}. 
We will make the following
\begin{quote}
	\textit{Assumption:} We will only consider quasi-Hopf algebras $A$ such 
	  that 
	the unit isomorphisms $\lambda_U$ and $\runit_U$  in $\rep A$ are as in $\vect_\ok$.
\end{quote}	 
This simplifies for example the counit conditions~\eqref{eq:eps-Delta} and~\eqref{eq:counital} below as they do not involve non-trivial invertible elements $l$ and $r$.

We will use Sweedler's sum notation with primes $'$ for the coproduct $\Delta(a) \in A \otimes A$ of an element  $a \in A$, and with subscript numbers ${}_{1,2,\dots}$ for elements of tensor products of $A$. For example, 
\be\label{eq:expansion-convention}
	\Delta(a) = \sum_{(a)} a' \otimes a''
	\quad , \qquad
	X = \sum_{(X)} X_1 \otimes X_2 \otimes X_3\quad
	\text{for}~~ X \in A^{\otimes 3} \ .
\ee

\begin{definition} \label{def:quasi-Hopf} \textit{A quasi-Hopf algebra} over a field $\ok$ is a unital associative algebra $A$ over $\ok$ together with
\begin{itemize}\setlength{\leftskip}{-1.5em}
\item an algebra homomorphism 
	$\eps: 
	A\to \oC$ (the {\em counit}),
\item an algebra homomorphism $\Delta: A\to A\tensor A$ (the {\em coproduct}),
\item an algebra anti-homomorphism $S: A\to A$ (the {\em antipode}),
\item a multiplicatively invertible element $\as\in A\tensor A\tensor A$ (the {\em coassociator}),
\item elements $\Salpha$, $\Sbeta\in A$ (the {\em evaluation} and {\em coevaluation element}, respectively).
\end{itemize}
These data are subject to the conditions:
\begin{itemize}\setlength{\leftskip}{-1.5em}
\item counitality and coassociativity:
\begin{align}\label{eq:eps-Delta}
	&(\eps\tensor\id)\circ\Delta ~=~ \id ~=~  (\id\tensor\eps)\circ\Delta \ ,
\\
\label{eq:as-intertwiner}
&\big((\Delta\tensor\id)(\Delta(a)) \big) \cdot \as ~=~ \as \cdot \big( (\id\tensor\Delta)
(\Delta(a))\big) \quad \text{ for all  } a\in A \ ,
\end{align}
\item the coassociator $\as$ is counital and a $3$-cocycle:
\begin{align}\label{eq:counital}
	& (\id\tensor\eps\tensor\id)(\as) = \one\tensor\one \ ,
\\
\label{eq:3-cocycle}
& (\Delta\tensor\id\tensor\id)(\as)\cdot (\id\tensor\id\tensor\Delta)(\as) = (\as\tensor\one)\cdot(\id\tensor\Delta\tensor\id)(\as)\cdot(\one\tensor\as) \ ,
\end{align}
\item the antipode conditions:

\begin{align}
\label{eq:Salpha-1}
	& \sum_{(a)}S(a') \, \Salpha \, a'' = \eps(a) \, \Salpha\ ,\qquad
	\sum_{(a)}a' \, \Sbeta  \, S(a'') = \eps(a) \, \Sbeta
\quad \text{ for all } a\in A \ ,
\\ &
\label{eq:Salpha-2}
\sum_{(\as)}S(\as_1) \, \Salpha  \, \as_2 \, \Sbeta  \,  S(\as_3) = \one\ ,\qquad
\sum_{(\as^{-1})}(\as^{-1})_1 \, \Sbeta \,  S((\as^{-1})_2) \, \Salpha   \, (\as^{-1})_3 = \one \ ,
\end{align}
for an expansion 
$\as= \sum_{(\as)}\as_1\tensor \as_2\tensor \as_3\in A\tensor A\tensor A$ 
and similarly for $\as^{-1}$, 
cf.\ \eqref{eq:expansion-convention}.
\end{itemize}
\end{definition}

\begin{remark}
\mbox{}
\begin{enumerate}\setlength{\leftskip}{-1.5em}
\item
We  note that the antipode $S$, as well as $\Salpha$ and $\Sbeta$ are uniquely determined up to the conjugation by a unique element $U$:
if the triple $\tilde{S}$, $\tilde{\Salpha}$, $\tilde{\Sbeta}$ gives another antipode structure in~$A$ then
there exists a unique element $U\in A$ such that
 \begin{equation}
\tilde{S}(a) = U S(a)U^{-1}\ ,\qquad
\tilde{\Salpha} = U \Salpha\ ,\qquad
\tilde{\Sbeta} = \Sbeta U^{-1} \ ,
\end{equation}
see~\cite[Prop.\,1.1]{Dr-quasi} for details.
\item 
Every Hopf algebra is also a quasi-Hopf algebra for which $\as = \one \tensor \one \tensor \one$ and $\Salpha = \Sbeta = \one$.
\end{enumerate}
\end{remark}

Let us denote by $\tau$ the symmetric braiding in vector spaces, 
i.e.\ for vector spaces $U,V$ and $u\in U$, $v\in V$ we set 
\be
	\tau_{U,V}(u \otimes v) ~=~ v \otimes u \ .
\ee

\begin{definition}\label{def:quasi-triang_for_quasi-hopf}
A quasi-Hopf  $A$ is 
\textit{quasi-triangular} if it is
		equipped with an invertible element $R\in A\tensor A$, called \textit{the universal R-matrix}, such that
\begin{itemize}\setlength{\leftskip}{-1.5em}
\item $R$ relates the coproduct with the opposite coproduct
 $\Delta^{\operatorname{op}}:=\tau_{A,A} \circ\Delta$ as
\begin{equation}\label{eq:RD=DopR}
R \, \Delta(a) = \Delta^{\mathrm{op}}(a) \, R
\quad \text{ for all } a\in A \ ,
\end{equation}
\item the quasi-triangularity conditions hold:
\begin{equation}\label{eq:R-mat-hex12}
\begin{split}
(\Delta\tensor\id)(R) &= 
	 (\as^{-1})_{231}
\, R_{13} \, \as_{132} \, R_{23}\,  \as^{-1} \ ,
\\
(\id\tensor\Delta)(R) &= \as_{312} \, R_{13} \, \as_{213}^{-1} \, R_{12} \, \as \ .
\end{split}
\end{equation}
Here we set $\as_{231}= \sum_{(\as)}\as_2\tensor \as_3\tensor \as_1$ and $R_{13}=\sum_{(R)}R_1\tensor\one\tensor R_2$, \textit{etc.}
\end{itemize}
\end{definition}

\begin{remark}\label{rem:q-Hopf-gives-mon}
The data of a quasi-triangular quasi-Hopf algebra $A$ allows one to turn $\rep A$ into a $\ok$-linear braided
	category with left duals as follows.
\begin{itemize}\setlength{\leftskip}{-1.5em}
\item
The associativity isomorphism $\alpha_{U,V,W} : U \tensor (V \tensor W) \to (U \tensor V) \tensor W$ for the tensor product (over $\ok$) of $A$-modules $U,V,W$ is given by
\begin{equation} \label{RepA-coass}
\alpha_{U,V,W}(u\tensor v\tensor w) = \as.(u\tensor v\tensor w)\ ,
\end{equation}
for any elements $u\in U$, $v \in V$, $w \in W$. The 3-cocycle condition on $\as$ is equivalent to the commutativity of the pentagon diagram for $\alpha$.

\item The antipode structure on $A$ gives rise to left duals for $\rep A$.
Namely, the left dual $U^*$ for $U$ in $\rep A$ is the vector space dual to $U$ together with the $A$-action
\begin{equation}\label{eq:dualU}
(a\cdot f)(u) := f(S(a)u) \ ,\qquad u\in U,\quad f\in U^*, \quad a\in A \ .
\end{equation}
The elements $\Salpha$ and $\Sbeta$ enter the definition of the evaluation and coevaluation morphisms as
\begin{equation}\label{eq:ev-coev}
\ev_U: \; \phi\tensor u \mapsto \phi(\Salpha . u)~~,\qquad \coev_U: \; 1\mapsto \sum_i (\Sbeta . u_i)\tensor u^*_i \ ,
\end{equation}
where $\phi\in U^*$, $u\in U$, and $\{u_i\}$ is a basis of $U$ with $\{u^*_i\}$ is the corresponding dual basis. 

\item
The  braiding isomorphisms $\sigma_{U,V}$ in  $\rep A$ are defined in terms of  the  universal R-matrix as
\be \label{RepA-braid}
\sigma_{U,V}(u \otimes v) = \tau_{U,V}(R. (u \otimes v)) \ .
\ee
Due to~\eqref{eq:R-mat-hex12},  the isomorphisms $\sigma_{U,V}$ satisfy 
	 the hexagon axioms of a braided monoidal category.
	Applying the linear map $\id\tensor\eps\tensor\id$ to both equations in~\eqref{eq:R-mat-hex12} and using the counital condition~\eqref{eq:counital}, 
we obtain the following result for a quasi-Hopf algebra under our \textit{Assumption}~\cite[Sec.\,3]{Dr-quasi}:
\begin{equation}\label{eq:eps-R-compat}
(\eps\tensor \id)(R) = \one = (\id\tensor \eps)(R) \gp
\end{equation}
 These equalities  correspond to the commutativity of the diagram involving the left and right unit isomorphisms and the braiding.\footnote{
	One can also demand the identities \eqref{eq:eps-R-compat} as part of the definition of quasi-triangularity in Definition~\ref{def:quasi-triang_for_quasi-hopf}. In this case one does not need to require that $R$ is invertible, see \cite{Bulacu:2003}.}
\end{itemize}
We will often use the {\em monodromy element}
\be\label{eq:monodromy-el}
	M ~:=~ R_{21} \, R ~\in ~ A \otimes A \ .
\ee
It describes the double braiding in $\rep A$: $\sigma_{V,U} \circ \sigma_{U,V}(u \otimes v) = M.(u\otimes v)$.

Let us give string diagrams for the structure maps in Remark \ref{rem:q-Hopf-gives-mon}. The action $A\ot U\to U$ of $A$ on a left $A$-module $U$ will be written as
\be 
\ipic{action-RepA}{.17}
\put(-27,-31){{\scriptsize $A$}}  \put(-4,-31){{\scriptsize $U$}} \put(-4,25){{\scriptsize $U$}} 
\quad\raisebox{2.5em}{\framebox{\scriptsize{$\vect_\ok$}}}
\ee
Here, the $\vect_\ok$ in a box indicates that the corresponding string diagram is to be taken in $\vect_\ok$.
For the structure morphisms \eqref{RepA-coass}, \eqref{eq:ev-coev} and \eqref{RepA-braid} we get
\begin{align}
\assoc_{U,V,W}(u\ot v\ot w) &~=~
\ipic{assoc-RepA}{.25}
\put(-68,-30){\scriptsize $\Phi$} \put(-38,-38){{\scriptsize $U$}}  \put(-21,-38){{\scriptsize $V$}} \put(-4,-38){{\scriptsize $W$}} 
\hspace{.9em}\raisebox{2.7em}{\framebox{\scriptsize{$\vect_\ok$}}}  \qcq  \\[10pt] 
\ev_U(u) &~=~ \ipic{ev-RepA}{.25}
\put(-34,-23){{\scriptsize $U^*$}}  \put(-14,-10){{\scriptsize $\Salpha$}} \put(-4,-23){{\scriptsize $U$}} 
\ \raisebox{3em}{\framebox{\scriptsize{$\vect_\ok$}}} \qcq 
\coev_U(u) ~=~ \ipic{coev-RepA}{.25}
\put(-34,23){{\scriptsize $U$}}  \put(-45,-10){{\scriptsize $\Sbeta$}} \put(-4,23){{\scriptsize $U^*$}} 
\ \raisebox{4em}{\framebox{\scriptsize{$\vect_\ok$}}}\qcq \\[10pt]  
\sigma_{U, V}(u\ot v) &~=~\ipic{braid-RepA}{.25}
\put(-63,-37){\scriptsize $R$} \put(-40,-46){{\scriptsize $U$}}  \put(-4,-46){{\scriptsize $V$}} \put(-40,40){{\scriptsize $V$}}  \put(-4,40){{\scriptsize $U$}} 
\quad \raisebox{4em}{\framebox{\scriptsize{$\vect_\ok$}}}\qcq 
\end{align}
for any $u\in U$, $v\in V$  and $w\in W$.
Note that since the string diagrams are in $\vect_\ok$, 
	the diagrams for duality maps and symmetric braiding represent the duality maps and braiding of $\vect_\ok$, not those of $\rep A$.
	
We will denote the standard pivotal structure of $\vect_\ok$ by 
\be\label{eq:pivotal-vect}
\delta^{\vect} : (-) \rightarrow (-)^{**}
\quad , \quad  \delta^{\vect}_V(v) = \langle-,v\rangle \ ,
\ee
where $V \in \vect_\ok$, $v \in V$
	and $\langle-,-\rangle$ denotes the pairing between $V^*$ and $V$.
\end{remark}

\begin{definition}\label{def:fact-qHopf}
A finite-dimensional quasi-triangular quasi-Hopf algebra $A$ is called \textit{factorisable} if its representation category $\rep A$ is factorisable in the sense of Definition~\ref{def:fact-cat}.\footnote{
	Definition \ref{def:fact-cat} is formulated in terms of braided finite tensor categories, but we have so far only introduced left duals. However,
given $V \in \rep A$, one can use the left duality morphisms $\ev_{V^*}$ and $\coev_{V^*}$ from above together with the Drinfeld isomorphism $u_V$ from \eqref{eq:uiso} to obtain right duality morphisms as in \eqref{eq:left-duals-def} such that $V^*$ is right dual to $V$ (in addition to being left dual to $V$).  We will not spell this out here as these right duals are different from the ones in the ribbon case that we intend to use later (see Section \ref{sec:qHopf-pivot}). But we stress that $\rep A$ is rigid, and in fact is a braided finite tensor category over $\ok$.
}
\end{definition}

\newcommand{\Dm}{\mathcal{D}_\coend}
\newcommand{\hDm}{\hat{\mathcal{D}}_\coend}

\newcommand{\pRel}{\boldsymbol{p}}
\newcommand{\pLel}{\tilde{\boldsymbol{p}}}
\newcommand{\qRel}{\boldsymbol{q}}
\newcommand{\qLel}{\tilde{\boldsymbol{q}}}

\newcommand{\ourQ}{Q}

\begin{remark}\label{rem:fact-qHopf-def}
~\\[-1.9em]
\begin{enumerate}\setlength{\leftskip}{-1.5em}
\item
An equivalent way to phrase Definition \ref{def:fact-qHopf} is to say that
$\Dm: \coend \to \coend^*$ from \eqref{eq:Omega} is invertible for the universal Hopf algebra $\coend$ in $\cat=\rep A$.
We will show in Section~\ref{sec:coend-repA} how this somewhat indirect definition of factorisability can be made explicit in terms of data of the quasi-Hopf algebra $A$. 
Namely, we will see that we can take $\coend = A^*$
and that the composition 
$A^* \xrightarrow{\Dm} A^{**} \xrightarrow{(\delta^{\vect}_A)^{-1}} A$
 can be written as $\phi \mapsto (\id \otimes \phi)( \hDm )$, 
where we used the
  isomorphism $\delta^{\vect}_A$ from~\eqref{eq:pivotal-vect}.
The element $\hDm \in A\tensor A$ is given by
\be\label{eq:Q-for-fact}
\hDm ~= 
\sum_{(X),(W)} S(W_3 X_2') W_4 X_2'' \otimes S(W_1 X_1') W_2 X_1''
\ ,
\ee
where 
$X \in A^{\otimes 2}$, $W \in A^{\otimes 4}$ are defined as
\begin{align}\label{eq:Rem-X}
X &~=~ \sum_{(\Phi)} \Phi_1 \otimes \Phi_2 \Sbeta S(\Phi_3)\ ,
\\ \nonumber
W&~=~(\one\tensor\Salpha\tensor\one\tensor\Salpha)\cdot
(\one\tensor\as^{-1})\cdot(\one\tensor M
\tensor\one)\cdot(\one\tensor\as)
\cdot(\id\tensor\id\tensor\Delta)(\as^{-1}) \ ,
\end{align}
and $M$ is as in \eqref{eq:monodromy-el}.
Thus, Definition \ref{def:fact-qHopf} states that $A$ is factorisable if and only if $\hDm \in A \otimes A$ is a non-degenerate copairing of vector spaces,
i.e.\ iff $\hDm = \sum_{i\in I} a_i\otimes b_i$ for two bases $\{ a_i \; |
 \; i\in I\}$ and $\{b_i \; | \; i\in I\}$ of $A$.

\item
Let us specialise the factorisability condition to the case that $A$ is a Hopf algebra. Then we have the trivial coassociator $\as=\one^{\otimes3}$ and $\Salpha=\Sbeta=\one$.
	Equation
\eqref{eq:Q-for-fact} reduces to 
$\hDm = \sum_{(M)} S(M_2) \otimes M_1$ and 
the map
 $(\delta^{\vect}_A)^{-1} \circ \Dm : A^* \to A$ becomes
$\phi \mapsto S \circ \big(	(\phi\tensor \id)(M) \big)$.
Thus $(\delta^{\vect}_A)^{-1} \circ \Dm$
is equal to the well-know Drinfeld mapping~\cite{[Drinfeld]}
composed with the antipode.
We conclude that $\hDm \in A \otimes A$ is a non-degenerate copairing if and only if the Drinfeld mapping
 is invertible. 
The latter condition is the usual definition of a factorisable Hopf algebra~\cite{RS}.
\end{enumerate}
\end{remark}

\subsection{Drinfeld twist} \label{subsec:Dtwist}

By definition, the antipode of a quasi-Hopf algebra is an algebra anti-homomorphism. However, 
in contrast to Hopf algebras it is in general not an coalgebra anti-homomorphism,  i.e.\ the equality $\Delta \bigl(S(a)\bigr) = (S\tensor S)(\Delta^{\operatorname{op}}(a))$ may not hold. Instead, the right hand side is conjugated by the Drinfeld twist~\cite{Dr-quasi}. The Drinfeld twist is the invertible element $\Dt\in A\tensor A$ given by
\begin{equation}\label{def:F}
\Dt = \sum_{(\as)} (S\tensor S)(\Delta^{\operatorname{op}}(\as_1)) \cdot \gamma \cdot \Delta\big( \as_2\Sbeta S(\as_3) \big)    
\end{equation}
with
\begin{equation} 
\gamma = \sum_{(X)} (S(X_2)\Salpha X_3) \tensor (S(X_1)\Salpha X_4) 
\quad \text{where}
\quad X = (\one\tensor\as)\cdot (\id\tensor\id\tensor\Delta)(\as^{-1}) \ .
 \label{def:gamma2}
\end{equation}
In terms of $\Dt$, $\Delta$ and $\Delta^{\operatorname{op}}$ are related by (see \cite{Dr-quasi})
 \begin{equation}\label{eq:anti-coalg}
\Dt\Delta \bigl(S(a)\bigr) \Dt ^{-1} = (S\tensor S)(\Delta^{\operatorname{op}}(a))  \ , \qquad a\in A \ .
\end{equation} 

\begin{lemma}[\cite{Dr-quasi}]
As morphisms in $\rep A$, $\gamma_{N,M}$ and
$\tilde\gamma_{N,M}$
from \eqref{eq:gammaUV} and \eqref{eq:gamma-tildeUV}, respectively, are given by  
\begin{align}\label{eq:gamma}
\big(\gamma_{N,M}(\varphi\tensor\psi)\big)(n\tensor m) &= (\psi\tensor\varphi) (\Dt. \, n\tensor m) \ , \\
\tilde\gamma_{N,M}(\varphi\tensor\psi\tensor n\tensor m) &= (\psi\tensor\varphi) (\gamma. \, n\tensor m) \ ,
\end{align}
where $m\in M$, $n\in N$, $\varphi\in M^\ast$, $\psi\in N^\ast$ and $\Dt$ and $\gamma$ are defined in \eqref{def:F} and \eqref{def:gamma2}, respectively. 
\end{lemma}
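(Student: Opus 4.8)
The plan is to evaluate the two composites \eqref{eq:gamma-tildeUV} and \eqref{eq:gammaUV} on general elements of their sources, using that by Remark~\ref{rem:q-Hopf-gives-mon} every coherence isomorphism in $\rep A$ is given by the action of $\as^{\pm1}$, every evaluation and coevaluation is built from $\Salpha,\Sbeta$ as in \eqref{eq:ev-coev}, the dual action is \eqref{eq:dualU}, and the unit isomorphisms $\lunit,\runit$ are trivial by the standing Assumption. The resulting formulas are then matched with the algebra elements $\gamma$ and $\Dt$ from \eqref{def:gamma2} and \eqref{def:F}; both identifications are instances of computations in \cite{Dr-quasi}, so the content is bookkeeping rather than a new idea.

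First I would treat $\tilde\gamma_{N,M}$ directly. Feeding $\varphi\otimes\psi\otimes n\otimes m$ (with $\varphi\in M^*$, $\psi\in N^*$, $n\in N$, $m\in M$) into \eqref{eq:gamma-tildeUV} with $V=N$, $U=M$, the first associator inserts a factor of $\as^{-1}$ whose third leg acts on $n\otimes m$ through $\Delta$, and the second associator inserts a factor of $\as$ acting on the $N^*$-, $N$- and $M$-slots. Collecting these actions gives precisely the element $X=(\one\tensor\as)\cdot(\id\tensor\id\tensor\Delta)(\as^{-1})\in A^{\otimes4}$ distributed over $M^*\otimes N^*\otimes N\otimes M$; that is, after the two associators the element is $\sum (X_1.\varphi)\otimes(X_2.\psi)\otimes(X_3.n)\otimes(X_4.m)$. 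Applying $\ev_N$ to the middle two slots and $\ev_M$ to the outer two slots, and moving the $X_i$-actions off the dual slots via $(a.\varphi)(x)=\varphi(S(a)x)$, yields $\sum \psi\big(S(X_2)\Salpha X_3\, n\big)\,\varphi\big(S(X_1)\Salpha X_4\, m\big)$, which is exactly $(\psi\tensor\varphi)\big(\gamma.(n\tensor m)\big)$ for $\gamma=\sum (S(X_2)\Salpha X_3)\otimes(S(X_1)\Salpha X_4)$. The interchange of $\varphi$ and $\psi$ is forced because $\tilde\gamma$ pairs the two dual factors against the two non-dual factors in opposite order.

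For $\gamma_{N,M}$ I would use its construction \eqref{eq:gammaUV} from $\tilde\gamma_{N,M}$ together with $\coev_{NM}$. Concretely, $\gamma_{N,M}$ is the unique morphism with $\ev_{NM}\circ(\gamma_{N,M}\tensor\id)=\tilde\gamma_{N,M}$ up to the associators appearing in \eqref{eq:gammaUV}, so it suffices to check that the functional $\phi_{\varphi,\psi}\in(NM)^*$ defined by $\phi_{\varphi,\psi}(n\tensor m)=(\psi\tensor\varphi)\big(\Dt.(n\tensor m)\big)$ meets this characterisation. That $\phi_{\varphi,\psi}$ actually lies in the $A$-module $(NM)^*$ of \eqref{eq:dualU}, and that $\gamma_{N,M}$ is an $A$-module map, is exactly the content of the twist relation \eqref{eq:anti-coalg}, $\Dt\,\Delta(S(a))\,\Dt^{-1}=(S\tensor S)(\Delta^{\operatorname{op}}(a))$. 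The remaining verification amounts to composing the formula for $\tilde\gamma_{N,M}$ just obtained with $\coev_{NM}(1)=\sum_i(\Sbeta.w_i)\tensor w_i^*$ and with $\ev_{NM}$ (which contributes a further $\Salpha$ and the associator legs hidden in $(NM)^*$), and recognising that the outer factors surrounding the copy of $\gamma$ assemble into $\sum(S\tensor S)(\Delta^{\operatorname{op}}(\as_1))\cdot\gamma\cdot\Delta(\as_2\Sbeta S(\as_3))=\Dt$; the simplification uses the $3$-cocycle identity \eqref{eq:3-cocycle}, counitality \eqref{eq:counital} and the antipode axioms \eqref{eq:Salpha-1}--\eqref{eq:Salpha-2} to cancel the $\coev_{NM}$/$\ev_{NM}$ contributions against the associator legs.

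The only genuine obstacle is the index bookkeeping: tracking which leg of $\as^{\pm1}$ acts on which tensor factor (including the legs that act diagonally through $\Delta$ on the composite objects $NM$ and $(NM)^*$), the appearance of $S$ and $\Delta^{\operatorname{op}}$ upon dualising, and the reversal $M^*\tensor N^*\leftrightarrow (NM)^*$. There is no conceptual difficulty, so the proof is completed by carefully expanding the string diagrams \eqref{eq:gammaVU} in $\vect_\ok$ and invoking the quasi-Hopf axioms as indicated.
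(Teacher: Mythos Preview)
Your argument is correct and follows the same path as the paper: compute $\tilde\gamma_{N,M}$ first by tracking the element $X=(\one\tensor\as)\cdot(\id\tensor\id\tensor\Delta)(\as^{-1})$ through the two evaluations, then obtain $\gamma_{N,M}$ from it via \eqref{eq:gammaUV}. The only difference is one of efficiency in the second step. The paper simply expands the composite \eqref{eq:gammaUV} in $\vect_\ok$: the associator contributes a copy of $\as$ whose first leg acts on $M^*\tensor N^*$ through $(S\tensor S)\circ\Delta^{\operatorname{op}}$ and whose remaining legs combine with the $\Sbeta$ from $\coev_{NM}$ to give $\Delta(\as_2\Sbeta S(\as_3))$ sandwiched around the $\gamma$ already produced by $\tilde\gamma_{N,M}$. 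This is literally the definition \eqref{def:F} of $\Dt$, so no appeal to the $3$-cocycle relation \eqref{eq:3-cocycle} or the antipode axioms \eqref{eq:Salpha-1}--\eqref{eq:Salpha-2} is required at this stage; those identities are what make $\Dt$ a genuine twist satisfying \eqref{eq:anti-coalg}, but the present lemma only asserts the raw formula. Your detour through the characterisation $\ev_{NM}\circ(\gamma_{N,M}\tensor\id)=\tilde\gamma_{N,M}$ is valid but introduces an extra $\ev_{NM}$ that then has to be cancelled, which is where the spurious need for further axioms creeps in.
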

\begin{proof}
We begin with $\tilde\gamma$. Let $X=(\one\ot\Phi)\cdot(\id\ot\id\ot\Delta)(\Phi^{-1})$. 
Then \eqref{eq:gamma-tildeUV} gives
\begin{align}
\tilde\gamma_{N,M}(\varphi\ot\psi\ot n\ot m) &= \ev_M \circ (\id\ot\ev_N\ot\id)(X\,.\,\varphi\ot\psi\ot n\ot m) \ .
\end{align}
Note that 
\be \ev_N(a\ot b\,.\,\psi\ot n)=\psi(S(a)\Salpha b\,.\,n) \ . \ee 
Indeed, we have
\be\label{proof-gamma-b}
   \ipic{proof-gamma-b-1}{.25}
	 \put(-45,-50){{\scriptsize $N^\ast$}}\put(-5,-50){{\scriptsize $N$}}
	 \put(-74,-38){\scriptsize$a\ot b$} \put(-20,4){\scriptsize$\Salpha$} 
	 \raisebox{4em}{\framebox{\scriptsize{$\vect_\ok$}}}
	 \quad = \quad 
	 \ipic{proof-gamma-b-2}{.25}
	 \put(-45,-50){{\scriptsize $N^\ast$}}\put(-5,-50){{\scriptsize $N$}}
	 \put(-74,-38){\scriptsize$a\ot b$} \put(-20,4){\scriptsize$\Salpha$} 
	 \quad = \quad 
	 \ipic{proof-gamma-b-3}{.25}
	 \put(-45,-50){{\scriptsize $N^\ast$}}\put(-5,-50){{\scriptsize $N$}}
	 \put(-74,-38){\scriptsize$a\ot b$} \put(-29,-2){\scriptsize$\Salpha$}  
		\qp 
\ee
It follows then	from the above equalities
that
\begin{align}
\tilde\gamma_{N,M}(\varphi\ot\psi\ot n\ot m) 
&= \sum_{(X)}\ev_M (X_1\ot X_4\,.\,\varphi\ot m) \, \psi(S(X_2)\Salpha X_3\,.\,n) \\ \nonumber
&= \sum_{(X)}\varphi (S(X_1)\Salpha X_4\,.\, m) \, \psi(S(X_2)\Salpha X_3\,.\,n) \\ \nonumber
&= \sum_{(X)}(\psi\ot\varphi)(S(X_2)\Salpha X_3\ot S(X_1)\Salpha X_4 \,.\, n\ot m) \\ \nonumber
&= (\psi\ot\varphi)(\gamma \,.\, n\ot m) \ .
\end{align}

Equation \eqref{eq:gamma} follows by recalling \eqref{eq:gammaUV} and using the identity
\be\label{proof-gamma}
   \ipic{proof-gamma-1}{.25}
	 \put(-127,-113){{\scriptsize $N^\ast$}}\put(-108,-113){{\scriptsize $M^\ast$}}
	 \put(-147,-98){\scriptsize$\Phi$} \put(-85,-91){\scriptsize$\Sbeta$} \put(-84,19){\scriptsize$\gamma$}
	 \put(-65,-63){{\scriptsize $N$}} \put(-48,-63){{\scriptsize $M$}}\put(-21,-63){{\scriptsize $(NM)^\ast$}}
	 \raisebox{8em}{\framebox{\scriptsize{$\vect_\ok$}}}
	 \quad = \quad
	 \ipic{proof-gamma-2}{.25}
	 \put(-118,-114){{\scriptsize $N^\ast$}}\put(-98,-114){{\scriptsize $M^\ast$}}
	 \put(-157,-99){\scriptsize$\Phi$} \put(-128,-102){\scriptsize$\Sbeta$} \put(-74,12){\scriptsize$\gamma$}
	 \put(-55,-63){{\scriptsize $N$}} \put(-38,-63){{\scriptsize $M$}}\put(-14,-63){{\scriptsize $(NM)^\ast$}}
\ee
\end{proof}

\subsection{Drinfeld element} \label{sec:Drinfeld-element}
Recall Drinfeld's canonical natural isomorphism 
	$u : \id_\cat \to (-)^{**}$ 
from \eqref{eq:uiso}. 
In terms of the data of the quasi-triangular quasi-Hopf algebra, $u_U$, for $U\in \rep A$, becomes
\be\label{def:Drinfeld-element-vect} 
   u_U \quad = \quad \ipic{drinfeld-element-vect}{.3}
	 \put(-133,-59){\scriptsize$U$} \put(-90,-59){\scriptsize$U^\ast$}  \put(-50,-59){\scriptsize$U^{\ast\ast}$}
	 \put(-133,50){\scriptsize$U^\ast$} \put(-91,50){\scriptsize$U$}
	 \put(-142,-16){\scriptsize$R$}  \put(-141,-45){\scriptsize$\Phi$} 
	 \put(-110,-82){\scriptsize$\Sbeta$}  \put(-110,48){\scriptsize$\Salpha$} 
	 \put(-27,77){\scriptsize$\vect_\ok$}
\ee

Abbreviate $X=(\Salpha\ot\one\ot\one)\cdot (R\ot\one) \cdot \Phi \cdot (\one\ot\Sbeta\ot\one)$
and recall the  standard pivotal structure of $\vect_\ok$ in~\eqref{eq:pivotal-vect}.
 Then we have
\be\label{def:Drinfeld-element-X} 
    u_U \quad = \quad \ipic{drinfeld-element-X-lhs}{.3}
	 \put(-133,-48){\scriptsize$U$} \put(-90,-48){\scriptsize$U^\ast$}  \put(-50,-48){\scriptsize$U^{\ast\ast}$}
	 \put(-133,40){\scriptsize$U^\ast$} \put(-91,40){\scriptsize$U$}
	 \put(-144,-35){\scriptsize$X$} 
	 \put(-27,66){\scriptsize$\vect_\ok$}
	 \quad = \quad \ipic{drinfeld-element-X-mid}{.3}
	 \put(-121,-47){\scriptsize$U$} \put(-80,-47){\scriptsize$U^\ast$}  \put(-40,-47){\scriptsize$U^{\ast\ast}$} \put(-20,-47){\scriptsize$U^\ast$} \put(-3,-47){\scriptsize$U^{\ast\ast}$}
	 \put(-123,41){\scriptsize$U^\ast$} \put(-81,41){\scriptsize$U$}
	 \put(-134,-44){\scriptsize$X$} 
	 \quad = \quad \ipic{drinfeld-element-X-rhs}{.3}
	 \put(-3,-45){\scriptsize$U$} \put(-3,25){\scriptsize$\delta_{U}^{\vect}$} \put(-3,50){\scriptsize$U^{**}$}
	 \put(-47,-41){\scriptsize$X$} \qp
\ee
We conclude that 
\be\label{eq:drinfeld-iso-ddual}
u_U=\delta_{U}^{\vect}\circ (\sqs\act (-)) \ ,
\ee 
where $\sqs \in A$ is the {\em Drinfeld element}. Explicitly,
\begin{equation}\label{sqs}
\sqs = \sum_{(\as),(R)} S(\as_2\Sbeta S(\as_3)) \, S(R_2) \, \Salpha \, R_1 \, \as_1 \ .
\end{equation}
The Drinfeld element satisfies
\be\label{eq:S2-inner}
	S^2(a) = \sqs a\sqs^{-1} \ ,
\ee	
for any $a\in A$, 
see \cite[Sect.\,3]{[AC]}. 

The corresponding calculation for the variant $\tilde u$ in \eqref{eq:uiso-tw} gives the same expression with $R$ replaced by $R^{-1}$. For later reference, we state it explicitly:
\be\label{eq:tilde-sqs}
\tilde \uiso_U=\delta_{U}^{\vect}\circ (\tilde\sqs\act (-)) 
\quad , \quad
\tilde\sqs = \sum_{(\as),(R^{-1})} S(\as_2\Sbeta S(\as_3)) \, S((R^{-1})_2) \, \Salpha \, (R^{-1})_1 \, \as_1 \ .
\end{equation}

\subsection{Ribbon element}\label{sec:ribbon-element}

 A quasi-triangular quasi-Hopf algebra $A$ is called \textit{ribbon}  if
it contains a ribbon element $\ribbon$ defined in the same way as for ordinary Hopf algebras \cite{Yorck}:

\begin{definition}\label{def:ribbonel}
A nonzero central element $\ribbon\in A$ is called a {\em ribbon element} if it
satisfies
\begin{equation}\label{def-ribbon}
  M \cdot \Delta(\ribbon)~=~\ribbon\tensor\ribbon,\qquad
 S(\ribbon)=\ribbon.
\end{equation}
\end{definition}

In a ribbon quasi-Hopf algebra $A$ we have the identities~\cite{[AC],Yorck}
\begin{equation}\label{u-ribbon}
   \ribbon^2= \sqs S(\sqs) \quad ,\quad \eps(\ribbon)=1 \ ,
\end{equation}
where $\sqs$ is the canonical Drinfeld element defined in~\eqref{sqs}.
By convention, the ribbon twist $\theta_U$ on an object $U$ is given by acting with $\ribbon^{-1}$:
\be\label{eq:RepA-ribbon}
	\theta_U ~=~ \ribbon^{-1}.(-) \ .
\ee

\subsection{Pivotal structure}\label{sec:qHopf-pivot}

For a ribbon category $\cat$ with ribbon twist $\theta$, one can define a pivotal structure $\delta_X : X \to X^{**}$ by $\delta_X =  u_X \circ \theta_X$, where $u_X$ is as in \eqref{eq:uiso}, see e.g.\ \cite[Sec.\,8.10]{EGNO-book}. 
Recall from \eqref{eq:uiso-tw} that $\tilde u_X$ is obtained from $u_X$ by using the inverse braiding. 
A short calculation shows that \eqref{eq:theta-braiding-prop} implies $u_X \circ \theta_X = \tilde u_X \circ \theta_X^{-1}$. Thus we can also write $\delta_X = \tilde u_X \circ \theta_X^{-1}$.

The pivotal structure and the left duality morphisms allow one to turn the left dual $U^*$ of an object $U$ into a right dual. The right duality morphisms are
\begin{align}
	\widetilde\ev_U
	~&=~ \big[\,
	UU^* \xrightarrow{\delta_U \otimes \id}
	U^{**}U^*
	\xrightarrow{\ev_{U^*}}
	\one
	\, \big],\\ \nonumber
	\widetilde\coev_U
	~&=~ \big[\,
	\one
	\xrightarrow{\coev_{U^*}}
	U^*U^{**}
	\xrightarrow{\id\otimes\delta_U^{-1}}
	U^*U 
	\, \big] \ .
\end{align}

Let now $A$ be a finite-dimensional ribbon quasi-Hopf algebra.
Combining \eqref{eq:drinfeld-iso-ddual} and \eqref{eq:RepA-ribbon}, we see that
in the category $\rep A$ the pivotal structure takes the form
\be
	\delta_U
	~=~ 
	\delta_{U}^{\vect}\circ (\ribbon^{-1}\sqs\act (-))
	~:~ U \to U^{**} \ .
\ee
The evaluation and coevaluation morphisms are (compare to \eqref{eq:ev-coev}), 
\be\label{eq:qHopf-tilde-evcoev}
\widetilde\ev_U: \; w\tensor\phi \mapsto \phi(S(\Salpha)\ribbon^{-1}\sqs . w)~~,
\quad\qquad \widetilde\coev_U: \; 1\mapsto \sum_i w^*_i \tensor (\sqs^{-1}\ribbon S(\Sbeta) . w_i) \ ,
\ee
where $\phi\in U^*$, $w\in U$, and $\{w_i\}$ is a basis of $U$ with dual basis $\{w^*_i\}$.

Using the ribbon structure, we can give a relation between the two variants $\sqs$ and $\tilde\sqs$ of the Drinfeld element. 
Namely,
	from $u_X \circ \theta_X = \tilde u_X \circ \theta_X^{-1}$ we get
$\sqs \ribbon^{-1} = \tilde\sqs \ribbon$ and combining this with \eqref{u-ribbon} gives
\begin{equation}\label{sqs-iv}
\tilde\sqs = S(\sqs^{-1}) \ .
\end{equation}
Applying $S$ to both sides and using \eqref{eq:S2-inner} and \eqref{eq:tilde-sqs} gives an explicit formula for $\sqs^{-1}$. An alternative expression is given in \cite[Thm.\,2.6]{Bulacu:2003}.

\section{Coends for quasi-triangular quasi-Hopf algebras} \label{sec:coend-repA}

We begin this section by describing explicitly the coend object $\coend$ in  $\rep \Ho$ for a quasi-triangular quasi-Hopf algebra $\Ho$. Then we give explicit expressions for its Hopf-algebra structure morphisms and Hopf pairing in terms of the defining data of  $A$.
Finally, we discuss an equivalent factorisability condition for $\rep \Ho$
and properties of integrals and internal characters.

All string diagrams in this section are taken in $\vect_\ok$, and we will drop the label $\raisebox{.1em}{\framebox{\scriptsize{$\vect_\ok$}}}$ from the diagrams.
	Recall that this means that duality maps in string diagrams refer to those of $\vect_{\ok}$, not those of $\rep A$.

\subsection{The coend $\coend$ in $\rep A$} \label{subsec:coend-repA}

To describe the coend, we will need to discuss the coadjoint representation of $A$, as well as an equivalent way of writing it. The {\em adjoint representation} $\rho^\mathrm{adj}_A : A \otimes A \to A$ of $A$ on itself is given by
\be
\rho^\mathrm{adj}_A ~=~ \ipic{adjac}{0.133}
	 \put(-15,48){\scriptsize$\Ho$}
	 \put(-37,-54){\scriptsize$\Ho$} \put(-15,-54)  {\scriptsize$\Ho$} \gp
\ee
By definition, the dual $\rho^{\mathrm{adj}^*}_{A^*} : A \otimes A^* \to A^*$ of the adjoint representation is given by
\be
\rho^{\mathrm{adj}^*}_{A^*} ~=~ \ipic{adjac-dual}{0.13}
	 \put(-3,58){\scriptsize$\Ho^\ast$}
	 \put(-70,-64){\scriptsize$\Ho$} \put(-50,-64) {\scriptsize$\Ho^\ast$} \gp
\ee
We will show below that, as for Hopf algebras (see \cite{Lyubashenko:1994tm,Kerler:1996}), the coend $\coend$ in $\rep A$ can be taken to be the dual of the adjoint representation. However, again as for Hopf algebras \cite{Kerler:1996}, we find it convenient to work with the action $\coa$ on $A^*$ from Figure \ref{fig:coend-qHopf}, which we refer to as the {\em coadjoint representation}.

\newcommand\adjiso{E}
The dual of the adjoint representation and the coadjoint representation are isomorphic. Indeed, define the map $\adjiso : A \to A$ as
\be
	\adjiso(a)
	~=~ \sum_{(\Dt)} S^{-1}\big(\Dt'\,a\,S(\Dt'')\big)
\ee
where $\Dt$ is
defined in \eqref{def:F}. It is straightforward to verify from invertibility of $\Dt$ that $\adjiso$ is invertible, and from \eqref{eq:anti-coalg} that 
\be
	\adjiso^* \,:\, (A^*,\rho^{\mathrm{adj}^*}_{A^*}) \longrightarrow (A^*,\coa)
\ee
is an isomorphism of $A$-modules.

\begin{figure}
\eqpicnn{def:coend-repA} {285} {50} {\setlength\unitlength{.85pt}
   \put(-2,54)  {$ \coa\coloneqq $}
   \put(45,10)  {\scalebox{.13}{\includegraphics{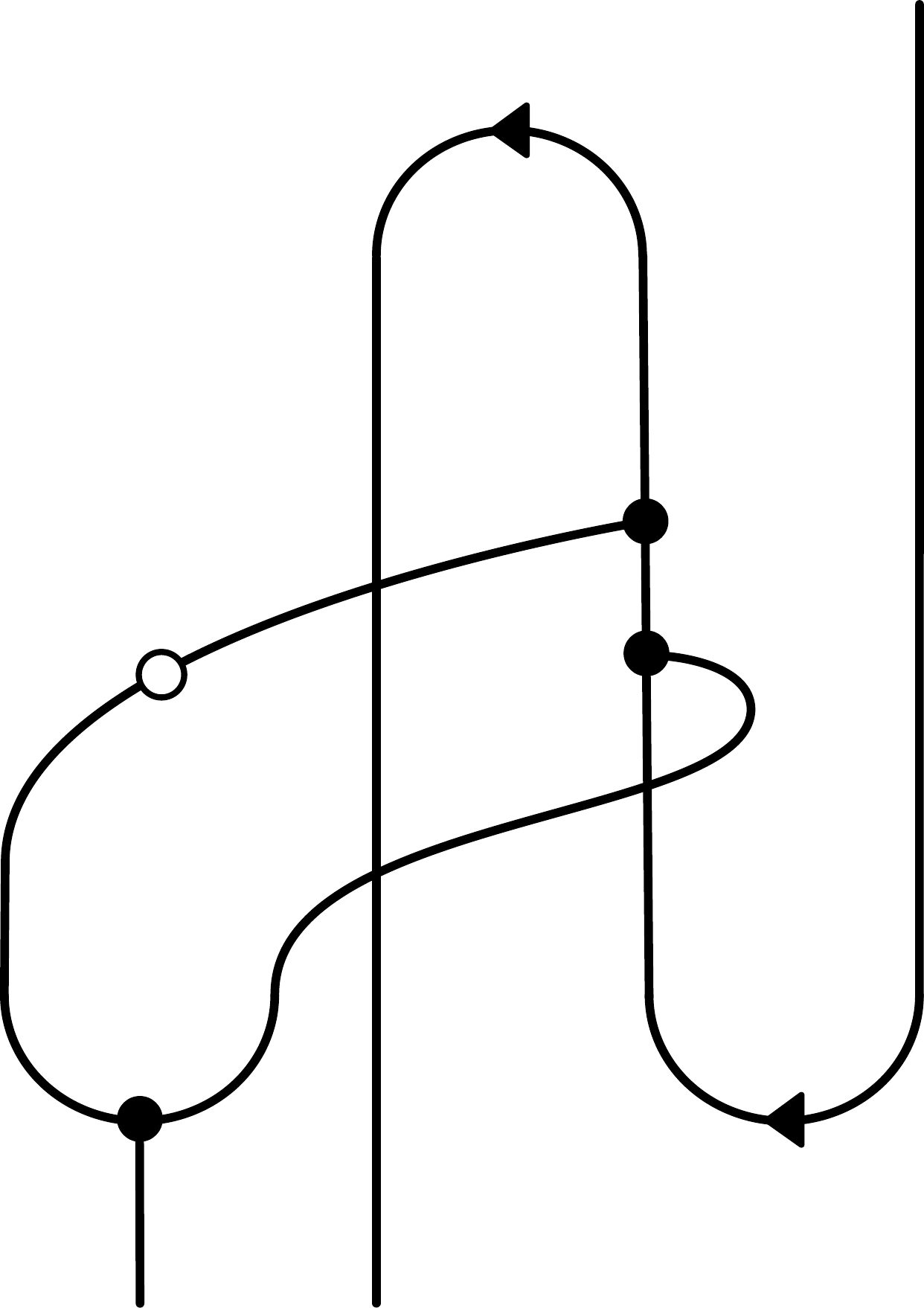}}}
	 \put(131,143){\scriptsize$\Ho^\ast$}
	 \put(53,0){\scriptsize$\Ho$} \put(77,0)  {\scriptsize$\Ho^\ast$}
   \put(208,54){$\iota_M\coloneqq$}
   \put(258,10) {\raisebox{0\height}{\includegraphics[scale=0.13]{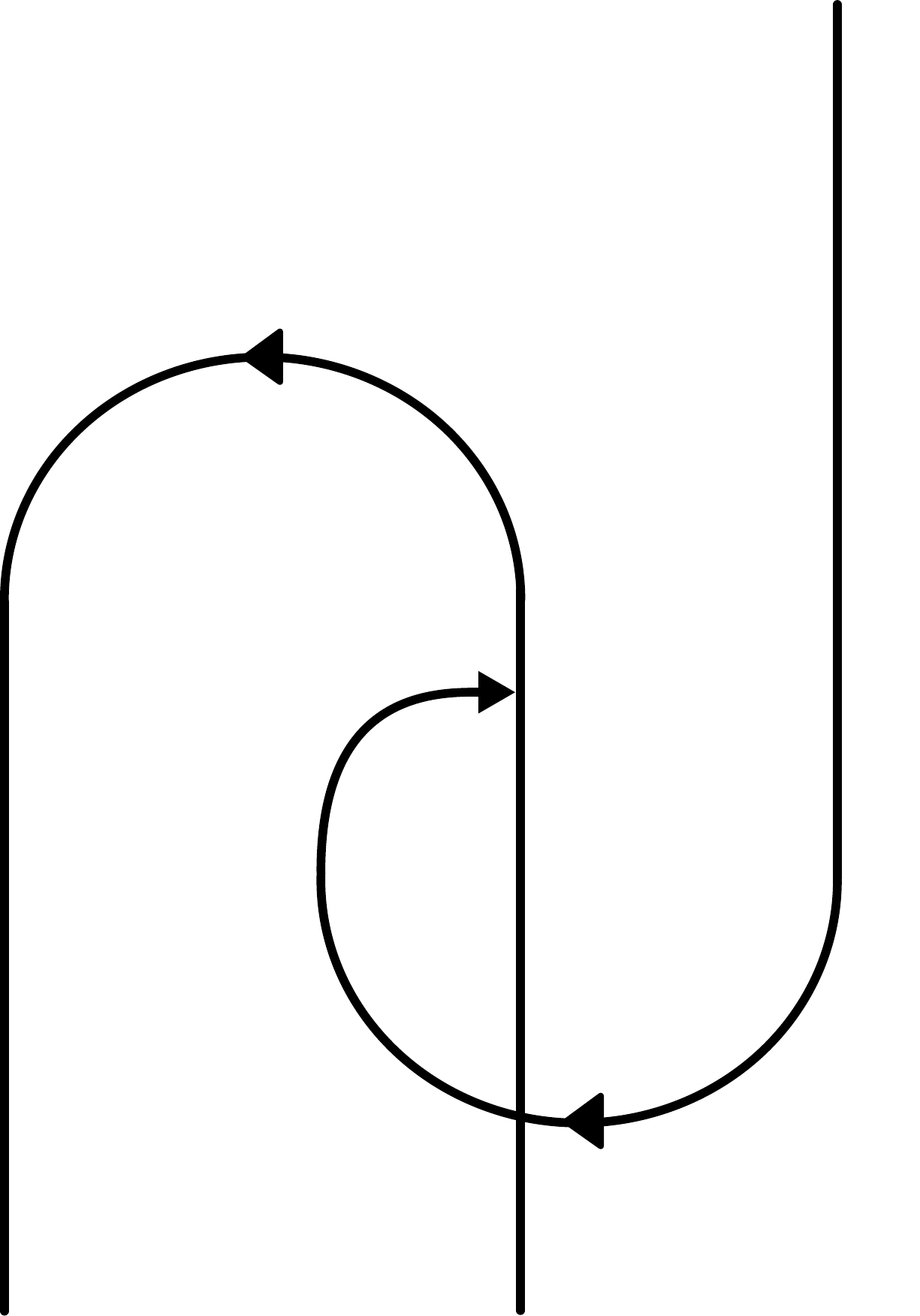}}}
	 \put(338,143){\scriptsize$\Ho^\ast$}
	 \put(250,00){\scriptsize$M^\ast$} \put(300,0)  {\scriptsize$M$}
	}
\caption{The coend object $\coend=\Ho^*$ for a quasi-Hopf algebra $\Ho$. The diagrams here are in $\vect_\ok$, i.e.\ the braidings are the usual flips of the vector spaces, etc.}
\label{fig:coend-qHopf}	
\end{figure}

We are now ready to show that $(A^*,\coa)$ can serve as the coend in $\rep A$.

\begin{proposition}\label{prop:coend-RepH}
Let $\Ho$ be a finite-dimensional quasi-Hopf algebra over a field $\ok$. 
\begin{enumerate}\setlength{\leftskip}{-1.5em}
\item
The coend object~\eqref{eq:coend} in $\rep \Ho$ can be chosen to be the coadjoint representation $\coend = (\Ho^\ast,\coa)$, together with the dinatural family
$\iota\equiv(\iota_M\colon M^\ast\tensor M \to \Ho^\ast)$ given by (see Figure~\ref{fig:coend-qHopf})
\begin{equation*}
\iota _M:\quad   \varphi\tensor m \; \mapsto\; \bigl(a \mapsto \varphi(a. m)\bigr)\ , \qquad m\in M, \; \varphi\in M^\ast, \; a\in A \ .
\end{equation*}
\item
The unique morphism $g: \coend\to B$ from~\eqref{eq:din-diag} is, for a dinatural transformation $(B,\phi)$,
$$
g = \left[\coend= \Ho^\ast \xrightarrow{\sim} \Ho^\ast\tensor\one \xrightarrow{\id\tensor \eta_\Ho} \Ho^\ast\tensor\Ho \xrightarrow{\phi_\Ho}  B \right]\ ,
$$
where $\eta_\Ho$ is the unit morphism of $A$ and $\phi_\Ho$ is  evaluated on the regular representation.
\end{enumerate}
\end{proposition}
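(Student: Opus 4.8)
The plan is to verify the two claims directly by unwinding the definitions, using the finiteness of $A$ (so that $A^*$ is a module with $\dim A^* = \dim A$) and the universal property of coends in the form of Remark~\ref{rem:coend}~(2). First I would check that $(\Ho^*, \coa)$ together with the family $\iota_M$ is indeed a dinatural transformation for the functor $F = (-)^* \otimes (-)$. This amounts to two things: that each $\iota_M : M^* \otimes M \to \Ho^*$ is a morphism of $A$-modules (i.e.\ intertwines the $\rep A$-action on $M^* \otimes M$ with the coadjoint action $\coa$ on $\Ho^*$), and that dinaturality~\eqref{eq:dinat-iota-f} holds, namely $\iota_N \circ (\id_{N^*} \otimes f) = \iota_M \circ (f^* \otimes \id_M)$ for every $f \in \rep\Ho(M,N)$. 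The second is essentially immediate: both sides send $\psi \otimes m$ (with $\psi \in N^*$, $m \in M$) to $a \mapsto \psi(a.f(m))$, using that $f$ is $A$-linear and that $f^*$ is the linear-algebra transpose (since our string diagrams are in $\vect_\ok$). The $A$-linearity of $\iota_M$ is where the explicit form of $\coa$ from Figure~\ref{fig:coend-qHopf} enters: one must show that acting by $b \in A$ on $\varphi \otimes m \in M^* \otimes M$ and then applying $\iota_M$ agrees with applying $\iota_M$ and then acting by $b$ via $\coa$. Expanding the $\rep A$-action on $M^* \otimes M$ involves the coassociator $\as$ (for the tensor product of three factors once we also feed in the argument $a \in A$), the antipode $S$, and the evaluation/coevaluation elements $\Salpha,\Sbeta$; the claim is that the resulting rearrangement is precisely what the diagram defining $\coa$ encodes. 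This is a bookkeeping computation with the defining axioms of a quasi-Hopf algebra (counitality, the 3-cocycle condition, and the antipode relations~\eqref{eq:Salpha-1}--\eqref{eq:Salpha-2}), done once and for all in the proof.

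Next I would establish the universal property, i.e.\ that for any dinatural transformation $(B,\phi)$ there is a \emph{unique} $g \in \rep\Ho(\Ho^*, B)$ with $\phi_M = g \circ \iota_M$ for all $M$, and that $g$ is given by the stated formula $g = \phi_\Ho \circ (\id \otimes \eta_\Ho)$ with $\phi_\Ho$ the component at the regular representation $A \in \rep\Ho$. For \emph{uniqueness}, I would exploit that $A$ is a projective generator of $\rep\Ho$ (indeed, it is the regular module), so every $M$ is a quotient of $A^{\oplus m}$; combined with dinaturality this forces $g$ to be determined by its composite with $\iota_A$, as in the discussion around~\eqref{eq:P-iota}. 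More concretely: since $\iota_A : A^* \otimes A \to \Ho^*$ is surjective (one can see this already from $\iota_A(\varphi \otimes 1) = \varphi$, because $\iota_A(\varphi \otimes 1)(a) = \varphi(a.1) = \varphi(a)$ for all $a$), any two $g$'s agreeing on the image of $\iota_A$ coincide, and dinaturality plus surjectivity of $A^{\oplus m} \twoheadrightarrow M$ pins down the image of $\iota_A$ in terms of the $\phi_M$. For \emph{existence}, I would take $g := \phi_\Ho \circ (\id_{\Ho^*} \otimes \eta_\Ho)$ — note $\eta_\Ho : \one \to \Ho$ is the unit $1 \mapsto 1$ of the regular module — observe it is $A$-linear because $\phi_\Ho$ is, and then verify $g \circ \iota_M = \phi_M$: feeding $\varphi \otimes m$ into $g \circ \iota_M$ and chasing through $\iota_M(\varphi \otimes m) = (a \mapsto \varphi(a.m))$, then $\id \otimes \eta_\Ho$, then $\phi_\Ho$, one uses dinaturality of $\phi$ applied to the $A$-linear map $A \to M$, $a \mapsto a.m$ (or rather to the map sending $1 \mapsto m$, which is $A$-linear from $A$ to $M$), to identify the result with $\phi_M(\varphi \otimes m)$. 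The key identity is that $\iota_M(\varphi \otimes m)$, viewed as an element of $\Ho^* = A^*$ and then ``evaluated on the regular representation'', reproduces exactly the data that dinaturality of $\phi$ converts from $\phi_A$ to $\phi_M$.

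The main obstacle, and the bulk of the real work, is the $A$-module homomorphism property of $\iota_M$ with respect to the \emph{coadjoint} action $\coa$: this is the step where all the quasi-Hopf structure (coassociator insertions, the $\Salpha,\Sbeta$ elements, the antipode axioms) has to be juggled carefully, and it is precisely the content that justifies the diagram in Figure~\ref{fig:coend-qHopf}. I would organize it as a string-diagram computation in $\vect_\ok$: write out the $A$-action on $M^* \otimes M$ (which involves $\Delta$, $S$, $\Salpha$, $\Sbeta$ and — when paired against the argument from $A$ — the associator $\as$), then repeatedly apply the defining relations to migrate everything into the shape of $\coa$. It helps that we may use the isomorphism $\adjiso^* : (A^*, \rho^{\mathrm{adj}^*}_{A^*}) \xrightarrow{\sim} (A^*, \coa)$ established just before the proposition: if one prefers, one can first check that the composite $\adjiso^* \circ (\text{obvious dinatural family into } (A^*, \rho^{\mathrm{adj}^*}_{A^*}))$ equals $\iota_M$, reducing the $A$-linearity check to the dual-of-adjoint picture, where it is cleaner; but ultimately some version of the quasi-Hopf bookkeeping is unavoidable. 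Everything else — dinaturality, surjectivity of $\iota_A$, $A$-linearity of $g$, the verification $g \circ \iota_M = \phi_M$, and uniqueness — is routine linear algebra combined with the projective-generator property of the regular module, and I would present those parts tersely.
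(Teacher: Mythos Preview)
Your overall strategy matches the paper's, but there is one genuine gap and one misplaced emphasis.

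\textbf{The $A$-linearity of $\iota_M$ is much easier than you say.} The action of $b\in A$ on $M^*\otimes M$ involves only the coproduct and the antipode (via the dual module structure~\eqref{eq:dualU}); the coassociator $\Phi$ and the elements $\Salpha,\Sbeta$ do \emph{not} enter, because no triple tensor product of modules and no duality morphism of $\rep A$ is involved. Concretely, $\iota_M\bigl(b.(\varphi\otimes m)\bigr)(h)=\sum_{(b)}\varphi\bigl(S(b')\,h\,b''.m\bigr)$, which is exactly $\coa(b\otimes\iota_M(\varphi\otimes m))(h)$ by the diagram defining $\coa$ in Figure~\ref{fig:coend-qHopf}. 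This is the same one-line computation as for ordinary Hopf algebras; there is no ``bookkeeping with the 3-cocycle condition'' here.

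\textbf{The $A$-linearity of $g$ is where you have a real gap.} You write ``observe it is $A$-linear because $\phi_\Ho$ is'', but this does not follow: the map $\eta_\Ho:\one\to A$ sending $1\mapsto 1_A$ is \emph{not} $A$-linear (that would force $a=\eps(a)1_A$ for all $a$), so $\id\otimes\eta_\Ho$ is not a morphism in $\rep A$, and $g$ is not a composite of $A$-module maps. More structurally, in $\phi_A:A^*\otimes A\to B$ the factor $A^*$ carries the \emph{dual of the left regular} action, whereas $\coend=A^*$ carries the \emph{coadjoint} action $\coa$; these differ. The paper closes this gap by invoking dinaturality of $\phi$ with respect to the right-multiplication maps $R_x:A\to A$ (which \emph{are} morphisms of left $A$-modules): from $\phi_A\circ(\id\otimes R_x)=\phi_A\circ(R_x^*\otimes\id)$ one rewrites $g(a.\varphi)=\sum_{(a)}\phi_A\bigl(L_{S(a')}^*\varphi\otimes a''\bigr)$, which is $a.\phi_A(\varphi\otimes 1_A)=a.g(\varphi)$ by $A$-linearity of $\phi_A$. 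Without this step your argument for existence is incomplete.

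The remaining parts of your plan (dinaturality of $\iota$, the verification $g\circ\iota_M=\phi_M$ via the map $R_m:A\to M$, and uniqueness via $\iota_A(\varphi\otimes 1_A)=\varphi$) agree with the paper and are fine.
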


In the case that $\Ho$ is a Hopf algebra, this proposition was proven in~\cite[Sec.\,3.3]{Lyubashenko:1994tm} and~\cite[Lem.\,3]{Kerler:1996}. The proof for the quasi-Hopf case is very similar and we reproduce it here for completeness. 

\begin{proof}[Proof of Proposition~\ref{prop:coend-RepH}]
We show that $(\coend,\iota)$ satisfies the universal property of the coend. As part of the argument, we show that $g$ is as stated in part (2).

\noindent
{\em $\bullet$ $\iota_M$ is an $A$-module intertwiner:}
We use the identity 
\begin{align}
	\iota_M (a\act(\varphi\ot m))= \big( h\mapsto \sum_{(a)}\varphi\big(S(a')ha''.m\big) \big) 
	\qquad , \quad h \in \Ho \ .
\end{align}
Indeed, we have 
\be\label{proof:diag-coend-repA} 
   \ipic{Action-din-trafo-RepH-lhs}{.13}
	 \put(-3,76){\scriptsize$\Ho^\ast$}
	 \put(-103,-81){\scriptsize$\Ho$} \put(-82,-81){\scriptsize$M^\ast$} \put(-39,-81)  {\scriptsize$M$}
   \quad = \quad
   \ipic{Action-din-trafo-RepH-mid}{.13}
	 \put(-3,76){\scriptsize$\Ho^\ast$}
	 \put(-103,-81){\scriptsize$\Ho$} \put(-82,-81){\scriptsize$M^\ast$} \put(-39,-81)  {\scriptsize$M$}
	 \quad = \quad
	 \ipic{Action-din-trafo-RepH-rhs}{.13}
	 \put(-3,76){\scriptsize$\Ho^\ast$}
	 \put(-103,-81){\scriptsize$\Ho$} \put(-82,-81){\scriptsize$M^\ast$} \put(-39,-81)  {\scriptsize$M$} \gp
\ee
Then the intertwiner property follows from 
$\sum_{(a)}\varphi(S(a')(-)a''.m)
=\coa(a\ot\varphi(-\,.m))=
\coa(a\ot\iota_M(\varphi\ot m))$.
Pictorially, it can be seen easily by adding a zig-zag at the last string of the last diagram in \eqref{proof:diag-coend-repA} and moving the coadjoint action of $a$ on the other end.

\noindent
{\em $\bullet$ $\iota_M$ is dinatural:}
We need to show that for all $A$-module maps $f : M \to N$ we have $\iota_N \circ (\id \otimes f) = \iota_M \circ (f^* \otimes \id)$ as maps $N^* \otimes M \to A^*$. Evaluating on $\varphi \in N^*$, $m \in M$ and $a \in A$ gives
\begin{align}
\big[\iota_N \circ (\id \otimes f)(\varphi \otimes m)\big](a)
&=
\varphi(a.f(m))
=
\varphi(f(a.m))
=
(f^* (\varphi))
(a.m)
\nonumber\\
&=
\big[\iota_M \circ (f^* \otimes \id)(\varphi \otimes m)\big](a) \ .
\end{align}

\noindent
{\em $\bullet$ $g$ from part (2) is an $A$-module map:}
For $x \in A$ denote by $L_x,R_x : A \to A$ the left and right multiplication by $x$: $L_x(a) = xa$ and $R_x(a) = ax$. Note that $R_x$ is an $A$-module intertwiner of $A$ seen as a left module over itself.
Since $\phi$ is dinatural, we have
\be\label{proof:diag-coend-repA_phiARx}
	\phi_A \circ (\id \otimes R_x) = \phi_A \circ ((R_x)^* \otimes \id) \ .
\ee
The coadjoint action of $a \in A$ is $\coa(a \otimes -) = \big(\sum_{(a)} L_{S(a')}\circ R_{a''}\big)^* : A^* \to A^*$, see Figure \ref{fig:coend-qHopf}. We compute, for $\varphi \in A^*$,
\begin{align}
	g(a.\varphi)
	&=
	\sum_{(a)} g(R_{a''}^*\circ L_{S(a')}^*(\varphi))
	\overset{\text{def.\,$g$}}=
	\sum_{(a)} 
	\phi_A\big( (R_{a''}^*\circ L_{S(a')}^*(\varphi)) \otimes 1_A \big)
\\ \nonumber &
	\overset{\eqref{proof:diag-coend-repA_phiARx}}=
	\sum_{(a)} 
	\phi_A\big( (L_{S(a')}^*(\varphi)) \otimes R_{a''}(1_A) \big)
	=
	\sum_{(a)} 
	\phi_A\big( (L_{S(a')}^* \otimes L_{a''})(\varphi \otimes 1_A) \big)
\\ \nonumber &
	\overset{(*)}=
	a.
	\phi_A\big( \varphi \otimes 1_A \big) 
	\overset{\text{def.\,$g$}}= a.g(\varphi) \ .
\end{align}
Here, (*) amounts to the observation that $\sum_{(a)} L_{S(a')}^* \otimes L_{a''}$ gives the action of $a$ on $A^* \otimes A$, where $A$ is 
	the left regular module
and $A^*$ is the corresponding dual module, together with the fact that $\phi_A$ is an $A$-module intertwiner.

\noindent
{\em $\bullet$ $g$ makes \eqref{eq:din-diag} commute:}
We need to show that for every $A$-module $M$, $g \circ \iota_M = \phi_M$.
Consider the map $R_m : A \to M$, $R_m(a) = a.m$. This is an $A$-module intertwiner, for $A$ the left regular
 module over itself.
We compute, for $\varphi \in M^*$ and $m \in M$,
\begin{align}
g \circ \iota_M(\varphi \otimes m)
&= \phi_A\big( \iota_M(\varphi \otimes m) \otimes 1_A \big)
= \phi_A\big( \varphi(-.m) \otimes 1_A \big)
\\ \nonumber &
= \phi_A\big( 
(R_m)^*  (\varphi)
 \otimes 1_A \big)
\overset{(*)}= \phi_M\big( \varphi \otimes (R_m(1_A))\big)
= \phi_M\big( \varphi \otimes m \big) \ ,
\end{align}
where (*) is dinaturality of $\phi$.

\noindent
{\em $\bullet$ $g$ is unique:}
Let $h : \coend \to B$ be an $A$-module map satisfying $h \circ \iota_M = \phi_M$ for all $A$-modules~$M$. We need to show that $h=g$.
But this is immediate if one chooses $M=A$. Indeed, for all $\varphi \in A^*$ we have $\iota_A(\varphi \otimes 1_A) = \varphi$ and so
\be
h(\varphi) = h \circ \iota_A(\varphi \otimes 1_A)
= \phi_A(\varphi \otimes 1_A) = g(\varphi) \ ,
\ee
by definition of $g$.
\end{proof}

We make a similar statement for an end $\Gamma$, recall the discussion in Section~\ref{sec:fact-end-coend}.
	This statement is proven in \cite[Lem.\,5.4]{Sakalos:2014} and can also be verified by arguments analogous to those in Proposition \ref{prop:coend-RepH}.

\begin{proposition}\label{prop:end-RepH}
Let $\Ho$ be a finite-dimensional  quasi-Hopf algebra over a field $\ok$. 
The end object $\Gamma$ in $\rep \Ho$ can be chosen to be the adjoint representation $\Gamma = (\Ho,\adj)$, together with the dinatural family
$j\equiv(j_M\colon  \Ho\to M\tensor M^\ast)$ given by 
\begin{equation}
j_M:\quad   a \; \mapsto\; \sum_i(a. m_i) \tensor m^*_i ~=~  
\ipic{Gamma-din-nat}{.18} 
\put(-55,-38){\scriptsize $A$} \put(-37,31){\scriptsize $M$} \put(-5,31){\scriptsize $M^*$}
\ , 
\end{equation}
where $m_i$ is a basis in $M$ and $m^*_i$ is the dual basis.
\end{proposition}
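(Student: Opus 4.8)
The plan is to verify directly that the pair $((A,\adj),j)$ satisfies the universal property of the end of $G=(-)\otimes(-)^*$, mirroring the proof of Proposition~\ref{prop:coend-RepH} step by step; an alternative is to quote \cite[Lem.\,5.4]{Sakalos:2014}, or to deduce the statement from Lemma~\ref{lem:end-coend} applied to the coend $((A^*,\coa),\iota)$ of Proposition~\ref{prop:coend-RepH}. For the direct route one has to check: (i) each $j_M$ is a morphism of $A$-modules $(A,\adj)\to M\otimes M^*$; (ii) the family $(j_M)_M$ is dinatural in the sense of \eqref{eq:dinat-j-f}; (iii) every dinatural transformation $(B,\phi)$, with $\phi_M\colon B\to M\otimes M^*$, factors uniquely as $\phi_M=j_M\circ g$ for an $A$-module map $g\colon B\to A$.

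First I would dispatch (i) and (ii), which involve none of the coherence data $\as$, $\Salpha$, $\Sbeta$. Using the canonical linear isomorphism $M\otimes M^*\xrightarrow{\sim}\End_\ok(M)$ and the fact that the $A$-action on a module is a monoid homomorphism $A\to\End_\ok(M)$, one computes that the $\vect_\ok$-tensor action of $x\in A$ on $M\otimes M^*$ sends $j_M(a)$ to $j_M\bigl(\sum_{(x)}x'\,a\,S(x'')\bigr)$: the evaluation and coevaluation of $\vect_\ok$ contract the two inner actions, and since $S$ is an algebra anti-homomorphism the output is $\sum_{(x)}x'aS(x'')=\adj(x\otimes a)$. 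Dinaturality (ii) is the formal ``snake'' identity: for $f\colon X\to Y$ both $(\id_Y\otimes f^*)\circ j_Y$ and $(f\otimes\id_{X^*})\circ j_X$ correspond, under $Y\otimes X^*\cong\Hom_\ok(X,Y)$, to $f$ itself.

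For (iii) the factorizing map is obtained from $\phi_A$ (the component at the left regular module $A$) dually to Proposition~\ref{prop:coend-RepH}\,(2), namely $g=\bigl[\,B\xrightarrow{\phi_A}A\otimes A^*\xrightarrow{\id\otimes\ev_{1_A}}A\,\bigr]$ with $\ev_{1_A}\colon A^*\to\ok$ evaluation at the unit. Since $\id_A\otimes\ev_{1_A}$ is left inverse to $j_A$, this formula is forced and $g$ is unique. To see that $g$ is $A$-linear into $(A,\adj)$ one uses that $\phi_A$ is a module map, together with dinaturality of $\phi$ along the right-multiplication intertwiners $R_y\colon A\to A$ of the regular module; writing $\phi_A(x)=\sum_k a_k\otimes\beta_k$, the latter gives $\sum_k\beta_k(y)\,a_k=g(x)\,y$ for all $y\in A$, whence $g(a.x)=\sum_{(a)}a'\,g(x)\,S(a'')=\adj(a\otimes g(x))$. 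Finally $j_M\circ g=\phi_M$ follows from dinaturality of $\phi$ along the intertwiners $R_m\colon A\to M$, $a\mapsto a.m$, again using $M\otimes M^*\cong\End_\ok(M)$.

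I do not expect a real obstacle: the argument is a routine dualization of Proposition~\ref{prop:coend-RepH}, and the quasi-Hopf features intervene only through the facts that $S$ is an algebra anti-homomorphism and that it enters the dual-module structure \eqref{eq:dualU} --- the objects $(A,\adj)$, $M\otimes M^*$ and the family $j_M$ are coherence-free. The one point needing a little care is identifying $g$ in (iii) and checking its $\adj$-linearity via dinaturality along $R_y$. If one prefers the route through Lemma~\ref{lem:end-coend}, the comparable work is the identification $\coend^*=(A^*,\coa)^*\cong(A,\adj)^{**}\cong(A,\adj)$ --- the first isomorphism coming from $E^*\colon(A,\adj)^*=(A^*,\rho^{\mathrm{adj}^*}_{A^*})\xrightarrow{\sim}(A^*,\coa)$ and the second from the Drinfeld isomorphism $\uiso$ --- followed by chasing $\hat\iota_X$ from \eqref{eq:hat-iota} through $d_X$, $E^*$ and $\uiso$ to recover $j_X$; this is no shorter than the direct check.
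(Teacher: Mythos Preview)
Your proposal is correct and follows exactly the route the paper indicates: the paper does not give a proof but cites \cite[Lem.\,5.4]{Sakalos:2014} and remarks that the argument is analogous to Proposition~\ref{prop:coend-RepH}; you carry out precisely that analogous argument (and also note the Sak\'alo\v{s} reference and the Lemma~\ref{lem:end-coend} alternative). One tiny imprecision: in step~(ii) the two sides, evaluated at $a\in A$, correspond under $Y\otimes X^*\cong\Hom_\ok(X,Y)$ to $L^Y_a\circ f=f\circ L^X_a$, not literally to $f$; the equality still follows immediately from $A$-linearity of $f$.
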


\subsection{Hopf structure and Hopf pairing on $\coend$} \label{sec:Hopf-structure-repA}

In this section we present explicit expressions for the structure maps 
	 \eqref{eq:mult-L}--\eqref{eq:antipode-L} and \eqref{eq:omega-L}
which define a Hopf structure and a Hopf pairing on
	the universal Hopf algebra $\coend = A^*$ in $\rep A$.

\newcommand{\mm}{\hat \mu_\coend}
\newcommand{\co}{\hat \Delta_\coend}
\newcommand{\un}{\hat \eta_\coend}
\newcommand{\coun}{\hat \eps_\coend}
\newcommand{\anip}{\hat S_\coend}
\newcommand{\Hp}{\hat \omega_\coend}
\newcommand{\Co}{D}

To work with elements rather than with functionals, we dualise all structure morphisms. 
We will use the notation $\langle-,-\rangle$ for the contraction of an element in $A^*$ with an element in $A$, and of an element in $A^* \otimes A^*$ with an element in $A \otimes A$:
\begin{align}
	\langle-,-\rangle &: A^* \otimes A \to \ok \ ,
	&&\langle \varphi,a \rangle = \varphi(a) \ ,
	\\ \nonumber
	\langle-,-\rangle &: A^* \otimes A^* \otimes A\otimes A \to \ok \ ,
	&&\langle \varphi \otimes\psi,a \otimes b \rangle = \varphi(b)\psi(a)  \ .
	\hspace*{4em}
\end{align}
Note the order of arguments in the second contraction. The convention is such that $\langle-,-\rangle = \tilde\gamma_{A,A}^{\vect_\ok}$, i.e.\ \eqref{eq:gammaVU} applied to the category of vector spaces.
In terms of this bracket notation, we define $\mm : A \to A \otimes A$, etc., as follows. For all $a,b \in A$ and $f,g\in A^*$,
\begin{align} \label{eq:coend:structuremaps}
\big\langle \muc (f\tensor g) \,,\, a \big\rangle
 &~=~ \big\langle f\tensor g \,,\, \mm(a) \big\rangle
 &&, \quad \mm : A \to A \otimes A \ ,
 \\ \nonumber 
\big\langle \Delta_\coend (f) \,,\, a \otimes b \big\rangle
 &~=~ \big\langle f \,,\, \co(a \otimes b) \big\rangle 
 &&, \quad \co : A \otimes A \to A\ ,
 \\ \nonumber 
\eta_\coend (1) &~=~ \big(\, a\mapsto \un(a) \, \big) 
 &&, \quad \un : A \to \ok \ ,
 \\ \nonumber
\eps_\coend (f) &~=~ f(\coun) 
 &&, \quad \coun \in A  \ ,
 \\ \nonumber
\big\langle S_\coend(f) \,,\, a \big\rangle &~=~ 
\big\langle f \,,\, \anip(a) \big\rangle  
 &&, \quad \anip : A \to A \ ,
 \\ \nonumber
\omega_\coend (f\tensor g) &~=~  \big\langle f\tensor g \,,\, \Hp \big\rangle
 &&, \quad \Hp \in A \otimes A \ .
\end{align}

\begin{theorem}\label{thm:explicit-coend-qHopf}
Let $A$ be a finite-dimensional quasi-triangular quasi-Hopf algebra over a field~$\ok$. The Hopf algebra structure and Hopf pairing from Theorem \ref{thm:coend-Hopf+pairing} applied to the universal Hopf algebra in $\rep A$ from Proposition \ref{prop:coend-RepH} are given by the maps in \eqref{eq:coend:structuremaps} with
\begin{align}
\label{eq:qHopf-coend-dualstructuremaps}
\mm(a) &~=~ \sum_{(\as),(\Psi),(\tilde{\Psi}),(R)} 
\left[S(\as_2\Psi_1
R_2'\tilde{\Psi}_3'
)\tensor S(\as_1\tilde{\Psi}_1)\right] \cdot \Dt
\\[-.8em] \nonumber
& \hspace{8em}
 \cdot \Delta(a \as_3) \cdot \left[(\Psi_2
R_2''\tilde{\Psi}_3'')
\tensor(\Psi_3R_1\tilde{\Psi}_2)\right] \gc
 \\ \nonumber
\co(a\ot b)&~=~ \sum_{(\Co)} S(D_1)bD_2 S(D_3)aD_4 \gcg 
 \\ \nonumber
	\un(a)&~=~ \eps(\Sbeta\,a) \gc 
 \\ \nonumber
\coun&~=~\Salpha \gc 
 \\ \nonumber
\anip(a)&~=~\sum_{(R)} S(aR_1)\tilde\sqs R_2 \ ,
 \\ \nonumber
	\Hp&~=~ \sum_{(W)} S(W_3) W_4 \ot S(W_1) W_2
\ .
\end{align}
In these expressions, $\Psi=\as^{-1}$, $\tilde{\Psi}$ is another copy of $\as^{-1}$, $\Dt$ is the Drinfeld twist from \eqref{def:F}, 
$\tilde\sqs$ was given in \eqref{eq:tilde-sqs},
 the elements $D,W \in A^{\otimes 4}$ are defined as
\begin{align}
\label{eq:coend-maps-elementsDW}  
\Co&~=~(\id\tensor\id\tensor\Delta)(\as)\cdot (\one\tensor\as^{-1})
\cdot
(\one\tensor\Sbeta\tensor\one\tensor\one) \ ,
\\ \nonumber
W&~=~(\one\tensor\Salpha\tensor\one\tensor\Salpha)\cdot
(\one\tensor\as^{-1})\cdot(\one\tensor
 M
\tensor\one)\cdot(\one\tensor\as)
\cdot(\id\tensor\id\tensor\Delta)(\as^{-1}) \ ,
\end{align}
and $M$ was defined in \eqref{eq:monodromy-el}.
\end{theorem}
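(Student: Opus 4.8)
The plan is to prove Theorem \ref{thm:explicit-coend-qHopf} by dualising each of the defining relations \eqref{eq:mult-L}--\eqref{eq:antipode-L} and \eqref{eq:omega-L}, working inside $\vect_\ok$ using the explicit formulas for the structure morphisms of $\rep A$ collected in Remark \ref{rem:q-Hopf-gives-mon} and Sections \ref{sec:Drinfeld-element}--\ref{sec:qHopf-pivot}. By Proposition \ref{prop:coend-RepH} we may take $\coend = A^*$ with the coadjoint action $\coa$ and the dinatural family $\iota_M(\varphi\otimes m) = (a\mapsto\varphi(a.m))$; the key computational device is that each structure morphism of $\coend$ is \emph{determined} by its precomposition with $\iota_M$ (or $\iota_U\otimes\iota_V$ for the double coend), so it suffices to verify the claimed formulas after precomposing with $\iota_A$ (resp.\ $\iota_A\otimes\iota_A$) applied to the regular representation. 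Concretely, for the multiplication one evaluates both sides of \eqref{eq:mult-L} on $\iota_A\otimes\iota_A$, feeds in elements $\varphi\otimes a\otimes\psi\otimes b$, and traces through the associators (given by multiplication with $\as^{\pm1}$ via \eqref{RepA-coass}), the braiding $\brC_{U,V^*V}$ (given by $\tau\circ(R.-)$ via \eqref{RepA-braid}), and the isomorphism $\gamma_{V,U}$ (given in terms of the Drinfeld twist $\Dt$ by Lemma in Section \ref{sec:Drinfeld-element}, i.e.\ \eqref{eq:gamma}); then one dualises using \eqref{eq:coend:structuremaps} to read off $\mm(a)$.

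The individual computations then proceed as follows. For $\coun$ and $\un$: the counit relation \eqref{eq:eps-L} is just $\ev_U$, which by \eqref{eq:ev-coev} is $\phi\otimes u\mapsto\phi(\Salpha.u)$, giving $\coun = \Salpha$ immediately; the unit relation \eqref{eq:eta-L} uses $\coev_\one$ and $\iota_\one$, and unpacking \eqref{eq:ev-coev} for $U=\one$ (where $\Sbeta$ acts) together with the canonical iso $\one^*\to\one$ yields $\un(a)=\eps(\Sbeta a)$. For the antipode: relation \eqref{eq:antipode-L} is $\iota_{U^*}\circ(\tilde u_U\otimes\id)\circ\brC_{U^*,U}$; here $\brC_{U^*,U}$ contributes an $R$-matrix, $\tilde u_U$ contributes $\tilde\sqs$ by \eqref{eq:tilde-sqs}, and evaluating on the regular module and dualising gives $\anip(a)=\sum_{(R)} S(aR_1)\tilde\sqs R_2$. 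For the coproduct: relation \eqref{eq:cop-L} involves $\coev_U$ (contributing $\Sbeta$), two associators (contributing $\as$ and $\as^{-1}$), and $\iota_U\otimes\iota_U$; assembling these into the element $D\in A^{\otimes4}$ of \eqref{eq:coend-maps-elementsDW} and dualising gives $\co(a\otimes b)=\sum_{(D)} S(D_1)bD_2 S(D_3)aD_4$. For the Hopf pairing $\Hp$: relation \eqref{eq:omega-L} contains the double braiding $\brC_{V^*,U}\circ\brC_{U,V^*}$ (contributing the monodromy $M=R_{21}R$), the evaluations (contributing two $\Salpha$'s), and three associators; these combine exactly into $W\in A^{\otimes4}$, giving $\Hp=\sum_{(W)} S(W_3)W_4\otimes S(W_1)W_2$.

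The main obstacle will be the multiplication map $\mm$: it is by far the most intricate because \eqref{eq:mult-L} chains together an inverse associator, the braiding $\brC_{U,V^*\otimes V}$ acting on a threefold tensor product, another inverse associator, an associator, and the map $\gamma_{V,U}$, and the resulting bookkeeping of which copy of $\as^{\pm1}$, which leg of $R$, and how the Drinfeld twist $\Dt$ interleaves with $\Delta(a)$ requires careful tracking of Sweedler indices. The strategy to control this is to break \eqref{eq:mult-L} into the three blocks already present in the displayed composition, compute the vector-space map each block induces on $A^*\otimes A$ separately (using that $\brC_{U,V^*V}(u\otimes\xi) = \tau(R.(u\otimes\xi))$ and that $\as$ acts by left multiplication on the appropriate triple tensor product of regular modules), then compose and finally apply $\gamma_{A,A}$ in the form \eqref{eq:gamma} and dualise via the second line of \eqref{eq:coend:structuremaps}, keeping in mind the reversed order of arguments in the pairing $\langle-,-\rangle = \tilde\gamma^{\vect_\ok}_{A,A}$. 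One must also check that all the claimed formulas are well-defined, i.e.\ that the dinatural transformations one writes down genuinely coincide on $\iota_M$ for \emph{all} $M$, not just $M=A$; but since $\iota$ is dinatural and every $M$ is a quotient of a free module, agreement on the regular representation propagates to all of $\rep A$ by the argument used in the proof of Proposition \ref{prop:coend-RepH}. The remaining verifications --- that these maps indeed satisfy the Hopf algebra axioms and that $\omega_\coend$ is a Hopf pairing --- are guaranteed abstractly by Theorem \ref{thm:coend-Hopf+pairing}, so no independent check is needed.
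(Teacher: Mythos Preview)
Your proposal is correct and follows essentially the same approach as the paper's proof: evaluate each defining relation \eqref{eq:mult-L}--\eqref{eq:antipode-L}, \eqref{eq:omega-L} after precomposition with the dinatural transformation, substitute the explicit $\rep A$ formulas for associators, braiding, duality maps, and $\gamma_{N,M}$ (the latter via the Drinfeld twist $\Dt$ as in \eqref{eq:gamma}), and then dualise using \eqref{eq:coend:structuremaps}. The only cosmetic difference is that the paper computes with arbitrary modules $M,N$ and then observes that the result depends only on $f=\iota_M(\varphi\otimes m)$ and $g=\iota_N(\psi\otimes n)$, whereas you propose to specialise to the regular representation from the start and invoke dinaturality to extend---these are equivalent since $\iota_A$ is surjective onto $\coend=A^*$.
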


\begin{proof}
Let $M,N\in\rep A$ and $m\in M$, $n\in N$, $\varphi\in M^\ast$, $\psi\in N^\ast$. By \eqref{eq:mult-L} the multiplication on $\coend$ is determined by
the equality $X = Y$ with
\begin{align}\label{eq:proof-muc-1}
X &= \muc\circ (\iota_M\ot\iota_N)(\varphi\ot m \ot \psi \ot n)  \ ,
\\ \nonumber
Y &= 
	\iota_{N \otimes M}
	\circ(\gamma_{N,M}\ot\id)\Big((\id\ot\id\ot\Delta)(\Phi) 
	\cdot (\one\tensor\Phi^{-1}) 
\\ \nonumber	&
	\hspace{2em}  . \, (\id\ot \tau_{M,N^\ast N})\Big\{
	\big(\one\ot(\id\ot\Delta)(R)\big) 
	\cdot (\id\ot\id\ot\Delta)(\Phi^{-1})\, . \,\varphi\ot m \ot \psi \ot n \Big\}\Big)  \ .
\end{align}
By abbreviating $\Psi,\tilde{\Psi}=\as^{-1}$
we get 
\begin{align}\label{eq:proof-muc-2}
Y &= \hspace{-.8em}
	\sum_{(\as),(\Psi),(\tilde{\Psi}),(R)} \hspace{-.8em}
	\iota_{N \otimes M}
	\circ(\gamma_{N,M}\ot\id)\Big(\Phi_1\tilde\Psi_1\ot \Phi_2\Psi_1 
	R_2'\tilde{\Psi}_3'\ot\Phi_3'\Psi_2
	R_2''\tilde{\Psi}_3''
	\ot\Phi_3''\Psi_3R_1\tilde\Psi_2
 \\[-1em] \nonumber &	\hspace{26em}
	\, . \,\varphi\ot\psi\ot n\ot m\Big) \ .
\end{align}
Note that for $B\in A^{\ot 4}$ we have
\begin{align}
\begin{split} 
	\iota_{N \otimes M}
&\circ(\gamma_{N,M}\ot\id)(B\, . \,\varphi\ot\psi\ot n\ot m) \\
=&\Big(
a\mapsto \sum_{(B)}(\psi\ot\varphi)
\big(
\big(S(B_2)\ot S(B_1)\big)\cdot \Dt \cdot \Delta(
a)\cdot (B_3\ot B_4) \act n\ot m\big)\Big) \gp
\end{split}
\end{align}
Indeed, with \eqref{eq:gamma} we get
\be\label{iota-gamma}
   \ipic{iota-gamma-lhs}{.25}
	 \put(-188,-92){{\scriptsize $M^\ast$}}\put(-167,-92){{\scriptsize $N^\ast$}}
	 \put(-207,-79){\scriptsize$B$} \put(-146,8){\scriptsize$\Dt$} \put(-5,87){\scriptsize$A^\ast$}
	 \put(-125,9){{\scriptsize $N$}}\put(-107,9){{\scriptsize $M$}} \put(-82,9){{\scriptsize $(NM)^\ast$}}
	 \put(-38,52){{\scriptsize $NM$}} \put(-44,-92){{\scriptsize $N$}}\put(-23,-92){{\scriptsize $M$}}
	 \quad = \quad
	 \ipic{iota-gamma-rhs}{.25}
	 \put(-157,-78){\scriptsize$B$} \put(-125,-37){\scriptsize$\Dt$} \put(-5,87){\scriptsize$A^\ast$}
	 \put(-105,-92){{\scriptsize $M^\ast$}}\put(-85,-92){{\scriptsize $N^\ast$}} \put(-43,-92){{\scriptsize $N$}}\put(-23,-92){{\scriptsize $M$}}
\ee
Thus \eqref{eq:proof-muc-2} becomes, for $a \in A$, 
\begin{align}
Y(a) &= \hspace{-.8em}
	\sum_{(\as),(\Psi),(\tilde{\Psi}),(R)} \hspace{-.8em}
	(\psi\ot\varphi)\big\{\big(S(\Phi_2\Psi_1
	R_2'\tilde{\Psi}_3')\ot S(\Phi_1\tilde\Psi_1) \big)
	\cdot \Dt \cdot \Delta(a\Phi_3)
\\[-1em]	\nonumber & \hspace{15em}
	\cdot
	\big(\Psi_2
	R_2''\tilde{\Psi}_3''
	\ot\Psi_3R_1\tilde\Psi_2\big) \act n\ot m\big\}\Big) 
\\	\nonumber
& = \big\langle f\tensor g \,,\, \mm(a) \big\rangle \ ,
\end{align}
where $f,g \in A^*$ are given by $f = \varphi((-).m)$ and $g = \psi((-).n)$. Since, with the same notation,
$(\iota_M\ot\iota_N)(\varphi\ot m \ot \psi \ot n) = f \otimes g$, from \eqref{eq:proof-muc-1} we get $X(a) = \big\langle \muc (f\tensor g) \,,\, a \big\rangle$. Altogether,
\be
\big\langle \muc (f\tensor g) \,,\, a \big\rangle
=
X(a)
=
Y(a)
=
\big\langle f\tensor g \,,\, \mm(a) \big\rangle
\ee
which proves the statement for $\muc$. 

The statements for the other maps can be shown similarly.
\end{proof}

\begin{remark}\label{rem:coend-str-Hopf}
We note that in the case $A$ is a quasi-triangular Hopf algebra, the universal Hopf algebra $\coend$ in $\rep A$ has the following structural maps on $\coend=A^*$,
using~\eqref{eq:coend:structuremaps} and applying Theorem~\ref{thm:explicit-coend-qHopf}:
\begin{gather}
\label{eq:qHopf-coend-dualstructuremaps_hopf}
\mm(a) ~=~ \sum_{(a),(R)} 
S(R_2')a' R_2''\tensor a'' R_1 \gc \qquad
\co(a\ot b)~=~ b \cdot a \gcg \qquad
 \\ \nonumber
	\un(a)~=~ \eps(a) \gc  \qquad
\coun ~=~ \one \gc 
 \\ \nonumber
\anip(a)~=~ \sum_{(R)} S(\sqs^{-1} \, a\, R_1) R_2 \  , \qquad
	\Hp ~=~ \sum_{(M)} S(M_2) \ot M_1 \ ,
\end{gather}
where $\sqs = \sum_{(R)} S(R_2)R_1$, see \eqref{sqs}.
The structure maps for the universal Hopf algebra have also been explicitly computed in \cite[Sec.\,4]{Lyubashenko:1994ma}, \cite[Sec.\,3.4]{Lyubashenko:1994tm} (in different conventions) and in \cite[Lem.\,4.4]{Vi}.
The structure maps for $H$ in \cite[Lem.\,4.4]{Vi} are precisely those for $A$ above with the opposite $\mm$ and $\co$.

\end{remark}

\subsection{Non-degeneracy of the Hopf pairing}\label{sec:non-deg-hopf-pair}

Recall from Definitions \ref{def:fact-cat} and \ref{def:fact-qHopf} that 
	a finite-dimensional quasi-triangular quasi-Hopf algebra $A$ over a field $\ok$
is called factorisable if $\omega_\coend$ is non-degenerate. Non-degeneracy of $\omega_\coend$ in turn by definition means that the morphism $D_\coend : \coend \to \coend^*$ from \eqref{eq:Omega} is invertible.

In the quasi-Hopf case, 
$\Dm$ 
is an $A$-module map $A^* \to A^{**}$. 
We will describe 
$\Dm$ in terms of an element 
$\hDm \in A \otimes A$ as, for $\varphi \in A^*$,
\be
	\Dm(\varphi)
	=
	\delta^{\vect}_A \circ \big( (\id \otimes \varphi)
	(\hDm) \big) \ .
\ee
A short computation shows that in terms of $\hat\omega_\coend$ as given in Theorem \ref{thm:explicit-coend-qHopf} we have
\be
	\hDm	=	\sum_{(X), (\hat\omega_\coend)}	 S(X_2') \, (\hat\omega_\coend)_1 X''_2	\,\otimes\,
	S(X'_1) \, (\hat\omega_\coend)_2 \, X''_1\ 
\ee
with 
$X$ as in~\eqref{eq:Rem-X}.
Substituting the explicit expression for $\hat \omega_\coend$ from Theorem~\ref{thm:explicit-coend-qHopf} gives the expression announced in Remark \ref{rem:fact-qHopf-def}\,(1).

\newcommand{\BTmap}{\mathcal{Q}^{\mathrm{BT}}}
\newcommand{\BTM}{\mathcal{M}^{\mathrm{BT}}}

\newcommand{\Acat}{\underline{A}}
\subsection{Equivalent factorisability condition}\label{sec:equiv-fact-cond}

In \cite{[BT]} the authors also introduce a notion of factorisability for 
quasi-triangular quasi-Hopf algebras. In this section we recall the definition in \cite{[BT]} and show that it is equivalent to Definition \ref{def:fact-qHopf}. 

Let $A$ be a finite-dimensional quasi-triangular quasi-Hopf algebra and consider the linear map
$\BTmap: A^*\to A$ defined by 
	(see \cite[Prop.\,2.2\,(i)]{[BT]} but note that their $\as$ is our $\as^{-1}$)
\begin{equation}\label{eq:BTmap}
\BTmap:\; \phi \mapsto (\phi\tensor \id)(\BTM) \ .
\end{equation}
The element $\BTM\in A\tensor A$ is defined as 
\begin{equation}\label{eq:BTM}
\BTM  ~= \hspace{-1em}\sum_{(R), (\tilde{R}), (\as), (\pRel),  (\qLel)}
\hspace{-1.5em}
 \qLel_1 (\as^{-1})_1 R_2 \tilde{R}_1\, \pRel_1 \tensor \qLel_2' (\as^{-1})_2 R_1 \tilde{R}_2\, \pRel_2 S\bigl(\qLel_2'' (\as^{-1})_3\bigr)\ ,
\end{equation}
where $\tilde{R}$ is a copy of $R$,	and 
\be
\pRel = \sum_{(\as)}\as_1\tensor \as_2 \Sbeta S(\as_3)\ , \qquad
\qLel = \sum_{(\as)} S(\as_1)\Salpha \as_2\tensor \as_3\ .
\ee

Recall the end $\Gamma=(A, j)$ from Proposition~\ref{prop:end-RepH} and the map $\DD$ defined by~\eqref{eq:T-DD}.

\begin{lemma}\label{lem:BTmap-DD}
$\BTmap = \DD$.
\end{lemma}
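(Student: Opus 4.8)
The plan is to verify the identity $\BTmap = \DD$ by evaluating both sides against the dinatural families that characterise the coend $\coend = A^*$ and the end $\Gamma = A$, using Propositions~\ref{prop:coend-RepH} and~\ref{prop:end-RepH}. Recall from~\eqref{eq:T-DD} that $\DD$ is the unique morphism $\coend \to \Gamma$ with $\T_{M,N} = j_N \circ \DD \circ \iota_M$ for all $M,N \in \rep A$. Hence it suffices to show that $\BTmap$, viewed as an $A$-module map $A^* \to A$, satisfies the same defining relation, i.e.\ that $j_N \circ \BTmap \circ \iota_M = \T_{M,N}$ for all $M,N$. Since $\iota_A(\varphi \otimes 1_A) = \varphi$ (as noted in the proof of Proposition~\ref{prop:coend-RepH}), the map $\DD$ is completely pinned down by the family $j_N \circ \DD$ evaluated on the regular representation, and likewise $\BTmap$ is given directly on elements of $A^*$ by~\eqref{eq:BTmap}; so in practice I would take $M = A$ the regular module, $\varphi \in A^*$, and compute $j_N\big(\BTmap(\varphi)\big)$ for arbitrary $N \in \rep A$, comparing with $\T_{A,N}$ applied to $\varphi \otimes 1_A$.

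First I would unravel $\T_{M,N}$ from~\eqref{eq:T-tangle} in the category $\rep A$ using the explicit structure maps of Remark~\ref{rem:q-Hopf-gives-mon}: the coevaluation $\coev_N$ contributes the factor $\Sbeta$ and a sum over a basis of $N$; the associators $\assoc_{X,Y,Y^*}^{\pm 1}$ contribute $\as$ and $\as^{-1}$; the double braiding $\brC_{N,M}\circ\brC_{M,N}$ contributes the monodromy $M = R_{21}R$ (here $\tilde R$ plays the role of the second $R$-factor); and the evaluation $\ev_M$ together with the left-unit isomorphisms contribute $\Salpha$. The bracket pairing convention $\langle-,-\rangle = \tilde\gamma^{\vect_\ok}_{A,A}$ fixed before Theorem~\ref{thm:explicit-coend-qHopf}, and the formula for $j_N$ in terms of a basis and its dual, turn the left-hand side into an element of $A \otimes A$ contracted in its first leg against $\varphi$. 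Reorganising the accumulated coassociator and $\Salpha,\Sbeta$ factors into the combinations $\pRel = \sum_{(\as)} \as_1 \otimes \as_2\Sbeta S(\as_3)$ and $\qLel = \sum_{(\as)} S(\as_1)\Salpha\as_2 \otimes \as_3$ — exactly as they appear in Bulacu--Torrecillas' element $\BTM$ of~\eqref{eq:BTM} — should then reproduce~\eqref{eq:BTM}. Throughout I would use the antipode and cocycle axioms~\eqref{eq:Salpha-1}--\eqref{eq:3-cocycle} and the quasi-triangularity relations~\eqref{eq:R-mat-hex12} only to the extent needed to match the two expressions; most of the work is bookkeeping of Sweedler indices, and the warning in the excerpt that ``their $\as$ is our $\as^{-1}$'' must be tracked carefully.

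The main obstacle I anticipate is precisely this matching of the two multi-index expressions: both $\T_{M,N}$ expanded via the concrete duality/braiding data and the element $\BTM$ are long sums with several independent copies of $\as$, $\as^{-1}$, $R$, $\tilde R$, and one has to find the right sequence of applications of the $3$-cocycle identity~\eqref{eq:3-cocycle} and the hexagon relations~\eqref{eq:R-mat-hex12} to bring one to the other. A clean way to organise this is graphically: draw $\T_{M,N}$ as the Hopf tangle~\eqref{T-pic} in $\rep A$, then push all $A$-actions through using the string-diagram identities for $\ev$, $\coev$, $\assoc$ and $\sigma$ from Remark~\ref{rem:q-Hopf-gives-mon}, arriving at a single $\vect_\ok$ diagram whose readout is manifestly~\eqref{eq:BTM}. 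An alternative, possibly shorter route is to appeal to the already-established equality $\hDm$ (from Remark~\ref{rem:fact-qHopf-def}\,(1) and Section~\ref{sec:non-deg-hopf-pair}) together with the identity $\DDL = \mathcal{D}_\coend$ proved inside the proof of Proposition~\ref{prop:C-DD}, and then check that the relation between $\BTM$ and $\hDm$ is the one induced by the isomorphism $\phi: (\coend^*,\hat\iota) \to (\Gamma,j)$ of Lemma~\ref{lem:end-coend}; but this still requires comparing $\BTM$ with~\eqref{eq:Q-for-fact}, so the index bookkeeping cannot be avoided. I would present the computation as a chain of labelled equalities, flagging at each step which quasi-Hopf axiom is invoked, and defer the most tedious re-indexing to a remark if it threatens readability.
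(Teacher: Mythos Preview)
Your proposal is correct and follows essentially the same approach as the paper: verify $j_Y \circ \BTmap \circ \iota_X = \T_{X,Y}$ by expanding $\T_{X,Y}$ with the explicit $\rep A$ structure maps, organise the result graphically in $\vect_\ok$, and read off the element $\BTM$. The paper packages the intermediate expression into a single element $Q\in A^{\otimes 4}$ (containing $\Salpha$, $\as^{\pm 1}$, $M$, $\Sbeta$ in the right slots) and then observes that $\BTM = (\mu\otimes\mu)\circ(S\otimes\id\otimes\id\otimes S)(Q)$, which is just a compact way of doing the bookkeeping you describe; your alternative route via $\hDm$ and Lemma~\ref{lem:end-coend} is not used and, as you note, would not actually shorten the computation.
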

\begin{proof}
	It is enough to show that for all $X,Y \in \rep A$ we have $j_Y \circ \BTmap\circ\iota_X = j_Y\circ \DD\circ \iota_X$. By definition, $j_Y\circ \DD\circ \iota_X = \T_{X,Y}$, so that it remains to show $j_Y \circ \BTmap\circ\iota_X = \T_{X,Y}$. We will verify this by computing
$\T_{X,Y}$ in~\eqref{eq:T-tangle} explicitly. Let 
\be\label{eq:Q-el}
Q = (\one\tensor \Salpha\tensor \one\tensor \one) \cdot \left[ (\id\tensor \id \tensor \Delta)\as\right] \cdot \bigl(\id\tensor (\as^{-1}\cdot(M\tensor\one)\cdot\as)\bigr) \cdot
 (\one\tensor \one\tensor \Sbeta\tensor \one)
\ee
then we obtain
\be\label{proof-QBT-1}
   \T_{X,Y} \quad = \quad
   \ipic{proof-QBT-1}{1.1}
	 \put(-44,44){\scriptsize$Y$}  \put(-8,44){\scriptsize$Y^\ast$} 
	 \put(-116,-28){\scriptsize$Q$}
	 \put(-92,-42){\scriptsize$X^\ast$} \put(-56,-42){\scriptsize$X$} 
	\quad = \quad
  \ipic{proof-QBT-2}{1.1}
  \put(-39,53){\scriptsize$Y$}  \put(-3,53){\scriptsize$Y^\ast$} 
	 \put(-70,-19){\scriptsize$\BTM$}
  \put(-137,-59){\scriptsize$X^\ast$} \put(-101,-59){\scriptsize$X$} 	
  \quad ,
\ee
where $\BTM =  (\mu\tensor\mu) \circ [S\tensor \id\tensor \id \tensor S] (Q)$
and a direct calculation shows that it equals to~\eqref{eq:BTM}, as the notation suggests. The morphism within the dotted frame in  RHS of~\eqref{proof-QBT-1} is then obviously the map $\BTmap$ and therefore we finally have that 
	$\T_{X,Y} = j_Y \circ \BTmap\circ\iota_X$.
\end{proof}
 
In~\cite{[BT]}, a quasi-triangular quasi-Hopf algebra $A$ is called factorisable if the map $\BTmap$ defined in~\eqref{eq:BTmap} is an isomorphism. As an immediate consequence of Lemma~\ref{lem:BTmap-DD} and Proposition~\ref{prop:C-DD} we get:
 
\begin{corollary}\label{prop:fact-qHopf=fact-coend}
For a finite-dimensional quasi-triangular  quasi-Hopf algebra $A$,
the map $\BTmap$ from~\eqref{eq:BTmap} is an isomorphism if and only if $A$
 is factorisable in the sense of Definition~\ref{def:fact-qHopf}. 
\end{corollary}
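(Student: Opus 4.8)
The plan is to obtain the statement as a short chain of equivalences assembled from the two preceding results, with no new computation needed. First I would unwind the definitions: by Definition~\ref{def:fact-qHopf}, the assertion that $A$ is factorisable is by definition the statement that $\rep A$ is factorisable in the sense of Definition~\ref{def:fact-cat}, i.e.\ that the Hopf pairing $\omega_\coend$ on the universal Hopf algebra $\coend$ of $\rep A$ is non-degenerate.

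Next I would invoke Proposition~\ref{prop:C-DD}, which reformulates this non-degeneracy as the invertibility of the categorical Drinfeld map $\DD\colon\coend\to\Gamma$ from~\eqref{eq:T-DD}. To turn $\DD$ into a concrete linear map I would use the explicit models of the coend and the end for $\rep A$: by Proposition~\ref{prop:coend-RepH} we may take $\coend=(A^*,\coa)$, and by Proposition~\ref{prop:end-RepH} we may take $\Gamma=(A,\adj)$, so that $\DD$ becomes a linear map $A^*\to A$ characterised by $\T_{X,Y}=j_Y\circ\DD\circ\iota_X$ for all $X,Y\in\rep A$, with $\T_{X,Y}$ the Hopf-tangle transformation~\eqref{eq:T-tangle}.

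The last step is Lemma~\ref{lem:BTmap-DD}, which identifies this linear map with $\BTmap$ of~\eqref{eq:BTmap}, namely $\BTmap=\DD$. Chaining the three steps: $A$ is factorisable in the sense of Definition~\ref{def:fact-qHopf} iff $\omega_\coend$ is non-degenerate iff $\DD$ is invertible iff $\BTmap$ is an isomorphism, which is exactly the claim. Equivalently, one may simply note that this is the instance of the chain set up in Proposition~\ref{prop:C-DD} and its proof, specialised to $\cat=\rep A$ through the models of Propositions~\ref{prop:coend-RepH} and~\ref{prop:end-RepH}.

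Since both ingredients---$\BTmap=\DD$ and ``$\cat$ factorisable iff $\DD$ invertible''---are already in hand, the corollary carries no genuine obstacle of its own: it is a formal consequence. The substantive work instead lies inside the proof of Lemma~\ref{lem:BTmap-DD}, where one must check that pushing $\T_{X,Y}$ in~\eqref{eq:T-tangle} through the chosen end and coend yields precisely the element $\BTM$ of~\eqref{eq:BTM}, via the intermediate element $Q$ of~\eqref{eq:Q-el} and the identity $\BTM=(\mu\tensor\mu)\circ[S\tensor\id\tensor\id\tensor S](Q)$; the only point at the level of the corollary itself requiring attention is the matching of conventions for the coassociator with those of~\cite{[BT]} (there $\as$ is the inverse of ours), which has already been flagged in the statement preceding~\eqref{eq:BTM}.
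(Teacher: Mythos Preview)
Your proposal is correct and follows exactly the paper's approach: the corollary is stated there as an immediate consequence of Lemma~\ref{lem:BTmap-DD} and Proposition~\ref{prop:C-DD}, which is precisely the chain of equivalences you spell out.
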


\begin{remark}\label{rem:DD-is-Drinfeld-for-Hopf}
Consider the case that $A$ is a quasi-triangular Hopf algebra and use the end $\Gamma=(A, j)$ from Proposition~\ref{prop:end-RepH}. In this case, the map $\DD$ defined by~\eqref{eq:T-DD} is precisely the Drinfeld map. Indeed, for
$\as=\one^{\otimes3}$ and $\Salpha=\Sbeta=\one$, the expression for
$\BTM$ in \eqref{eq:BTM} reduces to $M = R_{21} R$ and $\BTmap$
from \eqref{eq:BTmap} (which is equal to $\DD$ by Lemma \ref{lem:BTmap-DD}) 
becomes the Drinfeld map, cf.\ Remark \ref{rem:fact-qHopf-def}\,(2).
We also note that we have defined so far two maps from $A^*$ to $A$ 
-- the map 
$(\delta^{\vect}_A)^{-1} \circ\Dm$ in Remark \ref{rem:fact-qHopf-def} and the Drinfeld map $\DD$. The difference is that the first intertwines the coadjoint and its dual actions, while the second intertwines the coadjoint and adjoint actions. Both can be used to test factorisability of $A$.
\end{remark}

\subsection{Integrals and cointegrals} \label{sec:int-coint}

Let $A$ be a factorisable quasi-Hopf algebra over an algebraically closed field $\ok$ (actually, we only need $\ok$ to contain square roots).

{}From
Proposition \ref{prop:integrals-exist} we know that $\coend$ has a unique-up-to-scalar two-sided integral $\Lambda_\coend : \one \to \coend$.
We will impose the normalisation
$\omega_\coend \circ (\Lambda_\coend \otimes \Lambda_\coend) \circ \lambda_{\one}^{-1}
= \id_{\tid}$, which makes $\Lambda_\coend$ unique up to sign.
Given a two-sided integral, we obtain a two-sided cointegral $\coint_\coend : \coend \to \one$ by
\be\label{eq:two-sided-cointegral}
	\coint_\coend := \Omega(\Lambda_\coend) \ ,
\ee
where $\Omega$ was given in \eqref{eq:Omega-def} (cf.\ Lemma \ref{lem:coint-from-int}). By construction, we have
\be\label{eq:int-coint-relative-norm}
	\coint_\coend \circ \Lambda_\coend = \id_{\one} \ .
\ee

There are no closed formulas for the integral or the cointegral. Instead one needs to compute the spaces of solutions to the linear conditions \eqref{eq:integral-def-general}. We will spell out these conditions for $A$ in terms of the structure maps given in Theorem \ref{thm:explicit-coend-qHopf}.

Let $\hat\Lambda_\coend \in A^*$ and $\hat\Lambda^{\mathrm{co}}_\coend \in A$ be defined as, for $a \in A$, $f \in A^*$,
\be
	\Lambda_\coend(1) = \big( a \mapsto \hat\Lambda_\coend(a) \big)
	\quad , \quad
	\coint_\coend(f) = f(\hat\Lambda^{\mathrm{co}}_\coend) \ .	
\ee
Conditions \eqref{eq:integral-def-general} turn into the following linear relations on $\hat\Lambda_\coend$ and $\hat\Lambda^{\mathrm{co}}_\coend$, for all $a \in A$,
\begin{eqnarray}
\label{eq:coend-int-coint-hatted}
(\id 
\otimes \hat\Lambda_\coend) \circ \mm(a)
~=
&\Salpha \cdot \hat\Lambda_\coend(a)&
=~
(\hat\Lambda_\coend \otimes \id) \circ \mm(a)\ ,
\\ \nonumber 
\co(\hat\Lambda^{\mathrm{co}}_\coend \otimes a)
~= 
	&\hat\Lambda^{\mathrm{co}}_\coend \cdot \eps(\Sbeta a)&
=~
\co(a \otimes \hat\Lambda^{\mathrm{co}}_\coend) \ ,
\end{eqnarray}
where $\mm$ and $\co$ are introduced in~\eqref{eq:qHopf-coend-dualstructuremaps}.
The normalisation condition is quadratic and reads
\be
	(\hat\Lambda_\coend \otimes \hat\Lambda_\coend)(\Hp) = 1 \ ,
\ee
and the relative normalisation of integral and cointegral is fixed by \eqref{eq:int-coint-relative-norm} to be $\hat\Lambda_\coend(\hat\Lambda^{\mathrm{co}}_\coend)=1$.

A {\em left (resp.\ right) integral for a quasi-Hopf algebra $A$} is an element $\mathbf{c} \in A$ satisfying $a \cdot \mathbf{c} = \eps(a) \mathbf{c}$ (resp.\ $\mathbf{c} \cdot a = \eps(a) \mathbf{c}$) for all $a \in A$,
see e.g.\ \cite{Bulacu:2011}.
Finite-dimensional quasi-Hopf algebras possess a one-dimensional space of 
left and of right integrals \cite{Hausser:1999} (see also \cite{Panaite:2000} for existence).

If one knows a non-zero left or right integral for $A$, then the following proposition provides a shortcut for computing
$\coint_\coend$.

\begin{proposition}\label{prop:coint-coend-via-int-qHopf}
Let $A$ be a finite-dimensional quasi-Hopf algebra over some field, and let $\mathbf{c} \in A$ be left (resp.\ right) integral for $A$. Then $\langle - , \mathbf{c} \rangle \in \coend^* = A^{**}$ is a left (resp.\ right) cointegral for $\coend$,
i.e.\ $\hat\Lambda^{\mathrm{co}}_\coend = \mathbf{c}$.
\end{proposition}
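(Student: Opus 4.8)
The plan is to translate the categorical cointegral condition for $\coend = (A^*,\coa)$ into a statement about $A$, using the explicit coproduct $\co$ computed in Theorem~\ref{thm:explicit-coend-qHopf}. Recall from \eqref{eq:integral-def-general} that a left cointegral $\coint_\coend : \coend \to \tid$ is characterised by $\runit_\coend \circ (\id_\coend \tensor \coint_\coend)\circ\Delta_\coend = \eta_\coend\circ\coint_\coend$, and similarly on the right. Dualising via \eqref{eq:coend:structuremaps}, with $\hat\Lambda^{\mathrm{co}}_\coend = \mathbf{c}$ this becomes exactly the middle relation in \eqref{eq:coend-int-coint-hatted}, namely
\be\label{eq:plan-coint-cond}
	\co(\mathbf{c}\otimes a) ~=~ \mathbf{c}\,\eps(\Sbeta a)
	\qquad\text{(left case)}\ .
\ee
So the task reduces to verifying \eqref{eq:plan-coint-cond} for $\mathbf{c}$ a left integral of $A$, using $\co(a\otimes b) = \sum_{(\Co)} S(D_1)\,b\,D_2\,S(D_3)\,a\,D_4$ with $\Co=(\id\tensor\id\tensor\Delta)(\as)\cdot(\one\tensor\as^{-1})\cdot(\one\tensor\Sbeta\tensor\one\tensor\one)$.

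\textbf{Key steps.} First I would substitute $a\mapsto\mathbf{c}$ into the first tensor slot that carries $\mathbf{c}$ in $\co(\mathbf{c}\otimes a)$. Looking at the formula $\co(\mathbf{c}\otimes a)=\sum S(D_1)\,a\,D_2\,S(D_3)\,\mathbf{c}\,D_4$, the element $\mathbf{c}$ appears multiplied on the left by $S(D_3)$ and on the right by $D_4$; since $D_4$ comes from the last tensor factor of $\Co$, and $\Co$ ends in $(\id\tensor\id\tensor\Delta)(\as)$, the pair $(D_3,D_4)$ is essentially a coproduct applied to the third leg of a coassociator. The left-integral property $x\,\mathbf{c}=\eps(x)\mathbf{c}$ does not directly apply to $S(D_3)\mathbf{c}D_4$; instead I expect to use it after rewriting $S(D_3)\,\mathbf{c}\,D_4$ using the fact that for a left integral one has $\sum_{(\mathbf{c})} S(\mathbf{c}')\,\alpha\,\mathbf{c}'' D_4$-type contractions collapse. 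The cleaner route is: the second antipode condition \eqref{eq:Salpha-1}, $\sum_{(x)} x'\,\Sbeta\,S(x'') = \eps(x)\Sbeta$, combined with the $3$-cocycle identity \eqref{eq:3-cocycle} and counitality \eqref{eq:counital}, should let the whole expression $\sum S(D_1)\,a\,D_2\,S(D_3)\,\mathbf{c}\,D_4$ telescope. Concretely I would: (i) expand $\Co$ using the definition, pulling the $\Sbeta$ into position; (ii) apply $\Delta$ to the third leg and recognise $\sum D_3'\,(\dots)\,S(D_3'')$ patterns that can be simplified by \eqref{eq:Salpha-1}; (iii) use that $\mathbf{c}$ is a left integral, i.e.\ $S(D_3)\mathbf{c}D_4$ with $D_4$ "completing" a coproduct reduces because $\sum_{(y)} S(y')\,\mathbf{c}\,y''$-type sums against a left integral evaluate via applying $\eps$; (iv) collapse the remaining coassociators using the $3$-cocycle and pentagon-type identities until only $\eps(\Sbeta a)\,\mathbf{c}$ survives. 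The right-cointegral case is handled symmetrically using the first identity in \eqref{eq:Salpha-1}, $\sum S(x')\,\Salpha\,x'' = \eps(x)\Salpha$, and a right integral.

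\textbf{Main obstacle.} The delicate point is the bookkeeping of the four-fold coassociator $\Co$ together with the internal $\Delta$ applied to one of its legs: one gets a genuinely long sum of iterated coproducts of $\as$ and $\as^{-1}$, and the simplification to $\eps(\Sbeta a)\mathbf{c}$ hinges on repeatedly invoking the $3$-cocycle condition \eqref{eq:3-cocycle} in just the right grouping, interleaved with the antipode axioms \eqref{eq:Salpha-1}--\eqref{eq:Salpha-2}. I expect the proof is most transparent done graphically in $\vect_\ok$ using the string-diagram form of $\co$ (it is, after all, dual to the coend coproduct $\Delta_\coend$ of \eqref{eq:cop-L}, which has a clean diagrammatic shape): in diagrams, the left-integral property of $\mathbf{c}$ says a certain cap absorbs the $\mathbf{c}$-strand, and the zig-zag and cocycle moves then straighten everything. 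An alternative, and perhaps shortest, route is to avoid the explicit formula altogether: work directly with $\Delta_\coend\circ\iota_M$ from \eqref{eq:cop-L} in $\rep A$, observe that the categorical left-cointegral condition on $\coend=A^*$ pulled back along $\iota_A$ (the regular representation) becomes, after dualising, precisely the defining property of a left integral $\mathbf{c}\in A$ together with the identification $\eps_\coend = \langle-,\Salpha\rangle$ (which is $\coun=\Salpha$ from Theorem~\ref{thm:explicit-coend-qHopf}); one then checks that $\langle-,\mathbf{c}\rangle$ indeed satisfies the dualised relation by a single diagrammatic manipulation of \eqref{eq:cop-L} using only the $A$-module intertwiner property of $\iota_A$ and the zig-zag identities. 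I would present the argument this second way, reducing it to \eqref{eq:plan-coint-cond} and a short computation, and remark that substituting the explicit $\co$ gives the same conclusion.
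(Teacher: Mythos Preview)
Your reduction to \eqref{eq:plan-coint-cond} is correct, but you then badly overcomplicate the remaining computation. The point you are missing is this: in
\[
\co(\mathbf{c}\otimes a) \;=\; \sum_{(\Co)} S(D_1)\, a\, D_2\, S(D_3)\, \mathbf{c}\, D_4 \ ,
\]
the entire product $S(D_1)\, a\, D_2\, S(D_3)$ is a single element of $A$ multiplying $\mathbf{c}$ on the left. The left-integral property $x\,\mathbf{c}=\eps(x)\mathbf{c}$ therefore applies \emph{directly}, with $x=S(D_1)\, a\, D_2\, S(D_3)$. Using that $\eps$ is an algebra homomorphism and $\eps\circ S=\eps$, this collapses immediately to
\[
\co(\mathbf{c}\otimes a) \;=\; \sum_{(\Co)} \eps(D_1)\,\eps(a)\,\eps(D_2)\,\eps(D_3)\,\mathbf{c}\,D_4 \ .
\]
Now you only need counitality of $\as$ and $\as^{-1}$ (condition \eqref{eq:counital}) applied to the first three legs of $\Co=(\id\tensor\id\tensor\Delta)(\as)\cdot(\one\tensor\as^{-1})\cdot(\one\tensor\Sbeta\tensor\one\tensor\one)$, together with the counit identity \eqref{eq:eps-Delta}, to see that the surviving factor is $\eps(\Sbeta a)\,\mathbf{c}$. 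The right case is symmetric: $\mathbf{c}$ a right integral kills $D_2\,S(D_3)\,a\,D_4$ on the right. This is exactly the paper's argument.

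None of the machinery you list in your ``key steps'' and ``main obstacle'' is needed: no $3$-cocycle identity, no antipode axioms \eqref{eq:Salpha-1}--\eqref{eq:Salpha-2}, no telescoping of iterated coproducts, no diagrammatic straightening. In particular, your assertion that ``the left-integral property $x\,\mathbf{c}=\eps(x)\mathbf{c}$ does not directly apply to $S(D_3)\,\mathbf{c}\,D_4$'' is false: the $D_4$ on the right is irrelevant to a condition about left multiplication. Your expectation of a $\sum_{(y)} S(y')\,\mathbf{c}\,y''$-type reduction suggests you were conflating the integral property with the antipode axiom, which is the source of the perceived difficulty.
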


\begin{proof}
This is an immediate consequence of the explicit form of the structure maps of $\coend$ given in Theorem \ref{thm:explicit-coend-qHopf}. 
	The two conditions in the second line of \eqref{eq:coend-int-coint-hatted} become
\be\label{eq:L-coint-from-A-int_aux}
\text{left:} \quad
\co(\mathbf{c} \otimes a)
= 
\mathbf{c} \cdot \eps(\Sbeta a)
\qquad , \qquad
\text{right:} \quad
\co(a \otimes \mathbf{c}) 
=
\mathbf{c} \cdot \eps(\Sbeta a) \ .
\ee
Now substitute the expression for $\co$ in terms of $D$ as given in \eqref{eq:qHopf-coend-dualstructuremaps}. The left hand side in each case in  \eqref{eq:L-coint-from-A-int_aux} then becomes
\begin{align}\label{eq:left-int-Hopf-1}
\text{left:} \quad 
&\co(\mathbf{c} \otimes a)
=
\sum_{(D)} \eps(D_1) \eps(a)\eps(D_2) \eps(D_3) \mathbf{c} D_4 \ ,
\\ 
\text{right:} \quad
&\co(a \otimes \mathbf{c}) 
=
\sum_{(D)} S(D_1)\mathbf{c}\eps(D_2)\eps(D_3)\eps(a) \eps(D_4) \ , \label{eq:left-int-Hopf-2}
\end{align}
where we used $\eps \circ S = \eps$, which holds for quasi-Hopf algebras by \cite[part 7 of remark on p.\,1425]{Dr-quasi}.
Substituting the explicit form of $D$ from \eqref{eq:coend-maps-elementsDW} 
and using the counitality condition~\eqref{eq:counital} on  $\Phi^{-1}$ we get for RHS of~\eqref{eq:left-int-Hopf-1}
\be
\sum_{(\Phi)} \eps(\Sbeta a) \mathbf{c}\,  \eps(\Phi_1) \eps(\Phi_2) \eps(\Phi'_3)  \Phi''_3 \\
=  \eps(\Sbeta a) \mathbf{c} \bigl\{(\eps\ot \id) \circ \Delta \bigr\} \bigl((\eps\ot\eps\ot\id)(\Phi)\bigr) 
=   \eps(\Sbeta a) \mathbf{c} \ ,
\ee
where the counitality condition was used once more, and similarly for RHS of~\eqref{eq:left-int-Hopf-2}.
This shows that both the identities in \eqref{eq:L-coint-from-A-int_aux} hold.
\end{proof}

In particular, for factorisable $A$ we know by Proposition \ref{prop:integrals-exist} that the coend $\coend$ has a one-dimensional space of two-sided integrals (and hence of cointegrals).
The above proposition hence shows that $A$ has two-sided integrals, and a non-zero such two-sided integral $\mathbf{c}$ spans the space of cointegrals of $\coend$.

\subsection{Internal characters}\label{sec:intchar-qHopf}

Let $A$ be a factorisable ribbon
quasi-Hopf algebra over a field $\ok$.
	Then $\rep A$ is in particular pivotal, and we can use the results of Section \ref{sec:intchar-natendo}.
In this section we will give explicit expressions for the natural endomorphisms $\phi_V$ and $\modS_{\cat}(\phi_V)$ in terms of elements of the centre of $A$. Recall from \eqref{eq:tildechi-def} that $\phi_V$, and hence also
$\modS_{\cat}(\phi_V)$, is an image of the internal characters $\chi_V$
as defined in \eqref{eq:chiV-def}.

Let us identify $\chi_V$ with their images $\chi_V(1)\in A^{*}$. A short calculation using~\eqref{eq:qHopf-tilde-evcoev} and the definition of $\iota_V$ in Figure~\ref{fig:coend-qHopf} gives the linear forms  $\chi_V$
which we call \textit{$q$-characters}:
\be\label{eq:chi-Tr}
\chi_V(-) = \tr_V(\bal \cdot - )\colon\; A \longrightarrow \ok
\quad ,  \quad \text{ where }
~~ \bal = \sqs^{-1} \ribbon S(\Sbeta)\ .
\ee

Denote by $\xi\colon Z(A) \to \End(\id_{\rep A})$ the $\ok$-algebra isomorphism
between the centre of $A$ and the natural endomorphisms of the identity
functor on $\rep A$. Explicitly, for all $V \in \rep A$, $v \in V$ and $z \in Z(A)$,
\be\label{eq:xi-ZA-EndId-def}
    (\xi(z)_V)(v) = z.v \ .
\ee
We can use $\xi$ to represent the $S$-transformation of $\phi_V$ 
in terms of a central element $\bchi_V \in Z(A)$ as
$\xi(\bchi_V) = S_{\rep A}(\phi_V)$.
A short calculation starting from 
\eqref{eq:S(phi)-generalcat} gives
\be\label{eq:chiV-central-def}
\bchi_V
~=
\sum_{(\Phi),(\Psi),(M)} \chi_V\Big( S\big(\Psi_2 M_2  \Phi_2  \big)  \Salpha \, \Psi_3  \Phi_3  \Big) \, \Psi_1 \, M_1\,  \Phi_1 \ ,
\ee
where $\Psi=\Phi^{-1}$
and used~\eqref{eq:monodromy-el}.
We obtain the following corollary to Theorem \ref{thm:S_C(phi_M)-algebramap}.

\begin{corollary}
Let the field $\ok$ be algebraically closed and of characteristic zero. Then
the map
 $[V] \mapsto \bchi_V$ is an injective ring-homomorphism $\Gr(\rep A) \to Z(A)$.
\end{corollary}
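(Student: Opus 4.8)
The plan is to chain together Theorem~\ref{thm:S_C(phi_M)-algebramap} with the explicit identification $\xi(\bchi_V) = S_{\rep A}(\phi_V)$ and the properties of the algebra isomorphism $\xi : Z(A) \to \End(\id_{\rep A})$ from \eqref{eq:xi-ZA-EndId-def}. First I would recall that Theorem~\ref{thm:S_C(phi_M)-algebramap} states, in our current notation, that the assignment $[V] \mapsto \modS_{\rep A}(\phi_V)$ is an injective $\ok$-algebra homomorphism $\Gr_\ok(\rep A) \to \End(\id_{\rep A})$; here $\modS_{\rep A} = \modS_\cat$ for $\cat = \rep A$. Since $\ok$ is algebraically closed of characteristic zero, the remark following Theorem~\ref{thm:chiinj} ensures that the composition $\Gr(\rep A) \to \Gr_\ok(\rep A) \xrightarrow{\chi} \cat(\one,\coend)$ is injective, and hence so is $\Gr(\rep A) \to \End(\id_{\rep A})$, $[V] \mapsto \modS_{\rep A}(\phi_V)$, as the latter factors through the former via the injective maps $\Rad^{-1}$, $\psi^{-1}$ and $\modS_{\rep A}$.

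Next I would transport this statement across $\xi$. By construction $\xi$ is an isomorphism of $\ok$-algebras between $Z(A)$ and $\End(\id_{\rep A})$, so $\xi^{-1} \circ \modS_{\rep A}(\phi_{(-)})$ is again an injective $\ok$-algebra homomorphism $\Gr_\ok(\rep A) \to Z(A)$. By the defining relation $\xi(\bchi_V) = S_{\rep A}(\phi_V)$ (for which one should note that in this subsection $S_{\rep A}$ denotes $\modS_{\rep A}$, consistent with the notation $\modS_\cat(\phi_M)$ of Section~\ref{sec:intchar-natendo}), this homomorphism sends $[V]$ to $\bchi_V$. Restricting along the injection $\Gr(\rep A) \hookrightarrow \Gr_\ok(\rep A)$ and using that $Z(A)$ is a ring gives precisely that $[V] \mapsto \bchi_V$ is an injective ring homomorphism $\Gr(\rep A) \to Z(A)$, which is the claim. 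In particular $\bchi_U \bchi_V = \sum_W N_{UV}^{~W}\, \bchi_W$ with $N_{UV}^{~W}$ the structure constants of $\Gr(\rep A)$, and injectivity means these relations determine the $N_{UV}^{~W}$.

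The only genuine content beyond bookkeeping is the verification that $\xi(\bchi_V) = S_{\rep A}(\phi_V)$, i.e.\ that the displayed central element $\bchi_V$ really represents $\modS_{\rep A}(\phi_V)$ under $\xi$. This is the ``short calculation starting from \eqref{eq:S(phi)-generalcat}'' alluded to in the text: one specialises the categorical formula \eqref{eq:S(phi)-generalcat} for $\modS_\cat(\phi_M)_X$ to $\cat = \rep A$, replacing the associator by $\Phi^{\pm1}$ as in \eqref{RepA-coass}, the braidings by the universal $R$-matrix as in \eqref{RepA-braid}, the evaluation/coevaluation by $\Salpha,\Sbeta$ as in \eqref{eq:ev-coev}, and the ribbon-related double braiding / pivotal data by the monodromy $M = R_{21}R$ and the element $\sqs^{-1}\ribbon$ entering the pivotal structure $\delta_U = \delta_U^{\vect}\circ(\ribbon^{-1}\sqs.(-))$ from Section~\ref{sec:qHopf-pivot}. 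Tracing through and using the graphical manipulations already established in Section~\ref{sec:coend-repA} collapses the expression to ``$z.v$'' with $z$ the stated trace-weighted sum over basis elements $\Psi_1 M_1 \Phi_1$; centrality of this $z$ is automatic since it represents an endomorphism of the identity functor. I expect this specialisation to be the main obstacle only in the sense of being calculationally heavy (keeping track of the many coassociator legs), not conceptually: it is a routine but lengthy unwinding, and once it is done the corollary is an immediate formal consequence as above.
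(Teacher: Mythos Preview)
Your proposal is correct and matches the paper's approach: the paper states this as an immediate corollary to Theorem~\ref{thm:S_C(phi_M)-algebramap} without further proof, and your argument is precisely the intended unpacking (transport along the algebra isomorphism $\xi$, plus characteristic zero to ensure $\Gr(\rep A)\hookrightarrow\Gr_\ok(\rep A)$ is injective). One small clarification: the relation $\xi(\bchi_V)=\modS_{\rep A}(\phi_V)$ is the \emph{definition} of $\bchi_V$ in the paper, so no verification is needed for the corollary itself; the ``short calculation from \eqref{eq:S(phi)-generalcat}'' only serves to produce the explicit trace formula displayed afterwards, which is logically independent of the corollary statement.
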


The Verlinde-type formula \eqref{eq:verlinde-general} takes the form
\be
    \bchi_U \, \bchi_V
    = \sum_{W \in \Irr(\rep A)} N_{UV}^{~W} \, \bchi_W \ .
\ee
By linear independence of the $\bchi_W$, this determines the structure constants $N_{UV}^{~W}$ of $\Gr(\rep A)$ uniquely (for $\ok$ as in the corollary). We stress that it is not necessary to compute the centre of $A$ to evaluate this formula, but one does need to know all simple $A$-modules $U$ and be able to compute their characters $\tr_U(-)$.

\newcommand{\cointL}{\hat\Lambda^{\mathrm{co}}_\coend}
For completeness we also give the central element  $\bphi_V \in Z(A)$ representing
$\phi_V$ from \eqref{eq:phiV-rib-def} via $\xi(\bphi_V) = \phi_V$:
\be \label{bphi}
\bphi_V ~=~ \sum_{(F)}  F_1 \,
\chi_V\big(F_2\big)\ ,
\ee
where
\be
F= \eps(\Sbeta) 	\sum_{(\Psi),(\Phi),(\intQ)}
 \Psi_1 \intQ' \Phi_1 \tensor S(\Psi_2 \intQ''\Phi_2)\Salpha\Psi_3\Phi_3
\quad \text{and}\quad \Psi=\Phi^{-1}\ .
\ee
During the calculation we used that $\cointL = \mathbf{c}$ is a two-sided integral (Proposition~\ref{prop:coint-coend-via-int-qHopf} and the text above \eqref{eq:two-sided-cointegral}) and so $a \cdot \mathbf{c} = \eps(a) \mathbf{c}=\mathbf{c} \cdot a$ for all $a \in A$,  
together with counitality of $\Phi^{\pm1}$.

\begin{remark}
Let  $A$ be a finite-dimensional factorisable
ribbon Hopf algebra over $\ok$.
Recall \cite{[Drinfeld]}
that the space $\mathrm{qCh}$ of  $q$-characters of $A$ is defined as the space of invariants in $A^*$ under the coadjoint action.
	For example, the linear forms \eqref{eq:chi-Tr}, which now read $\chi_V = \tr_V\bigl(\sqs^{-1}\,\ribbon\, (-)\bigr)$, are $q$-characters in this sense, justifying their name.
 The two families of elements $\bchi_V$ and $\bphi_V$ simplify to
\be\label{eq:bchi-Hopf}
\bchi_V~= \sum_{(M)} M_1 \, \tr_V\big( \sqs^{-1}  \ribbon \, S(M_2) \big)  
\ \stackrel{(*)}{=}\ 
S^{-1} \circ \bigl(
	\chi_V(-) 
\ot \id\bigr) (M)
\ee
and (recall that we found $\cointL=\mathbf{c}$ and here	$F=(\id \otimes S) \circ \Delta(\mathbf{c})$)
\be\label{eq:bphi-Hopf}
\bphi_V ~=~ \sum_{(c)}  \mathbf{c}' \,
\tr_V\big( \sqs^{-1}\,\ribbon\, S(\mathbf{c}'')\big)\ \stackrel{(**)}{=}\ 
S^{-1} \circ \bigl(
	\chi_V(-) 
\ot \id\bigr) \circ
\Delta(\mathbf{c})\ ,
\ee
where for $(*)$ we used the identity\footnote{
This results in $(\id\ot S)(M) = \tau\circ (\id\ot S^{-1})(M)$ and then $(\id \tensor \phi) \circ (\id \otimes S)(M) 
	= (\phi\tensor \id)\circ (\id \otimes S^{-1})( M)
    = S^{-1} \circ \big(	(\phi\tensor \id)(M) \big)$, for $\phi\in A^*$.}
$(S\ot S)(R) = R$.
For $(**)$ we used that
$S(\mathbf{c}) =\mathbf{c}$  as follows from~\cite[Eqn.\,(2)]{R1}, which reads in our case as $S(a) = \sum_{(\mathbf{c})} \hat\Lambda_\coend(\mathbf{c}' a) \mathbf{c}''$,
and therefore $\Delta^{\mathrm{op}}(\mathbf{c}) = (S\ot S) \circ \Delta (\mathbf{c})$. \\
	Another way to relate the central elements $\bchi_V$ and $\bphi_V$ to $q$-characters is as follows.
The (algebra) map 
$S\circ \xi^{-1}\circ\,\psi^{-1}\circ\,\Omega$  from the space of $q$-characters to $Z(A)$
is the well-known \textit{Drinfeld mapping} given by
 $\mathbb{D}_{A^*,A}\colon \phi(\cdot)\mapsto (\phi\otimes\id) M$, see~\cite{[Drinfeld]}, while
the map  
$S\circ \xi^{-1}\circ\,\psi^{-1}\circ\rho^{-1}\colon \mathrm{qCh} \to Z(A)$ 
is the \textit{Radford mapping}
  given by  $\phi(\cdot)\mapsto (\phi\otimes\id)\Delta(\intQ)$,
  see~\cite{R1} for its definition and properties.
The central elements~\eqref{eq:bchi-Hopf} and~\eqref{eq:bphi-Hopf} are then images of the $q$-characters 
	$\chi_V(-)$ 
under the Drinfeld and Radford mappings
(composed with~$S^{-1}$), correspondingly.
\end{remark}

\section{$SL(2,\oZ)$-action for ribbon quasi-Hopf algebras} 
\label{sec:SL2Z-quasiHopf}

In this section, we assume that $A$ is a factorisable quasi-Hopf algebra and
we express the $S$- and $T$-transformations from~\eqref{eq:cat-ST}  in $\rep A$, and compute
the resulting action on
the centre~$Z(A)$ of $A$. 
To start with, we evaluate the map $\mathcal{Q}$ from \eqref{eq:Q-map} in $\rep A$. One finds, for $a,b \in A$, $f,g \in A^*$,
\be\label{eq:Q-RepA}
\big\langle \mathcal Q (f\tensor g) \,,\, a \otimes b \big\rangle
~=~
\big\langle f\tensor g \,,\, \hat{\mathcal Q}(a \otimes b) \big\rangle
\ee
with
\begin{align}\label{eq:Q-hat}
&\hat{\mathcal Q}(a \otimes b)
= \big(S(X_3) a X_4\big)  \ot \big(S(X_1)bX_2 \big) \ ,
\\ \nonumber
&X=
(\id\tensor\id\tensor\Delta)(\as)
\cdot
(\one\tensor\as^{-1})
\cdot
(\one\tensor M \tensor\one)
\cdot
(\one\tensor\as)
\cdot
(\id\tensor\id\tensor\Delta)(\as^{-1})
\ .
\end{align} 

Then the $S$- and 
$T$-transformations 
from~\eqref{eq:cat-ST} take the form, for $a \in A$, $f \in A^*$,
\begin{align} \label{SL2Z-action-A-star}
\big\langle \modS(f) \,,\, a \big\rangle &~=~ 
\big\langle \mathcal Q (f\tensor \Lambda_\coend) 
\,,\, a \otimes \Salpha \big\rangle 
\ , 
 \\ \nonumber
\big\langle \modT(f) \,,\, a \big\rangle &~=~ 
\big\langle f \,,\, \ribbon^{-1}a \big\rangle  \ .
\end{align}

Our next aim is to 
evaluate the action of the $S$- and $T$-generators on $\End(\id_{\cat})$ as given in~\eqref{eq:SCTC} in the case $\cat = \rep A$.
To do so, we use the isomorphism 
	$\xi$ from \eqref{eq:xi-ZA-EndId-def}
and will give the corresponding action on elements of $Z(A)$ instead.
The result is:

\begin{theorem}\label{thm:SL2Z-on-centre}
The $S$- and $T$-transformations 
 on $Z(A)$ are given by the following linear maps $Z(A)\to Z(A)$: for $z \in Z(A)$,
\begin{align} \label{eq:STact-ZA}
	\modS_Z(z)&~= \sum_{\substack{\Psi,(\Hp)}} 
	\Psi_1 \, \Sbeta \, S(\Psi_2)
	\, ({\Hp})_1 \, \Psi_3 \, 
		\hat\Lambda_\coend
	\Big(\co\big((\Hp)_2\ot \Salpha z\big)\Big) \ , \\
	\modT_Z(z)&~= \ribbon^{-1} z \gc \nonumber
\end{align}
where $\Psi=\Phi^{-1}$, and $\co$ and $\Hp$ are defined in \eqref{eq:qHopf-coend-dualstructuremaps}. 
\end{theorem}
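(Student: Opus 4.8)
\textbf{Proof proposal for Theorem \ref{thm:SL2Z-on-centre}.}

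The plan is to translate the categorical definition of $\modS_\cat$ in \eqref{eq:SCTC} step by step into an operation on $Z(A)$ using the isomorphism $\xi$ from \eqref{eq:xi-ZA-EndId-def}, and to keep track of everything in terms of the dual picture $\coend = A^*$ established in Proposition \ref{prop:coend-RepH} and Theorem \ref{thm:explicit-coend-qHopf}. The $T$-part is immediate: by \eqref{eq:SCTC} we have $\modT_\cat = \theta \circ (-)$, and $\theta_V = \ribbon^{-1}.(-)$ by \eqref{eq:RepA-ribbon}, so $\xi^{-1} \circ \modT_\cat \circ \xi$ is multiplication by $\ribbon^{-1}$ on $Z(A)$, giving the second line of \eqref{eq:STact-ZA}. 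All the work is in the $S$-part.

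First I would use the key simplification \eqref{eq:Rad-Omega-S} from the proof of Proposition \ref{prop:ST-on-EndId}, namely $\Rad(\Omega(f)) = \modS \circ f$, which collapses the composite $\Omega$ then $\Rad$ in \eqref{eq:SCTC} into a single application of $\modS : \coend \to \coend$. Thus $\modS_\cat = \psi^{-1} \circ (\modS \circ -) \circ \psi$, and correspondingly $\modS_Z = \xi^{-1} \circ \psi^{-1} \circ (\modS \circ -) \circ \psi \circ \xi$. Next I would compute $\psi \circ \xi$ explicitly: for $z \in Z(A)$, $\psi(\xi(z)) \in \cat(\coend,\one) = A^{**} \cong A$ (using $\delta^{\vect}_A$), and from the definition of $\psi$ in \eqref{eq:rhopsi-def} together with the formula for $\iota_M$ in Proposition \ref{prop:coend-RepH} one checks that $\psi(\xi(z))$ corresponds to the element $\Salpha z \in A$, i.e.\ $\langle \psi(\xi(z)), f \rangle = f(\Salpha z)$ — this uses only the formula $\ev_U(\phi \otimes u) = \phi(\Salpha.u)$ from \eqref{eq:ev-coev} evaluated on the regular representation. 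Then I would apply $\modS$ in the dual picture: by \eqref{eq:cat-ST} and \eqref{SL2Z-action-A-star}, $\langle \modS(f), a \rangle = \langle \mathcal{Q}(f \otimes \Lambda_\coend), a \otimes \Salpha \rangle$, and substituting $\hat{\mathcal Q}$ from \eqref{eq:Q-hat} — or more conveniently re-expressing $\modS$ through the Hopf pairing and coproduct as in \eqref{eq:ST-on-EndId_aux1}, which identifies $\modS \circ f$ with $\big[\coend \xrightarrow{\Lambda_\coend \otimes \id} \coend\coend \xrightarrow{\id \otimes \Delta_\coend} \cdots \xrightarrow{\omega_\coend \otimes \id} \coend\big]$ up to coherence — lets me write the result as a dual-space element built from $\hat\Lambda_\coend$, $\Hp$ and $\co$. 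Finally I would apply $\psi^{-1}$ (formula \eqref{eq:rhopsi-inv}) and then $\xi^{-1}$ to land back in $Z(A)$; the element $\psi^{-1}(g)$ for $g \in A^{**}$ is read off from the second line of \eqref{eq:rhopsi-inv} evaluated on the regular module, which re-introduces the factors $\Psi_1 \Sbeta S(\Psi_2) \Psi_3$ coming from the coevaluation $\coev_X$ and the associator in \eqref{RepA-coass}, \eqref{eq:ev-coev}. Assembling the pieces and using centrality of the output gives exactly \eqref{eq:STact-ZA}.

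The main obstacle I expect is bookkeeping of the coassociator: each of $\psi$, $\psi^{-1}$, $\modS$, and the identification $\coend = A^*$ introduces its own insertions of $\as$, $\as^{-1}$, $\Salpha$, $\Sbeta$, and the Drinfeld twist $\Dt$ (through $\mm$, $\co$), and the claim is that these collapse — via the cocycle and counitality identities \eqref{eq:3-cocycle}, \eqref{eq:counital} and the antipode conditions \eqref{eq:Salpha-1}–\eqref{eq:Salpha-2} — into the relatively compact expression $\Psi_1 \Sbeta S(\Psi_2) (\Hp)_1 \Psi_3 \cdot \hat\Lambda_\coend(\co((\Hp)_2 \otimes \Salpha z))$. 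Verifying that no stray factors survive, and that the resulting element is genuinely central (so that it defines an endomorphism of $\id_{\rep A}$), is the delicate part; centrality should follow from $\modS_\cat$ being a well-defined map on $\End(\id_\cat)$, but confirming it directly on the formula is where the longest computation lies. A clean way to organise this is to verify the identity by pairing both sides against an arbitrary $f \in A^*$ and an arbitrary module element, exactly as in the proof of Theorem \ref{thm:explicit-coend-qHopf}, rather than manipulating elements of $Z(A)$ directly.
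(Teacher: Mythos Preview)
Your proposal contains a genuine error in the use of \eqref{eq:Rad-Omega-S}. In \eqref{eq:SCTC} the middle composite is $\Rad$ \emph{then} $\Omega$ (reading along the arrows), i.e.\ $\Omega\circ\Rad:\cat(\coend,\one)\to\cat(\coend,\one)$, whereas \eqref{eq:Rad-Omega-S} says $\Rad\circ\Omega=(\modS\circ-)$ on $\cat(\one,\coend)$. These are different maps between different spaces, so you cannot collapse the middle of \eqref{eq:SCTC} to $(\modS\circ-)$; your identity $\modS_\cat=\psi^{-1}\circ(\modS\circ-)\circ\psi$ does not even type-check, since $\psi$ lands in $\cat(\coend,\one)$ but $(\modS\circ-)$ acts on $\cat(\one,\coend)$. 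What \eqref{eq:Rad-Omega-S} actually buys you is $\modS_\cat=\psi^{-1}\circ\Rad^{-1}\circ(\modS\circ-)\circ\Rad\circ\psi$, which is no simpler than the original.

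The paper's proof avoids this detour entirely and simply evaluates each of $\psi$, $\Rad$, $\Omega$, $\psi^{-1}$ directly on elements, using the dual descriptions from Theorem~\ref{thm:explicit-coend-qHopf}. Your computations of the two outer maps are correct and agree with the paper: $\psi(\xi(z))=\langle-,\Salpha z\rangle$ and $\psi^{-1}(\langle-,\tilde r\rangle)$ acts by $\sum_{(\Psi)}\Psi_1\Sbeta S(\Psi_2)\tilde r\Psi_3$. For the middle, one finds directly that $\Rad(\langle-,\tilde r\rangle)=\hat\Lambda_\coend(\co(-\otimes\tilde r))$ and $\Omega(s)=\langle-,(\id\otimes s)(\Hp)\rangle$, and composing gives \eqref{eq:STact-ZA} with no further manipulation. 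In particular your concern about heavy coassociator bookkeeping and cocycle cancellations is unfounded: each of the four maps has a one-line expression in terms of the data $\co$, $\Hp$, $\hat\Lambda_\coend$, $\Salpha$, $\Sbeta$, $\Psi$, and they compose on the nose. Centrality of the result need not be checked by hand; it is automatic from $\modS_\cat$ being a well-defined endomorphism of $\End(\id_{\rep A})\cong Z(A)$.
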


\begin{proof}
We compute the ingredients of \eqref{eq:SCTC}.
Let $\varphi\in\End(\id_{\Cc})$ be the natural transformation which acts by a central element $z\in Z(A)$, i.e.\ $\xi(z)=\varphi$. Then
\be\label{proof-Sact-ZA-din-trafo}
   \ipic{proof-Sact-ZA-psi-lhs}{.9}
	 \put(-31,37){\scriptsize$\psi(\varphi)$}
	 \put(-40,-59){\scriptsize$X^\ast$} \put(-13,-59)  {\scriptsize$X$}
	 \ \raisebox{4em}{\framebox{\scriptsize{$\rep A$}}} \quad \coloneqq \quad
   \ipic{proof-Sact-ZA-psi-rhs}{.9}
	 \put(-46,-16){\scriptsize$\varphi$} 
	 \put(-77,-46){\scriptsize$X^\ast$} \put(-48,-46)  {\scriptsize$X$}
\ee
or equivalently as a diagram in $\vect_\ok$
\be\label{proof-Sact-ZA-din-trafo-repA}
   \ipic{proof-Sact-ZA-psi-repA-lhs}{.9}
	 \put(-20.5,20){\scriptsize$\psi(\varphi)$}
	 \put(-65,-41){\scriptsize$X^\ast$} \put(-36,-41)  {\scriptsize$X$}
	 \ \raisebox{4em}{\framebox{\scriptsize{$\vect_\ok$}}} \quad \coloneqq \quad
   \ipic{proof-Sact-ZA-psi-repA-rhs}{.9}
	 \put(-56,-7){\scriptsize$\Salpha$} \put(-56,-25){\scriptsize$z$} 
	 \put(-77,-46){\scriptsize$X^\ast$} \put(-48,-46)  {\scriptsize$X$}
\ee
that is,
 $\psi(\varphi)=\langle -,\Salpha z\rangle \in A^{**}$.
Let $r\in \Hom_A(\coend,\one)$ and $s\in\Hom_A(\one,\coend)$, and write $r = \langle -,\tilde r \rangle$ for some $\tilde r \in A$. The other maps in \eqref{eq:SCTC} are given by
\be\label{proof-Sact-ZA-rho}
   \Rad(r)\ = \ 
   \ipic{proof-Sact-ZA-rho-lhs}{.9}
	 \put(-32,13){\scriptsize$r$}
	 \put(-19,-41){\scriptsize$\intL$} \put(-10,-20){\scriptsize$\Delta_\coend$}
	 \ \raisebox{4em}{\framebox{\scriptsize{$\vect_\ok$}}} \quad = \quad
   \ipic{proof-Sact-ZA-rho-mid}{.9}
	 \put(-42,1){\scriptsize$\co$} \put(-16,23){\scriptsize$r$}
	 \put(-69,-41){\scriptsize$\intL$}
	 \quad = \quad \hat{\Lambda}_\coend \big( \co (- \ot \tilde r) \big) \gc
\ee
\be\label{proof-Sact-ZA-Omg}
   \Omega(s)\ = \
   \ipic{proof-Sact-ZA-Omg-lhs}{.9}
	 \put(-15,14){\scriptsize$\omega_\coend$}
	 \put(-7,-32){\scriptsize$\coend$} \put(-17,-10){\scriptsize$s$}
	 \ \raisebox{3em}{\framebox{\scriptsize{$\vect_\ok$}}}\quad = \quad
   \ipic{proof-Sact-ZA-Omg-mid}{.9}
	 \put(-10,-7){\scriptsize$\Hp$} \put(-31,-39){\scriptsize$\coend$} \put(-44,-19){\scriptsize$s$}
	 \quad = \quad \langle - , (\id\ot s)(\Hp) \rangle \gc 
\ee	
\be\label{proof-Sact-ZA-psi-inv}
   \psi^{-1}(r)_X\ = \ 
   \ipic{proof-Sact-ZA-psi-inv-lhs}{.9}
	 \put(-113,-33){\scriptsize$\Psi$} \put(-93,-39){\scriptsize$\Sbeta$} \put(-9,33){\scriptsize$r$} 
	 \put(-28,-50){\scriptsize$X$} \put(-84,38){\scriptsize$X$} 
	 \ \raisebox{4em}{\framebox{\scriptsize{$\vect_\ok$}}}\quad = \quad
   \ipic{proof-Sact-ZA-psi-inv-rhs}{.9}
	 \put(-46,-40){\scriptsize$\Psi$} 
	 \put(-17,-14){\scriptsize$\tilde r$} 
	 \put(-19,8){\scriptsize$\Sbeta$} 
	 \put(-5,-47){\scriptsize$X$} \put(-5,41){\scriptsize$X$} 
	 \quad = \quad \sum\limits_{(\Psi)}\Psi_1\,\Sbeta\, S(\Psi_2)\,\tilde r\, \Psi_3 \act (-) \; \gc
\ee
where $\Psi=\Phi^{-1}$. Now it is easy to see that \eqref{eq:SCTC} reduces to  \eqref{eq:STact-ZA}. 
\end{proof}

Recall that in Section~\ref{sec:intchar-qHopf} we introduced the special central elements $\bchi_V$ and $\bphi_V$ that are related to the internal characters $\chi_V$.
As a corollary of Theorem~\ref{eq:STact-ZA} and by definition $\xi(\bchi_V) = S_{\rep A}(\phi_V)$, we have the following $S$-transformation of these elements. 
\begin{corollary}\label{cor:S(PHI)=CHI}
$\bchi_V = \modS_Z(\bphi_V)$ and $\bphi_{V^*} = \modS^2_Z(\bphi_V)$.
\end{corollary}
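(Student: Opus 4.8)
The plan is to derive both equalities formally by transporting the already-available facts about the action on $\End(\id_{\rep A})$ along the $\ok$-algebra isomorphism $\xi\colon Z(A)\to\End(\id_{\rep A})$ of \eqref{eq:xi-ZA-EndId-def}. Recall that $\modS_Z$ is, by construction, the conjugate of the map $\modS_\cat$ from \eqref{eq:SCTC} in the case $\cat=\rep A$, i.e.\ $\xi\circ\modS_Z=\modS_{\rep A}\circ\xi$ as linear endomorphisms (this is precisely what the proof of Theorem~\ref{thm:SL2Z-on-centre} unravels), and recall the two defining relations from Section~\ref{sec:intchar-qHopf}: $\xi(\bphi_V)=\phi_V$ and $\xi(\bchi_V)=\modS_{\rep A}(\phi_V)$, the latter being the $S_{\rep A}(\phi_V)$ appearing in the statement.

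For the first identity I would then simply compute
\[
\xi\bigl(\modS_Z(\bphi_V)\bigr)=\modS_{\rep A}\bigl(\xi(\bphi_V)\bigr)=\modS_{\rep A}(\phi_V)=\xi(\bchi_V),
\]
and conclude $\modS_Z(\bphi_V)=\bchi_V$ by injectivity of $\xi$.

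For the second identity the same manipulation gives $\xi\bigl(\modS_Z^2(\bphi_V)\bigr)=\modS_{\rep A}\bigl(\modS_{\rep A}(\phi_V)\bigr)$, so the only genuine input is the lemma in Section~\ref{sec:intchar-natendo} asserting that, for a ribbon finite tensor category $\cat$, the map $\modS_\cat\circ\modS_\cat$ sends $\phi_M$ to $\phi_{M^*}$. Since $A$ is a ribbon quasi-Hopf algebra, $\rep A$ carries a ribbon structure (twist $\theta_U=\ribbon^{-1}.(-)$, cf.\ \eqref{eq:RepA-ribbon}), so that lemma applies with $M=V$ and yields $\modS_{\rep A}\bigl(\modS_{\rep A}(\phi_V)\bigr)=\phi_{V^*}=\xi(\bphi_{V^*})$; injectivity of $\xi$ then gives $\modS_Z^2(\bphi_V)=\bphi_{V^*}$.

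Strictly speaking one should also check the bookkeeping of signs: the two-sided integral $\Lambda_\coend$, and with it $\coint_\coend$, $\phi_V$, $\bphi_V$ and the linear maps $\modS_\cat$, $\modS_Z$, are all fixed only up to one common sign (chosen once in Section~\ref{sec:intchar-qHopf}). One verifies that this sign enters both sides of each identity in the same way — for instance in $\bchi_V=\modS_Z(\bphi_V)$ the left-hand side is sign-free while on the right the sign of $\modS_Z$ cancels the one carried by $\bphi_V$, because $\modS_{\rep A}(\phi_V)$ reduces to $\psi^{-1}(\Omega(\chi_V))$ by \eqref{eq:SCTC} and \eqref{eq:tildechi-def} and hence contains no integral. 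I do not expect any real obstacle here: all the substance sits in Theorem~\ref{thm:SL2Z-on-centre} (which identifies $\modS_Z$ with the transport of $\modS_\cat$ along $\xi$) and in the earlier lemma on $\modS_\cat\circ\modS_\cat$ applied to internal characters, so the corollary amounts to a change of variables through $\xi$.
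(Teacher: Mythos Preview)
Your argument is correct and is precisely the approach the paper indicates: the corollary is stated immediately after Theorem~\ref{thm:SL2Z-on-centre} as a consequence of that theorem together with the definitions $\xi(\bphi_V)=\phi_V$ and $\xi(\bchi_V)=\modS_{\rep A}(\phi_V)$, and the paper gives no further proof. Your transport-along-$\xi$ computation and the appeal to the lemma $\modS_\cat^2(\phi_M)=\phi_{M^*}$ from Section~\ref{sec:intchar-natendo} are exactly what is intended; the sign bookkeeping you add is not strictly needed but is harmless.
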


Next we give the $S$-transformation on $A$ as $\hat \modS: A\to A$, using 
\eqref{SL2Z-action-A-star},
\be\label{eq:hat-modS}
\langle \modS(f), a  \rangle = \langle f, \hat\modS(a)  \rangle \ , 
\ee
for any $f\in A^*$ and $a\in A$. We easily get
\be\label{eq:modS-Z-1}
\hat\modS(a) =  (\hat\Lambda_\coend \ot \id) \Bigl[ \hat{\mathcal{Q}} (a\ot \Salpha)\Bigr]
= \sum_{(X)}\hat\Lambda_\coend \Bigl(S(X_3)a X_4\Bigl) S(X_1) \Salpha X_2
\ee
or equivalently, using the relation~\eqref{eq:ST-on-EndId_aux1} we have
\be\label{eq:modS-Z-2}
\hat\modS(a) = 
 \sum_{(\Phi),(\hat\omega_\coend)}\hat\Lambda_\coend \Big(\co\big(S(\Phi'_3)\,a\,\Phi''_3\ot S(\Phi'_2)(\Hp)_1\Phi''_2\big)\Big)
S(\Phi'_1)(\Hp)_2 \Phi''_1\ .
\ee
For $\hat \modT$, we obviously have $\hat \modT(a) = \ribbon^{-1} a$. Following~\eqref{eq:ST-endo-coend} and  the definition in~\eqref{eq:hat-modS}, $\hat\modS$ and $\hat\modT$ satisfy
\begin{align}
(\hat\modS\hat\modT)^3=\lambda \, \hat\modS^2 \ , \qquad \hat\modS^2 = \hat{S}_\coend^{-1} \ , \qquad \ \lambda\in\ok^{\times}\ ,
\end{align} 
with the antipode $ \hat{S}_\coend$ as in Theorem~\ref{thm:explicit-coend-qHopf}.
We note then the 
$S$-transformation~\eqref{eq:hat-modS} 
with~\eqref{eq:modS-Z-2} simplifies on linear forms $f\in\Cc(\one,\coend)$, or on the invariants $\hat f := f(1)$ of the coadjoint action
(recall that $\modS\colon \coend \to \coend$ is an intertwiner of the coadjoint action of $A$ on $\coend=A^*$ and thus $\modS$ acts on the space invariants):
\be\label{eq:modS-f}
 \modS(\hat f)(a) = \hat f\big( \hat\modS(a) \big) =  \sum_{(\hat\omega_\coend)}\hat\Lambda_\coend \Big(\co\big( a\ot (\Hp)_1\big)\Big)
\hat f\big((\Hp)_2\big)\ , \qquad a\in A\ ,
\ee
where  we used the counitality of $\Phi$. The two projective $SL(2,\mathbb{Z})$ actions, 
on $Z(A)$ in~\eqref{eq:STact-ZA} and on $\cat(\one,\coend)$ in~\eqref{eq:modS-f}, 
are related by  conjugation with the isomorphism $\rho\circ\psi\circ\xi$ between the centre $Z(A)$ and the space of coadjoint invariants.
(Note that the action~\eqref{eq:modS-f} agrees with the one from the categorical formula~\eqref{eq:Rad-Omega-S}).

Similarly, we have a projective $SL(2,\mathbb{Z})$ representation on the space $\cat(\coend,\one)$ of \textsl{co}invariants of the coadjoint action
that is identified by $\delta^{\vect}_A$ with the subspace $\Salpha\cdot Z(A) \subset A$.
	Indeed, by \eqref{proof-Sact-ZA-din-trafo-repA} the image of the isomorphism  $\psi\circ\xi : Z(A) \to \cat(\coend,\one)$ is $\delta^{\vect}_A(\Salpha\cdot Z(A))$. 
	In general, $\Salpha\cdot Z(A)$ is different from the centre $Z(A)$ and from the space of invariants of the adjoint action, 
which is $\Sbeta\cdot Z(A)$ (in the Hopf algebra case,  the three spaces are identical). 
	The $S$-transformation acts on $\Salpha\cdot Z(A)$ by restricting $\hat\modS$ to it. When evaluated on $\Salpha z$, the result~\eqref{eq:modS-Z-2} simplifies to
 \be\label{eq:modS-Z-2-alpha}
\hat\modS(\Salpha z) = 
 \sum_{(\hat\omega_\coend)}\hat\Lambda_\coend \Big(\co\big(\Salpha z\ot (\Hp)_1\big)\Big)
(\Hp)_2 \ ,\qquad z\in Z(A) \ ,
\ee
where we again used the counitality of $\Phi$. 
By construction, $\hat\modS$ commutes with the $A_{\mathrm{op}}$-action 
on $A$ given by $\rho(b)\colon a \mapsto \sum_{(b)}S(b') a b''$ \footnote{ Note the different position of the antipode $S$ here with respect to the adjoint action.} and thus $\hat\modS$ acts on the spaces of invariants of this action, which is indeed $\Salpha\cdot Z(A)$.

\begin{remark}
If $A$ is a factorisable Hopf algebra, recalling Remark~\ref{rem:coend-str-Hopf}
we have the $S$-transformation~\eqref{eq:modS-Z-1}
(or equivalently~\eqref{eq:modS-Z-2}) on $A$ as
\be
\hat\modS(a) = \sum_{(M)} \hat\Lambda_\coend\big(S(M_2)a\big) M_1
= \sum_{(M)} \hat\Lambda_\coend\big(M_1 a\big) S^{-1}(M_2)\ , \quad a\in A\ ,
\ee
where we used $(S\ot S)(M) = \tau_{A,A} (M)$,
while $\modS_Z$ from~\eqref{eq:STact-ZA} becomes
\be\label{eq:SZ-Hopf}
\modS_Z(z) = \sum_{(M)} \hat\Lambda_\coend\big(z M_1\big) S(M_2) \ , \quad z\in Z(A) \ .
\ee 
When $\hat\modS$ 	is
restricted to $Z(A)$ the two formulas become equal: $\modS_Z(z) = S^2\big(\hat\modS(z)\big)$, but $S^2$ is identity on the centre as $S^2(a) = \sqs a \sqs^{-1}$.
We note that~\eqref{eq:SZ-Hopf} agrees with the $S$-transformation obtained in~\cite[Sec.\,2]{[Kerler]} 
\footnote{Note that $\hat\Lambda_\coend\colon A \to \ok$ is an intertwiner  
for the $A_{\mathrm{op}}$-action
on A by $\rho(b)\colon a \mapsto \sum_{(b)}S(b') a b''$. Therefore, the right integral condition in RHS of~\eqref{eq:coend-int-coint-hatted} (the one in the context of the coends) simplifies to the standard right-cointegral condition for a Hopf algebra: $(\hat\Lambda_\coend\ot\id)\circ \Delta(a) = \hat\Lambda_\coend(a) \one$. 
(But the LHS of~\eqref{eq:coend-int-coint-hatted} does not simplify to the standard left-cointegral condition for a Hopf algebra.)
The right (co)integral $\mu_D$ used in~\cite[Section 2]{[Kerler]} thus coincides with our $\hat\Lambda_\coend$ 
(possibly up to a sign),
as the normalisation is also the same.}, 
which is a slight rewriting of the ``quantum Fourier'' $S$-transformation originally obtained in~\cite{Lyubashenko:1994ma}.
\end{remark}

\appendix

\section{Dinatural transformations}\label{app:dinat-tr}
We recall here the concept of dinatural transformations between two functors.
Let $\cat$ and~$\catD$ be any categories and let $F\colon \cat^{\operatorname{op}} \times \cat \to \catD$ and
$G\colon \cat \times  \cat^{\operatorname{op}}  \to \catD$  be two functors.
 The next definition is a slight modification of the one in 
\cite[Ch.\,IX.4]{MacLane-book} -- the order of categories for the second functor is different.

\begin{definition}\label{def:dinat-tr}
A \emph{dinatural transformation} from the functor $F$ to the functor $G$
is a family of morphisms $\phi\equiv\left(\phi_U \colon F(U,U)\to G(U,U)\right)_{U\in\cat}$ in~$\catD$,
written $\phi\colon F \stackrel{..}{\longrightarrow} G$, that makes the diagram 
\begin{center}
\makebox[0pt]
{
\begin{xy}
  \xymatrix@C+=1.5cm{
  	  		&  F(V,V) \ar[r]^{\phi_{V}} & G(V,V) \ar[dr]^{G(\id, f)}  & \\
      F(V,U)  \ar[dr]_{F(f,\id)} \ar[ur]^{F(\id ,f)} &  & & G(V,U) \\
     			&  F(U,U) \ar[r]^{\phi_{U}}   & G(U,U)\ar[ur]_{G(f, \id)}  &
  }
\end{xy}
}
\end{center}
commute  for all $U,V\in\cat$ and $f\in{\cat}(U,V)$. 
\end{definition}

For coends and ends, we  consider the cases where one of the  functors, $F$ or $G$, is a ``constant'' functor: e.g. $G\colon U\times V \mapsto B$  for all $U,V\in\cat$ and  an object $B\in\catD$. Definition~\ref{def:dinat-tr} then reduces to the following one.

\begin{definition}\label{def:dinat-const}
\mbox{}
\begin{enumerate}\setlength{\leftskip}{-1.5em}
\item
A \emph{dinatural transformation} from the functor $F$ to an object $B\in\catD$ 
is a family of morphisms $\phi\equiv\left(\phi_U \colon F(U,U)\to B\right)_{U\in\cat}$ in~$\catD$,
written $\phi\colon F \stackrel{..}{\longrightarrow} B$, that makes the diagram 
\begin{equation*}
\begin{xy}
  \xymatrix@C+=1.5cm{
      F(V,U)  \ar[d]_{F(f,\id)} \ar[r]^{F(\id ,f)} &  F(V,V) \ar[d]^{\phi_{V}}   \\
      F(U,U) \ar[r]^{\phi_{U}}   &  B 
  }
\end{xy}
\end{equation*}
commutative  for all $U,V\in\cat$ and $f\in{\cat}(U,V)$. 

\item
A \emph{dinatural transformation} from an object $B\in\catD$  to the functor $G$  is a family of morphisms $\phi\equiv\left(\phi_U \colon B\to G(U,U)\right)_{U\in\cat}$ in~$\catD$
that makes the diagram
\begin{equation*}
\begin{xy}
  \xymatrix@C+=1.5cm{
      B  \ar[d]_{\phi_{U}} \ar[r]^{\phi_{V}} &  G(V,V) \ar[d]^{G(\id ,f)}   \\
      G(U,U) \ar[r]^{G(f,\id)}   &   G(V,U)
  }
\end{xy}
\end{equation*}
 commutative  for all $U,V\in\cat$ and $f\in{\cat}(U,V)$. 

\end{enumerate}
\end{definition}

\section{Proof of Proposition \ref{prop:univ-vs-coend}}\label{app:univ-vs-coend}

We first prove that the two coalgebra structures are equal.
Recall that the coproduct for the universal Hopf algebra $(\coend,\varphi)$ is determined 
by the defining relation~\eqref{eq:rA-cop} involving the element from $\Nat(\id,\id\tensor (\coend\otimes \coend))$ -- the right-hand side of~\eqref{eq:rA-cop} --
while the coproduct for  the coend $(\coend,\iota)$ is given by the defining relation~\eqref{eq:cop-L} involving the element from $\Din((-)^* \otimes (-),\coend\otimes \coend)$.
By Corollary~\ref{cor:coend-Nat-rep} and Lemma~\ref{lem:I-Din},
and the universality property of the coend $\coend$ we have the commutative diagram (where all arrows are bijections)
\be\label{eq:comm-diag-3}
\xymatrix@C=40pt@R=30pt{
      \cat(\coend,V)  \ar[r]^{\varphi_{V}\quad }\ar[dr]_{(-) \circ \iota}  & \Nat(\id,\id\tensor V)    \\
         &   \Din((-)^* \otimes (-),V) \ar[u]_{\zeta_{V}}
}
\ee
We use this diagram for $V=\coend\otimes \coend$ to compare the coproducts on  $(\coend,\iota)$ and on $(\coend,\varphi)$ -- 
	this
is equivalent to comparing the dinatural transformation on RHS of~\eqref{eq:cop-L} with  the image of RHS of~\eqref{eq:rA-cop} under the bijection $\zeta^{-1}_{\coend\otimes \coend}$. We begin with rewriting the map $X \xrightarrow{\nattr_X} X \otimes \coend$ in terms of $\iota$, recall the definition~\eqref{eq:nattr}. Applying the diagram~\eqref{eq:comm-diag-3} for $V=\coend$ we get  $\nattr = \natiso_{\coend}(\id) = \zeta_{\coend}(\iota)$ or graphically
\be\label{eq:iota-tilde-pic}
   \nattr_X\quad\coloneqq\quad
   \ipic{din-trafo-zeta}{.9}
	 \put(-66,42){\scriptsize$X$}   \put(-25,42){\scriptsize$\coend$}
	 \put(-5,12)  {\scriptsize$\iota_X$}
	\put(-12,-49)  {\scriptsize$X$}
	\ee
 Then,  RHS of~\eqref{eq:rA-cop} is
 \be\label{eq:iota-tilde-cop-pic}
   \natiso_{\coend\tensor\coend}(\Delta_\coend)_X \quad = \quad
   \ipic{rA-cop}{.9}
		 \put(-121,63){\scriptsize$X$}   \put(-80,63){\scriptsize$\coend$}  \put(-24,63){\scriptsize$\coend$}
		 \put(-70,35){\scriptsize$\iota_X$}
	 	 \put(-13,-8)  {\scriptsize$\iota_X$}
	 \put(-13,-70)  {\scriptsize$X$}
	\ee
Applying $\zeta^{-1}_{\coend\otimes \coend}$ on it, recall~\eqref{eq:tilde-inv}, we get then indeed 
RHS of~\eqref{eq:cop-L} or the  dinatural transformation defining the coproduct on the coend $(\coend,\iota)$ in 	
	Figure~\ref{fig:Hopf-coend}. 
For the counit maps, we apply $\zeta_{\one}^{-1}$ on RHS of~\eqref{eq:rA-counit} and get indeed RHS of~\eqref{eq:eps-L}. Similarly for the antipode $S$ map (though, the algebra structure is discussed below), we apply 
 $\zeta_{\coend}^{-1}$ on RHS of~\eqref{eq:rA-S} and get indeed RHS of~\eqref{eq:antipode-L}, or the corresponding diagram in 
	Figure~\ref{fig:Hopf-coend}, 
after an elementary calculation using naturality of the braiding.

To compare the algebra structures, we use a direct calculation instead of the ``double'' analogue of the diagram~\eqref{eq:comm-diag-3}.
Recall that the multiplication for $(\coend,\natiso)$ is defined in~\eqref{eq:rA-mult} by the equality  $\natiso_\coend^2(\muc)_{X,Y} = \nattr_{X\otimes Y}$, where the map $\natiso^2_V$ is defined  in~\eqref{eq:natiso2}.
In order to show the equality of the multiplications on $(\coend,\iota)$ and $(\coend,\natiso)$, we compute the image of $\muc$ from~\eqref{eq:mult-L} (or graphically in 
	Figure~\ref{fig:Hopf-coend}) 
under the map $\natiso_\coend^2$ and show that it is equal to $\nattr_{X\otimes Y}$.
We have
 \be\label{eq:mult-L-A}
   \natiso_\coend^2(\muc)_{X,Y} \quad = \quad
   \ipic{muL-A}{.8}
		 \put(-133,92){\scriptsize$X$}   \put(-97,92){\scriptsize$Y$} \put(-40,92){\scriptsize$\coend$}
		  \put(-35,60){\scriptsize$\muc$}
		 \put(-88,-41)  {\scriptsize$\iota_X$}
	 	 \put(-13,-41)  {\scriptsize$\iota_Y$}
		 \put(-86,-98)  {\scriptsize$X$}  \put(-11,-98)  {\scriptsize$Y$}
   \quad = \quad
     \ipic{muL-A-2}{.86}
		 \put(-150,82){\scriptsize$X$}   \put(-114,82){\scriptsize$Y$} \put(-48,84){\scriptsize$\coend$}
		  \put(-39,53){\scriptsize$\muc$}
		 \put(-88,11)  {\scriptsize$\iota_X$}
	 	 \put(-13,11)  {\scriptsize$\iota_Y$}
		 \put(-103,-92)  {\scriptsize$X$}  \put(-18,-92)  {\scriptsize$Y$}
	\ee
where we used naturality of the braiding. Using then the defining equality for $\muc$ in 
	Figure~\ref{fig:Hopf-coend}, 
we can replace the part of the diagram inside the dashed square by RHS of~\eqref{eq:mult-L} that gives (after an elementary graphical calculus)
\be\label{eq:mult-L-A-2}
   \natiso_\coend^2(\muc)_{X,Y} \ = \ 
   \ipic{muL-A-3}{.3}
	 \put(-123,86){\scriptsize{$X$}} 	 \put(-106,86){\scriptsize{$Y$}}  \put(-41,88){\scriptsize{$\coend$}} 
   \put(-31,57){\scriptsize$\iota_{Y\tensor X}$}
	 \put(-78,21){\tiny$(Y\tensor X)^\ast$} \put(-30,21){\tiny$Y\tensor X$}
	 \put(-77,-8){\scriptsize$\gamma_{Y,X}$} \put(-18,-9){\scriptsize$\id$}
	 \put(-31,-93){\scriptsize$X$} \put(-6,-93){\scriptsize$Y$}
\quad = \quad
	 \ipic{muL-A-4}{.3}
	  \put(-88,84){\scriptsize{$X$}} 	 \put(-71,84){\scriptsize{$Y$}}  \put(-30,84){\scriptsize{$\coend$}} 
		\put(-20,20){\scriptsize$\iota_{X\tensor Y}$} \put(-89,-16){\scriptsize$\id_{X\otimes Y}$}	\put(-20,-90){\scriptsize$X\otimes Y$} \gc
\ee
where in the last equality we used the dinaturality property~\eqref{eq:P-iota-pic} of $\iota_{Y\otimes X}$ in order to move the braiding from right to left, and
then we also used the explicit diagram~\eqref{eq:gammaVU} for the isomorphism $\gamma_{Y,X}$, and the zig-zag identity to simplify the diagram. 
We get thus indeed $\natiso_\coend^2(\muc)_{X,Y} = \nattr_{X\otimes Y}$, recall~\eqref{eq:iota-tilde-pic},
and therefore the two multiplications are equal. The unit maps $\eta_\coend$ are compared in a similar way. Finally, for the Hopf pairing $\omega_\coend$ we calculate 
$\natiso_{\one}^2(\omega_{\coend})_{X,Y}$
along the lines in~\eqref{eq:mult-L-A} using RHS of~\eqref{hopf-pair} for the dashed region and simplify it up to RHS of~\eqref{eq:rA-omega}, as claimed.

%%%%%%%%%%%%%%%%%%%%%%%%%%%%%%%%%%%%     bibliography
\newcommand\arxiv[2]      {\href{http://arXiv.org/abs/#1}{#2}}
\newcommand\doi[2]        {\href{http://dx.doi.org/#1}{#2}}
\newcommand\httpurl[2]    {\href{http://#1}{#2}}

\end{document}